\theoremstyle{plain}
\newtheorem{proposition}{Proposition}[section]
\newtheorem{theorem}[proposition]{Theorem}
\newtheorem{lemma}[proposition]{Lemma}
\newtheorem{corollary}[proposition]{Corollary}
\theoremstyle{plain}
\newtheorem{definition}[proposition]{Definition}
\newtheorem{remark}[proposition]{Remark}
\newtheorem{example}[proposition]{Example}
\theoremstyle{nonumberplain}
\newtheorem{proof}{Proof}
\newcommand{\qed}{\hfill $\Box$}
\newcommand{\mer}{{\rm mero}}
\newcommand{\merc}{{\rm mero,c}}
\newcommand{\LK}{\mathrm{lk}}
\newcommand{\MCS}{\mathcal{S}}
\newcommand{\GR}{\mathrm{Gr}}
\newcommand{\ZZ}{{\mathbb Z}}
\newcommand{\RR}{{\mathbb R}}
\newcommand{\CC}{{\mathbb C}}
\newcommand{\PP}{{\mathbb P}}
\newcommand{\QQ}{{\mathbb Q}}
\newcommand{\LL}{{\mathbb L}}
\renewcommand{\d}{{\rm dim}}
\newcommand{\Vol}{{\rm Vol}}
\newcommand{\e}{\varepsilon}
\renewcommand{\SS}{{\mathcal S}}
\newcommand{\Spec}{{\rm Spec}}
\newcommand{\Con}{{\rm Con}}
\newcommand{\id}{{\rm id}}
\newcommand{\supp}{{\rm supp}}
\newcommand{\Int}{{\rm Int}}
\newcommand{\Var}{{\rm Var}}
\newcommand{\HSm}{{\rm HS}^{\rm mon}}
\newcommand{\relint}{{\rm rel.int}}
\newcommand{\Cone}{{\rm Cone}}
\newcommand{\dist}{{\rm dist}}
\newcommand{\height}{{\rm ht}}
\newcommand{\Gr}{{\rm Gr}}
\newcommand{\Db}{{\bf D}^{b}}
\newcommand{\Dbc}{{\bf D}_{c}^{b}}
\newcommand{\KK}{{\rm K}}
\newcommand{\F}{{\cal F}}
\newcommand{\G}{{\cal G}}
\renewcommand{\L}{{\mathcal L}}
\newcommand{\M}{{\mathcal M}}
\renewcommand{\O}{{\cal O}}
\newcommand{\CF}{{\rm CF}}
\newcommand{\J}{{S}}
\renewcommand{\sp}{{\rm sp}}
\newcommand{\tl}[1]{\widetilde{#1}}
\newcommand{\simto}{\overset{\sim}{\longrightarrow}}
\newcommand{\dsum}{\displaystyle \sum}
\newcommand{\dprod}{\displaystyle \prod}
\renewcommand{\(}{\left(}
\renewcommand{\)}{\right)}
\newcommand{\longhookrightarrow}{\DOTSB\lhook\joinrel\longrightarrow}
\def\to{\mathchoice{\longrightarrow}{\rightarrow}{\rightarrow}{\rightarrow}}
\def\cf{cf.\kern.3em}
\def\eg{e.g.\kern.3em}
\title{Geometric Monodromies, Mixed Hodge Numbers 
of Motivic Milnor Fibers and 
Newton Polyhedra 
\footnote{{\bf 2010 Mathematics 
Subject Classification: }14F05, 
14M25, 32C38, 32S35, 32S40}}
\author{Kiyoshi TAKEUCHI 
\footnote{Mathematical Institute, Tohoku University, 
Aramaki Aza-Aoba 6-3, Aobaku, Sendai, 980-8578, Japan. 
E-mail: takemicro@nifty.com} 
}
\date{}
\begin{document}

\maketitle

\begin{abstract}
We introduce the theory of local and global 
monodromies of polynomials in cohomology groups 
in various geometric situations, focusing  
on its relations with toric geometry and 
motivic Milnor fibers, and moreover in the modern languages 
of nearby and vanishing cycle functors. 
Equivariant mixed Hodge numbers 
of motivic Milnor fibers will be described in 
terms of Newton polyhedra of polynomials. 
\end{abstract}

\maketitle

\section{Introduction}\label{section 1}

Since Milnor's discovery of Milnor fibers of 
complex hypersufaces in \cite{Milnor}, many 
mathematicians in different fields, such as 
singularity theory, algebraic geometry, topology, 
complex analysis, number theory and D-modules, 
studied them from various points of view. 
Moreover, after the two fundamental papers 
\cite{Broughton} and \cite{S-T-1}, globalizing 
the situation, they started to study also the singularities and 
the monodromies at infinity of polynomial maps. 
The results thus obtained are surprisingly similar 
to the ones obtained in the local case. 
Here let us call these monodromies 
in the both local and global cases ``geometric 
monodromies" for short, in order to distinguish 
them from the ones of general constructible 
sheaves. The aim of this note is to introduce the theory of 
such geometric monodromies, laying special emphasis 
on its relations with toric geometry and 
motivic Milnor fibers, and moreover in the modern languages 
of nearby and vanishing cycle functors. 

To this end, we try to explain all the basic definitions and 
results on nearby and vanishing 
cycle functors in a comprehensible way and 
show their geometric applications at the same time 
as many as possible. By considering also the 
superstructures of mixed Hodge modules 
of nearby cycle complexes, 
in \cite{D-L-1} Denef and Loeser introduced 
motivic Milnor fibers, which can be regarded  
as the motivic reincarnations of the classical 
Milnor fibers. This motivic point of view is 
very important in the recent progress of 
the theory of geometric monodromies. In the 
last half of this note, 
we introduce the theory of 
motivic Milnor fibers at infinity developed in Matsui-Takeuchi 
\cite{M-T-3} and Raibaut \cite{Raibaut}, 
\cite{Raibaut-1.5}, \cite{Raibaut-1.8}. 
Combining it with some results in toric geometry 
such as the Bernstein-Khovanskii-Kouchnirenko 
theorem in \cite{Khovanskii} and the results in 
\cite[Section 2]{M-T-3}, in the both local and 
global cases, one can now describe the 
equivariant mixed Hodge numbers 
of motivic Milnor fibers 
in terms of Newton polyhedra of polynomials. 
In this way, the readers will see 
how the combinatorial 
expressions of the Jordan normal forms of various 
geometric monodromies are obtained.

The organization of this note is as follows. In Section \ref{section 2} 
we first recall the classical definitions of 
Milnor fibers and Milnor monodromies and explain their relations with 
the nearby and vanishing cycle functors introduced by 
Deligne \cite{Deligne}. Then we show how the monodromy zeta 
functions associated to nearby cycle complexes of constructible 
sheaves are calculated via resolutions of singularities. 
In Section \ref{section 3} we combine this method with 
some notions and results in toric geometry, such as 
toric modifications and the Bernstein-Khovanskii-Kouchnirenko 
theorem. Specifically, we thus prove 
the celebrated theorems of Kouchnirenko, Varchenko and Oka 
which allow us to express the monodromy zeta 
functions explicitly 
in terms of Newton polyhedra associated to 
polynomials. Here as in \cite{M-T-new2}, we 
prove them by using the theory of 
nearby cycle functors explained in Section \ref{section 2}. 
By the clarity of its expression and its applicability 
to generic situations, 
Varchenko's theorem is extremely useful in 
singularity theory, especially in the study of 
the monodromy conjecture proposed in 
Denef-Loeser \cite{D-L}. For the applications of 
Varchenko's theorem in this direction, 
see e.g. \cite{E-L-T}, \cite{L-P-S}, 
\cite{L-V}. In Section \ref{section 4} 
we take the same strategy to introduce some basic results on 
singularities and monodromies at infinity of polynomial maps. 
In particular, here we focus on 
N{\'e}methi-Zaharia's theorem in \cite{N-Z} and 
Libgober-Sperber's one in \cite{L-S}. Note that Libgober-Sperber's 
theorem can be considered as a global counterpart 
of Varchenko's one. In Sections \ref{section 5} 
and \ref{section 6}, we prepare several notions and 
results which will be used in subsequent sections. 
Then in Section \ref{section 7}, we explain the theory of 
motivic Milnor fibers at infinity developed in Matsui-Takeuchi 
\cite{M-T-3} and Raibaut \cite{Raibaut}, 
\cite{Raibaut-1.5}, \cite{Raibaut-1.8} independently.  
Combining it with the results in Sections \ref{section 5} and \ref{section 6}, 
then we see how the combinatorial 
expressions in \cite{M-T-3} of the Jordan normal forms of 
monodromies at infinity are obtained. In the course of their proofs, 
we obtain also precise descriptions of the 
equivariant mixed Hodge numbers of motivic 
Milnor fibers at infinity. 
As we will see in Section \ref{section 8}, similar results 
hold true also for (local) Milnor monodromies and 
(local) motivic Milnor fibers. For the further 
applications of these methods, see \cite{C-R}, 
\cite{E-T}, \cite{N-T}, \cite{Saito} and 
\cite{T-T}. 
In Section \ref{section 9} we introduce the theory of 
Katz-Stapledon \cite{Ka-St-1}, \cite{Ka-St-2}  and 
Stapledon \cite{Stapledon} which impoved 
the results in \cite{E-T}, \cite{M-T-3}, \cite{M-T-1}, 
\cite{T-T}. Remarkably, by using also 
the theories of intersection cohomology, tropical geometry, 
mirror symmetry and combinatorics, 
in \cite{Stapledon} Stapledon obtained 
the full combinatorial expressions 
of the Jordan normal forms of various geometic 
monodromies. Finally in Section \ref{section 10}, 
we show how these methods can be applied partially 
to Milnor fibers and Milnor monodromies of 
meromorphic functions. For this purpose, we 
explain some basic properties of 
the meromorphic nearby and 
vanishing cycle functors introduced in 
\cite{N-T} and \cite{Raibaut-2}.

\bigskip
\noindent{\bf Acknowledgement}: The author 
is very grateful to Professor Pepe Seade and his colleagues
for suggesting him to write this expository paper 
in Volume VI of the Handbook of Geometry and Topology of Singularities. 
He also thanks the referee for many valuable suggestions 
and comments which 
improved this paper.

\section{Milnor fibers and nearby cycle sheaves}\label{section 2}

In this section, we recall the classical definitions of 
Milnor fibers and Milnor monodromies and explain their relations with 
the nearby and vanishing cycle functors introduced by 
Deligne \cite{Deligne}. 
In this note, we essentially follow the terminology of \cite{Dimca}, \cite{H-T-T}, 
\cite{K-book}, \cite{K-S} and \cite{Sch}. 
For example, for a topological space $X$ we denote by $\Db(X)$ the derived category whose 
objects are bounded complexes of sheaves of $\CC_X$-modules on $X$. For a complex 
analytic space $X$, let $\Dbc(X)$ be the full subcategory of $\Db(X)$ consisting 
of constructible complexes of sheaves. In what follows, let 
$X$ be a complex analytic space and $f\colon X \longrightarrow \CC$ 
a non-constant holomorphic function on it. The problem being local, 
we may assume that $X$ is a complex analytic subset of 
a complex manifold $M$ and for a non-constant holomorphic 
function $\widetilde{f}\colon M \longrightarrow \CC$ on it  
we have $\widetilde{f}\big|_{X} \equiv f$. Set 
$X_0:=f^{-1}(0)\subsetneqq X$. First of all, the following fundamental result was proved by 
Milnor \cite{Milnor} and L\^e \cite{Le}. See also Massey 
\cite[Proposition 1.3]{M}. 

\begin{theorem}\label{thm.7.2.001}
Let $X\subset\CC^N$ be a complex analytic subset 
of $\CC^N$ containing the origin $0\in\CC^N$ and 
$f\colon X \longrightarrow \CC$ a non-constant holomorphic 
function on it satisfying the condition $f(0)=0$. 
Then for $0<\eta\ll\varepsilon\ll 1$ the 
restriction 
\begin{equation}
X\cap B(0;\varepsilon)\cap f^{-1}(B(0;\eta)^*) \longrightarrow B(0;\eta)^*\subset\CC^*
\end{equation}
of $f$ is a fiber bundle, where we set $B(0;\eta)^*:= 
\{z\in\CC \mid 0<|z|<\eta \} \subset \CC^*$ and $B(0;\varepsilon):= 
\{x=({x}_{1},\dots,x_{N})\in\CC^N \mid ||x||=\sqrt{\sum_{i=1}^{N} |x_i|^2}<
\varepsilon \} \subset \CC^N$. 
\end{theorem}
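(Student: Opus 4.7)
The plan is to establish the local triviality of this map by combining a Whitney stratification of the pair $(X, X_0)$ with a transversality argument for small spheres, and then invoking a stratified version of Ehresmann's theorem (Thom's first isotopy lemma).

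First I would extend $f$ to the holomorphic function $\widetilde{f}\colon M\longrightarrow\CC$ on the ambient manifold $M$, and endow $X\subset M$ with a Whitney stratification $\MCS=\{S_\alpha\}_\alpha$ such that $X_0=f^{-1}(0)$ is a union of strata. Such a stratification exists because $X$ and $X_0$ are complex analytic subsets of $M$. Note that since $f$ is non-constant on $X$, at least one stratum is not contained in $X_0$.

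Next I would fix a small $\varepsilon>0$ with the following transversality property: for every $0<\varepsilon'\leq\varepsilon$, the sphere $S(0;\varepsilon')=\partial B(0;\varepsilon')$ meets every stratum $S_\alpha$ transversally in $M$. This is the content of the local conic structure theorem, which in turn rests on the curve selection lemma for real semi-analytic sets. Shrinking $\varepsilon$ if necessary, the function $x\mapsto \|x\|^2$ has no critical value on any stratum $S_\alpha\cap (B(0;\varepsilon)\setminus\{0\})$. In particular, $X\cap B(0;\varepsilon)$ is homeomorphic to the open cone on $X\cap S(0;\varepsilon)$.

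After $\varepsilon$ has been fixed, I would choose $0<\eta\ll\varepsilon$ so that two properties hold simultaneously for every $t\in B(0;\eta)^*$: \emph{(i)} $t$ is a regular value of $f|_{S_\alpha}$ for every stratum $S_\alpha\not\subset X_0$; and \emph{(ii)} the level set $f^{-1}(t)$ meets the sphere $S(0;\varepsilon)$ transversally on each such stratum. Property \emph{(i)} follows from the Bertini--Sard theorem for complex analytic maps on strata, while \emph{(ii)} is a compactness/openness argument: transversality of $f^{-1}(0)\cap S_\alpha$ with $S(0;\varepsilon)$, which is what step~2 gave us on the strata not contained in $X_0$, is an open condition in $t$, and the compactness of $X\cap S(0;\varepsilon)$ allows one to choose a uniform $\eta$ that works for all strata simultaneously.

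With these choices, the restriction of $f$ to the stratified space $X\cap \overline{B}(0;\varepsilon)\cap f^{-1}(B(0;\eta)^*)$ becomes a proper stratified submersion onto $B(0;\eta)^*$, and Thom's first isotopy lemma yields a locally trivial topological fiber bundle structure; restricting to the open ball gives the claim. The main obstacle is the uniform choice of $\eta$ in step~3, since one must ensure the two transversality conditions hold simultaneously on every stratum, including those whose closure meets the boundary sphere $S(0;\varepsilon)$ along lower-dimensional Whitney strata; here the Whitney $(b)$-condition is essential in order to propagate transversality from $X_0\cap S(0;\varepsilon)$ to nearby fibers.
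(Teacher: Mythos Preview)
The paper does not supply its own proof of this theorem; it is stated with attribution to Milnor, L\^e, and Massey. Your outline via Whitney stratification, sphere transversality, and Thom's first isotopy lemma is the standard modern route (essentially L\^e's argument extending Milnor's smooth case to singular $X$), so there is no in-paper proof to compare against.

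One point deserves tightening. In your justification of condition~(ii) you invoke ``transversality of $f^{-1}(0)\cap S_\alpha$ with $S(0;\varepsilon)$'' for strata $S_\alpha\not\subset X_0$, but this is vacuous: since $X_0$ is a union of strata, such $S_\alpha$ satisfy $f^{-1}(0)\cap S_\alpha=\emptyset$. What you actually need is that on $S_\alpha\cap S(0;\varepsilon)$ the differentials of $f$ and of $\|\cdot\|^2$ are independent wherever $|f|$ is small; equivalently, that the map $(f,\|\cdot\|^2)\colon S_\alpha\to\CC\times\RR$ has no critical point over $\{|t|<\eta\}\times\{\varepsilon^2\}$. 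This is exactly where the Whitney $(b)$-condition (or, more sharply, Thom's $a_f$-condition along the strata of $X_0$) does the work, forcing limits of tangent spaces to $f^{-1}(t)\cap S_\alpha$ to contain the tangent spaces of the boundary strata in $X_0$, which are already transverse to the sphere by your step~2. You say this correctly in your final sentence; just rewrite the middle of step~3 so the logic matches it rather than appealing to a transversality that is empty.
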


\begin{definition}\label{def.7.2.002}
In the situation of Theorem \ref{thm.7.2.001}, 
for a point $z\in B(0;\eta)^*$ we call 
\begin{equation}
F_0 := X \cap B(0;\varepsilon) \cap f^{-1}(z) 
\end{equation}
the Milnor-L\^e fiber (or simply the Milnor fiber) 
of $f$ at the origin 
$0\in X_0=f^{-1}(0)\subset X$. 
\end{definition}

In the situation of Theorem \ref{thm.7.2.001}, 
for any $j \in \ZZ$ the $j$-th higher direct image 
sheaf of the constant sheaf on 
$X\cap B(0;\varepsilon)\cap f^{-1}(B(0;\eta)^*)$ by $f$ 
is a local system on the punctured disk 
$B(0;\eta)^* \subset \CC^*$. Then 
by turning around the origin $0 \in \CC$ in 
$B(0;\eta)^*$ we obtain 
automorphisms of their stalks, i.e. 
$\CC$-linear isomorphisms 
\begin{equation}
\Phi_{j,0} \colon H^j(F_0; \CC ) \simto H^j(F_0; \CC ) \qquad (j\in\ZZ). 
\end{equation}
We call them the Milnor-L\^e monodromies (or simply 
the Milnor monodromies) of $f$ at the origin 
$0\in X_0=f^{-1}(0)\subset X$. It is clear 
that the Milnor fiber and the Milnor monodromies 
are defined similarly 
also at any point $x \in X_0=f^{-1}(0)\subset X$ of 
$X_0$. We denote them $F_x$ and 
$\Phi_{j,x}$ ($j\in\ZZ$) respectively. If 
$X= \CC^n$ and the complex hypersurface $X_0=f^{-1}(0)\subset X$ 
of it has an isolated singular point at the 
origin $0\in X_0=f^{-1}(0)\subset X$, we have 
the following beautiful theorem due to Milnor \cite{Milnor} 

\begin{theorem}\label{thm.7.2.003}
{\rm (Milnor \cite{Milnor})} Let 
$f\colon X=\CC^n \longrightarrow \CC \ (n \ge 2)$ 
be a non-constant holomorphic function on 
$X= \CC^n$ satisfying the condition $f(0)=0$ and 
assume that the complex hypersurface $X_0=f^{-1}(0)\subset X$ 
has an isolated singular point at the 
origin $0\in X_0=f^{-1}(0)\subset X$. 
Then the Milnor fiber $F_0$ of $f$ at the origin 
is homotopy equivalent to the bouquet of some 
$(n-1)$-dimensional spheres $S^{n-1}$: 
\begin{equation}
F_0  \sim  S^{n-1}\vee\cdots\vee S^{n-1}.
\end{equation}
In particular, for its cohomology groups 
$H^j(F_0; \CC )$ ($j \in \ZZ$) we have 
\begin{equation}
H^j(F_0; \CC ) \simeq 
\begin{cases}
\ \CC & (j=0) \\
& \\
\ \CC^{\mu} & (j=n-1) \\
 & \\
\ 0 & (j \neq 0,n-1), 
\end{cases}
\end{equation}
where $\mu>0$ is the number of spheres $S^{n-1}$ in 
the above bouquet decomposition. 
\end{theorem}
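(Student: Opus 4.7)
The plan is to follow Milnor's original strategy, which combines the Stein property of the Milnor fiber with a connectivity estimate and then invokes a standard fact about highly connected CW complexes.

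First, I would observe that since the critical locus of $f$ is concentrated at $0 \in \CC^n$, for $0<\eta\ll\varepsilon\ll 1$ every nonzero value $z \in B(0;\eta)^*$ is a regular value of the restriction of $f$ to $B(0;\varepsilon)$. In particular, the Milnor fiber $F_0 = B(0;\varepsilon) \cap f^{-1}(z)$ is a smooth complex submanifold of $B(0;\varepsilon)$ of complex dimension $n-1$. One also needs a boundary transversality argument (comparing the gradient of $f$ to the outward radial direction on $\partial B(0;\varepsilon)$, using the standard curve selection lemma) so that $F_0$ is a manifold with boundary, closed inside the open ball. Because $F_0$ is a closed complex submanifold of a Stein open set, $F_0$ is itself Stein.

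Next I would invoke the Andreotti--Frankel theorem: any Stein manifold of complex dimension $n-1$ has the homotopy type of a CW complex of real dimension at most $n-1$. Equivalently, one can apply Morse theory directly to the squared distance function $\rho(x) = \|x\|^2$ restricted to $F_0$; this is a strictly plurisubharmonic exhaustion of the Stein manifold $F_0$, hence its Morse indices are $\le n-1$. In either form, the conclusion is that $F_0$ is homotopy equivalent to a CW complex of dimension $\le n-1$, so $H^j(F_0;\CC)=0$ for $j \ge n$.

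The core step, and the main obstacle, is the connectivity statement: $F_0$ is $(n-2)$-connected, i.e.\ $\pi_j(F_0)=0$ for $0 \le j \le n-2$. I would follow Milnor's approach: consider the Milnor fibration
\begin{equation}
\varphi \colon \partial B(0;\varepsilon) \setminus K \longrightarrow S^1, \qquad K = \partial B(0;\varepsilon) \cap X_0,
\end{equation}
whose fiber is diffeomorphic to the interior of $F_0$. One shows via general position that $\partial B(0;\varepsilon)\setminus K$ is $(n-2)$-connected (the knot/link $K$ has real codimension $2$ in $S^{2n-1}$), and then runs the long exact sequence of the fibration $\varphi$ together with a Morse-theoretic argument that $\pi_j(F_0)\to\pi_j(\partial B(0;\varepsilon)\setminus K)$ is an isomorphism in low degrees. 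Alternatively, one can argue directly that attaching cells of index $\le n-1$ to a point builds $F_0$ while killing no lower homotopy, using the Stein Morse function above and the fact that the only critical point of $f$ has been removed. This is the delicate part, since it requires genuine geometric information (the isolated singularity hypothesis) rather than formal homotopy theory.

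Finally, a CW complex of dimension $\le n-1$ that is $(n-2)$-connected is, by the Hurewicz theorem and a standard argument (realize generators of $H_{n-1}$ by maps from $S^{n-1}$ and build a wedge that induces an iso on homology, then invoke Whitehead), homotopy equivalent to a bouquet $\bigvee_\mu S^{n-1}$, where $\mu = \dim_\CC H^{n-1}(F_0;\CC)$. This yields the displayed cohomology calculation, with $\mu>0$ because $0$ is a genuine singular point (one can verify $\mu\neq 0$ either by a direct dimension count of the Jacobian algebra $\cO_{\CC^n,0}/(\partial_1 f,\dots,\partial_n f)$ or by noting that otherwise $F_0$ would be contractible, forcing $f$ to be a submersion at $0$).
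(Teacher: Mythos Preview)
The paper does not prove this theorem; it is stated as a classical result with a citation to Milnor's book, so there is no in-paper argument to compare against. Your overall architecture (Stein/Andreotti--Frankel to get a CW model of dimension $\le n-1$, then $(n-2)$-connectivity, then Hurewicz and Whitehead to obtain the bouquet) is the standard one and is correct in outline.

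The connectivity step, however, contains a genuine error. You claim that $\partial B(0;\varepsilon)\setminus K$ is $(n-2)$-connected ``via general position'' because $K$ has real codimension $2$ in $S^{2n-1}$. General position in codimension $2$ gives only path-connectedness of the complement; it says nothing about higher homotopy groups. In fact the intermediate statement is false: for $n\ge 2$ the Milnor fibration $S_\varepsilon\setminus K\to S^1$ has connected fiber, so $\pi_1(S_\varepsilon\setminus K)$ surjects onto $\ZZ$ and the complement is never simply connected. Your fallback sentence (``attaching cells of index $\le n-1$ to a point builds $F_0$ while killing no lower homotopy'') is not an argument either: attaching a $k$-cell can create $\pi_{k-1}$, so knowing only that the indices are $\le n-1$ (which is what the Stein Morse function gives) does not prevent the appearance of lower homotopy. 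What is needed is that, beyond a single $0$-cell, only $(n-1)$-cells occur, and that is precisely the content you have not supplied.

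A clean way to fill the gap is the Morsification argument. Perturb $f$ to a holomorphic $g$ with $\mu$ nondegenerate critical points in $B(0;\varepsilon)$ and pairwise distinct nonzero critical values $c_1,\dots,c_\mu$. For a small disk $D_t\subset\CC$ of radius $t$ centered at $0$, the set $g^{-1}(D_t)\cap B(0;\varepsilon)$ deformation retracts onto the smooth fiber $g^{-1}(0)\cap B(0;\varepsilon)$, which is diffeomorphic to the Milnor fiber $F_0$ of $f$. As $t$ increases past each $|c_i|$ one attaches a Lefschetz thimble, i.e.\ an $n$-cell, and for $t$ large enough the preimage is all of $B(0;\varepsilon)$. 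Thus the contractible ball is obtained from $F_0$ by attaching $\mu$ cells of dimension $n$, so the pair $(B(0;\varepsilon),F_0)$ is $(n-1)$-connected and hence $\pi_j(F_0)=0$ for $j\le n-2$. Combined with your dimension bound and the Hurewicz/Whitehead step, this completes the proof.
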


We call the number $\mu >0$ in Theorem \ref{thm.7.2.003} 
the Milnor number of $f$ (or of $X_0=f^{-1}(0)$) 
at the origin $0\in X_0=f^{-1}(0)\subset X$. 
Note that the Milnor fiber $F_0 \subset \CC^n$ in 
Theorem \ref{thm.7.2.003} is an  
$(n-1)$-dimensional complex manifold i.e. a 
$(2n-2)$-dimensional real analytic manifold. 
Theorem \ref{thm.7.2.003} asserts that it 
has the homotopy type of the real 
$(n-1)$-dimensional topological space 
$S^{n-1}\vee\cdots\vee S^{n-1}$. 
If $X_0=f^{-1}(0) \subset X= \CC^n$ for $f: X= \CC^n \longrightarrow \CC$ 
does not have an isolated singular point at the 
origin $0\in X_0=f^{-1}(0)$, 
it is in general very hard to determine  
the Betti numbers of the Milnor fiber $F_0$.  
Nevertheless, making use of Morse theory 
L\^e~\cite{Le-1} and Massey~\cite{Mas2} obtained 
the handle decompositions of $F_0$. Namely 
we can know at least the homotopy type of it. 
From now, we shall explain how 
Milnor fibers and their monodromies can be effectively 
studied by the more sophisticated machineries 
of constructible sheaves. 
Let $i\colon X_0 = f^{-1}(0) \hookrightarrow X$
and $j\colon X\setminus X_0 \hookrightarrow X$ be 
the inclusion maps. For $\CC^* := \CC \setminus\{0\}$ 
we regard the exponential map 
\begin{equation}
\exp : \widetilde{\CC^*} := \CC  \longrightarrow \CC^*, 
\qquad (t \longmapsto \exp (t)) 
\end{equation}
as its universal covering. Then for 
the fiber product $\widetilde{X\setminus X_0}$ of 
$\exp : \widetilde{\CC^*} = \CC  \longrightarrow \CC^*$ and 
$f|_{X\setminus X_0}\colon X\setminus X_0 \longrightarrow \CC^*$
we obtain the following commutative diagram. 

\begin{equation}\begin{xy}\xymatrix@C=30pt{
& & \,\widetilde{X\setminus X_0}\, \ar[ld]_-{j\circ\pi} \ar[d]_-{\pi} \ar[r] 
\ar@{}[dr]|{\square} & \,\widetilde{\CC^*} \ar[d]^-{\mathrm{exp}}
 & \hspace{-48pt} {\,\simeq\,\CC} & \\
X_0=f^{-1}(0)\,\,\, \ar@{^{(}->}@<-0.3ex>[r]_-{i} & \,X\, & \,X\setminus X_0\, 
\ar@{_{(}->}@<+0.3ex>[l]^-{j} \ar[r]_-{f|_{X\setminus X_0}} & \,\CC^*. &
}\end{xy}\end{equation}
Here $\Box$ stands for the Cartesian diagram. 

\begin{definition}\label{def.7.1.001}
{\rm (Deligne~\cite{Deligne})} 
For $\F \in \Db (X)$ we define its nearby cycle 
complex $\psi_f( \F )\in \Db (X_0)$ along $f$ by 
\begin{equation}
\psi_f( \F ) = i^{-1} {\rm R} (j\circ\pi)_*(j\circ\pi)^{-1} \F
\ \in \Db (X_0).
\end{equation}
Moreover, we define the vanishing cycle 
complex $\phi_f( \F )\in \Db (X_0)$ of $\F \in \Db (X)$ 
along $f$ so that it fits into the distinguished triangle 
\begin{equation}
i^{-1} \F \longrightarrow \psi_f( \F ) \longrightarrow 
\phi_f( \F ) \overset{+1}{\longrightarrow}, 
\end{equation}
where the morphism $i^{-1} \F  \longrightarrow \psi_f( \F )$ 
is induced by the one 
${\rm id} \longrightarrow {\rm R} (j\circ\pi)_*(j\circ\pi)^{-1}$ 
of functors (for the more precise definition, see 
Kashiwara-Schapira \cite[page 351]{K-S}). 
\end{definition}
We thus obtain two functors 
\begin{equation}
\psi_f( \cdot ), \ \phi_f( \cdot ) \colon \Db (X) \longrightarrow \Db (X_0). 
\end{equation}
We call them the nearby and vanishing cycle functors 
respectively. Note that for the 
universal covering $ \widetilde{\CC^*} = \CC$ of 
$\CC^*$ we have a deck transformation 
\begin{equation}
\widetilde{\CC^*} = \CC \simto 
\widetilde{\CC^*} = \CC \qquad 
(z \longmapsto z-2\pi\sqrt{-1}). 
\end{equation}
Then by our construction of the covering space 
$\pi\colon\widetilde{X\setminus X_0} \longrightarrow  X\setminus X_0$ 
of $X\setminus X_0$, 
it induces an automorphism 
\begin{equation}
T\colon\widetilde{X\setminus X_0} \simto \widetilde{X\setminus X_0}
\end{equation}
of $\widetilde{X\setminus X_0}$. By $\pi \circ T= \pi$ 
there exists a morphism of functors 
\begin{equation}
{\rm R} \pi_* \pi^{-1}  \longrightarrow {\rm R} \pi_* ( {\rm R} T_* 
T^{-1} ) \pi^{-1}= {\rm R} \pi_* \pi^{-1}. 
\end{equation}
Then we obtain isomorphisms 
\begin{equation}
\Psi_f( \F ) \colon \psi_f( \F ) \simto \psi_f( \F ), \qquad 
\Phi_f( \F ) \colon \phi_f( \F ) \simto \phi_f( \F ). 
\end{equation} 
We call them the monodromy automorphisms. 
Now we recall the following basic result. 

\begin{proposition}\label{prp.7.1.002}
{\rm (see Kashiwara-Schapira \cite[Exercise VIII.15]{K-S} and 
Dimca \cite[Proposition 4.2.11]{Dimca})}  
Let $\rho \colon Y  \longrightarrow X$ be a 
proper morphism of complex analytic spaces 
and $f\colon X  \longrightarrow \CC$ a non-constant 
holomorphic function on $X$. We set 
$g:=f\circ\rho\colon Y  \longrightarrow \CC$ and 
\begin{equation}
X_0=f^{-1}(0)\subset X,\quad Y_0=g^{-1}(0)=\rho^{-1}(X_0)\subset Y. 
\end{equation}
Let $\rho|_{Y_0}\colon Y_0 \longrightarrow X_0$ be the 
restriction of $\rho$ to $Y_0\subset Y$, Then for 
$\G \in \Db (Y)$ there exist isomorphisms 
\begin{equation}
\psi_f( {\rm R} \rho_* \G ) \simeq {\rm R} (\rho|_{Y_0})_*\psi_g( \G ), \qquad 
\phi_f( {\rm R} \rho_* \G ) \simeq {\rm R} (\rho|_{Y_0})_*\phi_g( \G ). 
\end{equation} 
\end{proposition}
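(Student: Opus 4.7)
The plan is to reduce everything to the proper base change theorem applied to several Cartesian squares built from the given data. The key observation is that, since $g=f\circ\rho$, the universal cover construction for $Y\setminus Y_0$ is compatible with that for $X\setminus X_0$: we have a canonical identification
\begin{equation*}
\widetilde{Y\setminus Y_0}\,=\,(Y\setminus Y_0)\times_{\CC^*}\widetilde{\CC^*}\,=\,(Y\setminus Y_0)\times_{X\setminus X_0}\widetilde{X\setminus X_0}.
\end{equation*}
This produces a Cartesian square with horizontal arrows $\tilde{\rho}\colon\widetilde{Y\setminus Y_0}\to\widetilde{X\setminus X_0}$ (above) and $\rho\colon Y\to X$ (below), and vertical arrows $j_Y\circ\pi_Y$ and $j_X\circ\pi_X$. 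A second Cartesian square arises trivially from the defining relation $Y_0=\rho^{-1}(X_0)$, with $\rho|_{Y_0}\colon Y_0\to X_0$ on top and $\rho$ on the bottom, vertical arrows $i_Y$ and $i_X$. Since $\rho$ is proper, so are $\tilde{\rho}$ and $\rho|_{Y_0}$.

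For the nearby cycles, I would unwind the definition
\begin{equation*}
\psi_f(R\rho_*\G)\,=\,i_X^{-1}R(j_X\pi_X)_*(j_X\pi_X)^{-1}R\rho_*\G
\end{equation*}
and then apply proper base change twice. First, using the upper square and properness of $\rho$, one has $(j_X\pi_X)^{-1}R\rho_*\G\simeq R\tilde{\rho}_*(j_Y\pi_Y)^{-1}\G$. Composing with $R(j_X\pi_X)_*$ and using $(j_X\pi_X)\circ\tilde{\rho}=\rho\circ(j_Y\pi_Y)$ gives $R(j_X\pi_X)_*(j_X\pi_X)^{-1}R\rho_*\G\simeq R\rho_*R(j_Y\pi_Y)_*(j_Y\pi_Y)^{-1}\G$. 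Now apply $i_X^{-1}$ and invoke proper base change on the lower Cartesian square to get the desired isomorphism $\psi_f(R\rho_*\G)\simeq R(\rho|_{Y_0})_*\psi_g(\G)$.

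For the vanishing cycles, the point is to compare the distinguished triangles defining $\phi_f(R\rho_*\G)$ and $R(\rho|_{Y_0})_*\phi_g(\G)$. Proper base change on the lower square also yields $i_X^{-1}R\rho_*\G\simeq R(\rho|_{Y_0})_*i_Y^{-1}\G$. Applying the triangulated functor $R(\rho|_{Y_0})_*$ to the triangle $i_Y^{-1}\G\to\psi_g(\G)\to\phi_g(\G)\overset{+1}{\to}$ produces a distinguished triangle whose first two terms are identified with $i_X^{-1}R\rho_*\G$ and $\psi_f(R\rho_*\G)$ via the isomorphisms just obtained. The essential point that has to be verified is that, under these identifications, the first map of this triangle coincides with the canonical morphism $i_X^{-1}R\rho_*\G\to\psi_f(R\rho_*\G)$ induced by $\id\to R(j_X\pi_X)_*(j_X\pi_X)^{-1}$. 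Once this naturality check is done, the third vertex of the triangle is determined up to isomorphism, giving $\phi_f(R\rho_*\G)\simeq R(\rho|_{Y_0})_*\phi_g(\G)$.

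The main obstacle is precisely this final compatibility between the adjunction-induced morphism $\id\to R(j\pi)_*(j\pi)^{-1}$ and the proper base change isomorphisms. The diagrammatic content is standard, but it requires one to track carefully how the unit of adjunction interacts with the base change natural transformation in the upper Cartesian square; once this naturality is in place, the rest of the argument is essentially formal manipulation of derived functors.
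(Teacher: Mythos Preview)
The paper does not supply its own proof of this proposition; it is stated as a known result with references to Kashiwara--Schapira \cite[Exercise VIII.15]{K-S} and Dimca \cite[Proposition 4.2.11]{Dimca}. Your argument is the standard one found in those references: set up the two Cartesian squares coming from $Y_0=\rho^{-1}(X_0)$ and from the compatibility $\widetilde{Y\setminus Y_0}=(Y\setminus Y_0)\times_{X\setminus X_0}\widetilde{X\setminus X_0}$, then apply proper base change twice for $\psi$ and deduce the $\phi$ statement from the defining distinguished triangle. The naturality check you flag (compatibility of the adjunction unit $\id\to R(j\pi)_*(j\pi)^{-1}$ with the base change isomorphisms) is indeed the only non-formal step, and it is a routine diagram chase; so your proposal is correct and complete in outline.
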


By the following basic results we can study 
Milnor fibers and their monodromies via nearby and 
vanishing cycle functors. 

\begin{theorem}\label{thm.7.2.004}
{\rm (see e.g. Dimca \cite[Proposition 4.2.2]{Dimca})} 
For any point 
$x \in X_0=f^{-1}(0)\subset X$ of 
$X_0$ there exist isomorphisms 
\begin{equation}
H^j(F_x; \CC ) \simeq H^j \psi_f( \CC_X )_x,
\quad \widetilde{H}^j(F_x; \CC ) \simeq 
H^j \phi_f(  \CC_X )_x \qquad (j\in\ZZ), 
\end{equation}
where $\widetilde{H}^j(F_x; \CC )$ stand for 
the reduced cohomology groups of $F_x$. 
Moreover these isomorphisms are compatible 
with the automorphisms of both sides 
induced by the monodromies. 
\end{theorem}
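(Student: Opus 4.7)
The plan is to compute the stalks of $\psi_f(\CC_X)$ and $\phi_f(\CC_X)$ at $x$ directly from Definition \ref{def.7.1.001}, and to identify them with the (reduced) cohomology of the Milnor fiber $F_x$ by means of Milnor's fibration theorem (Theorem \ref{thm.7.2.001}).

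By the definition of nearby cycles together with the standard formula for the stalks of a derived direct image, one has
\[
\psi_f(\CC_X)_x \;\simeq\; \varinjlim_{U\ni x}\ \RG\bigl((j\circ\pi)^{-1}(U);\,\CC\bigr),
\]
where $U$ ranges over open neighborhoods of $x$ in $X$. A cofinal system of such neighborhoods is given by $U_{\varepsilon,\eta}:= B(x;\varepsilon)\cap X\cap f^{-1}(B(0;\eta))$ with $0<\eta\ll\varepsilon\ll 1$, chosen so that $U_{\varepsilon,\eta}\setminus X_0$ is precisely the total space of the Milnor--L\^e fibration of $f$ at $x$ provided by Theorem \ref{thm.7.2.001}. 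The Cartesian square defining $\widetilde{X\setminus X_0}$ identifies $(j\circ\pi)^{-1}(U_{\varepsilon,\eta})$ with the pullback of this locally trivial fibration along $\exp\colon \exp^{-1}(B(0;\eta)^*)\to B(0;\eta)^*$. Now $\exp^{-1}(B(0;\eta)^*)=\{t\in\CC\mid \mathrm{Re}(t)<\log\eta\}$ is a half plane and hence contractible, so the pulled-back bundle deformation retracts onto any one of its fibers $F_x$. Applying $\RG(-;\CC)$ and passing to the direct limit yields the isomorphism $H^j\psi_f(\CC_X)_x\simeq H^j(F_x;\CC)$.

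For the vanishing cycle assertion I would apply the defining triangle
\[
i^{-1}\CC_X \longrightarrow \psi_f(\CC_X)\longrightarrow \phi_f(\CC_X)\overset{+1}{\longrightarrow}
\]
at the stalk $x$. The stalk of $i^{-1}\CC_X$ is $\CC$, and under the homotopy equivalence just obtained the first map becomes the inclusion of constants $\CC\hookrightarrow H^0(F_x;\CC)$, so the long exact sequence attached to the triangle identifies $H^j\phi_f(\CC_X)_x$ with the reduced cohomology $\widetilde{H}^j(F_x;\CC)$. Compatibility with monodromies is obtained by tracking the deck transformation $T\colon t\mapsto t-2\pi\sqrt{-1}$ on $\widetilde{\CC^*}$: it preserves $\exp^{-1}(B(0;\eta)^*)$ and carries any lift of a basepoint to another lift of the same basepoint, so under the identification above its action on $\RG(F_x;\CC)$ is exactly the parallel transport once around a small loop in $B(0;\eta)^*$, which by Definition \ref{def.7.2.002} is the Milnor monodromy $\Phi_{j,x}$.

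The main technical obstacle I anticipate is ensuring that the homotopy equivalence $(j\circ\pi)^{-1}(U_{\varepsilon,\eta})\simeq F_x$ is compatible with the natural $\ZZ$-action of deck transformations on the left and with the monodromy action on the right. This requires pinning down the identification by choosing, for each $U_{\varepsilon,\eta}$, a lift of a basepoint of $B(0;\eta)^*$ to $\exp^{-1}(B(0;\eta)^*)$ coherently along the cofinal system; once the identification is set up $\ZZ$-equivariantly, both the cohomological isomorphism and the compatibility with monodromy follow by straightforward diagram chasing.
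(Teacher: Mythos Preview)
Your argument is correct. Both proofs rest on the same two inputs---Milnor's fibration theorem and the contractibility of the half-plane $\exp^{-1}(B(0;\eta)^*)$---but are organized differently. You compute the stalk $\psi_f(\CC_X)_x$ directly as a direct limit over the cofinal system $U_{\varepsilon,\eta}$ and identify each $(j\circ\pi)^{-1}(U_{\varepsilon,\eta})$ with a fiber bundle over a contractible base. The paper instead first passes through the graph embedding $g\colon X\hookrightarrow X\times\CC_t$, $x\mapsto(x,f(x))$, and invokes Proposition~\ref{prp.7.1.002} to reduce to computing $\psi_t(\CC_{\Gamma_f})_{(0,0)}$ for the second projection $t$; in that product situation the covering factors as $\mathrm{id}_{\overline{B(0;\varepsilon)}}\times\pi_0$, and properness of the projection onto $B(0;\eta)^*$ yields locally constant---hence globally constant---higher direct images on the half-plane, from which the identification with $H^j(F_0;\CC)$ drops out without any direct-limit bookkeeping. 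Your route is more elementary and avoids Proposition~\ref{prp.7.1.002}; the paper's route buys a cleaner handling of exactly the coherence issue you flag in your final paragraph. For $\phi_f$, the paper cites the cone theorem (Milnor \cite[Theorem~2.10]{Milnor}, Burghelea--Verona \cite[Lemma~3.2]{B-V}) to justify that the specialization map $(i^{-1}\CC_X)_x\to\psi_f(\CC_X)_x$ really is the inclusion of constants; your claim to that effect is correct, but since $X$ is allowed to be singular a reference here would tighten the argument.
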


\begin{proof}
The problem being local, we may assume that 
$X \subset \CC^N$ and the point $x$ is 
the origin $0\in\CC^N$ of $\CC^N$. Set 
$Y:=X\times\CC_t$ and let 
\begin{equation}
g:X \hookrightarrow Y=X\times\CC_t \qquad 
(x \longmapsto (x,\,f(x))) 
\end{equation}
be the graph embedding of $X$ by $f$. By the 
projection $Y=X\times\CC_t \longrightarrow X$ 
we identify the graph $\Gamma_{f} :=g(X)\subset Y$ 
of $f$ with $X$. Then for 
$0<\eta\ll\varepsilon\ll 1$ 
\begin{equation}
\Gamma_{f} \cap (B(0;\varepsilon)\times\{ t_0\})\qquad (0<|t_0|<\eta)
\end{equation}
is identified with the Milnor fiber $F_0$ of $f$ at 
the origin $0\in X_0=f^{-1}(0)$. 
On the other hand, by applying Proposition \ref{prp.7.1.002} 
to the constructible sheaves 
$\CC_X \in \Dbc (X)$ and 
$g_*\CC_X =\CC_{\Gamma_{f}}\in \Dbc(Y) $ and 
the function $t\colon Y= X\times \CC \longrightarrow \CC$, 
we obtain an isomorphism 
\begin{equation}
\psi_t(\CC_{ \Gamma_{f}})_{(0,0)} \simeq \psi_{t\circ g}(\CC_X)_0 = 
\psi_f(\CC_X)_{0}.
\end{equation}
Hence it suffices to show that for 
$0<\eta\ll\varepsilon\ll 1$ we have isomorphisms 
\begin{equation}\label{eqn.7.2.005}
H^j\psi_t( \CC_{\Gamma_{f}} )_{(0,0)} \simeq 
H^j( B(0 ,\varepsilon)\times\{ t_0 \} ; \CC_{\Gamma_{f}} )
\qquad (j\in\ZZ, 0<|t_0|<\eta). 
\end{equation}
Note also that by Theorem \ref{thm.7.2.001} for 
$0<\eta\ll\varepsilon\ll 1$ there exist isomorphisms 
\begin{equation}
H^j ( \overline{B(0 ,\varepsilon)}
 \times\{ t_0 \} ; \CC_{\Gamma_{f}}) \simeq 
H^j( B(0 ,\varepsilon)\times\{ t_0 \} ; \CC_{\Gamma_{f}})
\qquad (j\in\ZZ, 0<|t_0|<\eta). 
\end{equation}
Restricting the exponential map 
$\exp \colon\widetilde{\CC^*} = \CC \longrightarrow \CC^*$ 
to the punctured disk 
$B(0;\eta)^*\subset\CC^*$ we obtain a covering maps  
\begin{equation}
\pi_0\colon\widetilde{B(0;\eta)^*}:=\{ z\in\CC \mid {\rm Re}\,z<\log \eta \} 
\longrightarrow B(0;\eta)^* 
\end{equation}
and 
\begin{equation}
\pi={\rm id_{\overline{B(0;\varepsilon)}}} \times\pi_0\colon 
\overline{B(0;\varepsilon)} \times\widetilde{B(0;\eta)^*} 
\longrightarrow \overline{B(0;\varepsilon)} \times B(0;\eta)^*.
\end{equation}
Then by the definition of the 
nearby cycle sheaf $\psi_t( \CC_{\Gamma_{f}})$ 
 for $0<\eta\ll\varepsilon\ll 1$ we obtain isomorphisms 
\begin{equation}
H^j\psi_t( \CC_{ \Gamma_{f}})_{(0,0)} \simeq H^j(
\overline{B(0;\varepsilon)} 
\times\widetilde{B(0;\eta)^*}; \pi^{-1} \CC_{ \Gamma_{f}}) \qquad (j\in\ZZ). 
\end{equation}
Let $p \colon \overline{B(0;\varepsilon)} \times B(0;\eta)^* \longrightarrow 
B(0;\eta)^*$ and 
$q\colon \overline{B(0;\varepsilon)} \times\widetilde{B(0;\eta)^*} 
\longrightarrow \widetilde{B(0;\eta)^*}$ be the projections. 
As they are proper, we obtain an isomorphism 
\begin{equation}
{\rm R} q_* (\pi^{-1} \CC_{ \Gamma_{f}}) \simeq 
\pi_0^{-1} {\rm R} p_* (\CC_{ \Gamma_{f}}). 
\end{equation}
Moreover by Theorem \ref{thm.7.2.001} the cohomology sheaves 
\begin{equation}
H^j {\rm R} q_* (\pi^{-1} \CC_{ \Gamma_{f}}) \simeq 
H^j \pi_0^{-1} {\rm R} p_* (\CC_{ \Gamma_{f}})
\qquad (j\in\ZZ)
\end{equation}
on $\widetilde{B(0;\eta)^*}$ are locally constant. Since 
$\widetilde{B(0;\eta)^*}$ is homeomorphic to 
the complex plane $\CC$, they are globally constant. 
Take a point $z_0\in\widetilde{B(0;\eta)^*}$ and set 
$t_0:= \pi_0(z_0)= \exp (z_0) \in B(0;\eta)^*$. Then we obtain 
isomorphisms 
\begin{align}
& {\rm R} \Gamma ( \widetilde{B(0;\eta)^*}; {\rm R} q_*
(\pi^{-1} \CC_{ \Gamma_{f}})) \simeq 
 {\rm R} q_*(\pi^{-1} \CC_{ \Gamma_{f}})_{z_0} \\
& \simeq {\rm R} p_*( \CC_{ \Gamma_{f}})_{t_0}
\simeq  {\rm R} \Gamma (  \overline{B(0;\varepsilon)} \times\{ t_0\} ; 
\CC_{ \Gamma_{f}} ). 
\end{align}
We thus obtain the isomorphisms in \eqref{eqn.7.2.005}. 
Following the above argument more carefully, 
we can also show the assertion on the monodromy of 
$\psi_f( \CC_X)$. The remaining assertion 
for $\phi_f( \CC_X)$ follows from the 
cone theorem proved in Milnor \cite[Theorem 2.10]{Milnor} 
and Burghelea-Verona \cite[Lemma 3.2]{B-V}. 
This completes the proof. 
\qed
\end{proof}

Similarly, we can prove the following 
more general result. 

\begin{theorem}\label{thm.7.2.006}
In the situation of Theorem \ref{thm.7.2.004}, 
for any constructible sheaf $\F \in \Dbc (X)$ on $X$ and 
any point 
$x \in X_0=f^{-1}(0)\subset X$ of $X_0$ there exist isomorphisms 
\begin{equation}
H^j(F_x; \F ) \simeq H^j \psi_f( \F )_x 
 \qquad (j\in\ZZ).
\end{equation}
\end{theorem}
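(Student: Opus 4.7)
The plan is to mimic the proof of Theorem \ref{thm.7.2.004} step by step, with the constant sheaf $\CC_X$ replaced by an arbitrary $\F \in \Dbc(X)$. Since the statement is local, I first reduce to the case $X \subset \CC^N$ and $x = 0$. I then introduce the graph embedding
$$g \colon X \hookrightarrow Y = X \times \CC_t, \qquad x \longmapsto (x, f(x)),$$
which is a closed (hence proper) embedding identifying $X$ with $\Gamma_f \subset Y$. For $0 < \eta \ll \varepsilon \ll 1$ and $0 < |t_0| < \eta$, the Milnor fiber $F_0$ is identified with $\Gamma_f \cap (B(0;\varepsilon) \times \{t_0\})$. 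Applying Proposition \ref{prp.7.1.002} to $g$ and the function $t \colon Y \to \CC$ yields
$$\psi_t({\rm R}g_* \F)_{(0,0)} \simeq \psi_{t \circ g}(\F)_0 = \psi_f(\F)_0,$$
so the theorem reduces to the existence, for $0 < \eta \ll \varepsilon \ll 1$, of isomorphisms
$$H^j \psi_t({\rm R}g_* \F)_{(0,0)} \simeq H^j\bigl(B(0;\varepsilon) \times \{t_0\};\, {\rm R}g_* \F\bigr) = H^j(F_0; \F) \qquad (j \in \ZZ).$$

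To obtain these I would repeat the covering-space construction verbatim. Set $\pi_0 \colon \widetilde{B(0;\eta)^*} \to B(0;\eta)^*$ to be the restriction of $\exp$, let $\pi = \id_{\overline{B(0;\varepsilon)}} \times \pi_0$, and consider the proper projections $p$ and $q$ from $\overline{B(0;\varepsilon)} \times B(0;\eta)^*$ and $\overline{B(0;\varepsilon)} \times \widetilde{B(0;\eta)^*}$ onto their second factors. By the definition of $\psi_t$ together with proper base change one then has
$$H^j \psi_t({\rm R}g_* \F)_{(0,0)} \simeq H^j\bigl(\overline{B(0;\varepsilon)} \times \widetilde{B(0;\eta)^*};\, \pi^{-1} {\rm R}g_* \F\bigr), \qquad {\rm R} q_*(\pi^{-1} {\rm R}g_*\F) \simeq \pi_0^{-1} {\rm R} p_*({\rm R}g_*\F).$$
Given local constancy of the sheaves $H^j {\rm R}p_*({\rm R}g_*\F)$ on $B(0;\eta)^*$, their pullbacks along $\pi_0$ become globally constant on the simply connected space $\widetilde{B(0;\eta)^*} \cong \CC$, and evaluating global sections at a point $z_0$ with $\pi_0(z_0) = t_0$ yields the required isomorphism exactly as in the proof of Theorem \ref{thm.7.2.004}.

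The main obstacle, and the only genuinely new ingredient compared with the constant-sheaf case, is therefore to verify this local constancy of $H^j {\rm R}p_*({\rm R}g_* \F)$ on $B(0;\eta)^*$ for $0 < \eta \ll \varepsilon \ll 1$. In the proof of Theorem \ref{thm.7.2.004} it was an immediate consequence of the topological Milnor fibration (Theorem \ref{thm.7.2.001}). For general constructible $\F$ one needs a \emph{stratified} version of the Milnor-L\^e fibration: one fixes a Whitney stratification of $X$ to which $\F$ is adapted, so that ${\rm R}g_* \F$ is constructible with respect to the induced stratification of $Y$ containing $\Gamma_f$; then Thom's $a_t$-condition together with the Thom-Mather first isotopy lemma implies that for sufficiently small $0 < \eta \ll \varepsilon \ll 1$, the restriction of $p$ to each stratum inside $(\overline{B(0;\varepsilon)} \times B(0;\eta)^*) \cap \Gamma_f$ is a locally trivial fibration over $B(0;\eta)^*$. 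This stratified local triviality forces $H^j {\rm R}p_*({\rm R}g_*\F)$ to be locally constant on $B(0;\eta)^*$, closing the argument.
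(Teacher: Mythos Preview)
Your proposal is correct and follows exactly the approach the paper intends: the paper gives no separate proof for Theorem \ref{thm.7.2.006}, merely stating ``Similarly, we can prove the following more general result'' after the proof of Theorem \ref{thm.7.2.004}. You have faithfully carried out that ``similarly'', and in particular you correctly isolate the one step that genuinely requires extra input beyond the constant-sheaf case---the local constancy of $H^j{\rm R}p_*({\rm R}g_*\F)$ on $B(0;\eta)^*$---and supply the right tool for it (a Whitney stratification adapted to $\F$, Thom's $a_f$-condition, and the first isotopy lemma to obtain a stratum-preserving local trivialization of the Milnor tube).
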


Since for a point $x \in X_0=f^{-1}(0)$ the Milnor fiber $F_x$ of $f$ 
at $x$ is a subset of $X \setminus X_0$, we obtain the 
following very simple consequence of Theorem \ref{thm.7.2.006}. 

\begin{corollary}\label{new-corol}
In the situation of Theorem \ref{thm.7.2.006}, 
let $j: X \setminus X_0 \hookrightarrow X$ 
be the inclusion map. Then for any 
constructible sheaf $\F \in \Dbc (X)$ on $X$ and 
any point 
$x \in X_0=f^{-1}(0)\subset X$ of $X_0$ there exist isomorphisms 
\begin{align}
\psi_f( j_!j^{-1} \F )_x & = \psi_f( \F_{X \setminus X_0} )_x 
\simeq 
\psi_f( \F )_x,  
\\ 
\psi_f( {\rm R}j_*j^{-1} \F )_x & = \psi_f( 
{\rm R} \Gamma_{X \setminus X_0} \F )_x 
\simeq 
\psi_f( \F )_x,  
\end{align}
\end{corollary}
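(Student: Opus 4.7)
The plan is to reduce the statement directly to Theorem \ref{thm.7.2.006}. The fundamental observation driving everything is that the Milnor fiber $F_x$ at $x \in X_0$ is, by construction, contained in $f^{-1}(B(0;\eta)^*) \subset X \setminus X_0$, so any sheaf on $X$ that agrees with $\F$ on $X \setminus X_0$ has the same cohomology on $F_x$ as $\F$ does.

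First I would record the standard fact that for the open immersion $j \colon X \setminus X_0 \hookrightarrow X$, the adjunction counit and unit induce canonical isomorphisms
\begin{equation*}
j^{-1}(j_! j^{-1} \F) \simeq j^{-1} \F \simeq j^{-1}(Rj_* j^{-1} \F).
\end{equation*}
In particular, the three complexes $\F$, $j_! j^{-1}\F$, and $Rj_* j^{-1}\F$ all have the same restriction to the open set $X \setminus X_0$, and hence the same restriction to the subset $F_x$.

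Next I would apply Theorem \ref{thm.7.2.006} to each of these three constructible sheaves to obtain
\begin{align*}
H^j \psi_f(\F)_x &\simeq H^j(F_x; \F), \\
H^j \psi_f(j_!j^{-1}\F)_x &\simeq H^j(F_x; j_!j^{-1}\F), \\
H^j \psi_f(Rj_*j^{-1}\F)_x &\simeq H^j(F_x; Rj_*j^{-1}\F).
\end{align*}
By the previous step the three right-hand sides coincide, giving the desired isomorphisms on cohomology. These are induced by the canonical morphisms $j_!j^{-1}\F \to \F \to Rj_*j^{-1}\F$ in $\Dbc(X)$, so they upgrade to isomorphisms in $\Db(\mathrm{pt})$ at the level of the stalks $\psi_f(\cdot)_x$.

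There is really no obstacle in this argument; the corollary is an immediate consequence of Theorem \ref{thm.7.2.006} combined with the defining adjunction properties of $j_!$ and $Rj_*$ for an open immersion. As an alternative, one could bypass Theorem \ref{thm.7.2.006} entirely and argue directly from Definition \ref{def.7.1.001}: since $\psi_f(\F) = i^{-1} R(j \circ \pi)_*(j \circ \pi)^{-1} \F$ depends on $\F$ only through $(j\circ\pi)^{-1}\F = \pi^{-1} j^{-1}\F$, the equalities $j^{-1}(j_!j^{-1}\F) = j^{-1}\F = j^{-1}(Rj_*j^{-1}\F)$ force $\psi_f(j_!j^{-1}\F) = \psi_f(\F) = \psi_f(Rj_*j^{-1}\F)$ globally on $X_0$, not merely at the stalk at $x$.
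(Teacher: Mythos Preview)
Your proof is correct and follows the same approach as the paper: the paper simply remarks that $F_x \subset X \setminus X_0$ and declares the corollary a ``very simple consequence'' of Theorem~\ref{thm.7.2.006}, which is precisely what you spell out. Your alternative argument directly from Definition~\ref{def.7.1.001} is also valid and in fact yields the isomorphisms globally on $X_0$, not just at the stalk.
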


\begin{lemma}\label{lem:2-ac-1}
For $1 \leq k<l \leq n$ 
set $\Omega :=( \CC^*)^l \times \CC^{n-l} \subset \CC^n$ and 
let $j: \Omega \hookrightarrow \CC^n$
be the inclusion map and $h: \CC^n \longrightarrow \CC$ 
the function on $\CC^n$ defined by $h(z)=z_1^{m_1}z_2^{m_2} 
\cdots z_k^{m_k}$ 
($m_i \in \ZZ_{>0}$) for $z=(z_1,z_2, \ldots, z_n) \in \CC^n$. 
Then for the constructible sheaf $j_!j^{-1} \CC_{\CC^n} 
\simeq j_! \CC_{\Omega} \in 
\Dbc ( \CC^n)$ we have $\psi_h( j_! \CC_{\Omega} )_0 \simeq 0$. 
\end{lemma}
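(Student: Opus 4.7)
The plan is to exploit Theorem~\ref{thm.7.2.006} to replace the stalk $\psi_h(j_!\CC_\Omega)_0$ by cohomology of the Milnor fiber. Writing $z=(z',z'')$ with $z'=(z_1,\dots,z_k)$ and $z''=(z_{k+1},\dots,z_n)$, the Milnor fiber of $h$ at the origin takes the form $F_0=\{(z',z'')\in B(0;\varepsilon):z_1^{m_1}\cdots z_k^{m_k}=t_0\}$ for $0<|t_0|<\eta\ll\varepsilon\ll 1$. Since $t_0\neq 0$, the coordinates $z_1,\dots,z_k$ never vanish on $F_0$, so $U:=F_0\cap\Omega$ equals $F_0\setminus V$ for $V:=F_0\cap\{z_{k+1}\cdots z_l=0\}$. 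Hence $j_!\CC_\Omega|_{F_0}=j'_!\CC_U$ for the open inclusion $j'\colon U\hookrightarrow F_0$, and the exact sequence $0\to j'_!\CC_U\to\CC_{F_0}\to\CC_V\to 0$ gives the long exact sequence
\begin{equation*}
\cdots\to H^j(F_0;j'_!\CC_U)\to H^j(F_0;\CC)\to H^j(V;\CC)\to\cdots.
\end{equation*}
Thus the problem reduces to showing that the restriction map $H^j(F_0;\CC)\to H^j(V;\CC)$ is an isomorphism for every $j$.

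For this I would exhibit a common strong deformation retract of $F_0$ and $V$. Set $F''_0:=F_0\cap\{z''=0\}$, so that $F''_0$ is simply the Milnor fiber of $h'(z'):=z_1^{m_1}\cdots z_k^{m_k}$ at the origin of $\CC^k$. Define the homotopy $r_t(z',z'')=(z',(1-t)z'')$ for $t\in[0,1]$. One checks directly that $r_t$ takes values in $F_0$: the ball condition is preserved since $|r_t(z)|^2=|z'|^2+(1-t)^2|z''|^2\le|z|^2<\varepsilon^2$, and the equation $h=t_0$ only involves the untouched coordinates $z'$. Consequently $r_t$ is a strong deformation retraction of $F_0$ onto $F''_0$. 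The hypothesis $k<l$ now enters crucially: $F''_0\subset V$ because $z_{k+1}=0$ on $F''_0$ and $k+1\le l$, while $r_t$ preserves $V$ since $z_i=0$ forces $(1-t)z_i=0$. Therefore $r_t$ restricts to a strong deformation retraction of $V$ onto $F''_0$, so both $F_0$ and $V$ retract compatibly onto $F''_0$ and the inclusion $V\hookrightarrow F_0$ is a homotopy equivalence. This forces $H^j(F_0;j'_!\CC_U)=0$ for all $j$, and applying Theorem~\ref{thm.7.2.006} yields $\psi_h(j_!\CC_\Omega)_0\simeq 0$.

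The only genuinely delicate point is verifying that the single radial homotopy $r_t$ simultaneously preserves $F_0$ and $V$ and retracts onto $F''_0$; all three verifications depend on the strict inequality $k<l$, and examining the boundary case $l=k$, where $V$ would be empty and the vanishing would evidently fail, confirms why this hypothesis is essential.
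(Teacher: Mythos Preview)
Your proof is correct and follows essentially the same approach as the paper: both use the short exact sequence relating $j_!\CC_\Omega$, $\CC_{\CC^n}$, and $\CC_D$ (with $D=\CC^n\setminus\Omega$, so your $V=F_0\cap D$), reduce to showing that the inclusion $F_0\cap D\hookrightarrow F_0$ is a homotopy equivalence, and invoke the identification of nearby cycle stalks with Milnor fiber cohomology. The paper merely asserts this homotopy equivalence, whereas you supply the explicit radial retraction $r_t$ onto $F_0''$, which is a welcome clarification.
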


\begin{proof}
Set $D:= \CC^n \setminus \Omega 
= \{ z \in \CC^n \ | \ z_1z_2 \cdots z_l=0 \}$ and 
let $i:D \hookrightarrow \CC^n$ be the inclusion map. 
Then there exists a distinguished triangle 
\begin{equation}
\psi_h(  j_! \CC_{\Omega} )_0 \longrightarrow 
\psi_h( \CC_{\CC^n} )_0 \longrightarrow \psi_h( i_! \CC_{D} )_0
 \overset{+1}{\longrightarrow}
\end{equation}
in $\Dbc ( \{ 0 \} )$. 
Note that by the condition $l>k$ the Milnor fiber $F_0$ of 
$h$ at the origin $0 \in h^{-1}(0) \subset \CC^n$ is 
homotopic to $F_0 \cap D$. Then by Theorem \ref{thm.7.2.004} 
we obtain an isomorphism 
\begin{equation}
\psi_h( \CC_{\CC^n} )_0 \simto \psi_h( i_! \CC_{D} )_0. 
\end{equation}
From this the assertion immediately follows. 
\qed
\end{proof}

Now we shall introduce some basic properties of the 
two functors 
\begin{equation}
\psi_f(  \cdot  ), \ \phi_f(  \cdot  ) \colon \Db (X) \longrightarrow \Db (X_0).
\end{equation}

\begin{theorem}\label{thm.7.1.004}
{\rm (see Kashiwara-Schapira \cite[Proposition 8.6.3]{K-S}, 
Dimca \cite[page 103]{Dimca} and 
Sch\"urmann \cite[Theorem 4.0.2 and Lemma 4.2.1]{Sch})} 
Assume that $\F \in \Db(X)$ is constructible. Then 
$\psi_f( \F ), \phi_f( \F ) \in  \Db(X_0)$ 
are also constructible. Namely there exist two  
functors 
\begin{equation}
\psi_f(  \cdot  ),\ 
\phi_f(  \cdot  ) \colon \Dbc (X) \longrightarrow \Dbc (X_0).
\end{equation}
\end{theorem}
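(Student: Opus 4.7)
The plan is to reduce the statement for $\phi_f$ to the one for $\psi_f$, and then attack $\psi_f(\F)$ via a Whitney stratification adapted to both $\F$ and $f$. First I would observe that $i^{-1}\F$ is constructible because $i \colon X_0 \hookrightarrow X$ is a closed embedding of complex analytic spaces, and that the class of constructible complexes is stable under taking cones. Hence from the defining distinguished triangle
\begin{equation}
i^{-1}\F \longrightarrow \psi_f(\F) \longrightarrow \phi_f(\F) \overset{+1}{\longrightarrow}
\end{equation}
it is enough to show $\psi_f(\F) \in \Dbc(X_0)$.

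Next, I would invoke the Thom-Mather theory to choose a Whitney stratification $\MCS$ of $X$ with the following three properties: (a) every cohomology sheaf of $\F$ is locally constant on each stratum of $\MCS$; (b) $X_0$ is a union of strata; (c) Thom's $a_f$-condition holds for every pair $(S,S')$ of strata with $S \not\subset X_0$, $S' \subset X_0$ and $S' \subset \overline{S}$. The existence of such a stratification is the main technical input and is proved e.g. in Kashiwara-Schapira and in Sch\"urmann.

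With $\MCS$ in hand, I would fix a stratum $T \subset X_0$ and a point $x_0 \in T$, and show that the cohomology sheaves $\mathcal{H}^j \psi_f(\F)$ are locally constant on $T$ near $x_0$, with finite-dimensional stalks. The key identification comes from Theorem \ref{thm.7.2.006}, which gives
\begin{equation}
\mathcal{H}^j \psi_f(\F)_x \;\simeq\; H^j(F_x;\F) \qquad (x \in X_0,\ j \in \ZZ).
\end{equation}
The $a_f$-condition together with Thom's first isotopy lemma (applied in the family parametrized by $T$) yields a stratified local trivialization of $f$ in a tubular neighborhood of $T$. Such a trivialization induces homeomorphisms between the Milnor fibers $F_x$ for nearby $x \in T$, compatible with the induced stratification on the fibers, hence compatible with the restriction of $\F$ thanks to condition (a). Consequently the groups $H^j(F_x;\F)$ form a local system on $T$, as required. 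Finite-dimensionality of the stalks follows from constructibility of $\F$ combined with the fact that each Milnor fiber is a relatively compact subanalytic set, and boundedness of $\psi_f(\F)$ follows from the bounded real dimension of the Milnor fibers together with the bounded amplitude of $\F$.

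The main obstacle is the existence of an $a_f$-stratification adapted to $\F$; once this is granted, the stratified triviality propagates through the identification in Theorem \ref{thm.7.2.006} to give both local constancy along strata of $X_0$ and finite-dimensionality, completing the proof.
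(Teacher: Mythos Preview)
The paper does not give its own proof of this theorem; it simply states the result and refers the reader to Kashiwara--Schapira \cite[Proposition 8.6.3]{K-S}, Dimca \cite[page 103]{Dimca} and Sch\"urmann \cite[Theorem 4.0.2 and Lemma 4.2.1]{Sch}. Your proposal is a faithful sketch of the argument one finds in those references, particularly Sch\"urmann's treatment: reduce $\phi_f$ to $\psi_f$ via the defining triangle, choose a Whitney stratification satisfying Thom's $a_f$-condition relative to $f$ and adapted to $\F$, and then use the stratified local triviality coming from the isotopy lemma together with the stalk identification $H^j\psi_f(\F)_x \simeq H^j(F_x;\F)$ (Theorem~\ref{thm.7.2.006}) to conclude local constancy of the cohomology sheaves along each stratum of $X_0$. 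You correctly flag the existence of an $a_f$-stratification adapted to $\F$ as the main technical ingredient; this is precisely what the cited sources supply. So your outline is correct and aligned with the literature the paper points to.
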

Shifting the functors $\psi_f( \cdot ), \phi_f( \cdot )$ by 
$-1 \in \ZZ$ we set 
\begin{equation}
^p\!\psi_f( \cdot ) := \psi_f( \cdot )[-1], \quad 
^p\!\phi_f( \cdot ) := \phi_f( \cdot )[-1].
\end{equation}
Then we have the following deep result 
proved first by Kashiwara in \cite{K-2} 
using D-modules. See also 
Brylinski \cite[Theorem 1.2 and Corollary 1.7]{Bry} and 
Mebkhout-Sabbah 
\cite[Chapter III, Section 4]{Meb} for 
the details. Later, purely geometric  
proofs to it were given by Goresky-MacPherson 
\cite{GM}, Kashiwara-Schapira \cite[Corollary 10.3.11]{K-S} and 
Sch\"urmann \cite[Chapter 6]{Sch}. 

\begin{theorem}\label{thm.7.2.007}
In the situation of Theorem \ref{thm.7.2.004}, 
assume that $\F \in \Dbc (X)$ is a perverse 
sheaf on $X$. Then 
$^p\!\psi_f( \F ), \ ^p\!\phi_f( \F )\in \Dbc (X_0)$ are 
also perverse sheaves on $X_0=f^{-1}(0)$. 
\end{theorem}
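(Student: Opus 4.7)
The plan is to verify the support and cosupport conditions characterizing perverse sheaves on $X_0$, exploiting a reduction to a smooth projection and the Milnor fiber description of stalks of nearby and vanishing cycles.

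First I would reduce to the case where $f$ is a smooth coordinate projection, following the graph-embedding trick used in the proof of Theorem \ref{thm.7.2.004}. Via $g\colon X \hookrightarrow Y := X \times \CC_t$, $x \mapsto (x, f(x))$, the direct image $Rg_*\F$ is perverse on $Y$ (since $g$ is a closed embedding), and Proposition \ref{prp.7.1.002} identifies $\psi_f(\F)$ and $\phi_f(\F)$ with the pullbacks of $\psi_t(Rg_*\F)$ and $\phi_t(Rg_*\F)$ along the closed inclusion $X_0 \hookrightarrow Y_0 = X \times \{0\}$. Since closed embeddings preserve perversity, it suffices to treat the case of the smooth projection $t\colon Y \to \CC$.

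Next I would verify the support condition for $^p\!\psi_t(Rg_*\F)$. Choose a Whitney stratification $\mathcal{S}$ of $Y$ adapted both to $Rg_*\F$ and to $Y_0$. For a stratum $Z \subset Y_0$ and any point $y \in Z$, Theorem \ref{thm.7.2.006} gives
\[
\mathcal{H}^j \psi_t(Rg_*\F)_y \simeq H^j(F_y;\, Rg_*\F),
\]
where the Milnor fiber $F_y$ is a Stein open subset of $Y \setminus Y_0$ of complex dimension $\dim_y Y - 1$. Because $Rg_*\F|_{F_y}$ is perverse and $F_y$ is Stein, Artin's vanishing theorem for perverse sheaves on Stein/affine varieties forces $H^j(F_y; Rg_*\F)$ to vanish outside the correct range; combining this stalkwise vanishing with the stratification bookkeeping yields the dimension estimate $\dim \supp \mathcal{H}^j({}^p\!\psi_t(Rg_*\F)) \le -j$ demanded by the support condition.

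The cosupport condition is then obtained by invoking the canonical commutation $\DD \circ {}^p\!\psi_f \simeq {}^p\!\psi_f \circ \DD$ of the shifted nearby cycle functor with Verdier duality on $\Dbc$, applying the support estimate to the perverse sheaf $\DD(Rg_*\F)$, and dualizing. The same argument, with reduced cohomology of Milnor fibers in place of ordinary cohomology (via Theorem \ref{thm.7.2.004}) and the analogous Verdier commutation for $^p\!\phi_f$, proves that $^p\!\phi_f(\F)$ is perverse as well. The main obstacle will be the Artin-type vanishing step: one needs a sufficiently sharp form of Artin's theorem for perverse sheaves on Stein manifolds, together with stratum-by-stratum dimension bookkeeping, to land exactly in the perverse heart. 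A secondary delicacy is having the duality commutation $\DD \circ {}^p\!\psi_f \simeq {}^p\!\psi_f \circ \DD$ available with the precisely correct shift normalization, which traces back to the deep result of Kashiwara \cite{K-2} (compare Mebkhout-Sabbah \cite{Meb} and Sch\"urmann \cite{Sch}).
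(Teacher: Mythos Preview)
The paper does not give a proof of this theorem: it states the result as a ``deep result'' and refers to Kashiwara's original D-module argument and to the later geometric proofs by Goresky--MacPherson, Kashiwara--Schapira, and Sch\"urmann. Your outline is essentially the geometric approach found in those cited references (especially Sch\"urmann \cite[Chapter 6]{Sch}): graph reduction to a smooth projection, Artin-type vanishing on the Stein Milnor fiber for the support condition, and Verdier duality for the cosupport condition. So there is nothing to compare against in the paper itself, but your plan matches the literature it points to.

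One caution on the cosupport step: the commutation $\DD \circ {}^p\!\psi_f \simeq {}^p\!\psi_f \circ \DD$ is in many treatments derived \emph{after} (or simultaneously with) the perversity of ${}^p\!\psi_f$, so invoking it to prove perversity risks circularity unless you have an independent source for it (e.g.\ the microlocal argument in \cite[Chapter VIII]{K-S}). The cleaner route, taken in \cite{Sch}, is to prove the cosupport condition directly via the dual Artin vanishing $H^j_c(F_y;\G)=0$ for $j<0$ on the Stein Milnor fiber, using the costalk description $i_x^! \psi_f(\F) \simeq {\rm R}\Gamma_c(F_x;\F)$ rather than appealing to duality. Your ``stratification bookkeeping'' remark is on point: the bare Artin bound $H^j(F_y;\G)=0$ for $j>0$ only gives ${}^p\!\psi_t(\G) \in {}^p\!D^{\le 0}$; getting the full support estimate $\dim\supp \mathcal{H}^j \le -j$ requires the local product structure along strata (normal slice $\times$ ball in the stratum) so that the effective Stein dimension drops with the stratum dimension.
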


By Theorem \ref{thm.7.2.007}, even if the complex hypersurface 
$X_0=f^{-1}(0) \subset X= \CC^n$ for $f: X= \CC^n \longrightarrow \CC$ 
does not have an isolated singular point at the 
origin $0\in X_0=f^{-1}(0)$, we can prove various 
vanishing theorems for the cohomology groups $H^j(F_0; \CC)$ 
of Milnor fiber $F_0$ (see e.g. Dimca \cite[Section 6.1]{Dimca} 
for the details). Similarly, by using D-modules we obtain 
also the following refinement of Theorem \ref{thm.7.2.007}. 

\begin{theorem}\label{thm.7.2.0078}
In the situation of \ref{thm.7.2.004}, 
assume that $\F \in \Dbc (X)$ is a perverse 
sheaf on $X$. Then in the abelian category ${\rm Perv}( \CC_{X_0})$ of 
perverse sheaves on $X_0$ we have the direct sum 
decompositions: 
\begin{equation}
^p\!\psi_f( \F ) \simeq \bigoplus_{\lambda \in \CC} \ ^p\!\psi_{f, \lambda} ( \F ), 
\qquad 
^p\!\phi_f( \F ) \simeq \bigoplus_{\lambda \in \CC} \ ^p\!\phi_{f, \lambda}( \F ), 
\end{equation} 
where the right hand sides are finite sums and for $\lambda \in \CC$ 
taking a large enough integer $m \gg 0$ we set 
\begin{align}
 ^p\!\psi_{f, \lambda}( \F ):= & 
{\rm Ker} \Bigl[ \bigl( \Psi_f( \F )- \lambda \cdot {\rm id} \bigr)^m :  
\  ^p\!\psi_f( \F ) \longrightarrow \ ^p\!\psi_f( \F ) \Bigr] 
\subset \  ^p\!\psi_f( \F ), 
\\ 
 ^p\!\phi_{f, \lambda}( \F ):= & 
{\rm Ker} \Bigl[ \bigl( \Phi_f( \F )- \lambda \cdot {\rm id} \bigr)^m : 
 \ ^p\!\phi_f( \F ) \longrightarrow \ ^p\!\phi_f( \F ) \Bigr] 
\subset \  ^p\!\phi_f( \F ). 
\end{align} 
\end{theorem}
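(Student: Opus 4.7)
The plan is to establish the decomposition in three conceptual steps, the heart being the existence of a global annihilating polynomial for the monodromy.

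\textbf{Setup.} By Theorem \ref{thm.7.2.007}, both $^p\!\psi_f( \F )$ and $^p\!\phi_f( \F )$ are objects of the abelian category $\mathrm{Perv}(\CC_{X_0})$, and the monodromy isomorphisms $\Psi_f( \F )$, $\Phi_f( \F )$ constructed via the deck transformation of the universal cover of $\CC^*$ are morphisms (in fact automorphisms) in this abelian category, since they are defined as morphisms of functors $\psi_f, \phi_f \colon \Dbc(X) \to \Dbc(X_0)$ and perversity is preserved. Thus we may treat $\Psi_f( \F )$ as an endomorphism of a single object in an honest abelian category and search for a decomposition into generalized eigenspaces.

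\textbf{Existence of a global annihilating polynomial.} The key analytic input I would invoke is the existence of a nonzero polynomial $P(X) \in \CC[X]$ with
\begin{equation}
P\bigl( \Psi_f( \F ) \bigr) = 0 \quad \text{in } \mathrm{End}_{\mathrm{Perv}(\CC_{X_0})}\bigl( {}^p\!\psi_f( \F ) \bigr).
\end{equation}
This is the deep content of the theorem and the reason D-modules enter. Through the Riemann--Hilbert correspondence, $\F$ corresponds to a regular holonomic $\cD$-module $\cM$, and ${}^p\!\psi_f( \F )$ corresponds to graded pieces of the Kashiwara--Malgrange $V$-filtration along $f$; the monodromy is then $\exp(-2\pi\sqrt{-1}\,\mathrm{res})$ where $\mathrm{res}$ is the residue of the connection, and the Bernstein--Sato $b$-function provides precisely such a polynomial identity. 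Alternatively, as in the geometric proofs of Goresky--MacPherson and Sch\"urmann, one reduces via a resolution of singularities (using Proposition \ref{prp.7.1.002}) to a normal crossings situation on a finite Whitney stratification, where the monodromy is quasi-unipotent with uniformly bounded Jordan sizes, yielding the polynomial $P$.

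\textbf{From the polynomial identity to the decomposition.} Once $P$ exists, factor $P(X) = \prod_{i=1}^{k} (X - \lambda_i)^{m_i}$ with distinct $\lambda_i \in \CC$; this exhibits the spectrum as finite. By partial fractions, choose $Q_i(X) \in \CC[X]$ with
\begin{equation}
\sum_{i=1}^{k} Q_i(X) \prod_{j \neq i} (X - \lambda_j)^{m_j} = 1,
\end{equation}
and set $e_i := Q_i\bigl( \Psi_f( \F ) \bigr) \cdot \prod_{j \neq i} \bigl( \Psi_f( \F ) - \lambda_j \cdot \mathrm{id} \bigr)^{m_j}$. A formal computation using $P\bigl( \Psi_f( \F ) \bigr) = 0$ shows that $\{e_i\}$ are orthogonal idempotents in $\mathrm{End}_{\mathrm{Perv}(\CC_{X_0})}( {}^p\!\psi_f( \F ))$ summing to the identity, so in any abelian category they split the object as $\bigoplus_i \mathrm{Im}(e_i)$. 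One then identifies $\mathrm{Im}(e_i)$ with the generalized eigenspace $^p\!\psi_{f,\lambda_i}( \F )$ defined in the statement. The identical argument, applied to $\Phi_f( \F )$ on $^p\!\phi_f( \F )$, handles the vanishing cycle case.

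\textbf{Main obstacle.} The only nonformal step is the second one: the existence of a polynomial identity for the monodromy acting globally on the perverse sheaf $^p\!\psi_f( \F )$. Stalkwise finiteness is immediate from constructibility, but gluing to a single global polynomial requires either the $\cD$-module machinery (the $V$-filtration and $b$-function of Kashiwara and Malgrange) or a careful geometric argument on a finite stratification obtained by resolution --- which is precisely why the result, easy to formulate once perversity is known, sits at the depth indicated by its attributions.
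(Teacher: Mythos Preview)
Your proposal is correct and aligned with the paper's approach: the paper does not supply a detailed proof but simply records that the result is obtained ``by using D-modules'', i.e.\ via the Kashiwara--Malgrange $V$-filtration and the Bernstein--Sato polynomial, exactly as you invoke in your second step. Your write-up in fact goes further than the paper by spelling out the formal abelian-category argument (orthogonal idempotents from partial fractions) that converts a global annihilating polynomial for the monodromy into the generalized-eigenspace decomposition; the paper leaves all of this implicit.
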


\begin{corollary}\label{cor.7.2.0078}
In the situation of Theorem \ref{thm.7.2.004}, assume 
moreover that $X= \CC^n$. Then for any point $x \in 
X_0=f^{-1}(0)$ of $X_0$, $\lambda \in \CC$ and 
$j \in \ZZ$ 
there exist isomorphisms 
\begin{align}
H^j(F_x; \CC )_{\lambda} \simeq & H^{j-n+1}
\ ^p\!\psi_{f, \lambda}( \CC_X [n] )_x, 
\\ 
\widetilde{H}^j(F_x; \CC )_{\lambda} \simeq & H^{j-n+1}
\ ^p\!\phi_{f, \lambda}( \CC_X [n] )_x, 
\end{align} 
where $H^j(F_x; \CC )_{\lambda} \subset H^j(F_x; \CC )$ and 
$\widetilde{H}^j(F_x; \CC )_{\lambda} \subset \widetilde{H}^j(F_x; \CC )$ 
are the generalized eigenspaces of the Milnor 
monodromies for the eigenvalue $\lambda$. 
\end{corollary}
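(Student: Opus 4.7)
The plan is to reduce the corollary to Theorem \ref{thm.7.2.004} by combining the perverse decomposition of Theorem \ref{thm.7.2.0078} with a simple shift of indices. Since $X=\CC^n$ is smooth of complex dimension $n$, the shifted constant sheaf $\CC_X[n]$ is perverse on $X$. Hence Theorem \ref{thm.7.2.007} applies and ${}^p\!\psi_f(\CC_X[n])$, ${}^p\!\phi_f(\CC_X[n])$ are perverse sheaves on $X_0=f^{-1}(0)$, and Theorem \ref{thm.7.2.0078} gives the direct sum decompositions into generalized eigenspaces of the monodromy automorphisms $\Psi_f$ and $\Phi_f$.

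Next I would unwind the shifts. By definition ${}^p\!\psi_f(\F)=\psi_f(\F)[-1]$, so
\begin{equation}
{}^p\!\psi_f(\CC_X[n]) \simeq \psi_f(\CC_X)[n-1],
\qquad
{}^p\!\phi_f(\CC_X[n]) \simeq \phi_f(\CC_X)[n-1],
\end{equation}
and therefore for any $x\in X_0$ and any $j\in\ZZ$,
\begin{equation}
H^{j-n+1}\,{}^p\!\psi_f(\CC_X[n])_x \simeq H^j\psi_f(\CC_X)_x,
\qquad
H^{j-n+1}\,{}^p\!\phi_f(\CC_X[n])_x \simeq H^j\phi_f(\CC_X)_x.
\end{equation}
By Theorem \ref{thm.7.2.004}, the right-hand sides are isomorphic respectively to $H^j(F_x;\CC)$ and to $\widetilde{H}^j(F_x;\CC)$, and under these isomorphisms the endomorphism induced by $\Psi_f$ (resp.\ $\Phi_f$) corresponds to the Milnor monodromy on the cohomology of $F_x$.

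Finally I would match the eigenspace pieces. The decompositions of Theorem \ref{thm.7.2.0078} are honest direct sum decompositions in $\mathrm{Perv}(\CC_{X_0})$, so taking stalks at $x$ and applying the cohomology functor $H^{j-n+1}(\cdot)$ preserves them. On the summand ${}^p\!\psi_{f,\lambda}(\CC_X[n])$ the operator $(\Psi_f-\lambda\cdot\id)^m$ vanishes for $m\gg 0$ by construction, and this nilpotence is inherited by the induced endomorphism on $H^{j-n+1}{}^p\!\psi_{f,\lambda}(\CC_X[n])_x$. Via the compatibility of monodromies supplied by Theorem \ref{thm.7.2.004}, this summand is precisely the generalized $\lambda$-eigenspace $H^j(F_x;\CC)_\lambda$ of the Milnor monodromy, yielding the first isomorphism. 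The same reasoning applied to $\phi_f$, together with the identification $H^j\phi_f(\CC_X)_x\simeq \widetilde{H}^j(F_x;\CC)$, produces the second isomorphism.

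The only point requiring any care is the third step: that the perverse-sheaf-level direct sum descends to an eigenspace decomposition on cohomology stalks with the correct labelling by $\lambda$. This, however, is automatic once one observes that stalks and cohomology are additive functors and that the operator $(\Psi_f-\lambda\cdot\id)^m$ remains nilpotent after passing to cohomology, so no further analytic input beyond what has already been cited is needed.
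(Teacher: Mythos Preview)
Your proof is correct and follows exactly the intended route: the paper states this result as an immediate corollary (without proof) of Theorem \ref{thm.7.2.004} and Theorem \ref{thm.7.2.0078}, and your argument spells out precisely this deduction by observing that $\CC_X[n]$ is perverse on the smooth $X=\CC^n$, unwinding the shift ${}^p\!\psi_f=\psi_f[-1]$, and using the monodromy compatibility in Theorem \ref{thm.7.2.004} to match generalized eigenspaces.
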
 

As was first observed by A'Campo \cite{A'Campo}, 
Milnor fibers and their monodromies can be precisely studied 
by resolutions of singularities of 
$X_0=f^{-1}(0) \subset X$. Let us explain his brilliant idea 
in the more general framework 
of constructible sheaves. For this purpose, 
we define monodromy zeta functions of $f$ as follows. 
Let $\CC(t)^*=\CC(t) \setminus \{0\}$ be the 
multiplicative group of the function field $\CC(t)$ of 
one variable $t$. 

\begin{definition}\label{dfn:2-79}
For a point $x \in X_0=f^{-1}(0)$ of $X_0$ we define 
the monodromy zeta function $\zeta_{f,x}(t) \in \CC(t)^*$ 
of $f$ at $x$ by 
\begin{equation}
\zeta_{f,x}(t):=\prod_{j=0}^{\infty} \ 
 \Bigl\{ \det(\id -t \cdot \Phi_{j,x}) \Bigr\}^{(-1)^j} 
\quad \in \CC(t)^*, 
\end{equation}
where the $\CC$-linear maps 
$\Phi_{j,x} \colon H^j(F_x; \CC ) \simto H^j(F_x; \CC )$ are 
the Milnor monodromies at $x$. 
\end{definition}

The following lemma was obtained by A'Campo \cite{A'Campo} 
(see also Oka \cite[Chapter I, Example (3.7)]{Oka} for a 
precise explanation). 

\begin{lemma}\label{lem:2-ac-21}
For $1 \leq k \leq n$ let $h: \CC^n \longrightarrow \CC$ be  
the function on $\CC^n$ defined by $h(z)=z_1^{m_1}z_2^{m_2} 
\cdots z_k^{m_k}$ 
($m_i \in \ZZ_{>0}$) for $z=(z_1,z_2, \ldots, z_n) \in \CC^n$. 
Then we have 
\begin{equation}
\zeta_{h,0}(t)=
\begin{cases}
\ 1-t^{m_1} & (k=1) \\
& \\
\ 1 & (k>1) \\
\end{cases}
\end{equation} 
\end{lemma}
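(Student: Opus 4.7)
My plan is to treat the cases $k=1$ and $k \geq 2$ separately.

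The case $k=1$ is essentially tautological. Since $h(z) = z_1^{m_1}$ does not involve $z_2, \ldots, z_n$, after replacing the standard Milnor ball by a homotopy-equivalent polydisc $\{|z_j| < \varepsilon\}_{j=1,\ldots,n}$, the Milnor fiber splits as a product
\begin{equation*}
F_0 \;\simeq\; \{z_1 \in \CC : z_1^{m_1} = t_0,\, |z_1| < \varepsilon\} \times \{(z_2, \ldots, z_n) : |z_j| < \varepsilon\},
\end{equation*}
that is, the disjoint union of $m_1$ contractible components indexed by the $m_1$-th roots of $t_0$. Thus $H^0(F_0; \CC) \simeq \CC^{m_1}$, $H^j(F_0; \CC) = 0$ for $j \geq 1$, and the natural lift $(z_1, z_2, \ldots, z_n) \mapsto (e^{2\pi i/m_1} z_1, z_2, \ldots, z_n)$ of the loop $t_0 \mapsto e^{2\pi i} t_0$ cyclically permutes these $m_1$ components. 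Hence $\det(\id - t\, \Phi_{0,0})$ equals $\prod_{k=0}^{m_1-1}(1 - e^{2\pi i k/m_1} t) = 1 - t^{m_1}$, giving $\zeta_{h,0}(t) = 1 - t^{m_1}$.

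For $k \geq 2$ I would prove the stronger statement that for every $\lambda \in \CC^*$ the alternating sum
\begin{equation*}
\chi_\lambda \;:=\; \sum_{j \geq 0} (-1)^j \dim H^j(F_0; \CC)_\lambda
\end{equation*}
of generalized $\lambda$-eigenspace dimensions of the Milnor monodromy vanishes; since $\zeta_{h,0}(t) = \prod_{\lambda \in \CC^*} (1 - \lambda t)^{\chi_\lambda}$, this forces $\zeta_{h,0}(t) = 1$. The key is a free smooth $S^1$-action on $F_0$ commuting with a chosen lift of the monodromy. Setting $d := \gcd(m_1, m_2)$, $a := m_1/d$, $b := m_2/d$ (so $\gcd(a,b) = 1$), I would use
\begin{equation*}
e^{i\theta} \cdot (z_1, z_2, z_3, \ldots, z_n) \;:=\; (e^{i\theta b} z_1,\, e^{-i\theta a} z_2,\, z_3, \ldots, z_n).
\end{equation*}
The identity $bm_1 - am_2 = 0$ shows $h$ is preserved; the norms $|z_j|$ are individually preserved, so the Milnor ball is respected; since $\gcd(a,b) = 1$ and $z_1, z_2 \neq 0$ on $F_0$, the action is free; and it manifestly commutes with the monodromy lift $(z_1, \ldots, z_n) \mapsto (e^{2\pi i/m_1} z_1, z_2, \ldots, z_n)$.

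The conclusion then follows from the Leray--Serre spectral sequence of the principal $S^1$-bundle $F_0 \to F_0/S^1$. Connectedness of $S^1$ makes the local coefficient system trivial, so $E_2^{p,q} = H^p(F_0/S^1; \CC) \otimes H^q(S^1; \CC)$, and the descended monodromy acts only on the first factor. Passing to generalized $\lambda$-eigenspaces and using that Euler characteristics are preserved through the spectral sequence, I obtain
\begin{equation*}
\chi_\lambda(F_0) \;=\; \chi_\lambda(F_0/S^1) \cdot \chi(S^1) \;=\; 0 \qquad (\lambda \in \CC^*),
\end{equation*}
completing the proof. The main obstacle is the verification that the chosen monodromy lift commutes with the $S^1$-action, allowing descent to $F_0/S^1$, together with checking compatibility of the eigenspace decomposition with the spectral sequence; the latter reduces to the fact that the connected group $S^1$ acts trivially on $H^*(S^1;\CC)$.
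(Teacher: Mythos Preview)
Your argument is correct. Note, however, that the paper does not itself prove this lemma: it merely attributes the result to A'Campo and points to Oka \cite[Chapter I, Example (3.7)]{Oka} for an explanation, so there is no ``paper's own proof'' to match against.

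The classical treatment (as in Oka) proceeds by direct computation: one identifies the Milnor fiber of $z_1^{m_1}\cdots z_k^{m_k}$ up to homotopy (for $k\geq 2$ it is a finite disjoint union of $(k-1)$-tori), writes down the monodromy as an explicit translation, and checks by hand that the eigenvalue contributions cancel degree by degree in the alternating product. Your route is different and more conceptual: you never compute $H^*(F_0)$ or the monodromy action on it, but instead produce a free $S^1$-action commuting with a geometric lift of the monodromy and invoke the multiplicativity of (equivariant) Euler characteristics in the Leray--Serre spectral sequence of the principal bundle $F_0\to F_0/S^1$. This buys you a shorter proof that sidesteps all explicit cohomology computations and would apply verbatim to any Milnor fiber carrying such an action; the price is that it gives no information about the individual groups $H^j(F_0;\CC)_\lambda$, only that their alternating sum vanishes. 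The one point worth making fully explicit in a write-up is that $\phi_s(z)=(e^{2\pi i s/m_1}z_1,z_2,\dots,z_n)$ furnishes the trivialisation over the loop, so that $\phi_1$ is genuinely a representative of the geometric monodromy and not merely a self-map of the fiber.
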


Note that if the complex hypersurface 
$X_0=f^{-1}(0) \subset X= \CC^n$ for $f: X= \CC^n \longrightarrow \CC$ 
has an isolated singular point at the 
origin $0\in X_0=f^{-1}(0)$ by Theorem \ref{thm.7.2.003}  
we have 
\begin{equation}
\zeta_{f,x}(t)=(1-t) \cdot 
\Bigl\{ \det(\id -t \cdot \Phi_{n-1,x}) \Bigr\}^{(-1)^{n-1}}. 
\end{equation}
Hence in this particular case, we can know the characteristic polynomial 
$\det(t \cdot \id - \Phi_{n-1,x}) \in \CC [t]$ of 
the ``principal" Milnor monodromy 
\begin{equation}
\Phi_{n-1,x} \colon H^{n-1}(F_x; \CC ) \simto H^{n-1}(F_x; \CC )
\end{equation}
from the monodromy zeta function $\zeta_{f,x}(t) \in \CC(t)^*$. 
This classical notion of monodromy zeta functions can be generalized as follows.

\begin{definition}\label{dfn:2-8}
For a constructible sheaf $\F \in \Dbc(X)$ on $X$ and 
a point $x \in X_0=f^{-1}(0)$ of $X_0$ we define 
the monodromy zeta function $\zeta_{f,x}(\F) \in \CC(t)^*$ of 
$\F$ along $f$ at $x$ by 
\begin{equation}
\zeta_{f,x}(\F)(t):=\prod_{j \in \ZZ} \ 
\Bigl\{ \det\left(\id -t\Phi(\F)_{j,x}\right) \Bigr\}^{(-1)^j} 
\quad \in \CC(t)^*, 
\end{equation}
where the $\CC$-linear maps 
$\Phi(\F)_{j,x} \colon H^j(\psi_f(\F))_x \simto H^j(\psi_f(\F))_x$ are induced by 
$\Phi(\F)$.
\end{definition}
As in A'Campo \cite{A'Campo} (see also Oka \cite[Chapter I]{Oka}) 
we can easily prove the following very useful result. 

\begin{lemma}\label{lemma:2-8}
Let $\F^{\prime} \longrightarrow \F \longrightarrow \F^{\prime \prime}
 \overset{+1}{\longrightarrow}$ be a distinguished triangle 
in $\Dbc (X)$. Then for any point $x \in X_0=f^{-1}(0)$ of $X_0$ we have 
\begin{equation}
\zeta_{f,x}( \F )(t)= \zeta_{f,x}( \F^{\prime} )(t) 
\cdot \zeta_{f,x}( \F^{\prime \prime} )(t). 
\end{equation}
\end{lemma}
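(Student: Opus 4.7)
The plan is to reduce the multiplicativity of $\zeta_{f,x}$ to the standard multiplicativity of characteristic polynomials in a long exact sequence of finite-dimensional vector spaces carrying compatible endomorphisms.

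First, since $\psi_f(\cdot)$ is a triangulated functor $\Dbc(X) \longrightarrow \Dbc(X_0)$ (by Theorem \ref{thm.7.1.004}) and the stalk functor $(\cdot)_x$ at $x \in X_0$ is also triangulated, the distinguished triangle $\F^{\prime} \longrightarrow \F \longrightarrow \F^{\prime \prime} \overset{+1}{\longrightarrow}$ yields a distinguished triangle
\begin{equation}
\psi_f(\F^{\prime})_x \longrightarrow \psi_f(\F)_x \longrightarrow \psi_f(\F^{\prime \prime})_x \overset{+1}{\longrightarrow}
\end{equation}
in $\Dbc(\{x\})$. Taking the associated long exact sequence of cohomology and noting that the construction of $\psi_f$ from ${\rm R}(j\circ\pi)_*(j\circ\pi)^{-1}$ is functorial, the monodromy automorphisms $\Phi(\F^{\prime}), \Phi(\F), \Phi(\F^{\prime \prime})$ commute with all connecting morphisms. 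Thus we obtain a long exact sequence of finite-dimensional $\CC$-vector spaces
\begin{equation}
\cdots \longrightarrow H^{j-1}(\psi_f(\F^{\prime \prime}))_x \longrightarrow H^j(\psi_f(\F^{\prime}))_x \longrightarrow H^j(\psi_f(\F))_x \longrightarrow H^j(\psi_f(\F^{\prime \prime}))_x \longrightarrow \cdots
\end{equation}
which is equivariant for the three monodromy actions, and in which all but finitely many terms vanish (by the constructibility guaranteed by Theorem \ref{thm.7.1.004}).

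Next I would invoke the following linear-algebra lemma: if $V_{\bullet}$ is a bounded exact sequence of finite-dimensional vector spaces equipped with compatible endomorphisms $A_{\bullet}$, then
\begin{equation}
\prod_{j} \det(\id - t A_j)^{(-1)^j} = 1 \quad \in \CC(t)^*.
\end{equation}
This is proved by breaking the long exact sequence into short exact pieces $0 \to \Ker \to V_j \to \Im \to 0$ (each stable under the endomorphism because it commutes with the maps), applying multiplicativity of $\det(\id - t \cdot)$ on short exact sequences, and telescoping.

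Applying this lemma to the long exact sequence above, with $A_j$ equal to the direct sum of the three monodromy endomorphisms on the three consecutive types of terms, one finds that the alternating product
\begin{equation}
\zeta_{f,x}(\F^{\prime})(t)^{-1} \cdot \zeta_{f,x}(\F)(t) \cdot \zeta_{f,x}(\F^{\prime \prime})(t)^{-1}
\end{equation}
equals $1$, which is the desired identity. The only slightly delicate step is the equivariance of the connecting morphism with respect to the monodromies; but this follows because $\Phi$ is induced by the deck transformation $T$ on $\widetilde{X \setminus X_0}$, which acts on the whole cone construction defining the triangle and therefore commutes with every boundary map. Apart from this functoriality check, the argument is purely formal.
\qed
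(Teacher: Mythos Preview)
Your proof is correct and is precisely the standard argument the paper alludes to: the paper does not actually prove this lemma but only states that it ``can easily'' be proved as in A'Campo \cite{A'Campo} and Oka \cite[Chapter I]{Oka}, and the argument there is exactly the one you give --- pass to the monodromy-equivariant long exact sequence on stalks of $\psi_f$ and use multiplicativity of $\det(\id - tA)$ on short exact sequences. One small quibble: Theorem \ref{thm.7.1.004} gives constructibility (hence finite-dimensionality of the stalks), not the triangulated-functor property of $\psi_f$, which follows directly from its definition as a composite of triangulated functors; you may want to separate these two citations.
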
 

For an abelian group $G$ and a complex analytic space $Z$,  
we shall say that a $G$-valued function $\varphi :Z \longrightarrow G$ on $Z$ is constructible if 
there exists a stratification $Z=\bigsqcup_{\alpha}Z_{\alpha}$ of $Z$ 
 such that $\varphi |_{Z_{\alpha}}$ is 
constant for any $\alpha$. We denote by $\CF_G(Z)$ 
the abelian group of $G$-valued constructible functions on $Z$. 
In this note, we consider $\CF_G(Z)$ only for $G= \CC(t)^*$. 
Then for the complex analytic space $X$ and the non-constant 
holomorphic function $f :X \longrightarrow \CC$ on it, 
by Theorem \ref{thm.7.2.0078}   
we can easily see that the $\CC(t)^*$-valued function 
$\zeta_{f}(\F) :X_0 \longrightarrow \CC(t)^*$ on $X_0=f^{-1}(0)$ 
defined by 
\begin{equation}
\zeta_{f}(\F)(x):=\zeta_{f,x}(\F)(t) \quad \in \CC(t)^* \qquad 
(x \in X_0=f^{-1}(0))
\end{equation}
is constructible. 
For a $G$-valued constructible function $\varphi \colon Z \longrightarrow G$ on 
a complex analytic space $Z$, by taking 
a stratification $Z=\bigsqcup_{\alpha}Z_{\alpha}$ of $Z$ such that $\varphi |_{Z_{\alpha}}$ is 
constant for any $\alpha$, we set 
\begin{equation}
\int_Z \varphi :=\sum_{\alpha}\chi(Z_{\alpha}) \cdot 
\varphi (z_{\alpha}) \quad \in G, 
\end{equation}
where $\chi ( \cdot )$ stands for the 
topological Euler characteristic and $z_{\alpha}$ is a reference point in 
the stratum $Z_{\alpha}$. 
By the following lemma $\int_Z \varphi \in G$ does not depend on the choice of 
the stratification $Z=\bigsqcup_{\alpha} Z_{\alpha}$ of $Z$. 
We call it the topological (or Euler) integral of 
$\varphi$ over $Z$. 

\begin{lemma}\label{lem:2-str-1}
Let $Y$ be a complex analytic space and 
$Y=\bigsqcup_{\alpha}Y_{\alpha}$ a stratification of $Y$. 
Then we have 
\begin{equation}
\chi_{{\rm c}}(Y) = \dsum_{\alpha} \chi_{{\rm c}}(Y_{\alpha}), 
\end{equation}
where $\chi_{{\rm c}}( \cdot )$ stands for the Euler 
characteristic with compact supports. Moreover, 
for any $\alpha$ we have 
$\chi_{{\rm c}}(Y_{\alpha})=\chi (Y_{\alpha})$. 
\end{lemma}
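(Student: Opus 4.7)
The plan is to prove the two assertions separately. For the additivity $\chi_{\rm c}(Y) = \sum_\alpha \chi_{\rm c}(Y_\alpha)$, I would proceed by induction on the (finite, or locally finite) number of strata, the basic engine being the long exact sequence of cohomology with compact supports. Concretely, since the stratification is a Whitney stratification of a complex analytic space, I can choose an open stratum $Y_{\alpha_0}$ (\eg\ one of top dimension) whose complement $Y':=Y \setminus Y_{\alpha_0}$ is a closed analytic subset that inherits a stratification by the remaining strata. The long exact sequence
\begin{equation}
\cdots \longrightarrow H^{j}_{\rm c}(Y_{\alpha_0}; \CC) \longrightarrow H^{j}_{\rm c}(Y; \CC) \longrightarrow H^{j}_{\rm c}(Y'; \CC) \longrightarrow H^{j+1}_{\rm c}(Y_{\alpha_0}; \CC) \longrightarrow \cdots
\end{equation}
then yields $\chi_{\rm c}(Y) = \chi_{\rm c}(Y_{\alpha_0}) + \chi_{\rm c}(Y')$, and the inductive hypothesis on $Y'$ finishes the step.

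For the equality $\chi_{\rm c}(Y_\alpha) = \chi(Y_\alpha)$, my plan is to reduce to the smooth case by refining $Y_\alpha$ further into smooth locally closed complex analytic manifolds. For a smooth complex manifold $S$ of complex dimension $n$, Poincar\'e--Lefschetz duality gives an isomorphism $H^{j}_{\rm c}(S; \CC) \simeq H^{2n-j}(S; \CC)^{*}$, so that
\begin{equation}
\chi_{\rm c}(S) \;=\; \sum_{j}(-1)^{j}\dim H^{j}_{\rm c}(S;\CC) \;=\; \sum_{j}(-1)^{2n-j}\dim H^{j}(S;\CC) \;=\; \chi(S),
\end{equation}
the sign flip being trivial because $2n$ is even. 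This handles the smooth case, and by the additivity of $\chi_{\rm c}$ established in the first step, $\chi_{\rm c}(Y_\alpha) = \sum_{i}\chi(S_i)$ over the smooth strata $S_i$ of $Y_\alpha$.

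To finally conclude $\chi(Y_\alpha) = \sum_i \chi(S_i)$, I would embed $Y_\alpha$ as a locally closed subset of a compact complex analytic space (its closure in $Y$, or an analytic compactification), and run induction on $\dim_{\CC} Y_\alpha$. The base case of isolated points is immediate, and for the inductive step I use that a compact complex analytic space $\overline{Y_\alpha}$ automatically satisfies $\chi(\overline{Y_\alpha})=\chi_{\rm c}(\overline{Y_\alpha})$, while its boundary $\partial Y_\alpha = \overline{Y_\alpha}\setminus Y_\alpha$ is a closed complex analytic subset of strictly smaller dimension, to which the inductive hypothesis applies.

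The main obstacle is precisely this last step: ordinary $\chi$ is \emph{not} additive across open-closed decompositions for general topological spaces (\eg\ $\chi(\RR)=1$ while $\chi((-\infty,0))+\chi(\{0\})+\chi((0,\infty))=3$), so the equality $\chi = \chi_{\rm c}$ on complex analytic spaces is genuinely a feature of the complex analytic category. It hinges on the combination of Poincar\'e duality on smooth strata (where the even real dimension is essential) with the compactness trick that upgrades the always-valid additivity of $\chi_{\rm c}$ to an additivity statement for $\chi$ on analytic pieces. Once this is in place, combining the two assertions justifies the definition of the topological integral $\int_Z \varphi$ independently of the chosen stratification.
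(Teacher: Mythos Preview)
Your argument for the additivity of $\chi_{\rm c}$ is essentially the paper's: the paper peels off all top-dimensional strata at once (using the closed filtration $Y_i=\bigsqcup_{\dim Y_\alpha\le i}Y_\alpha$ and the exact sequence $0\to\CC_{Y\setminus Y_{n-1}}\to\CC_Y\to\CC_{Y_{n-1}}\to 0$), while you peel off one open stratum at a time. Same mechanism, same long exact sequence.

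For the second assertion you have overcomplicated matters and, in doing so, introduced a gap. In the paper's convention (standard for complex analytic stratifications) every stratum $Y_\alpha$ is already a \emph{complex manifold}. The paper therefore concludes in one line: Poincar\'e duality $[H^j_{\rm c}(Y_\alpha;\CC)]^*\simeq H^{2\dim Y_\alpha-j}(Y_\alpha;\CC)$ gives $\chi_{\rm c}(Y_\alpha)=\chi(Y_\alpha)$ immediately, the sign being harmless because the real dimension is even. Your refinement of $Y_\alpha$ into smooth pieces $S_i$ and the ensuing compactification argument are unnecessary.

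Worse, that compactification step does not close as written. From compactness you get $\chi(\overline{Y_\alpha})=\chi_{\rm c}(\overline{Y_\alpha})$, from part one $\chi_{\rm c}(\overline{Y_\alpha})=\chi_{\rm c}(Y_\alpha)+\chi_{\rm c}(\partial Y_\alpha)$, and from the inductive hypothesis $\chi(\partial Y_\alpha)=\chi_{\rm c}(\partial Y_\alpha)$; combining yields $\chi(\overline{Y_\alpha})=\chi_{\rm c}(Y_\alpha)+\chi(\partial Y_\alpha)$. To extract $\chi(Y_\alpha)=\chi_{\rm c}(Y_\alpha)$ you would still need $\chi(\overline{Y_\alpha})=\chi(Y_\alpha)+\chi(\partial Y_\alpha)$, i.e.\ additivity of \emph{ordinary} $\chi$ across the open--closed decomposition $\overline{Y_\alpha}=Y_\alpha\sqcup\partial Y_\alpha$ --- exactly the phenomenon you correctly flagged as non-automatic (your $\RR$ example). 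The long exact sequence of the pair $(\overline{Y_\alpha},\partial Y_\alpha)$ gives $H^*(\overline{Y_\alpha},\partial Y_\alpha)\simeq H^*_{\rm c}(Y_\alpha)$, hence again $\chi_{\rm c}(Y_\alpha)$, not $\chi(Y_\alpha)$; so the induction does not advance. The fix is simply to drop the detour: once you accept that the $Y_\alpha$ are smooth, your own Poincar\'e duality computation for $S$ is already the whole proof.
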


\begin{proof}
Let $n:= {\rm dim }Y$ and for $0 \leq i \leq n$ set 
\begin{equation}
Y_i:= \bigsqcup_{\alpha : \ {\rm dim }Y_{\alpha} \leq i} Y_{\alpha} 
\quad \subset Y. 
\end{equation}
Then $Y_i$ are closed subsets of $Y$ and $Y_n=Y$. 
Applying the functor ${\rm R} \Gamma_{{\rm c}} (Y ;  \ \cdot \ )$ 
to the exact sequence 
\begin{equation}
0 \longrightarrow  \CC_{Y \setminus Y_{n-1}}  \longrightarrow  \CC_{Y} 
 \longrightarrow   \CC_{Y_{n-1}} \longrightarrow  0, 
\end{equation}
we obtain an equality 
\begin{equation}
\chi_{{\rm c}}(Y) = \chi_{{\rm c}}(Y_{n-1}) + 
\chi_{{\rm c}}(Y \setminus Y_{n-1}). 
\end{equation}
Note also that we have 
\begin{equation}
Y \setminus Y_{n-1}= 
\bigsqcup_{\alpha : \ {\rm dim }Y_{\alpha}=n} Y_{\alpha}  
\end{equation}
and hence 
\begin{equation}
\chi_{{\rm c}}(Y \setminus Y_{n-1}) = 
\dsum_{\alpha : \ {\rm dim }Y_{\alpha}=n} \chi_{{\rm c}}(Y_{\alpha}).  
\end{equation}
Repeating this argument, we finally obtain the first assertion. 
Since for any $\alpha$ the stratum $Y_{\alpha}$ is a 
complex manifold, by the (generalized) 
Poincar\'e duality isomorphisms 
\begin{equation}
\bigl[ H^j_{{\rm c}} (Y_{\alpha}; \CC ) 
\bigr]^* \simeq 
H^{2 {\rm dim }Y_{\alpha}-j}  (Y_{\alpha}; \CC ) 
\qquad (0 \leq j \leq 2 {\rm dim }Y_{\alpha}) 
\end{equation}
we obtain also the second assertion. 
\qed 
\end{proof} 

More generally, 
for any morphism $\rho \colon Z \longrightarrow W$ of complex analytic spaces 
and any $G$-valued constructible function 
$\varphi \in \CF_G(Z)$ on $Z$, we define the push-forward $\int_{\rho} \varphi \in \CF_G(W)$ of 
$\varphi$ by 
\begin{equation}
\Bigl( \int_{\rho} \varphi \Bigr) (w) :=\int_{\rho^{-1}(w)} \varphi \qquad 
(w \in W). 
\end{equation}
We thus obtain a homomorphism 
\begin{equation}
\int_{\rho} : \CF_G(Z) \longrightarrow \CF_G(W)
\end{equation}
of abelian groups. By Theorem \ref{thm.7.2.0078} and 
Proposition \ref{prp.7.1.002}, we can easily prove the 
following very useful result.   

\begin{theorem}\label{prp:2-99}
{\rm (Dimca \cite[Proposition 4.2.11]{Dimca} 
and Sch\"urmann \cite[Chapter 2]{Sch})}  
In the situation of Proposition \ref{prp.7.1.002}, 
assume that $\G\in \Db (Y)$ is constructible. 
Then we have 
\begin{equation}
\int_{ \rho|_{Y_0}} \zeta_g(\G) =\zeta_f({\rm R} \rho_*\G)
\end{equation}
in $\CF_{\CC(t)^*}(X_0)$, where $\int_{\rho|_{Y_0}}\colon \CF_{\CC(t)^*}(Y_0) 
\longrightarrow \CF_{\CC(t)^*}(X_0)$ is the push-forward of 
$\CC(t)^*$-valued constructible functions by $\rho|_{Y_0} \colon Y_0 \longrightarrow X_0$.
\end{theorem}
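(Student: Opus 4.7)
The plan is to prove the equality in $\CF_{\CC(t)^*}(X_0)$ pointwise. Fix $x \in X_0$; since $x \in X_0$ and $\rho$ is proper, the fiber $\rho^{-1}(x)$ is compact and contained in $Y_0$. It suffices to show
\begin{equation*}
\zeta_{f,x}(R\rho_*\G)(t) \;=\; \int_{\rho^{-1}(x)} \zeta_g(\G).
\end{equation*}

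First I would combine Proposition \ref{prp.7.1.002} with proper base change to identify
\begin{equation*}
\psi_f(R\rho_*\G)_x \;\simeq\; R\Gamma\bigl(\rho^{-1}(x),\; \psi_g(\G)|_{\rho^{-1}(x)}\bigr)
\end{equation*}
compatibly with monodromies. This rewrites the left-hand side as the Lefschetz-type zeta function of the monodromy acting on $R\Gamma(\rho^{-1}(x), \psi_g(\G))$.

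Next, by Theorem \ref{thm.7.1.004} the complex $\psi_g(\G)$ is constructible on $Y_0$, so I can choose a Whitney stratification $\rho^{-1}(x) = \bigsqcup_\alpha Z_\alpha$ by smooth locally closed subsets such that each cohomology sheaf $\H^j\psi_g(\G)$ restricts to a local system on every stratum, the monodromy $\Psi_g(\G)$ preserves this stratified structure, and $\zeta_g(\G)$ is constant on each $Z_\alpha$ with value $\zeta_g(\G)(y_\alpha)$ for a reference point $y_\alpha \in Z_\alpha$. By induction on the stratification, using the open–closed distinguished triangles and the multiplicativity of $\prod_j \det(\id - tT|H^j)^{(-1)^j}$ along long exact sequences of monodromy-equivariant complexes (the Lefschetz analogue of Lemma \ref{lemma:2-8}), the claim reduces to proving, for each stratum,
\begin{equation*}
\prod_j \det\bigl(\id - tT \bigm| H^j_c(Z_\alpha, \psi_g(\G)|_{Z_\alpha})\bigr)^{(-1)^j} \;=\; \zeta_g(\G)(y_\alpha)^{\chi(Z_\alpha)}.
\end{equation*}

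On a single smooth stratum $Z = Z_\alpha$, I would decompose the restriction of $\psi_g(\G)$ into generalized eigenspaces of the monodromy, further reducing to the case of a local system $\L$ of rank $r_\lambda$ with a single generalized eigenvalue $\lambda$. Then $\det(\id - tT_{y_\alpha}|\L_{y_\alpha}) = (1-t\lambda)^{r_\lambda}$, and since the induced operator on each $H^j_c(Z, \L)$ still has only $\lambda$ as generalized eigenvalue, the corresponding factor of the zeta function is $(1 - t\lambda)^{\dim H^j_c(Z, \L)}$. Taking the alternating product and using $\chi_c(Z, \L) = \chi_c(Z) \cdot \rank \L$ yields $(1-t\lambda)^{\chi_c(Z) \cdot r_\lambda}$, which equals $\det(\id - tT_{y_\alpha})^{\chi(Z)}$ after invoking Lemma \ref{lem:2-str-1}. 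Reassembling over $\lambda$ and over strata gives the desired identity. The main obstacle is the third step: one must verify that passing from a local system on $Z_\alpha$ to its compactly supported cohomology does not introduce new generalized eigenvalues of the monodromy, which is exactly what legitimizes the Lefschetz-type multiplicative identity and reduces everything to a count of Euler characteristics.
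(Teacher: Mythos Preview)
Your approach is correct and matches what the paper indicates. The paper does not actually give a proof of this theorem: it says only ``By Theorem~\ref{thm.7.2.0078} and Proposition~\ref{prp.7.1.002}, we can easily prove the following very useful result'' and then refers the reader to Dimca \cite[p.~170--173]{Dimca} and Sch\"urmann \cite[Chapter~2]{Sch} for details. Your outline uses precisely these two inputs --- the commutation $\psi_f(R\rho_*\G)\simeq R(\rho|_{Y_0})_*\psi_g(\G)$ together with proper base change, and the generalized eigenspace decomposition of the monodromy --- followed by stratification and the identity $\chi_c(Z,\L)=\chi_c(Z)\cdot\rank\L$, which is exactly the standard argument in the cited references.

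One remark on what you flag as ``the main obstacle'': it is not a genuine difficulty. The eigenvalue decomposition of Theorem~\ref{thm.7.2.0078} (or, for a general constructible $\G$, its extension via perverse truncation, or simply quasi-unipotence of the monodromy) takes place at the level of the object $\psi_g(\G)$ itself, so on each summand one has $(\Psi_g(\G)-\lambda\cdot\id)^N=0$ as an endomorphism in $\Dbc(Y_0)$. Applying any functor --- in particular $R\Gamma_c(Z_\alpha,-)$ --- preserves this nilpotency, so no new eigenvalues can appear. With that settled, your reduction to $(1-t\lambda)^{\chi_c(Z_\alpha)\cdot r_\lambda}$ and the appeal to Lemma~\ref{lem:2-str-1} go through as written.
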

For the precise proof of this theorem, 
see for example, \cite[p.170-173]{Dimca} and 
\cite[Chapter 2]{Sch}. 

\begin{corollary}\label{cpr:2-99}
In the situation of Theorem \ref{prp:2-99}, 
for a point $x \in X_0=f^{-1}(0)$ of $X_0$ let 
$\rho^{-1}(x)= \sqcup_{\alpha} Z_{\alpha}$ be 
a stratification of $Z:= \rho^{-1}(x) \subset 
Y_0=g^{-1}(0)$ such that for any $\alpha$ 
the $\CC(t)^*$-valued constructible function 
$\zeta_{g}( \G ) \in \CF_{\CC(t)^*}(Y_0)$ is 
constant on $Z_{\alpha}$. Then we have 
\begin{equation}
\zeta_{f,x}( {\rm R} \rho_* \G )(t)= 
\dprod_{\alpha} \Bigl\{
\zeta_{g, y_{\alpha}}( \G )(t) 
\Bigr\}^{\chi ( Z_{\alpha} )}, 
\end{equation}
where $y_{\alpha}$ is a reference point of $Z_{\alpha}$. 
\end{corollary}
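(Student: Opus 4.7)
The plan is to deduce this corollary as a direct consequence of Theorem \ref{prp:2-99} by evaluating the identity $\int_{\rho|_{Y_0}} \zeta_g(\G) = \zeta_f(R\rho_*\G)$ pointwise at $x \in X_0$. Essentially no new ideas are required beyond the definition of the push-forward of constructible functions and the topological integral; the proof is a matter of careful unpacking.

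First, I would note that Theorem \ref{prp:2-99} gives the equality $\int_{\rho|_{Y_0}} \zeta_g(\G) = \zeta_f(R\rho_*\G)$ as an identity of $\CC(t)^*$-valued constructible functions on $X_0$. Evaluating at $x$, the right-hand side is by definition $\zeta_{f,x}(R\rho_*\G)(t)$, whereas the left-hand side equals $\left(\int_{\rho|_{Y_0}}\zeta_g(\G)\right)(x) = \int_{\rho^{-1}(x)}\zeta_g(\G)$ by the definition of the push-forward of constructible functions. So it remains to compute this topological integral over the fiber $Z = \rho^{-1}(x)$.

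Next, I would invoke the hypothesis that the stratification $Z = \bigsqcup_{\alpha} Z_\alpha$ trivializes $\zeta_g(\G)$, so that for every $\alpha$ we have $\zeta_g(\G)|_{Z_\alpha} \equiv \zeta_{g,y_\alpha}(\G)(t)$. Translating the defining formula $\int_Z \varphi = \sum_\alpha \chi(Z_\alpha)\cdot \varphi(z_\alpha)$ from the additive notation of the abelian group $G=\CC(t)^*$ into its multiplicative notation (sum becomes product, scalar multiplication by an integer $\chi(Z_\alpha)$ becomes exponentiation) immediately yields
\begin{equation}
\int_{\rho^{-1}(x)} \zeta_g(\G) = \prod_{\alpha}\bigl\{\zeta_{g,y_\alpha}(\G)(t)\bigr\}^{\chi(Z_\alpha)},
\end{equation}
which is the desired formula. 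Well-definedness of the topological integral independent of the chosen stratification is guaranteed by Lemma \ref{lem:2-str-1} (via the equality $\chi_{\rm c} = \chi$ on complex manifolds and the additivity of $\chi_{\rm c}$ under stratifications).

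The main obstacle, if one can call it that, is purely notational: one must be careful when switching between additive notation (in which the topological integral was defined) and multiplicative notation (which is natural for $\CC(t)^*$). No further geometric, sheaf-theoretic, or Hodge-theoretic input is needed — the hard work is already contained in Theorem \ref{prp:2-99} and its preparatory lemmas.
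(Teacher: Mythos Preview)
Your proposal is correct and matches the paper's approach: the corollary is stated without proof there, as an immediate consequence of Theorem \ref{prp:2-99} obtained by evaluating the identity of constructible functions at $x$ and unpacking the definition of the topological integral over $\rho^{-1}(x)$. Your careful remark about translating the additive notation of the integral into the multiplicative notation of $\CC(t)^*$ is the only point requiring attention, and you handle it correctly.
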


In the special but important case where $\G = \CC_Y$ 
and the restriction $Y \setminus Y_0 \longrightarrow 
X \setminus X_0$ of $\rho$ is an isomorphism, 
this result was first obtained by 
Gusein-Zade-Luengo-Melle-Hern\'andez 
\cite[Corollary 1]{GZ-L-MH-1}. 
The following two lemmas are useful to calculate monodromy 
zeta functions of constructible sheaves by 
Corollary \ref{cpr:2-99}. Indeed, 
by resolutions of singularities, we can reduce the 
problem to the situations treated in them. 
For their applications to 
the monodromies of $A$-hypergeometric functions, 
see \cite{Takeuchi-2} and \cite{A-E-T}.

\begin{lemma}\label{lem:2-99}
For $1 \leq k \leq n$ let $\L$ be a $\CC$-local system 
of rank $r>0$ on $( \CC^*)^k \times \CC^{n-k} \subset \CC^n$. 
Let $j: ( \CC^*)^k \times \CC^{n-k} \hookrightarrow \CC^n$
be the inclusion map and $h: \CC^n \longrightarrow \CC$ 
the function on $\CC^n$ defined by $h(z)=z_1^m$ 
($m \in \ZZ_{>0}$) for $z=(z_1,z_2, \ldots, z_n) \in \CC^n$. 
\begin{enumerate}
\item[\rm{(i)}] Assume moreover that $k>1$. Then we have 
$\psi_h(j_! \L )_0 \simeq 0$ and hence $\zeta_{h,0}(j_! \L )(t)=1$.  
\item[\rm{(ii)}]  Assume moreover that $k=1$ and 
let $A \in {\rm GL}_r( \CC )$ be the monodromy matrix of 
$\L$ along the loop $C:= \{ (e^{\sqrt{-1} \theta}, 0, \ldots, 0) 
\ | \ 0 \leq \theta \leq 2 \pi \}$ in 
$( \CC^*)^k \times \CC^{n-k}= 
\CC^* \times \CC^{n-1}$ (defined up to conjugacy). 
Then we have 
\begin{equation}
\zeta_{h,0}(j_! \L )(t)= 
{\rm det} \bigl( {\rm id}- t^m A \bigr) \quad \in \CC (t)^*. 
\end{equation}
\end{enumerate}
\end{lemma}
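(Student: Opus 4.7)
The plan is to compute $\psi_h(j_! \L)_0$ via the Milnor fiber cohomology ${\rm R} \Gamma(F_0; j_! \L|_{F_0})$ using Theorem~\ref{thm.7.2.006}, and to exploit the simple structure of the Milnor fiber of the monomial $h(z) = z_1^m$. For $0 < |t_0| \ll \varepsilon \ll 1$, the fiber $F_0 = \{z_1^m = t_0\} \cap B(0, \varepsilon)$ decomposes as a disjoint union $\bigsqcup_{\nu^m = 1} F_0^{(\nu)}$ of $m$ contractible components, each identified via projection to $(z_2, \dots, z_n)$ with a small analytic ball $B' \subset \CC^{n-1}$ on which $z_1$ is constant, equal to $\nu \cdot t_0^{1/m}$. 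The Milnor monodromy $t_0 \mapsto e^{2\pi\sqrt{-1}} t_0$ cyclically permutes these components, and its $m$-th iterate traces the loop $C$ exactly once in the $z_1$-direction.

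For part (ii), with $k = 1$, the condition $z_1 \neq 0$ defining $\Omega$ is automatic on $F_0$, so $j_! \L|_{F_0} = \L|_{F_0}$. Contractibility of each component forces $H^*(F_0; \L)$ to be concentrated in degree zero and equal to $\bigoplus_{\nu^m = 1} \L_{x_\nu}$. Choosing a basis compatible with parallel transport along the cyclic permutation, the Milnor monodromy $\Phi(j_! \L)_{0,0}$ takes the block companion form with $m - 1$ identity blocks $I_r$ on the subdiagonal and the block $A$ in the upper-right corner; the standard block-determinant calculation then yields $\det(\id - t\Phi) = \det(\id - t^m A)$, and hence the stated formula for $\zeta_{h, 0}(j_! \L)(t)$.

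For part (i), with $k \geq 2$, the restriction of $j_! \L$ to each component $B'$ is the extension by zero of a local system $\L''$ from $U_{B'} := B' \cap ((\CC^*)^{k-1} \times \CC^{n-k})$, where the $\CC^*$ factors refer now to the coordinates $z_2, \dots, z_k$. The claim reduces to the auxiliary vanishing
\begin{equation*}
{\rm R} \Gamma(B'; j''_! \L'') = 0
\end{equation*}
for the inclusion $j''\colon U_{B'} \hookrightarrow B'$. I will prove this via the distinguished triangle $j''_! \L'' \to {\rm R} j''_* \L'' \to i''_* i''^* {\rm R} j''_* \L'' \overset{+1}{\to}$, where $i''\colon B' \setminus U_{B'} \hookrightarrow B'$ is the inclusion of the coordinate hyperplane configuration $\{z_2 \cdots z_k = 0\} \cap B'$. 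Taking ${\rm R} \Gamma(B'; -)$, the middle term gives ${\rm R} \Gamma(U_{B'}; \L'')$, while the right term, after deformation retracting $B' \setminus U_{B'}$ onto the origin, identifies with the derived stalk $(i''^* {\rm R} j''_* \L'')_0$; both of these compute the cohomology of the small link torus $(S^1)^{k-1}$ with coefficients in $\L''$, and the comparison map between them is a quasi-isomorphism, forcing ${\rm R} \Gamma(B'; j''_! \L'')$ to vanish.

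The main obstacle is this final identification in part (i): one must verify carefully that ${\rm R} \Gamma(B' \setminus U_{B'}; i''^* {\rm R} j''_* \L'')$ really degenerates to the stalk at the origin, which is not fully automatic since $i''^* {\rm R} j''_* \L''$ is only locally constant on each stratum of the normal crossing complement $B' \setminus U_{B'}$. As a safer alternative I would induct on $k - 1$ by pushing forward along the projection onto one punctured-disk factor, reducing the problem to the one-variable vanishing ${\rm R} \Gamma(D; j_{0,!} \M) = 0$ for a disk $D$ and a local system $\M$ on $D^* = D \setminus \{0\}$; this follows immediately from the same triangle combined with the homotopy equivalence of $D^*$ with any small punctured disk around $0$.
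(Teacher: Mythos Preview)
Your argument is correct. The paper's own proof is essentially a pointer to external work: it invokes Theorem~\ref{thm.7.2.004} together with \cite[Proposition~5.2 (ii),(iii)]{M-T-new2} and the K\"unneth formula, and gives no further details. Your approach is genuinely different in that it is self-contained: you identify the Milnor fiber of $z_1^m$ explicitly as $m$ copies of a ball $B'\subset\CC^{n-1}$, and then compute ${\rm R}\Gamma(F_0;j_!\L|_{F_0})$ directly. For (ii) your block-companion calculation of $\det(\id - t\Phi)=\det(\id - t^m A)$ is exactly right. For (i) the distinguished-triangle argument you outline is in fact complete: the ``obstacle'' you flag is not real, because the scaling $\RR_{>0}$-action $(z_2,\ldots,z_n)\mapsto(s z_2,\ldots,s z_n)$ preserves the stratification of $B'\setminus U_{B'}$ and the sheaf $i''^{\,*}{\rm R}j''_*\L''$ is constructible with respect to that conical stratification, so ${\rm R}\Gamma(B'\setminus U_{B'};\,i''^{\,*}{\rm R}j''_*\L'')$ does collapse to the stalk at the origin. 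Your inductive alternative via projection to a single punctured-disk factor is also valid (and is morally the K\"unneth reduction the paper has in mind), provided you replace the round ball $B'$ by a small polydisk, which is harmless for constructible-sheaf cohomology. What your route buys is independence from the reference \cite{M-T-new2}; what the paper's route buys is brevity.
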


\begin{proof}
By Theorem \ref{thm.7.2.004} the assertion of (i) 
immediately follows from \cite[Proposition 5.2 (ii)]{M-T-new2} 
and the K\"unneth formula.  
We also obtain the one in (ii) by 
\cite[Proposition 5.2 (iii)]{M-T-new2}. 
\qed 
\end{proof}

The following result, which is a 
generalization of A'Campo's lemma i.e. 
Lemma \ref{lem:2-ac-21} 
(see also Oka \cite[Example (3.7)]{Oka}) 
to constructible sheaves, was proved in 
\cite[Proposition 5.3]{M-T-new2}.  

\begin{lemma}\label{lem:2-999}
For $0 \leq k \leq n$ let $\L$ be a $\CC$-local system 
of rank $r>0$ on $( \CC^*)^k \times \CC^{n-k} \subset \CC^n$. 
Let $j: ( \CC^*)^k \times \CC^{n-k} \hookrightarrow \CC^n$
be the inclusion map and $h: \CC^n \longrightarrow \CC$ 
the function on $\CC^n$ defined by $h(z)=z_1^{m_1}z_2^{m_2} 
\cdot z_n^{m_n}$ 
($m_i \in \ZZ_{\geq 0}$) for $z=(z_1,z_2, \ldots, z_n) \in \CC^n$. 
Assume that $\sharp \{ 1 \leq i \leq n \ | \ m_i>0 \} 
\geq 2$. Then we have $\zeta_{h,0}(j_! \L )(t)=1$.  
\end{lemma}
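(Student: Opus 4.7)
By relabeling coordinates we may assume $m_1, m_2 > 0$. For $0 < \eta \ll \varepsilon \ll 1$, the Milnor fiber $F_0 = B(0;\varepsilon) \cap h^{-1}(\eta)$ of $h$ at the origin is contained in $\{z \in \CC^n \mid z_1, z_2 \neq 0\}$, since $h(z) = \eta \neq 0$ forces this. Setting $V := F_0 \cap ((\CC^*)^k \times \CC^{n-k})$, which is open in $F_0$, Theorem \ref{thm.7.2.006} identifies $H^\bullet \psi_h(j_!\L)_0$ with $H^\bullet(F_0; (j_!\L)|_{F_0}) = H^\bullet_c(V; \L|_V)$ equivariantly under the geometric monodromy $T$. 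Thus $\zeta_{h,0}(j_!\L)(t)$ is the Lefschetz-type zeta function of $T$ acting on these cohomology groups, equal to $\exp\bigl(\sum_{k \geq 1} L(T^k;V,\L)\,t^k/k\bigr)$.

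The plan is to exhibit a free $\CC^*$-action on $V$ that commutes with $T$, and then to deform $T$ through this action to a fixed-point-free endomorphism of $(V,\L)$, forcing every iterated Lefschetz number to vanish. Let $d := \gcd(m_1, m_2)$ and write $m_i = d m_i'$ with $\gcd(m_1', m_2') = 1$. Define
\begin{equation}
\sigma_\lambda(z_1, z_2, z_3, \ldots, z_n) := (\lambda^{m_2'} z_1, \lambda^{-m_1'} z_2, z_3, \ldots, z_n) \qquad (\lambda \in \CC^*).
\end{equation}
A direct calculation shows $h \circ \sigma_\lambda = h$ (so the action preserves $V$), and $T \circ \sigma_\lambda = \sigma_\lambda \circ T$ for the representative $T(z) = (e^{2\pi i/m_1}z_1, z_2, \ldots, z_n)$ of the geometric monodromy. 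Because $\gcd(m_1',m_2') = 1$ and $z_1 \neq 0$ on $V$, the action is free, making $V$ into a principal $\CC^*$-bundle over $V/\CC^*$ compatibly with $T$.

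For generic $\lambda \in \CC^*$ the composition $T \circ \sigma_\lambda$ has no fixed points on $V$: the fixed-point equations force $\lambda^{m_2'} e^{2\pi i/m_1} = 1$ together with $\lambda^{m_1'} = 1$, a condition satisfied by only finitely many $\lambda$. Using the path $s \mapsto \sigma_{\lambda^s}$ ($s \in [0,1]$) inside the connected group $\CC^*$, one obtains an isotopy from $T$ to $T \circ \sigma_\lambda$ through endomorphisms of the pair $(V,\L)$, lifting the family of isomorphisms $\sigma_{\lambda^s}^* \L \simeq \L$ continuously (possible since $\CC^*$ is connected, so the isomorphism class of $\sigma_\mu^*\L$ is independent of $\mu$). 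Homotopy invariance of the Lefschetz number then yields $L(T;V,\L) = L(T \circ \sigma_\lambda;V,\L) = 0$, and the same argument applied to each iterate $T^k$ (which commutes with $\sigma$) gives $L(T^k;V,\L) = 0$ for every $k \geq 1$, whence $\zeta_{h,0}(j_!\L)(t) = 1$.

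The main technical obstacle is the interplay between the non-compactness of $V$ and the $\CC^*$-action, since $\sigma_\lambda$ does not literally preserve the ambient ball $B(0;\varepsilon)$: one must replace $F_0$ by a $\sigma$-invariant model (for instance the level set $\{h = \eta\}$ cut by a suitably normalized conical neighborhood) that is homotopy equivalent to the standard Milnor fiber, and then verify that the Lefschetz trace formula with local-system coefficients transfers along this equivalence. Once this replacement is carried out, the isotopy argument goes through as sketched.
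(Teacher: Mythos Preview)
The paper does not prove this lemma directly; it simply records that the result is \cite[Proposition 5.3]{M-T-new2}. So there is no in-paper argument to compare with, and the question is whether your sketch stands on its own.

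Your core idea---a free $\CC^*$-action $\sigma_\lambda$ on the fiber commuting with the geometric monodromy, used to deform $T$ to a fixed-point-free map---is the right heuristic and is exactly why A'Campo's formula gives $1$ for $\geq 2$ factors. But the argument as written has a genuine gap, which you yourself flag in the final paragraph and then do not close. Two points make it incomplete rather than merely informal. First, the identification $H^{\bullet}(F_0;(j_!\L)|_{F_0}) = H^{\bullet}_c(V;\L|_V)$ is not correct as stated: $F_0$ is the \emph{open} Milnor fiber, and for an open inclusion $\iota_V:V\hookrightarrow F_0$ one does not have $R\Gamma(F_0;(\iota_V)_!\G)\simeq R\Gamma_c(V;\G)$ unless $F_0$ is compact. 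You need to pass to the closed Milnor fiber (or otherwise justify this). Second, and more seriously, the step ``$T\circ\sigma_\lambda$ has no fixed points, hence $L(T\circ\sigma_\lambda;V,\L)=0$'' is an invocation of the Lefschetz fixed point theorem on a non-compact space with local-system coefficients. That theorem requires compactness or a relative/properness hypothesis; on a non-compact space a fixed-point-free self-map can perfectly well have nonzero Lefschetz number (think of a translation on $\RR$). Your proposed fix---replace $F_0$ by a $\sigma$-invariant model homotopy equivalent to it---is precisely the missing content: you must actually construct such a model (e.g.\ a compact manifold with corners on which both $T$ and $\sigma_\lambda$ act, compatibly with the boundary), verify that the relevant cohomology groups are unchanged, and check that a version of LFPT with coefficients applies there. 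Until that is written down, the argument is a plausible outline, not a proof.

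A more direct route, closer in spirit to the sheaf-theoretic arguments used elsewhere in the paper (cf.\ the proofs of Lemmas \ref{lem:2-ac-1} and \ref{lem:2-99}), would avoid fixed-point theory entirely: compute $\psi_h(j_!\L)_0$ via the normal-crossing structure of $h^{-1}(0)$, or reduce by the multiplicativity of $\zeta$ over distinguished triangles (Lemma \ref{lemma:2-8}) and a K\"unneth-type decomposition to the case of a rank-one local system, where the vanishing follows from the explicit description of the cover of the Milnor fiber of a monomial.
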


\section{Theorems of Kouchnirenko, Varchenko and Oka}\label{section 3}

In this section, we introduce the beautiful theorems of 
Kouchnirenko, Varchenko and Oka which express 
the Milnor numbers and the monodromy 
zeta functions of holomorphic functions in terms of their 
Newton polyhedra. Here, for simplicity, we first consider 
non-constant polynomials 
$f(x) \in \CC [x]= \CC [x_1, \ldots, x_n]$ on $\CC^n$ 
such that $f(0)=0$. We set $\ZZ_+^n:=( \ZZ_{\geq 0})^n \subset \ZZ^n$ and 
$\RR_+^n:=( \RR_{\geq 0})^n \subset \RR^n$. 
We shall freely use standard notions in toric geomety, 
for which we refer to Fulton \cite{Fulton} and 
Oda \cite{Oda}. First of all, we recall 
the simplest case of the 
Bernstein-Khovanskii-Kouchnirenko  
theorem \cite{Khovanskii}. 

\begin{definition}
Let $g(x)=\sum_{v \in \ZZ^n} c_vx^v \in 
\CC [x_1^{\pm}, \ldots, x_n^{\pm}]$ ($c_v\in \CC$) be a 
Laurent polynomial on the algebraic torus 
$T=(\CC^*)^n$. 
\begin{enumerate}
\item[\rm{(i)}] We call the convex hull of 
$\supp(g):=\{v\in \ZZ^n \ | \ c_v\neq 0\} 
\subset \ZZ^n \subset \RR^n$ in $\RR^n$ the 
Newton polytope of $g$ and denote it by $NP(g)$.
\item[\rm{(ii)}] For a face $\gamma \prec NP(g)$ of $NP(g)$, 
we define the $\gamma$-part 
$g^{\gamma}$ of $g$ by 
$g^{\gamma}(x):=\sum_{v \in \gamma \cap \ZZ^n} c_vx^v$. 
\end{enumerate}
\end{definition}

\begin{definition}\label{non-deg-simple} 
(see \cite{Oka}) 
Let $g(x) \in \CC [x_1^{\pm}, \ldots, x_n^{\pm}]$ be a 
Laurent polynomial on  
$T=(\CC^*)^n$ and set $\Delta :=NP(g) \subset \RR^n$. 
Then we say that $g$ is 
non-degenerate 
if for any face $\gamma \prec \Delta$ of 
$\Delta$ the $1$-form $dg^{\gamma}$ does not vanish 
on the complex hypersurface 
$\{ x\in T=(\CC^*)^n  \ | \ g^{\gamma}(x)=0 \}$.
\end{definition}

\begin{theorem}\label{BKK-simple} 
{\rm (Bernstein-Khovanskii-Kouchnirenko 
theorem \cite{Khovanskii}, see also 
Oka \cite[Chapter IV, Theorem (3.1)]{Oka})} 
Let $g(x) \in \CC [x_1^{\pm}, \ldots, x_n^{\pm}]$ be a 
Laurent polynomial on  
$T=(\CC^*)^n$ and set $\Delta :=NP(g) \subset \RR^n$. 
Assume that $g$ is 
non-degenerate. Then for the complex hypersurface 
$Z^* =\{ x\in T=(\CC^*)^n \ | \ g(x)
=0 \}$ of $T=(\CC^*)^n$ we have
\begin{equation}
\chi(Z^*)=(-1)^{n-1} \Vol_{\ZZ}( \Delta ),  
\end{equation}
where $\Vol_{\ZZ}( \Delta ) \in \ZZ_+$ 
is the normalized $n$-dimensional volume 
(i.e. the $n!$ times the usual volume) of 
$\Delta$ with respect to the lattice $\ZZ^n 
\subset \RR^n$.
\end{theorem}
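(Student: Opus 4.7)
My plan is to induct on $n$, reducing via a toric compactification $X_\Sigma \supset T = (\CC^*)^n$ adapted to the polytope $\Delta$.

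First, I would reduce to the case $\dim\Delta = n$. If $\dim\Delta < n$, then a monomial automorphism of $T$ brings $g$ into the form $x^{\alpha}g'(x_1,\ldots,x_{n-1})$ with $g'$ a non-degenerate Laurent polynomial in strictly fewer variables, so that $Z^*\cong\{g'=0\}\times\CC^*$. Since $\chi(\CC^*)=0$, we get $\chi(Z^*) = 0$, matching $\Vol_\ZZ(\Delta) = 0$ (the normalized $n$-dimensional volume of a lower-dimensional polytope vanishes).

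Assume now $\dim\Delta = n$. I would choose a smooth projective fan $\Sigma$ refining the normal fan of $\Delta$, and let $X_\Sigma$ be the corresponding smooth projective toric variety, with orbit decomposition $X_\Sigma = \bigsqcup_{\sigma\in\Sigma} T_\sigma$ (where $T_{\{0\}} = T$). Let $\overline Z$ denote the closure of $Z^*$ in $X_\Sigma$. Each cone $\sigma\in\Sigma$ selects a face $\gamma(\sigma)\prec\Delta$, and after monomial normalization the restriction $g|_{T_\sigma}$ is, up to a unit, the face polynomial $g^{\gamma(\sigma)}$ on $T_\sigma$. The non-degeneracy of $g$ then forces $\overline Z$ to meet every orbit transversally, so each $\overline Z\cap T_\sigma$ is a (possibly empty) non-degenerate hypersurface in the lower-dimensional torus $T_\sigma$ with Newton polytope $\gamma(\sigma)$ inside its affine span. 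By Lemma \ref{lem:2-str-1},
\begin{equation*}
\chi(\overline Z) \;=\; \chi(Z^*) \;+\; \sum_{\sigma\neq\{0\}} \chi(\overline Z\cap T_\sigma).
\end{equation*}
By the inductive hypothesis, every non-trivial term on the right equals $(-1)^{\dim\gamma(\sigma)-1}\Vol_\ZZ(\gamma(\sigma))$. Independently I would compute $\chi(\overline Z)$ via toric intersection theory: $\overline Z$ is a smooth ample divisor in $X_\Sigma$ whose class is prescribed by $\Delta$, and a standard adjunction computation using the total Chern class of $X_\Sigma$ yields $\chi(\overline Z)$ as an explicit combinatorial expression in the lattice volumes of the faces of $\Delta$.

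The heart of the argument is then the cancellation: after subtracting the inductive boundary contributions from the Chern-class expression for $\chi(\overline Z)$, one has to see that precisely $(-1)^{n-1}\Vol_\ZZ(\Delta)$ remains, which is essentially a M\"obius-inversion identity on the face poset of $\Delta$. This bookkeeping is the main obstacle. A clean way to sidestep it is to deform $g$ through non-degenerate polynomials to a Viro-type polynomial subordinate to a unimodular triangulation of $\Delta$: there the contribution of each top-dimensional simplex is manifestly $(-1)^{n-1}$ times its normalized volume, summing to $(-1)^{n-1}\Vol_\ZZ(\Delta)$. Topological invariance of $\chi$ under the resulting equisingular family then transfers the formula to the general non-degenerate $g$.
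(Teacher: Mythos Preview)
The paper does not prove this theorem: it is stated with attribution to Khovanskii and Oka and then used as a black box (notably in the proof of Varchenko's theorem, Theorem~\ref{thm:3-5-ad}). So there is no ``paper's own proof'' to compare against.

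Your outline is a reasonable sketch of the standard toric-compactification proof. The reduction to $\dim\Delta=n$ is fine, and the inductive decomposition $\chi(\overline Z)=\chi(Z^*)+\sum_{\sigma\neq\{0\}}\chi(\overline Z\cap T_\sigma)$ together with the identification of each boundary piece as a lower-dimensional non-degenerate hypersurface is exactly right. Two points deserve more care. First, you assert that the Chern-class computation of $\chi(\overline Z)$ and the subsequent M\"obius-type cancellation is ``the main obstacle'' and then sidestep it with a Viro deformation; but you have not justified that this deformation stays inside the non-degenerate locus (so that the closures $\overline Z_t$ form an equisingular, hence topologically trivial, family over the parameter). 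That is not automatic and is where the work lies. Second, if you do carry out the Chern-class route, note that $\overline Z$ need not be ample in $X_\Sigma$ once you refine the normal fan: it is only nef, though that suffices for the adjunction computation you have in mind. Either path can be made to work, but as written the deformation argument is the genuine gap.
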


\begin{definition}
Let $f(x)=\sum_{v \in \ZZ_+^n} a_v x^v \in \CC [x]$ ($a_v\in \CC$) be a 
non-constant polynomial on $\CC^n$ such that $f(0)=0$. 
\begin{enumerate}
\item[\rm{(i)}] We define the support $\supp(f) \subset \ZZ_+^n$ 
of $f$ by 
\begin{equation}
\supp(f):=\{v\in \ZZ^n_+ \ | \ a_v\neq 0\} 
\ \subset \ZZ_+^n. 
\end{equation}
\item[\rm{(ii)}] We define the Newton polyhedron 
$\Gamma_+(f) \subset \RR_+^n$ of $f$ at 
the origin $0 \in \CC^n$ of $\CC^n$ to be 
the convex hull of $\cup_{v \in \supp(f)} 
(v + \RR_+^n)$ in $\RR_+^n$.  
\item[\rm{(iii)}] For a compact 
face $\gamma \prec \Gamma_+(f)$ of $\Gamma_+(f)$, 
we define the $\gamma$-part 
$f^{\gamma} \in \CC [x]$ of $f$ by 
$f^{\gamma}(x):=\sum_{v \in \gamma \cap \ZZ_+^n} a_vx^v$. 
\item[\rm{(iv)}] We say that $f$ is convenient if 
$\Gamma_+(f)$ intersects the positive part of each 
coordinate axis of $\RR^n$. 
\end{enumerate}
\end{definition}

\begin{definition} {\rm (Kouchnirenko \cite{Kushnirenko})} 
Let $f(x)=\sum_{v \in \ZZ_+^n} a_v x^v \in \CC [x]$ ($a_v\in \CC$) be a 
non-constant polynomial on $\CC^n$ such that $f(0)=0$. 
Then we say that $f$ is non-degenate (at the origin $0 \in \CC$) if 
for any compact face $\gamma \prec \Gamma_+(f)$ of $\Gamma_+(f)$ 
the complex hypersurface 
\begin{equation}
\{ x=(x_1, \ldots, x_n)  \in (\CC^*)^n \ |\ f^{\gamma}(x)=0 \} 
\ \subset (\CC^*)^n
\end{equation} 
of $(\CC^*)^n$ is smooth.
\end{definition}

Note that among the polynomials having the same 
Newton polyhedron this non-degeneracy condition 
is satisfied by generic ones. 
For a subset $\J\subset \{1,2,\ldots,n\}$, let us set
\begin{equation}
\RR^{\J}:=\{ v=(v_1,v_2,\ldots,v_n)\in \RR^n\ |\ 
\text{$v_i=0$ for $i \notin \J$}\} \simeq \RR^{\sharp S}.
\end{equation}
Denote by $( \RR^{\J} )^* \simeq \RR^{\sharp S}$ 
the dual vector space of $\RR^{\J} \simeq \RR^{\sharp S}$. 
We set also $\Gamma_{+}^{\J}(f)=\Gamma_{+}(f) \cap \RR^{\J}$. 
Note that if $f$ is convenient 
we have $\dim \Gamma_{+}^{\J}(f)=\sharp {\J}$ 
for any non-empty subset ${\J} \subset \{1,2,\ldots,n\}$. 
Now let $f(x)=\sum_{v \in \ZZ_+^n} a_v x^v \in \CC [x]$ ($a_v\in \CC$) be a 
non-constant polynomial on $\CC^n$ such that $f(0)=0$. 
Denote by $( \RR^n)^*_+ 
\simeq \RR^n_+$ the dual cone of $\RR_+^n \subset \RR^n$ 
in $( \RR^n)^*$. For $u \in ( \RR^n)^*_+$ we define 
the supporting face $\gamma^u \prec \Gamma_+(f)$ of 
$u$ in $\Gamma_+(f)$ by 
\begin{equation}
\gamma^u := \left\{ v \in \Gamma_+(f) \ | \ 
\langle u, v \rangle = 
\min_{w \in \Gamma_{+}(f)} \langle u , w \rangle \right\}. 
\end{equation}
Then for a face $\gamma \prec \Gamma_+(f)$ of 
$\Gamma_+(f)$ the (closed) polyhedral cone 
\begin{equation}
\sigma ( \gamma ):= 
\overline{ \{ u \in ( \RR^n)^*_+ \ | \ \gamma^u = \gamma \} } 
\quad \subset ( \RR^n)^*_+
\end{equation}
in $( \RR^n)^*_+$ is rational and strongly convex. 
Moreover, we can easily see that the family 
$\Sigma_1 := \{ \sigma ( \gamma ) \}_{\gamma \prec \Gamma_+(f)}$ 
of cones in $( \RR^n)^*_+$ thus obtained 
satisfies the axioms of fans. We call it the dual fan of 
$\Gamma_+(f)$ in $( \RR^n)^*_+$. 
Conversely, for a rational and strongly convex 
polyhedral cone $\sigma \subset ( \RR^n)^*_+$ we set 
\begin{equation}
\gamma ( \sigma ):= 
\bigcap_{u \in \sigma} \gamma^u \quad \prec \Gamma_+(f). 
\end{equation}
We call it the supporting face of $\sigma$ in 
$\Gamma_+(f)$. Then it is easy to see that 
for any $u \in ( \RR^n)^*_+$ in the relative 
interior ${\rm rel.int} ( \sigma )$ of $\sigma$ 
we have $\gamma ( \sigma )= \gamma^u$. 
For each non-empty subset ${\J} \subset \{1,2,\ldots, n\}$, let 
$n({\J}) \in \ZZ_+$ be the number of the 
$(\sharp {\J}-1)$-dimensional compact faces 
i.e. the compact facets 
of $\Gamma_{+}^{\J}(f)$ and 
$\gamma_1^{\J},\gamma_2^{\J}, \ldots,\gamma_{n({\J})}^{\J}$ 
be such faces. 
For $1 \leq i \leq n(\J)$, let $u_i^{\J} \in (\RR^{\J})^* \cap \ZZ_+^{\J}$ be 
the unique non-zero primitive vector which takes its 
minimum in $\Gamma_{+}^{\J}(f)$ on the whole $\gamma_i^{\J}$ and set
\begin{equation}
d_i^{\J}: = \min_{v\in \Gamma_{+}^{\J}(f)} \langle u_i^{\J} ,v \rangle \ \in \ZZ_{>0}.
\end{equation}
We call $d_i^{\J}$ the lattice distance of $\gamma_i^{\J}$ from the origin $0 \in \RR^{\J}$. 
For each compact facet $\gamma_i^{\J} \prec \Gamma_{+}^{\J}(f)$, 
let $\LL(\gamma_i^{\J})$ be the smallest affine linear subspace of $\RR^n$ 
containing $\gamma_i^{\J}$ and $\Vol_{\ZZ}(\gamma_i^{\J}) \in \ZZ_{>0}$ the 
normalized $(\sharp \J -1)$-dimensional volume (i.e. the 
$(\sharp S-1)!$ times the usual volume) of $\gamma_i^{\J}$ 
with respect to the lattice $\ZZ^n \cap \LL(\gamma_i^{\J})$. 
Then we have the following celebrated theorem of 
Varchenko \cite{Varchenko}. 

\begin{theorem}\label{thm:3-5-ad}{\rm 
(Varchenko \cite[Theorem 4.1]{Varchenko})} 
Assume that $f$ is non-degenerate at the origin $0 \in \CC^n$. Then we have 
\begin{equation}
\zeta_{f,0}(t)=\prod_{{\J} \neq \emptyset } \ \zeta_{f, 0}^S(t),
\end{equation}
where for each non-empty subset $\J \subset \{1,2,\ldots,n\}$ we set
\begin{equation}
\zeta_{f, 0}^S(t):=
\prod_{i=1}^{n({\J})}
\ (1-t^{d_i^{\J}})^{(-1)^{\sharp {\J}-1}\Vol_{\ZZ}(\gamma_i^{\J})}.
\end{equation}
\end{theorem}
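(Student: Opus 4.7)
The plan is to compute $\zeta_{f,0}(t)$ by A'Campo's method on a toric modification of $\CC^n$ adapted to $\Gamma_+(f)$. First I would take a smooth subdivision $\Sigma$ of the dual fan $\Sigma_1$ of $\Gamma_+(f)$ in $(\RR^n)^*_+$ that also refines the fan of $\CC^n$, yielding a proper toric morphism $\rho\colon X_\Sigma\to\CC^n$ which is an isomorphism over $(\CC^*)^n$ (and, by a careful choice of $\Sigma$, over all of $\CC^n\setminus\{0\}$). Setting $g := f\circ\rho$, the non-degeneracy hypothesis guarantees that $\{g=0\}\subset X_\Sigma$ has simple normal crossings in a neighborhood of $\rho^{-1}(0)$: it is the union of the torus-invariant divisors $D(u)$ for rays $u$ of $\Sigma$, each appearing with multiplicity $d(u):=\min_{v\in\Gamma_+(f)}\langle u,v\rangle$, together with the strict transform $\tilde Z$ of $\{f=0\}$, which meets the toric stratification transversally.

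Applying Corollary~\ref{cpr:2-99} to $\G=\CC_{X_\Sigma}$, in the special case where $\rho|_{X_\Sigma\setminus Y_0}$ is an isomorphism (so that $\zeta_{f,0}(\mathrm{R}\rho_*\CC_{X_\Sigma})$ coincides with $\zeta_{f,0}(t)$, as in the Gusein-Zade--Luengo--Melle-Hern\'andez remark following that corollary), one obtains
\begin{equation*}
\zeta_{f,0}(t)=\prod_{\alpha}\zeta_{g,y_\alpha}(t)^{\chi(Z_\alpha)},
\end{equation*}
for a stratification $\{Z_\alpha\}$ of $\rho^{-1}(0)$ refining the torus-orbit stratification of $X_\Sigma$ by intersection with $\tilde Z$. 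By Lemma~\ref{lem:2-ac-21} and its generalization Lemma~\ref{lem:2-999}, the local factor $\zeta_{g,y}(t)$ is nontrivial only at points $y$ where $g$ is locally (up to a unit) a pure power of a single coordinate: these are points in the open orbit $O(u)\setminus\tilde Z$ of an exceptional divisor $D(u)$, together with fixed points where an exceptional divisor meets the strict transforms of coordinate hyperplanes in such a way that exactly one monomial factor of $g$ remains nontrivial; in each such case the local factor is $1-t^{d(u)}$ for the relevant ray $u$.

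To finish, each Euler characteristic $\chi(Z_\alpha)$ is evaluated using the Bernstein--Khovanskii--Kouchnirenko theorem (Theorem~\ref{BKK-simple}): the stratum associated to a ray $u$ with supporting face $\gamma^u\prec\Gamma_+(f)$ is (after further stratifying by coordinate subtori) a torus minus the zero set of the face polynomial $f^{\gamma^u}$, with Euler characteristic of the form $\pm\Vol_\ZZ(\gamma^u)$. I would then regroup the contributing rays according to the smallest coordinate subspace $\RR^{\J}$ whose affine span contains $\gamma^u$. By BKK, rays whose supporting face has positive codimension inside $\Gamma_+^{\J}(f)$ contribute trivially (this in particular kills auxiliary rays added for smoothing), so only the rays corresponding to compact facets $\gamma_i^{\J}$ of $\Gamma_+^{\J}(f)$ remain; the associated stratum is a $(\sharp\J-1)$-dimensional torus minus the hypersurface $\{f^{\gamma_i^{\J}}=0\}$, of Euler characteristic $(-1)^{\sharp\J-1}\Vol_\ZZ(\gamma_i^{\J})$ by BKK. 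Multiplying these contributions over all pairs $(\J,\gamma_i^{\J})$ then recovers exactly the stated product.

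The main obstacle is the combinatorial bookkeeping in this last step: identifying precisely which strata of $\rho^{-1}(0)$ correspond to each pair $(\J,\gamma_i^{\J})$, correctly treating the Euler-characteristic corrections coming from intersections with $\tilde Z$ and with strict transforms of coordinate hyperplanes, and extracting the sign $(-1)^{\sharp\J-1}$ from BKK applied to the face polynomial on the appropriate subtorus. A secondary delicate point is verifying independence of the final expression from the choice of smooth subdivision $\Sigma$: this follows a posteriori from the intrinsic nature of $\zeta_{f,0}(t)$, but operationally one must check that rays introduced purely for smoothing yield trivial BKK contributions.
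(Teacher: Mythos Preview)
Your approach is essentially the classical one and is correct in outline, but it differs genuinely from the paper's proof in its organization. You take a single toric modification of $\CC^n$, apply A'Campo's formula with the constant sheaf, and then regroup the surviving contributions according to the smallest coordinate subspace $\RR^{\J}$ containing each supporting face. The paper instead \emph{begins} with the decomposition
\[
\zeta_{f,0}(t)=\prod_{S\neq\emptyset}\zeta_{f\circ i_S,0}(\CC_{T_S})(t)
\]
coming from the stratification $\CC^n\setminus\{0\}=\bigsqcup_{S\neq\emptyset}T_S$ (via Corollary~\ref{new-corol}, Lemma~\ref{lemma:2-8} and Theorem~\ref{prp:2-99}), and then for each fixed $S$ computes $\zeta_{f\circ i_S,0}(\CC_{T_S})$ by a toric modification of $\overline{T_S}\simeq\CC^{\sharp S}$, working with the constructible sheaf $\CC_{T_S}$ rather than the constant sheaf. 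The key technical lemma is Lemma~\ref{lem:2-ac-1}: it forces strata where an extra $(\CC^*)$-factor has been removed to contribute trivially to $\psi$ of $\CC_{T_S}$, so that only $T$-orbits associated to rays in $\Int(\RR^S)^*_+$ survive. This buys the paper exactly what you flag as your ``main obstacle'': the $\zeta_{f,0}^S$ factors are separated from the outset, no regrouping of boundary strata is needed, and the argument for $S=\{1,\dots,n\}$ is literally the template for every $S$. It also generalizes cleanly to the Oka setting (Theorem~\ref{thm:3-5-CI}).

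One small inaccuracy in your outline: the parenthetical ``by a careful choice of $\Sigma$, over all of $\CC^n\setminus\{0\}$'' is too strong in the non-convenient case. What is always true (and is what the Gusein-Zade--Luengo--Melle-Hern\'andez remark actually needs) is that $\Sigma$ can be chosen so that $\rho$ is an isomorphism over $\CC^n\setminus f^{-1}(0)$: a boundary cone $\sigma_0=\langle e_i:i\in S\rangle$ of $(\RR^n)^*_+$ fails to lie in the dual fan $\Sigma_1$ precisely when $\Gamma_+^{S^c}(f)=\emptyset$, but then the corresponding torus orbit $T_{\sigma_0}$ lies in $f^{-1}(0)$, so subdividing it is harmless for the A'Campo computation over $\rho^{-1}(0)$. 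With this correction your plan goes through, though the fiber $\rho^{-1}(0)$ may then contain pieces coming from blown-up coordinate subspaces inside $f^{-1}(0)$, adding to the bookkeeping you already anticipate.
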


We shall give a short proof to Theorem \ref{thm:3-5-ad} along the 
same line as in \cite[Section 4]{M-T-new2}. 

\begin{proof}
For a subset $\J\subset \{1,2,\ldots,n\}$, let us set
\begin{equation}
T_{\J}:=\{ x=(x_1,x_2,\ldots,x_n) \in \CC^n \ | \ 
x_i=0 \ (i \notin \J ), \ 
x_i \not= 0 \ (i \in \J ) \} \subset \CC^n.
\end{equation}
Then we have $T_S \simeq ( \CC^*)^{\sharp S}$. Moreover 
for the closure $\overline{T_S}$ of $T_S$ in $\CC^n$ 
we have $\overline{T_S} \simeq \CC^{\sharp S}$. 
Let $i_S: \overline{T_S} \hookrightarrow \CC^n$ 
be the inclusion map. Then by Lemma \ref{lemma:2-8} and 
the decomposition $\CC^n \setminus \{ 0 \} 
 = \sqcup_{\J \not= \emptyset} T_S$ 
of $\CC^n \setminus \{ 0 \}$ we obtain 
\begin{align}
\zeta_{f,0}(t) & = \zeta_{f,0}( \CC_{\CC^n} )(t) = 
\zeta_{f,0}( \CC_{\CC^n \setminus \{ 0 \}} )(t) 
\\
& = \prod_{S \not= \emptyset} \ \zeta_{f,0}( \CC_{T_S} )(t)
\\
& = \prod_{S \not= \emptyset} \ \zeta_{f,0}( i_{S *} \CC_{T_S} )(t)
\\
& = \prod_{S \not= \emptyset} \ \zeta_{f \circ i_S,0}( \CC_{T_S} )(t), 
\end{align} 
where we used also Corollary \ref{new-corol}  
(resp. Theorem \ref{prp:2-99}) 
in the second (resp. last) equality. Hence it suffices to 
prove that for any non-empty subset $\J\subset \{1,2,\ldots,n\}$ 
we have 
\begin{equation}
\zeta_{f \circ i_S,0}( \CC_{T_S} )(t)= \zeta_{f, 0}^S(t)
=\prod_{i=1}^{n({\J})}
\ (1-t^{d_i^{\J}})^{(-1)^{\sharp {\J}-1}\Vol_{\ZZ}(\gamma_i^{\J})}.
\end{equation}
Here we shall prove it only for $\J = \{1,2,\ldots,n\}$. 
The proof for the other cases is similar. 
For $\J = \{1,2,\ldots,n\}$ we set 
\begin{equation}
T:=T_S=T_{\{1,2,\ldots,n\}} =( \CC^*)^n \subset \CC^n. 
\end{equation}
Let us consider $\CC^n$ as a toric variety associated to 
the fan $\Sigma_0$ in $( \RR^n)^*$ formed by the all faces of 
the first quadrant $( \RR^n)^*_+ \subset ( \RR^n)^*$. 
Then we regard $T = ( \CC^*)^n$ as the open dense torus in it. 
Let $\Sigma$ be a smooth subdivision of $\Sigma_0$ and 
the dual fan $\Sigma_1$ of $\Gamma^S_{+}(f) =\Gamma_{+}(f)$ in 
$( \RR^n)^*_+$. 
Let $X_{\Sigma}$ be the smooth 
toric variety  associated to $\Sigma$ and $T^{\prime} \subset 
X_{\Sigma}$ the open dense torus in it. Since $\Sigma$ 
is a subdivision of $\Sigma_0$, there exists a proper 
morphism $\pi : X_{\Sigma} \longrightarrow \CC^n$ of 
toric varieties. We can easily see that $\pi$ 
induces an isomorphism $T^{\prime} \simto T$. So in 
what follows, we identify $T^{\prime}$ with $T$ and 
denote it simply by $T$. Recall that $T^{\prime} \simeq T$ 
acts on $X_{\Sigma}$ and the $T$-orbits are parametrized by 
the cones in $\Sigma$. For a cone $\sigma \in \Sigma$ denote by 
$T_{\sigma} \simeq (\CC^*)^{n-\dim \sigma}$ the corresponding $T$-orbit. 
We have also natural affine open subsets $\CC^n(\sigma) 
\simeq \CC^n$ of $X_{\Sigma}$ associated to $n$-dimensional 
cones $\sigma$ in $\Sigma$. Now we apply Theorem \ref{prp:2-99}  
to the proper morphism $\pi : X_{\Sigma} \longrightarrow \CC^n$, 
the function $f \circ i_S=f : \CC^n \longrightarrow \CC$ 
and the constructible sheaf $\CC_T \in \Dbc (X_{\Sigma})$. 
Then for $S= \{1,2,\ldots,n\}$ we obtain an equality 
\begin{equation}
\zeta_{f \circ i_S,0}( \CC_{T_S} )= \zeta_{f}( \CC_{T} )(0)
=  \zeta_{f}( {\rm R} \pi_* \CC_{T} )(0)
= \int_{\pi^{-1}(0)}  \zeta_{f \circ \pi}( \CC_{T} ) 
\end{equation}
in $\CC (t)^*$. Hence for the calculation of 
$\zeta_{f \circ i_S,0}( \CC_{T_S} ) \in \CC (t)^*$ it 
suffices to calculate the value of the constructible 
function $\zeta_{f \circ \pi}( \CC_{T} )$ at 
each point of $\pi^{-1}(0) \subset X_{\Sigma}$. Note 
that $\pi^{-1}(0) \subset X_{\Sigma}$ is the union of 
the $T$-orbits $T_{\tau}$ in $X_{\Sigma}$ associated to the cones 
$\tau \in \Sigma$ such that 
${\rm rel.int} ( \tau ) \subset {\rm Int} ( \RR^n)^*_+
\simeq ( \RR_{>0})^n$.  
Let $\tau \in \Sigma$ be a cone such that 
${\rm rel.int} ( \tau ) \subset {\rm Int} ( \RR^n)^*_+$ 
and $\sigma \in \Sigma$ an $n$-dimensional cone 
such that $\tau \prec \sigma$. 
Let $w_1, w_2, \ldots, w_n \in \ZZ_+^n \setminus \{ 0 \}$ be the 
primitive vectors on the edges of 
the simplicial cone $\sigma$. 
Then there exists an affine open subset $\CC^n(\sigma)$ 
of $X_{\Sigma}$ such that $\CC^n(\sigma) \simeq \CC^n_y$ 
and $f \circ \pi$ has the following form on it:
\begin{equation} 
(f \circ \pi )(y)=\sum_{v \in \ZZ_+^n}a_{v}
y_1^{\langle w_1,v \rangle}\cdots y_n^{\langle w_n, v \rangle} =
y_1^{b_1} \cdots y_n^{b_n} \cdot f_{\sigma}(y), 
\end{equation}
where we set 
\begin{equation}
b_i=\min_{v\in \Gamma_{+}(f)} \langle w_i,v 
\rangle \geq 0 \qquad (i=1,2,\ldots,n)
\end{equation}
and $f_{\sigma}(y)$ is a polynomial on $\CC^n(\sigma) \simeq \CC^n_y$. 
Set $d:= {\rm dim} \tau$ and for simplicity, 
assume that $w_1,\ldots, w_d$ generate the simplicial cone $\tau$. 
Then in the affine chart $\CC^n(\sigma) \simeq \CC^n_y$ the $T$-orbit 
$T_{\tau}$ associated to $\tau$ is explicitly defined by
\begin{equation}
T_{\tau}=\{(y_1,\ldots,y_n)\in \CC^n(\sigma)\ 
 |\ y_1=\cdots =y_d=0,\ y_{d+1},\ldots, y_n\neq 0\}\simeq (\CC^*)^{n-d}.
\end{equation}
By the condition ${\rm rel.int} ( \tau ) \subset {\rm Int} ( \RR^n)^*_+$ 
the supporting face $\gamma ( \tau ) \prec \Gamma_+(f)$ of 
$\tau$ in $\Gamma_+(f)$ being compact, 
it follows form the non-degeneracy of $f$ that 
the complex hypersurface $\{ x \in (\CC^*)^n \ |\ 
f^{\gamma ( \tau )}(x)=0 \} 
\subset (\CC^*)^n$ is smooth. As 
the restriction of $f_{\sigma}$ to $T_{\tau} \subset 
\CC^n(\sigma)$ is naturally identified with 
$f^{\gamma ( \tau )}$, this implies that the hypersurface 
$f_{\sigma}^{-1}(0) \subset \CC^n(\sigma) \simeq \CC^n_y$ 
intersects $T_{\tau}$ transversally. 
Since we have $T_{\tau} \subset \pi^{-1}(0) \subset 
(f \circ \pi )^{-1}(0)$ and hence the function 
$f \circ \pi$ is zero on $T_{\tau}$, 
we have $b_i>0$ for some $1 \leq i \leq d$. Then by 
Lemmas \ref{lem:2-ac-1} and \ref{lem:2-ac-21}, 
if $d \geq 2$ for any point 
$y \in T_{\tau}$ we have 
$\zeta_{f \circ \pi}( \CC_{T} )(y)= \zeta_{f \circ \pi, y}( \CC_{T} )
(t)=1$. By Lemma \ref{lem:2-ac-21}, if $d=1$ for any point 
$y \in T_{\tau}$ we have 
\begin{equation}
\zeta_{f \circ \pi}( \CC_{T} )(y)= \zeta_{f \circ \pi, y}( \CC_{T} )
(t)
=\begin{cases}
1-t^{b_1} & (y \in T_{\tau} \setminus f_{\sigma}^{-1}(0) ),\\
 & \\
1 & (y \in T_{\tau} \cap f_{\sigma}^{-1}(0)).
\end{cases}
\end{equation}
Moreover, if $d=1$ and the dimension of the supporting face 
$\gamma ( \tau ) \prec \Gamma_+(f)$ of $\tau$ in $\Gamma_+(f)$ 
is less than 
$n- {\rm dim} \tau =n-1$, then we have 
$T_{\tau} \cap f_{\sigma}^{-1}(0) \simeq 
A \times ( \CC^*)^{n-1- {\rm dim} \gamma ( \tau )}$ 
for some set $A$ and hence we get 
\begin{equation}
\chi ( T_{\tau} \setminus f_{\sigma}^{-1}(0) ) = 
\chi ( T_{\tau} ) - \chi ( T_{\tau} \cap f_{\sigma}^{-1}(0) ) 
=0-0=0. 
\end{equation}
This implies that $\int_{T_{\tau}}\zeta_{f \circ \pi}( \CC_{T} )$ 
is not trivial only in the case where 
$\gamma ( \tau ) \prec \Gamma_+(f)$ is equal to 
the facet $\gamma_i^S \prec \Gamma_+(f)$ for 
some $1 \leq i \leq n(S)$. In such a case, we have 
$d=1$, $b_1=d_i^S>0$ and moreover 
\begin{equation}
\chi ( T_{\tau} \setminus f_{\sigma}^{-1}(0) ) 
=0-(-1)^{n-2} \ \Vol_{\ZZ}(\gamma_i^{\J}) 
=(-1)^{\sharp S-1} \ \Vol_{\ZZ}(\gamma_i^{\J}) 
\end{equation}
by Theorem \ref{BKK-simple}. Then we obtain the desired equality 
\begin{equation}
 \int_{\pi^{-1}(0)}  \zeta_{f \circ \pi}( \CC_{T} ) =
\prod_{i=1}^{n({\J})} \ (1-t^{d_i^{\J}})^{(-1)^{\sharp {\J}-1}
\Vol_{\ZZ}(\gamma_i^{\J})}.
\end{equation}
This completes the proof. 
\qed 
\end{proof}

The following lemma is well-known to the specialists. 
For the convenience of the readers, here we give a 
proof to it. 

\begin{lemma}\label{leme:3-5-ad}
Assume that $f$ is non-degenerate at the origin $0 \in \CC^n$ 
and convenient. Then the complex hypersurface 
$f^{-1}(0) \subset \CC^n$ is smooth or has an isolated 
singular point at the origin $0 \in \CC^n$. 
\end{lemma}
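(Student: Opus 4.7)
The plan is to reuse the toric resolution $\pi : X_\Sigma \to \CC^n$ employed in the proof of Theorem \ref{thm:3-5-ad}, taking $\Sigma$ to be a smooth subdivision of the common refinement of the fan $\Sigma_0$ of $\CC^n$ and the dual fan $\Sigma_1$ of $\Gamma_+(f)$. The key observation is that the convenience hypothesis forces every ray of $\Sigma$ not already in $\Sigma_0$ to lie in the interior $\Int ( \RR^n)^*_+$. Indeed, for each non-empty proper subset $S \subsetneq \{1,\ldots,n\}$ the slice $\Gamma_+^S(f) = \Gamma_+(f) \cap \RR^S$ is itself a face of $\Gamma_+(f)$, and its supporting cone in $\Sigma_1$ coincides with the coordinate cone $\mathrm{cone}(e_j^* : j \notin S)$, which is already a cone of $\Sigma_0$. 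Hence $\Sigma$ can be arranged so that all of its new rays lie in $\Int ( \RR^n)^*_+$; consequently the exceptional set of $\pi$ is contained in $\pi^{-1}(0)$, and $\pi$ restricts to an isomorphism $X_\Sigma \setminus \pi^{-1}(0) \simto \CC^n \setminus \{0\}$.

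Suppose, for contradiction, that there exists a sequence of singular points $x_k \in f^{-1}(0) \setminus \{0\}$ with $x_k \to 0$. Since $\pi$ is proper, the lifts $y_k := \pi^{-1}(x_k) \in X_\Sigma$ admit, after passing to a subsequence, a limit $y_\infty \in \pi^{-1}(0)$. I fix an affine chart $\CC^n(\sigma) \simeq \CC^n_y$ of $X_\Sigma$ containing $y_\infty$ (and hence all $y_k$ for $k$ sufficiently large), and decompose $(f \circ \pi)(y) = y_1^{b_1}\cdots y_n^{b_n} f_\sigma(y)$ exactly as in the proof of Theorem \ref{thm:3-5-ad}. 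Because $\pi$ is an isomorphism at each $y_k$, the conditions $f(x_k) = 0$ and $\nabla f(x_k) = 0$ translate into $f_\sigma(y_k) = 0$ and $\nabla f_\sigma(y_k) = 0$, and these equalities pass to the limit: $f_\sigma(y_\infty) = 0$ and $\nabla f_\sigma(y_\infty) = 0$.

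Let $\tau \prec \sigma$ be the cone of $\Sigma$ with $y_\infty \in T_\tau$. Since $y_\infty \in \pi^{-1}(0)$, one has $\relint ( \tau ) \subset \Int ( \RR^n)^*_+$, so the supporting face $\gamma(\tau) \prec \Gamma_+(f)$ is compact. A direct monomial computation, entirely parallel to the one already carried out in the proof of Theorem \ref{thm:3-5-ad}, shows that the restriction $f_\sigma|_{T_\tau}$ agrees, up to an invertible monomial change of coordinates identifying $T_\tau$ with a torus orbit of $(\CC^*)^n$, with the face polynomial $f^{\gamma(\tau)}$. Consequently $y_\infty$ yields a point of $(\CC^*)^n$ at which both $f^{\gamma(\tau)}$ and its differential vanish, contradicting the non-degeneracy of $f$ along the compact face $\gamma(\tau)$. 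The main technical obstacle is exactly this last identification: one must track the chart coordinates of $\CC^n(\sigma)$ through the monomial parametrization coming from $\gamma(\tau)$, and verify that the vanishing of $\nabla f_\sigma(y_\infty)$ in the ambient chart implies the vanishing of the gradient of $f^{\gamma(\tau)}$ on that orbit.
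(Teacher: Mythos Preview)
Your argument is correct and rests on the same toric resolution as the paper's own proof. The paper packages the final step more directly: instead of taking a sequence of singular points and passing to a limit on the exceptional fibre, it observes that non-degeneracy forces the proper transform of $f^{-1}(0)$ to meet every stratum $D_I=\bigcap_{i\in I}D_i$ of $\pi^{-1}(0)$ transversally, so the proper transform is already smooth in a neighbourhood of $\pi^{-1}(0)$ and the conclusion follows immediately from the isomorphism $X_\Sigma\setminus\pi^{-1}(0)\simto\CC^n\setminus\{0\}$. Your sequence argument reaches the same endpoint by a slightly longer route.

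One step you should make explicit: the passage from $\nabla(f\circ\pi)(y_k)=0$ to $\nabla f_\sigma(y_k)=0$ needs the monomial prefactor $y_1^{b_1}\cdots y_n^{b_n}$ to be nonzero at $y_k$. This is where convenience enters a second time --- it forces $b_i=0$ for every boundary ray $w_i=e_j^*$ (since $\Gamma_+(f)$ meets each coordinate hyperplane), so the vanishing locus of the prefactor is a union of divisors $\{y_i=0\}$ with $w_i$ interior, and any such divisor lies entirely inside $\pi^{-1}(0)$. Hence $m(y_k)\neq 0$ whenever $y_k\notin\pi^{-1}(0)$, and the deduction goes through.
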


\begin{proof}
Let us consider $\CC^n$ as the toric variety associated to 
the fan $\Sigma_0$ formed by the all faces of the first 
quadrant $(\RR^n)^*_+ \subset (\RR^n)^*$. Recall that 
we denote by $\Sigma_1$ the dual fan of 
$\Gamma_+(f)$ in $(\RR^n)^*_+$. 
Then by our assumptions, we can easily see that 
any cone $\sigma \in \Sigma_0$ in $\Sigma_0$ such 
that $\sigma \subset \partial (\RR^n)^*_+ 
\simeq ( \RR^n_+ \setminus {\rm Int} (\RR^n_+) ) $ is 
an element of $\Sigma_1$. Hence we can construct a 
smooth subdivision $\Sigma$ of $\Sigma_1$ without 
subdividing the cones $\sigma \in \Sigma_1$ such 
that $\sigma \subset \partial (\RR^n)^*_+$ 
(see e.g. \cite[Chapter II, Lemma (2.6)]{Oka}). 
Let $X_{\Sigma}$ be the smooth toric variety associated 
to $\Sigma$. Then there exists a morphism 
$\pi_0: X_{\Sigma} \rightarrow \CC^n$
of toric varieties 
inducing an isomorphism 
$X_{\Sigma} \setminus \pi_0^{-1}( \{ 0 \} ) 
\simto \CC^n \setminus \{ 0 \}$. Let 
$\rho_1, \rho_2, \ldots, \rho_m \in \Sigma$ be 
the $1$-dimensional cones in $\Sigma$ 
not contained in $\partial (\RR^n)^*_+$. 
For each $\rho_i$ denote by $T_i \simeq ( \CC^*)^{n-1}$ 
the $T$-orbit in $X_{\Sigma}$ and 
set $D_i:= \overline{T_i} \subset X_{\Sigma}$. 
Then $D_i$ are smooth and we have 
$\pi_0^{-1}( \{ 0 \} ) =D_1 \cup D_2 \cup 
\cdots \cup D_m$. Moreover by the non-degeneracy of 
$f$ at $0 \in \CC^n$, for any non-empty subset 
$I \subset \{ 1,2, \ldots, m \}$ the 
smooth subvariety $D_I:= \cap_{i \in I} D_i$ of 
$X_{\Sigma}$ intersects the proper transform of 
$f^{-1}(0) \subset \CC^n$ in $X_{\Sigma}$ 
transversally. This in particular implies that 
the proper transform itself is smooth. 
Then the assertion is very clear. 
\qed 
\end{proof}

For a non-empty subset ${\J} \subset \{1,2,\ldots, n\}$ 
we set 
\begin{equation}
E_S:= \dsum_{i=1}^{n(S)} d_i^S \cdot \Vol_{\ZZ}(\gamma_i^{\J}) 
\quad \geq 0. 
\end{equation}
Moreover for the empty set $S= \emptyset$ we set 
$E_{\emptyset} := 1$. 
Then by Theorem \ref{thm:3-5-ad} we obtain the following 
fundamental theorem of Kouchnirenko \cite{Kushnirenko}.  

\begin{theorem} {\rm (Kouchnirenko \cite[Theorem 1.10]{Kushnirenko})} 
Assume that $f$ is non-degenerate at the origin $0 \in \CC^n$. Then we have 
\begin{equation}
\chi (F_0)= \dsum_{{\J} \neq \emptyset} 
(-1)^{\sharp {\J}-1} E_S
\end{equation}
In particular, if moreover $f$ is convenient and 
$f^{-1}(0)$ is singular at $0 \in \CC^n$, then 
$f^{-1}(0)$ has an isolated 
singular point at $0 \in \CC^n$ and the Milnor 
number $\mu >0$ there is given by 
\begin{equation}
\mu = \dsum_{{\J} \subset \{1,2,\ldots, n\}} 
(-1)^{n- \sharp {\J}} E_S \quad 
=E_{\{1,2, \ldots, n \}} + \cdots \cdots +(-1)^n 
>0. 
\end{equation}
\end{theorem}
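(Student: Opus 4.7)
The plan is to derive Kouchnirenko's formula for $\chi(F_0)$ directly from Varchenko's Theorem~\ref{thm:3-5-ad} by a degree calculation, and then to deduce the formula for the Milnor number by combining this with the bouquet description in Theorem~\ref{thm.7.2.003} together with Lemma~\ref{leme:3-5-ad}.

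The key observation is that for any rational function of the form $\zeta(t)=\prod_j \det({\rm id}-t\Phi_j)^{(-1)^j}$ defined by $\CC$-linear automorphisms $\Phi_j$ on a graded vector space of finite total dimension, its degree as a rational function equals $\sum_j (-1)^j \dim ({\rm source\ of}\ \Phi_j)$. Applied to the Milnor monodromies $\Phi_{j,0}$ on $H^j(F_0;\CC)$, this yields
\begin{equation}
\deg \zeta_{f,0}(t)=\sum_{j\in\ZZ}(-1)^j\dim H^j(F_0;\CC)=\chi(F_0).
\end{equation}
On the other hand, each factor in Varchenko's formula,
\begin{equation}
\zeta_{f,0}^S(t)=\prod_{i=1}^{n(S)}(1-t^{d_i^S})^{(-1)^{\sharp S-1}\Vol_{\ZZ}(\gamma_i^S)},
\end{equation}
has degree $(-1)^{\sharp S-1}\sum_{i=1}^{n(S)} d_i^S\Vol_{\ZZ}(\gamma_i^S)=(-1)^{\sharp S-1}E_S$, since $\deg(1-t^d)^e=de$. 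Taking degrees on both sides of $\zeta_{f,0}(t)=\prod_{S\neq\emptyset}\zeta_{f,0}^S(t)$ then gives the first assertion:
\begin{equation}
\chi(F_0)=\sum_{S\neq\emptyset}(-1)^{\sharp S-1}E_S.
\end{equation}

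For the second assertion, suppose in addition that $f$ is convenient and $f^{-1}(0)$ is singular at $0$. By Lemma~\ref{leme:3-5-ad} the singularity at $0$ is then isolated, so Theorem~\ref{thm.7.2.003} applies and $F_0$ has the homotopy type of a bouquet of $\mu>0$ copies of $S^{n-1}$. Hence
\begin{equation}
\chi(F_0)=1+(-1)^{n-1}\mu,\qquad\text{so}\qquad \mu=(-1)^{n-1}(\chi(F_0)-1).
\end{equation}
Substituting the first formula and using $(-1)^{n-1}\cdot(-1)^{\sharp S-1}=(-1)^{n-\sharp S}$ together with the convention $E_\emptyset=1$, so that $-(-1)^{n-1}=(-1)^n=(-1)^{n-\sharp\emptyset}E_\emptyset$, gives
\begin{equation}
\mu=\sum_{S\neq\emptyset}(-1)^{n-\sharp S}E_S+(-1)^n=\sum_{S\subset\{1,2,\ldots,n\}}(-1)^{n-\sharp S}E_S,
\end{equation}
which is the desired expression, with the top term $E_{\{1,\ldots,n\}}$ corresponding to $S=\{1,\ldots,n\}$.

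The only delicate point is a careful sign bookkeeping in the second step; there is no genuine obstacle, since all the real geometric content (non-degeneracy, use of toric resolutions, Bernstein--Khovanskii--Kouchnirenko, A'Campo-type reductions) has already been absorbed into Theorems~\ref{thm.7.2.003} and~\ref{thm:3-5-ad} and Lemma~\ref{leme:3-5-ad}. Thus the proof reduces to the degree identity for monodromy zeta functions plus elementary rearrangement of signs.
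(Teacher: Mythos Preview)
Your proof is correct and follows exactly the approach the paper indicates: the paper simply states that the result follows from Theorem~\ref{thm:3-5-ad} (Varchenko's formula), and your argument makes this explicit by taking the degree of both sides and then invoking Lemma~\ref{leme:3-5-ad} and Theorem~\ref{thm.7.2.003} for the Milnor number formula. The sign bookkeeping is done correctly.
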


From now on, we shall introduce Oka's theorem proved 
in \cite{O-1} and \cite{Oka} (see also Kirillov \cite{Kirillov}). 
As we will see below, it is a wide generalization of Theorem \ref{thm:3-5-ad}. 
Since some technical assumptions imposed in Oka's theorem in 
\cite{O-1} and \cite{Oka} were 
removed later in Matsui-Takeuchi \cite[Theorem 3.12]{M-T-new2}, 
here we present it in its final form in 
\cite[Theorem 3.12]{M-T-new2}. Note that 
in \cite[Theorem 3.12]{M-T-new2} more general 
results on the Milnor monodromies of regular 
functions on complete intersection subvarieties 
of (possibly non-normal)  
affine toric varieties were obtained. 
In \cite{M-T-new1}, they were used to obtain 
formulas for the degrees and the dimensions of 
the $A$-discriminant varieties introduced by 
Gelfand-Kapranov-Zelevinsky \cite{GKZ}. However, 
for the sake of simplicity, here we present only the 
results in the simplest case of regular functions on 
complete intersection subvarieties of $\CC^n$. First, 
let us recall 
the most general form of the 
Bernstein-Khovanskii-Kouchnirenko 
theorem \cite{Khovanskii}. 
Let $( \RR^n)^* \simeq \RR^n$ be the dual vector space 
of $\RR^n$ and 
\begin{equation}
\langle \cdot , \cdot  \rangle : 
 ( \RR^n)^* \times \RR^n  \longrightarrow \RR, \quad 
((u,v) \longmapsto \langle u, v \rangle := 
\sum_{i=1}^n u_iv_i )
\end{equation}
the canonical pairing. Let $\Delta \subset \RR^n$ be a 
lattice polytope 
in $\RR^n$. For $u \in ( \RR^n)^* \simeq \RR^n$ we define the 
supporting face $\gamma^u \prec  \Delta$ 
of $u$ in $ \Delta$ by 
\begin{equation}
\gamma^u = \left\{ v \in \Delta \ | \ 
\langle u , v \rangle 
= 
\min_{w \in \Delta } 
\langle u ,w \rangle \right\}. 
\end{equation} 
For a face $\gamma$ of $\Delta$ set 
\begin{equation}
\sigma (\gamma) := \overline{ \{ u \in ( \RR^n)^* \ | \ 
\gamma^u = \gamma   \} } \subset ( \RR^n)^*. 
\end{equation}
\noindent Then  $\sigma (\gamma )$ 
is an $(n- \dim \gamma )$-dimensional 
rational convex polyhedral 
cone in $( \RR^n)^*$. Moreover 
the family $\{ \sigma (\gamma ) \ | \ 
\gamma \prec  \Delta \}$ of cones in $( \RR^n)^*$   
thus obtained is a subdivision of $( \RR^n)^*$. 
We call it the dual subdivision of $( \RR^n)^*$ by 
$\Delta$. If $\dim \Delta =n$, the cones 
$\sigma ( \gamma )$ are strongly convex and it 
satisfies the axiom 
of fans. Then we call it the dual fan of 
$\Delta$. More generally, let $\Delta_1, \ldots, \Delta_p  
\subset \RR^n$ be lattice polytopes 
in $\RR^n$ and $\Delta = 
\Delta_1 + \cdots + \Delta_p  
\subset \RR^n$ their Minkowski sum. 
Then for a face $\gamma \prec \Delta$ of 
$\Delta$, by taking a point $u \in ( \RR^n)^*$ 
in the relative interior of its dual cone $\sigma (\gamma)$ 
we define the supporting face 
$\gamma_i \prec \Delta_i$ of $u$ in $\Delta_i$ 
so that we have 
$\gamma = \gamma_1 + \cdots + \gamma_p$. 

\begin{definition}\label{non-deg} 
(see \cite{Oka}) 
Let $g_1, g_2, \ldots , g_p$ be 
Laurent polynomials on $T=(\CC^*)^n$. 
Set $\Delta_i=NP(g_i)$ $(i=1,\ldots, p)$ and 
$\Delta = \Delta_1 + \cdots + \Delta_p$. 
Then we say that the subvariety 
$Z^* =\{ x\in T=(\CC^*)^n \ | \ g_1(x)=g_2(x)= 
\cdots =g_p(x)=0 \}$ of $T=(\CC^*)^n$ is a 
non-degenerate complete intersection 
if for any face $\gamma \prec \Delta$ of 
$\Delta$ the $p$-form $dg_1^{\gamma_1} \wedge 
dg_2^{\gamma_2} \wedge 
\cdots \wedge dg_p^{\gamma_p}$ does not vanish 
on $\{ x\in T=(\CC^*)^n  \ | \ 
g_1^{\gamma_1}(x)= \cdots =
g_p^{\gamma_p}(x)=0 \}$.
\end{definition}

\begin{definition}\label{def-1}
Let $\Delta_1,\ldots,\Delta_n$ 
be lattice 
polytopes in $\RR^n$. Then 
their normalized $n$-dimensional 
mixed volume 
$\Vol_{\ZZ}( \Delta_1,\ldots,\Delta_n) 
\in \ZZ$ is defined by the formula 
\begin{equation}
\Vol_{\ZZ}( \Delta_1, \ldots , \Delta_n)=
\frac{1}{n!} 
\dsum_{k=1}^n (-1)^{n-k} 
\sum_{\begin{subarray}{c}I\subset 
\{1,\ldots,n\}\\ |I| =k\end{subarray}}
\Vol_{\ZZ}\left(
\dsum_{i\in I} \Delta_i \right)
\end{equation}
where $\Vol_{\ZZ}( \cdot )
= n! \cdot \Vol ( \cdot ) \in \ZZ_+$ is 
the normalized $n$-dimensional volume 
with respect to the lattice $\ZZ^n 
\subset \RR^n$.
\end{definition}

\begin{theorem}\label{BKK}\label{thm:14}  
{\rm (Bernstein-Khovanskii-Kouchnirenko 
theorem \cite{Khovanskii}, see also 
Oka \cite[Chapter IV, Theorem (3.1)]{Oka})} 
Let $g_1, g_2, \ldots , g_p$ be 
Laurent polynomials on $T=(\CC^*)^n$. 
Assume that the subvariety $Z^* 
=\{ x\in T=(\CC^*)^n \ | \ g_1(x)=g_2(x)= 
\cdots =g_p(x)=0 \}$ of $T=(\CC^*)^n$ is a 
non-degenerate complete intersection. 
Set $\Delta_i=NP(g_i)$ $(i=1,\ldots, p)$. Then we have
\begin{equation}
\chi(Z^* )=(-1)^{n-p} 
\dsum_{\begin{subarray}{c} 
m_1,\ldots,m_p \geq 1\\ m_1+\cdots +m_p=n 
\end{subarray}}\Vol_{\ZZ}(
\underbrace{\Delta_1,\ldots,\Delta_1}_{\text{
$m_1$-times}},\ldots, 
\underbrace{\Delta_p,
\ldots,\Delta_p}_{\text{$m_p$-times}}),
\end{equation}
where $\Vol_{\ZZ}(\underbrace{\Delta_1,
\ldots,\Delta_1}_{\text{$m_1$-times}},
\ldots,\underbrace{\Delta_p,\ldots,
\Delta_p}_{\text{$m_p$-times}})\in \ZZ_+$ 
is the normalized $n$-dimensional mixed volume 
with respect to the lattice $\ZZ^n 
\subset \RR^n$.
\end{theorem}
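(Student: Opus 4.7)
The plan is to compactify the torus $T=(\CC^*)^n$ in a smooth projective toric variety, use the non-degeneracy hypothesis to control how the closures of the hypersurfaces $\{g_i=0\}$ meet the toric boundary, and combine this with Euler characteristic additivity and toric intersection theory to reduce the problem to a combinatorial identity about mixed volumes. Concretely, I would choose a smooth projective fan $\Sigma$ in $(\RR^n)^*$ refining the dual fan of $\Delta = \Delta_1 + \cdots + \Delta_p$ (equivalently, the common refinement of the dual fans of the individual $\Delta_i$). Let $X_{\Sigma}$ be the associated smooth projective toric variety with open torus $T$ and orbit decomposition $X_{\Sigma} = \bigsqcup_{\tau \in \Sigma} T_{\tau}$, where $T_{\tau} \simeq (\CC^*)^{n-\dim \tau}$. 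Let $\bar{H}_i \subset X_{\Sigma}$ be the closure of $\{g_i=0\} \cap T$ and set $\bar{Z} := \bar{H}_1 \cap \cdots \cap \bar{H}_p$.

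The non-degeneracy assumption of Definition \ref{non-deg}, read through the correspondence between the relative interior of $\tau \in \Sigma$ and the supporting faces $\gamma_i^{\tau} \prec \Delta_i$, implies that for every $\tau \in \Sigma$ the intersection $\bar{Z} \cap T_{\tau}$ is either empty or a smooth non-degenerate complete intersection in $T_{\tau}$ cut out by the face parts $g_i^{\gamma_i^{\tau}}$. I would then argue by induction on $n$. Euler characteristic additivity over the toric stratification gives
\begin{equation}
\chi(Z^*) \ = \ \chi(\bar{Z}) \ - \ \sum_{\tau \neq \{0\}} \chi\bigl(\bar{Z} \cap T_{\tau}\bigr),
\end{equation}
and each boundary term, if nonempty, is the Euler characteristic of a non-degenerate complete intersection in a torus of strictly smaller dimension, hence given by the inductive hypothesis applied to the face polytopes $\gamma_i^{\tau}$.

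The remaining term $\chi(\bar{Z})$ is computed via intersection theory on $X_{\Sigma}$. Since $\bar{Z}$ is smooth of pure dimension $n-p$, its normal bundle in $X_{\Sigma}$ splits as $\bigoplus_i \mathcal{O}_{X_{\Sigma}}(\bar{H}_i)\big|_{\bar{Z}}$, so the adjunction sequence expresses $c(T_{\bar{Z}})$ in terms of $c(T_{X_{\Sigma}}) = \prod_{\rho}(1+[D_{\rho}])$ (with $\rho$ ranging over the rays of $\Sigma$) and the divisor classes $[\bar{H}_i]$. Evaluating $c_{n-p}(T_{\bar{Z}}) \cap [\bar{Z}]$ against the fundamental class yields a sum of toric intersection numbers of the form $[\bar{H}_{i_1}] \cdots [\bar{H}_{i_k}] \cdot [D_{\rho_1}] \cdots [D_{\rho_{n-k}}]$, each of which equals a normalized lattice mixed volume of a Minkowski combination of Newton polytopes and facet polytopes associated to $\Sigma$.

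The main obstacle is then the combinatorial bookkeeping that identifies the final answer. After expanding $\chi(\bar{Z})$ via the Chern class computation and subtracting the inductively known lower-dimensional mixed volumes contributed by the boundary strata $T_{\tau}$ with $\tau \neq \{0\}$, one must verify that all terms involving the auxiliary toric divisors $D_{\rho}$ and all mixed volumes of proper faces cancel, leaving exactly $(-1)^{n-p}\sum_{m_1+\cdots+m_p=n} \Vol_{\ZZ}(\underbrace{\Delta_1,\ldots,\Delta_1}_{m_1\text{-times}},\ldots,\underbrace{\Delta_p,\ldots,\Delta_p}_{m_p\text{-times}})$. This cancellation is the toric combinatorial core of the BKK theorem and reduces ultimately to the polarization identity defining mixed volumes. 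A streamlined alternative would be Khovanskii's Cayley trick: replace $(g_1,\ldots,g_p)$ by a single polynomial in $n-p+1$ auxiliary variables whose Newton polytope is the Cayley polytope of the $\Delta_i$, reducing the complete intersection case directly to the hypersurface case of Theorem \ref{BKK-simple}; the residual combinatorial step then becomes the classical identity between the normalized lattice volume of the Cayley polytope and the mixed volumes of the $\Delta_i$.
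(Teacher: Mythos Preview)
The paper does not prove this theorem; it is quoted from the literature (Khovanskii \cite{Khovanskii}, Oka \cite[Chapter IV, Theorem (3.1)]{Oka}) and used as a black box, notably in the proof of Theorem \ref{thm:3-5-CI}. So there is no ``paper's own proof'' to compare against.

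That said, your outline is a reasonable sketch of the standard toric argument and is essentially the route taken in the cited references. A few comments. First, the induction you set up is correct in spirit, but you should be explicit about the base case and about why the boundary contributions $\chi(\bar{Z}\cap T_\tau)$ for $\dim\tau\geq 1$ really are governed by the same formula in lower dimension: this requires checking that the restriction of $(g_1,\ldots,g_p)$ to $T_\tau$ remains a non-degenerate complete intersection with Newton polytopes the face polytopes $\gamma_i^\tau$ (and in particular that some of the $g_i$ may become units on $T_\tau$, reducing $p$). Second, the step you flag as ``the main obstacle'' --- the cancellation of all auxiliary terms leaving only the top mixed volumes --- is indeed the heart of the matter, and your proposal as written does not carry it out; it is honest to say so, but be aware that this is where the real work lies. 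The Cayley trick you mention at the end is a cleaner route and is probably what you would actually want to write up: it reduces everything to the $p=1$ case (Theorem \ref{BKK-simple}) plus a single polytope-volume identity, avoiding the full Chern-class expansion. Either way, what you have is a correct plan rather than a proof.
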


Now let $f_1,f_2, \ldots, f_k \in \CC [x]= \CC 
[x_1,x_2, \ldots, x_n]$ be polynomials on 
$\CC^n$ and consider the following 
two subvarieties of $\CC^n$: 
\begin{equation}
V:= \{ f_1=f_2= \cdots =f_k=0 \} \ \subset 
W:= \{ f_1=f_2= \cdots =f_{k-1}=0 \}.  
\end{equation}
Assume that $V \not= W$ and $0 \in V$. Then for the non-constant 
regular function $g:=f_k|_W: W \longrightarrow \CC$ on $W \subset \CC^n$,  
let us consider the Milnor fiber $F_0 \subset (W \setminus V)$ and 
the Milnor monodromies 
\begin{equation}
\Phi_{j,0} \colon H^j(F_0; \CC ) \simto H^j(F_0; \CC ) \qquad (j\in\ZZ) 
\end{equation}
of $g$ at the origin $0 \in \CC^n$. Our objective here 
is to describe the monodromy zeta function $\zeta_{g,0}(t) \in \CC (t)^*$ 
of $g$ in terms of the Newton polyhedra $\Gamma_+(f_j) \subset 
\RR^n_+$ of $f_j$ ($1 \leq j \leq k$). 
For a subset ${\J} \subset \{1,2,\ldots, n\}$ such that 
$\Gamma_+^S(f_k)= 
\Gamma_+(f_k) \cap \RR^S \not= \emptyset$ we set 
\begin{equation}
I(S):= \{ 1 \leq j \leq k-1 \ | \ 
\Gamma_+^S(f_j) \not= \emptyset \} \quad 
\subset \{ 1,2, \ldots, k-1 \} 
\end{equation}
and $m(S):= \sharp I(S)+1$. Moreover, for such $S$, 
$u \in {\rm Int} ( \RR^S)^*_+ \subset ( \RR^S)^*$ 
and $j \in I(S) \sqcup \{ k \}$ we define 
the supporting face $\gamma_j^u \prec \Gamma_+^S(f_j) 
\subset \RR_+^S$ of $u$ in $\Gamma_+^S(f_j)$ as before 
and set 
\begin{equation}
f_j^u(x):= \dsum_{v \in \gamma_j^u \cap \ZZ^n_+} a_{j,v} x^v \quad 
\in \CC [x], 
\end{equation}
where $f_j(x)= \sum_{v \in \ZZ^n_+} a_{j,v} x^v$ 
($a_{j,v} \in \CC$). 

\begin{definition}\label{de-no-deg} {\rm (Oka \cite{O-1}, \cite{Oka}, 
see also \cite[Definition 3.9]{M-T-new2})} 
We say that $f=(f_1,f_2, \ldots, f_k)$ 
is non-degenerate (at the origin $0 \in \CC^n$) if 
for any subset ${\J} \subset \{1,2,\ldots, n\}$ such that 
$\Gamma_+^S(f_k) \not= \emptyset$ and 
any $u \in {\rm Int} ( \RR^S)^*_+$ the following 
two subvarieties of $( \CC^*)^n$ are 
non-degenerate complete intersections: 
\begin{align}
& \{ x=(x_1, \ldots, x_n)  \in (\CC^*)^n \ | \ f_j^u(x)=0 
 \ (j \in I(S) \sqcup \{ k \} ) \},  
\\
& \{ x=(x_1, \ldots, x_n)  \in (\CC^*)^n \ | \ f_j^u(x)=0 
 \ (j \in I(S) ) \}. 
\end{align} 
\end{definition}
For each subset ${\J} \subset \{1,2,\ldots, n\}$ such that 
$\Gamma_+^S(f_k) \not= \emptyset$, let us 
consider the Minkowski sum 
\begin{equation}
\Gamma_+^S(f):= \dsum_{j \in I(S) \sqcup \{ k \}} \ \Gamma_+^S(f_j) \ 
\subset \RR^S_+. 
\end{equation}
Then, let 
$\gamma_1^{\J},\gamma_2^{\J}, \ldots,\gamma_{n({\J})}^{\J}$ be the 
compact facets 
of $\Gamma_{+}^{\J}(f)$. 
For $1 \leq i \leq n(\J)$, let $u_i^{\J} \in (\RR^{\J})^* \cap \ZZ_+^{\J}$ be 
the unique non-zero primitive vector which takes its 
minimum in $\Gamma_{+}^{\J}(f)$ on the whole $\gamma_i^{\J}$. 
Then for any $1 \leq i \leq n(\J)$ the supporting faces 
\begin{equation}
\gamma_{i,j}^S:= \gamma_j^{u_i^S} \prec  \Gamma_+^S(f_j) \qquad  
(j \in I(S) \sqcup \{ k \} ) 
\end{equation}
of $u_i^S$ in $\Gamma_+^S(f_j)$ satisfy the condition 
\begin{equation}
\gamma_i^S= \dsum_{j \in I(S) \sqcup \{ k \}} \ \gamma_{i,j}^S. 
\end{equation}
For $1 \leq i \leq n(\J)$ we set also 
\begin{equation}
d_i^{\J}: = \min_{v\in \Gamma_{+}^{\J}(f_k)} \langle u_i^{\J} ,v 
\rangle \ \in \ZZ_{>0}
\end{equation} 
and 
\begin{equation}
K_i^S:= 
\dsum_{\begin{subarray}{c} 
\alpha_1 + \cdots + \alpha_{m(S)}= \sharp S-1, 
\\ 
\alpha_q \geq 1 \ (q \leq m(S)-1), \ \alpha_{m(S)} \geq 0 
\end{subarray}} 
\ 
\Vol_{\ZZ}(
\underbrace{\gamma_{i,j_1}^S,\ldots, \gamma_{i,j_1}^S}_{\text{
$\alpha_1$-times}},\ldots, 
\underbrace{\gamma_{i,j_{m(S)}}^S,
\ldots, \gamma_{i,j_{m(S)}}^S}_{\text{$\alpha_{m(S)}$-times}}), 
\end{equation}
where we set $I(S) \sqcup \{ k \} = 
\{ j_1, j_2, \ldots, j_{m(S)-1}, k=j_{m(S)} \}$ and 
\begin{equation}
\Vol_{\ZZ}(
\underbrace{\gamma_{i,j_1}^S,\ldots, \gamma_{i,j_1}^S}_{\text{
$\alpha_1$-times}},\ldots, 
\underbrace{\gamma_{i,j_{m(S)}}^S,
\ldots, \gamma_{i,j_{m(S)}}^S}_{\text{$\alpha_{m(S)}$-times}}) 
\ \in \ZZ_+
\end{equation}
is the normalized $( \sharp S-1)$-dimensional mixed volume 
with respect to the lattice 
$\ZZ^n \cap \LL(\gamma_i^{\J}) \simeq \ZZ^{\sharp S-1}$ in 
$\LL(\gamma_i^{\J}) \simeq \RR^{\sharp S-1}$. 

\begin{remark}
If $\sharp S-1=0$ and $\sharp I(S)=m(S)-1=0$, we set 
\begin{equation}
K_i^S= 
\Vol_{\ZZ}(
\underbrace{\gamma_{i,k}^S,\ldots, \gamma_{i,k}^S}_{\text{
$0$-times}}) =1 
\end{equation}
(in this case $\gamma_{i,k}^S$ is a point). 
\end{remark}

\begin{theorem}\label{thm:3-5-CI}{\rm (Oka \cite{O-1}, 
\cite[Chapter IV]{Oka}, 
see also \cite[Theorem 3.12]{M-T-new2})} 
Assume that $f=(f_1,f_2, \ldots, f_k)$  
is non-degenerate at the origin $0 \in \CC^n$. Then 
the monodromy zeta function $\zeta_{g,0}(t) \in \CC (t)^*$ 
of $g=f_k|_W: W \longrightarrow \CC$ at $0 \in \CC^n$ is given by 
\begin{equation}
\zeta_{g,0}(t)=
\dprod_{S : \ \Gamma_+^S(f_k) \not= \emptyset, \  m(S) \leq \sharp S}
 \ \zeta_{g, 0}^S(t),
\end{equation}
where for each subset $\J \subset \{1,2,\ldots,n\}$ 
such that $\Gamma_+^S(f_k) \not= \emptyset$ and 
$m(S) \leq \sharp S$ we set
\begin{equation}
\zeta_{g, 0}^S(t):=
\prod_{i=1}^{n({\J})}
\ (1-t^{d_i^{\J}})^{(-1)^{\sharp {\J}-m(S)} K_i^S}.
\end{equation}
\end{theorem}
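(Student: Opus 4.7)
The plan is to extend the strategy used to prove Varchenko's theorem (Theorem \ref{thm:3-5-ad}) to the complete intersection setting, replacing the constructible sheaf $\CC_{\CC^n}$ by (the push-forward of) $\CC_W$, the single Newton polyhedron $\Gamma_+(f)$ by the Minkowski sum $\Gamma_+^S(f) = \sum_{j \in I(S) \sqcup \{k\}} \Gamma_+^S(f_j)$, and the simple volume formula by the full Bernstein-Khovanskii-Kouchnirenko theorem (Theorem \ref{BKK}) for mixed volumes. First, using the graph embedding of $g = f_k|_W$ together with Theorem \ref{thm.7.2.006}, identify $\zeta_{g,0}(t) = \zeta_{f_k, 0}(i_*\CC_W)(t)$, where $i \colon W \hookrightarrow \CC^n$ is the closed embedding. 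By Corollary \ref{new-corol} and Lemma \ref{lemma:2-8} applied to the stratification $\CC^n \setminus \{0\} = \sqcup_{S \neq \emptyset} T_S$ by coordinate tori, one obtains
\begin{equation*}
\zeta_{g,0}(t) = \prod_{S \neq \emptyset}\ \zeta_{f_k \circ i_S,\, 0}\bigl(\CC_{W \cap T_S}\bigr)(t),
\end{equation*}
and the strata with $\Gamma_+^S(f_k) = \emptyset$ drop out because $f_k$ vanishes identically on $T_S$.

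For each remaining $S$, fix a smooth subdivision $\Sigma$ of the dual fan of $\Gamma_+^S(f)$ in $(\RR^S)_+^*$, extended compatibly to a smooth fan on $(\RR^n)_+^*$, and let $\pi \colon X_\Sigma \longrightarrow \CC^n$ be the associated toric modification, which is an isomorphism outside $\pi^{-1}(0)$. Applying Theorem \ref{prp:2-99} to $\pi$, the function $f_k$, and the pulled-back constructible sheaf $\pi^{-1}\CC_{W \cap T_S}$, the $S$-factor reduces to an Euler integral of the constructible function $\zeta_{f_k \circ \pi}(\pi^{-1}\CC_{W \cap T_S})$ over the torus orbits $T_\tau \subset \pi^{-1}(0)$ whose cones $\tau \in \Sigma$ satisfy $\relint(\tau) \subset \Int(\RR^S)_+^*$. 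On each affine chart $\CC^n(\sigma) \simeq \CC^n_y$, every $f_j \circ \pi$ factors as a monomial $y^{b_j}$ times a polynomial $f_{j,\sigma}$, and the non-degeneracy hypothesis of Definition \ref{de-no-deg} forces the proper transforms of $W$ and $V$ to meet each relevant $T_\tau$ transversally. Lemmas \ref{lem:2-ac-1}, \ref{lem:2-ac-21}, and \ref{lem:2-999} then kill the contributions of all orbits except those attached to edges $\tau = \RR_{\geq 0} u_i^S$ whose supporting face in $\Gamma_+^S(f)$ is a compact facet $\gamma_i^S$.

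For such an edge, the local monomial factor of $f_k \circ \pi$ transverse to the exceptional divisor is $y_1^{d_i^S}$, producing the basic factor $(1 - t^{d_i^S})$ with exponent equal to $\chi\bigl(T_\tau \cap \pi^{-1}(W \setminus V)\bigr)$. By non-degeneracy, this Euler characteristic equals the difference of two non-degenerate complete intersections inside the torus $T_\tau \simeq (\CC^*)^{\sharp S - 1}$ cut out respectively by the initial forms $\{f_j^{u_i^S}\}_{j \in I(S)}$ and $\{f_j^{u_i^S}\}_{j \in I(S) \sqcup \{k\}}$. Applying Theorem \ref{BKK} to each intersection and subtracting, the alternating-sum identity of Definition \ref{def-1} for the normalized mixed volume collapses the resulting expression to precisely $(-1)^{\sharp S - m(S)} K_i^S$. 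Multiplying over $i$ and $S$ yields the stated formula. The main obstacle will be the combinatorial bookkeeping in this last step: one must track signs through the mixed volume identity, verify the zero-dimensional boundary case $m(S) = \sharp S$, and check that strata with $m(S) > \sharp S$ contribute trivially because the defining complete intersection on $T_S$ is then already empty, so that the product in the theorem ranges only over $m(S) \leq \sharp S$.
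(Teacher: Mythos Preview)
Your proposal is correct and follows essentially the same approach as the paper's proof: stratify by coordinate tori $T_S$, resolve via a toric modification governed by the Minkowski sum $\Gamma_+^S(f)$, push forward the zeta function using Theorem \ref{prp:2-99}, reduce to $1$-dimensional cones whose supporting face is a compact facet $\gamma_i^S$, and compute the remaining Euler characteristic $\chi(W_\tau \setminus V_\tau)$ via the Bernstein--Khovanskii--Kouchnirenko theorem. The only cosmetic difference is that the paper carries out the toric modification over $\overline{T_S}\simeq\CC^{\sharp S}$ rather than extending the fan to all of $(\RR^n)_+^*$ as you do, which slightly streamlines the bookkeeping but is otherwise equivalent.
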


\begin{proof}
The proof is similar to that of 
Theorem \ref{thm:3-5-ad}. Let $T_S, i_S$ 
etc. be as in the proof of Theorem \ref{thm:3-5-ad}. 
Then by Corollary \ref{new-corol},  
Lemma \ref{lemma:2-8}, Theorem \ref{prp:2-99} and  
the decomposition $W \setminus \{ 0 \} 
 = \sqcup_{\J \not= \emptyset}(W \cap T_S)$ 
of $W \setminus \{ 0 \}$ we obtain 
\begin{equation}
\zeta_{g,0}(t) = \zeta_{f_k,0}( \CC_W) (t)
= \prod_{S \not= \emptyset} \ 
\zeta_{f_k \circ i_S,0}( \CC_{W \cap T_S} )(t). 
\end{equation} 
If $\Gamma_+^S(f_k) = \emptyset$ we have 
$f_k \circ i_S=0$ and hence 
$\zeta_{f_k \circ i_S,0}( \CC_{W \cap T_S} )(t)=1$.  
Then it suffices to take the product of 
$\zeta_{f_k \circ i_S,0}( \CC_{W \cap T_S} )(t)$ 
over the subsets $S \subset \{ 1,2, \ldots, n \}$ such 
that $\Gamma_+^S(f_k)  \not= \emptyset$. 
Moreover, for such $S$ we have 
\begin{equation}
W \cap T_S= \{ x \in T_S \ | \ 
f_j(x)=0 \ (j \in I(S)) \}. 
\end{equation}
Fix a subset $\J = \{1,2,\ldots,n\}$ 
such that $\Gamma_+^S(f_k) \not= \emptyset$, 
set $l:= \sharp S >0$ and 
consider $\overline{T_S} \simeq \CC^l$ as the toric variety associated to 
the fan $\Sigma_0$ in $( \RR^S)^* \simeq \RR^l$ formed by the all faces of 
the first quadrant $( \RR^S)^*_+ \subset ( \RR^S)^*$. 
Then we regard $T_S \simeq ( \CC^*)^l$ as the open dense torus in it. 
Let $\Sigma$ be a smooth subdivision of $\Sigma_0$ and 
the dual fan of $\Gamma^S_{+}(f) \subset \RR^S_+$ in $( \RR^S)^*_+$. 
Let $X_{\Sigma}$ be the smooth 
toric variety  associated to $\Sigma$. 
Then there exists a proper 
morphism $\pi : X_{\Sigma} \longrightarrow 
\overline{T_S}$ of 
toric varieties. For a cone $\sigma \in \Sigma$ denote by 
$T_{\sigma} \simeq (\CC^*)^{l-\dim \sigma}$ the $T_S$-orbit 
in $X_{\Sigma}$ associated to $\sigma$. 
We have also natural affine open subsets $\CC^l(\sigma) 
\simeq \CC^l$ of $X_{\Sigma}$ associated to $l$-dimensional 
cones $\sigma$ in $\Sigma$. Now apply Theorem \ref{prp:2-99}  
to the proper morphism $\pi : X_{\Sigma} \longrightarrow 
 \overline{T_S}$, 
the function $f_k \circ i_S : \overline{T_S} 
 \longrightarrow \CC$ 
and the constructible sheaf $\CC_{W \cap T_S} \in \Dbc (X_{\Sigma})$. 
Then we obtain an equality 
\begin{equation}
\zeta_{f_k \circ i_S,0}( \CC_{W \cap T_S} )
=  \zeta_{f_k \circ i_S}( {\rm R} \pi_* \CC_{W \cap T_S} )(0)
= \int_{\pi^{-1}(0)}  \zeta_{f_k \circ i_S \circ \pi}( \CC_{W \cap T_S} ) 
\end{equation}
in $\CC (t)^*$. Hence it 
suffices to calculate the value of the constructible 
function $\zeta_{f_k \circ i_S \circ \pi}( \CC_{W \cap T_S} )$ at 
each point of $\pi^{-1}(0) \subset X_{\Sigma}$. Note 
that $\pi^{-1}(0)$ is the union of 
the $T_S$-orbits $T_{\tau}$ in $X_{\Sigma}$ associated to the cones 
$\tau \in \Sigma$ such that 
${\rm rel.int} ( \tau ) \subset {\rm Int} ( \RR^S)^*_+
\simeq ( \RR_{>0})^l$. 
Let $\tau \in \Sigma$ be such a cone 
and $\sigma \in \Sigma$ an $l$-dimensional cone 
such that $\tau \prec \sigma$. 
Let $w_1, w_2, \ldots, w_l \in \ZZ_+^S \setminus \{ 0 \}$ be the 
primitive vectors on the edges of 
the simplicial cone $\sigma$. 
Then there exists an affine open subset $\CC^l(\sigma)$ 
of $X_{\Sigma}$ such that $\CC^l(\sigma) \simeq \CC^l_y$ 
and the functions $f_j \circ i_S  \circ \pi$ 
($j \in I(S) \sqcup \{ k \}$) 
have the following form on it:
\begin{equation} 
(f_j \circ i_S \circ \pi )(y)=
y_1^{b_{j,1}} \cdots y_l^{b_{j,l}} \cdot f_{j,\sigma}(y), 
\end{equation}
where we set 
\begin{equation}
b_{j,i}=\min_{v \in \Gamma_{+}^S(f_j)} \langle w_i, v 
\rangle \geq 0 \qquad (i=1,2,\ldots, l)
\end{equation}
and $f_{j, \sigma}(y)$ is a polynomial on $\CC^l(\sigma) \simeq \CC^l_y$. 
Set $d:= {\rm dim} \tau$ and for simplicity, 
assume that $w_1,\ldots, w_d$ generate the simplicial cone $\tau$. 
Then in the affine chart $\CC^l(\sigma) \simeq \CC^l_y$ the $T_S$-orbit 
$T_{\tau}$ associated to $\tau$ is explicitly defined by
\begin{equation*}
T_{\tau}=\{(y_1,\ldots,y_l)\in \CC^l(\sigma)\ 
 |\ y_1=\cdots =y_d=0,\ y_{d+1},\ldots, y_l \neq 0\}\simeq (\CC^*)^{l-d}.
\end{equation*}
For $j \in I(S) \sqcup \{ k \}$ let $h_j: T_{\tau} \longrightarrow 
\CC$ be the restriction of $f_{j, \sigma}: \CC^l( \sigma ) \longrightarrow 
\CC$ to $T_{\tau} \subset \CC^l( \sigma )$. Then 
by the non-degeneracy of $f=(f_1,f_2, \ldots, f_k)$, 
the following two subvarieties of $T_{\tau}$ are 
non-degenerate complete intersections: 
\begin{align}
V_{\tau}:= & \{ x  \in T_{\tau} \ | \ h_j (x)=0 
 \ (j \in I(S) \sqcup \{ k \} ) \},  
\\
W_{\tau}:= & \{ x  \in T_{\tau} \ | \ h_j (x)=0 
 \ (j \in I(S) ) \}. 
\end{align} 
This implies that the subvariety 
$\cap_{j \in I(S) \sqcup \{ k \}} f_{j, \sigma}^{-1}(0)$ 
(resp. $\cap_{j \in I(S)} f_{j, \sigma}^{-1}(0)$) 
of $\CC^l( \sigma )$ is smooth and of codimension 
$m(S)$ (resp. $m(S)-1= \sharp I(S)$) in $\CC^l( \sigma )$ 
on a neighborhood of $T_{\tau} \subset \CC^l( \sigma )$. 
Note also that for the closure $\overline{W \cap T_S}$ 
of $W \cap T_S$ in $\CC^l( \sigma )$ we have 
\begin{equation}
\overline{W \cap T_S} = 
\bigcap_{j \in I(S)} f_{j, \sigma}^{-1}(0)
\end{equation}
on a neighborhood of $T_{\tau}$. 
Then as in the proof of Theorem \ref{thm:3-5-ad}, 
it suffices to consider only the $1$-dimensional 
cones $\tau \in \Sigma$ such that the supporting 
face $\gamma ( \tau ) \prec \Gamma_+^S(f)$ of 
$\tau$ in $\Gamma_+^S(f)$ is equal to 
the facet $\gamma_i^S \prec \Gamma_+(f)$ for 
some $1 \leq i \leq n(S)$. In such a case, we have 
$d=1$, $b_{k,1}=d_i^S>0$. But, if $m(S)> \sharp S$, 
then $\sharp I(S)=m(S)-1> \sharp S-1= {\rm dim } 
T_{\tau}$ and hence $\cap_{j \in I(S)} f_{j, \sigma}^{-1}(0)$ 
does not intersect $T_{\tau}$. So we do not have to 
consider the subsets $S \subset \{ 1,2, \ldots, n \}$ 
such that $m(S)> \sharp S$. 
Moreover, for the $1$-dimensional cone $\tau \in \Sigma$ 
such that $\gamma ( \tau )= \gamma_i^S$, by Theorem \ref{BKK} 
we have 
\begin{equation}
\chi ( W_{\tau} \setminus V_{\tau} )= 
(-1)^{\sharp S- m(S)} K_i^S. 
\end{equation}
Then the assertion immediately follows. 
This completes the proof. 
\qed 
\end{proof}

As in the proof of Lemma \ref{leme:3-5-ad} we can 
easily prove the following result.  

\begin{lemma}\label{leme:31-5-ad}
Assume that $f=(f_1,f_2, \ldots, f_k)$ is non-degenerate at 
the origin $0 \in \CC^n$ 
and $f_1,f_2, \ldots, f_k$ are convenient. Then the complex 
subvarieties 
$V, W \subset \CC^n$ are smooth or have an isolated 
singular point at the origin $0 \in \CC^n$ and 
satisfy the conditions ${\rm dim} V=n-k$ and 
${\rm dim} W=n-k+1$. 
\end{lemma}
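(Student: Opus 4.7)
The plan is to follow the same strategy as the proof of Lemma~\ref{leme:3-5-ad}: construct a toric resolution of the origin in $\CC^n$ built from the Newton polyhedra of the $f_j$, and then deduce smoothness of $V$ and $W$ outside the origin from the non-degeneracy hypothesis together with the transversality properties of the resolution.

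First, view $\CC^n$ as the toric variety associated to the fan $\Sigma_0$ formed by the faces of $(\RR^n)^*_+$, and let $\Sigma_1$ be the dual fan in $(\RR^n)^*_+$ of the Minkowski sum $\Gamma_+(f) := \Gamma_+(f_1) + \cdots + \Gamma_+(f_k)$. Because each $f_j$ is convenient, so is $\Gamma_+(f)$, and therefore every cone $\sigma \in \Sigma_0$ contained in $\partial (\RR^n)^*_+$ is already a cone of $\Sigma_1$. As in the proof of Lemma~\ref{leme:3-5-ad}, one constructs a smooth subdivision $\Sigma$ of $\Sigma_1$ which does not subdivide any cone of $\Sigma_1$ lying in $\partial (\RR^n)^*_+$. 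Let $\pi\colon X_{\Sigma} \longrightarrow \CC^n$ be the resulting toric modification and $\rho_1, \ldots, \rho_m \in \Sigma$ the one-dimensional cones not contained in $\partial (\RR^n)^*_+$. Then $\pi$ restricts to an isomorphism $X_{\Sigma} \setminus \pi^{-1}(0) \simto \CC^n \setminus \{0\}$, the divisors $D_i := \overline{T_{\rho_i}}$ are smooth, and $\pi^{-1}(0) = D_1 \cup \cdots \cup D_m$.

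Next, let $\widetilde{V}$ and $\widetilde{W}$ be the proper transforms of $V$ and $W$ in $X_{\Sigma}$. The main step is to verify that for every non-empty $I \subset \{1, \ldots, m\}$ the smooth intersection $D_I := \bigcap_{i \in I} D_i$ meets both $\widetilde{V}$ and $\widetilde{W}$ transversally, with the expected codimensions $k$ and $k-1$. Fix such an $I$; then its open part is a torus orbit $T_\tau \simeq (\CC^*)^{n-\dim \tau}$ attached to a cone $\tau \in \Sigma$ with $\mathrm{rel.int}(\tau) \subset \mathrm{Int}(\RR^n)^*_+$. Pick a generic $u \in \mathrm{rel.int}(\tau)$ and an $n$-dimensional $\sigma \in \Sigma$ with $\tau \prec \sigma$, as in the proof of Theorem~\ref{thm:3-5-CI}; in the resulting chart $\CC^n(\sigma) \simeq \CC^n_y$ each $f_j \circ \pi$ factors as a monomial times a polynomial $f_{j,\sigma}(y)$, and the restriction of $f_{j,\sigma}$ to $T_\tau$ is, up to a monomial change of coordinates, the face polynomial $f_j^u$. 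Since each $f_j$ is convenient, $\Gamma_+^S(f_j) \neq \emptyset$ for every non-empty $S$, so the non-degeneracy of $f = (f_1, \ldots, f_k)$ of Definition~\ref{de-no-deg} applies and shows that the two subvarieties cut out by $f_1^u, \ldots, f_k^u$ and by $f_1^u, \ldots, f_{k-1}^u$ in $(\CC^*)^n$ are non-degenerate complete intersections of codimensions $k$ and $k-1$. Consequently, along $T_\tau$ the proper transforms $\widetilde{V}$ and $\widetilde{W}$ are smooth of the correct codimensions in $X_{\Sigma}$ and intersect $D_I$ transversally.

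Since this holds for every $I$, the proper transforms $\widetilde{V}$ and $\widetilde{W}$ are smooth of dimensions $n-k$ and $n-k+1$ along the entire exceptional fibre $\pi^{-1}(0)$, and thus smooth of these dimensions everywhere. Because $\pi$ is an isomorphism away from the origin, this forces $V$ and $W$ to be smooth of the asserted dimensions on $\CC^n \setminus \{0\}$, which is exactly the assertion. The only delicate point — and the place where the argument could fail without the convenience hypothesis — is the bookkeeping described in the previous paragraph, namely ensuring that the non-degeneracy condition (which is phrased only for subsets $S$ with $\Gamma_+^S(f_k) \neq \emptyset$ and for the specific index set $I(S) \sqcup \{k\}$) applies uniformly at every stratum $T_\tau$ of the exceptional divisor; convenience of all $f_j$ guarantees $I(S) = \{1, \ldots, k-1\}$ for every non-empty $S$, so this issue does not arise.
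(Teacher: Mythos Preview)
Your argument is correct and follows exactly the route the paper indicates (it merely says ``as in the proof of Lemma~\ref{leme:3-5-ad}''): construct a smooth toric modification from a subdivision of the dual fan of $\Gamma_+(f_1)+\cdots+\Gamma_+(f_k)$ that leaves the boundary cones untouched, and use the non-degeneracy of $f=(f_1,\ldots,f_k)$ to see that the proper transforms of $V$ and $W$ are smooth complete intersections meeting every $D_I$ transversally over the exceptional fiber. Your closing remark about convenience forcing $I(S)=\{1,\ldots,k-1\}$ is the right bookkeeping observation that makes Definition~\ref{de-no-deg} apply uniformly.
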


Together with the celebrated generalization of 
Theorem \ref{thm.7.2.003} by Hamm \cite{Hamm}, 
in the situation of Lemma \ref{leme:31-5-ad} 
we obtain the characteristic polynomial of 
the $(n-k)$-th monodromy operator 
\begin{equation}
\Phi_{n-k,0}: H^{n-k}(F_0; \CC ) \simto 
H^{n-k}(F_0; \CC )
\end{equation}
by Theorem \ref{thm:3-5-CI}.

\section{Singularities and 
monodromies at infinity of polynomial maps}\label{section 4}

In this section, we introduce some basic definitions 
and results on 
singularities and monodromies at infinity of polynomial maps 
$f \colon \CC^n \longrightarrow \CC$. In particular, here we focus on 
N{\'e}methi-Zaharia's theorem in \cite{N-Z} and 
Libgober-Sperber's one in  \cite{L-S}. 
After the two fundamental papers Broughton 
\cite{Broughton} and Siersma-Tib{\u a}r \cite{S-T-1}, many authors 
studied the global behavior of polynomial maps 
$f \colon \CC^n \longrightarrow \CC$. 
First of all, for a 
polynomial map $f \colon \CC^n \longrightarrow \CC$, 
it is well-known that there 
exists a finite subset $B \subset \CC$ such that the restriction
\begin{equation}
\CC^n \setminus f^{-1}(B) \longrightarrow \CC \setminus B
\end{equation}
of $f$ is a $C^{\infty}$ locally trivial fibration. 
We denote by $B_f$ the smallest 
subset $B \subset \CC$ satisfying this 
condition. Let ${\rm Sing} f \subset \CC^n$ 
be the set of the critical points of 
$f \colon \CC^n \longrightarrow \CC$. Then by 
the definition of $B_f$, obviously we have 
$f( {\rm Sing} f) \subset B_f$. 
We call the elements of $B_f$ the 
bifurcation values of $f$. 
The determination of the bifurcation set 
$B_f \subset \CC$ is a fundamental problem and has been  
studied by a lot of mathematicians such as \cite{Broughton}, 
\cite{H-L}, \cite{Ishikawa}, \cite{I-N-P}, 
\cite{N-Z}, \cite{N-Z-2}, \cite{Paru}, \cite{S-T-1}, \cite{Suzuki}, 
\cite{Tak-1}, \cite{Tanabe-G}, \cite{Zaharia} and from 
various points of view. 
For the generalizations of some of 
these results to more general polynomial maps 
$f \colon \CC^n \longrightarrow \CC^k$ 
for $1 \leq k \leq n$, 
see also \cite{C-D-T-T}, \cite{H-N}, \cite{KOS}, \cite{Nguyen} 
and \cite{Ra}. 
The essential difficulty in determining $B_f \subset \CC$ 
lies in the fact that $f \colon \CC^n \longrightarrow \CC$ 
is not proper and hence 
we may have $f( {\rm Sing} f) \not= B_f$ i.e. 
$f$ has some extra singularities at infinity. 

\begin{definition} {\rm (Broughton \cite{Broughton})} \label{dfn:tame}
Let $\partial f\colon \CC^n \longrightarrow \CC^n$ 
be the map defined by $\partial f(x)=(\partial_1f(x), \ldots, \partial_n f(x))$ 
($x \in \CC^n$). 
Then we say that $f$ is tame at infinity if the restriction $(\partial f)^{-1}(B(0;\e )) 
\longrightarrow B(0;\e )$ of $\partial f$ to a sufficiently 
small ball $B(0;\e ) \subset \CC^n$ centered at the origin $0 \in \CC^n$ is proper. 
\end{definition}

In \cite{Broughton} Broughton showed that if 
$f \colon \CC^n \longrightarrow \CC$ is tame at infinity 
then we have $f( {\rm Sing} f) = B_f$. 
In the tame case, the following result is also fundamental. 

\begin{theorem} {\rm (Broughton \cite{Broughton})} \label{tame}
Assume that $f$ is tame at infinity. 
Then the generic fiber $f^{-1}(c)$ ($c \in \CC \setminus B_f$) 
has the homotopy type of the bouquet of some $(n-1)$-spheres. In particular, we have
\begin{equation}
H^j(f^{-1}(c);\CC)=0 \quad (j \neq 0, n-1).
\end{equation}
\end{theorem}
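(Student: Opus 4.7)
My plan is to deduce both the bouquet homotopy type and the cohomological vanishing by first bounding the homotopy dimension of $f^{-1}(c)$ from above and then bounding its connectivity from below.

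First, since $c \in \CC \setminus B_f$ is a regular value, the fiber $f^{-1}(c)$ is a smooth closed complex submanifold of $\CC^n$ of complex dimension $n-1$, hence a Stein manifold. By the classical theorem of Andreotti--Frankel, a Stein manifold of complex dimension $d$ has the homotopy type of a CW complex of real dimension at most $d$. Thus $f^{-1}(c)$ has the homotopy type of a CW complex of real dimension $\leq n-1$, and in particular $H^j(f^{-1}(c);\CC) = 0$ already for $j > n-1$.

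Second, I would unpack the tameness hypothesis (Definition \ref{dfn:tame}): the properness of $(\partial f)^{-1}(B(0;\varepsilon)) \to B(0;\varepsilon)$ yields constants $R, \delta > 0$ such that $\|\partial f(x)\| \geq \delta$ for all $x$ with $\|x\| \geq R$, so in particular every critical point of $f$ lies in $\overline{B(0;R)}$. Using a smoothly cut-off horizontal vector field built from $\overline{\partial f}/\|\partial f\|^2$, whose flow lifts paths in $\CC$ and is defined throughout $\{\|x\| > R\}$, one obtains a $C^{\infty}$ trivialization of $f$ at infinity. This is the ingredient that permits global Morse theory for $f$ to behave as if $f$ were proper.

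Third, I would apply a global Morse-theoretic description: by the trivialization at infinity, $\CC^n$ is built up (homotopically) from $f^{-1}(c)$ by attaching handles localized at the finitely many critical points of $f$ inside $B(0;R)$. At each such critical point the local picture is a Milnor fibration, and by Theorem \ref{thm.7.2.003} the local Milnor fiber is $(n-2)$-connected and a wedge of $(n-1)$-spheres; the associated cell attachment introduces only cells of real dimension $\geq n$. Since $\CC^n$ is contractible, the long exact sequence of the pair $(\CC^n, f^{-1}(c))$ then yields
\begin{equation}
\widetilde{H}_j(f^{-1}(c);\ZZ) \cong H_{j+1}(\CC^n, f^{-1}(c);\ZZ) = 0 \quad (j < n-1),
\end{equation}
and a parallel van Kampen / Hurewicz argument upgrades this to genuine $(n-2)$-connectivity. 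An $(n-2)$-connected CW complex of dimension $\leq n-1$ is homotopy equivalent to a wedge of $(n-1)$-spheres, which is the desired conclusion.

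The main obstacle is the rigorous execution of the third step, since $f: \CC^n \to \CC$ is not proper and ordinary Morse theory need not apply. Tameness is precisely the hypothesis that prevents gradient trajectories from escaping to infinity at finite $f$-value, so that the cell-attaching picture from the critical points is in fact the whole story; without it, extra topology hidden at infinity (i.e., the \emph{singularities at infinity} visible through $B_f \setminus f(\Sing f)$ in the non-tame case) would invalidate the argument.
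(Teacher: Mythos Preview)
The paper does not supply a proof of this theorem; it is quoted as a result of Broughton \cite{Broughton} and immediately followed by a remark on its extension by Siersma--Tib{\u a}r. Your outline is correct and is essentially Broughton's own argument: the Andreotti--Frankel bound on a Stein fiber handles the top dimension, while tameness forces the critical locus $(\partial f)^{-1}(0)$ to be a compact analytic subset of $\CC^n$, hence a finite set of points, so that global Morse theory (legitimized by the trivialization at infinity you describe) reduces the pair $(\CC^n, f^{-1}(c))$ to finitely many local Milnor pictures, each contributing only $n$-cells by Theorem~\ref{thm.7.2.003}. One point you should make explicit is precisely this finiteness of $\mathrm{Sing}\, f$: Theorem~\ref{thm.7.2.003} requires an isolated hypersurface singularity, and that is guaranteed here only because a compact complex analytic subset of affine space is zero-dimensional. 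With that clarification, your sketch is complete; your closing paragraph already identifies correctly where the analytic work lies.
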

By Siersma and Tib\u ar \cite{S-T-1} 
this concentration result 
was later extended to polynomial maps 
$f \colon \CC^n \longrightarrow \CC$ 
with isolated singularities with respect 
to some fiber-compactifying extensions of $f$. From now on, 
we shall introduce the theorem of 
N\'emethi and Zaharia \cite{N-Z} which 
gives an upper bound for $B_f \subset \CC$ described in terms of 
a Newton polyhedron of $f$. 

\begin{definition} {\rm (Kouchnirenko \cite{Kushnirenko})} \label{dfn:3-1}
We call the convex hull of $\{0\}\cup NP(f)$ in 
$\RR^n$ the Newton polyhedron at infinity of $f$ 
and denote it by $\Gamma_{\infty}(f)$.
\end{definition}

For a face $\gamma \prec \Gamma_{\infty}(f)$ of $\Gamma_{\infty}(f)$ 
we say that $\gamma$ is at infinity if $0 \notin \gamma$. 

\begin{definition} {\rm (Kouchnirenko \cite{Kushnirenko})} \label{dfn:3-3}
We say that $f(x)=\sum_{v\in \ZZ_+^n} a_vx^v$ ($a_v\in \CC$) 
is non-degenerate at infinity if for any 
face at inifinity $\gamma$ of $\Gamma_{\infty}(f)$  
the complex hypersurface $\{x \in (\CC^*)^n\ |\ f^{\gamma}(x)=0\}$ of 
$T=(\CC^*)^n$ 
is smooth and reduced, where we set 
$f^{\gamma}(x)=\sum_{v \in \gamma \cap \ZZ_+^n} a_vx^v$.
\end{definition}

Kouchnirenko \cite{Kushnirenko} proved that if 
$f$ is convenient and non-degenerate 
at infinity then we have $B_f = f( {\rm Sing} f)$. 
Indeed in such a case, by a result of Broughton \cite{Broughton} 
$f$ is tame at infinity. However, 
in the non-convenient case, 
N\'emethi and Zaharia \cite{N-Z} showed 
that more bifurcation values may occur 
by the presence of the so-called 
``bad faces'' of $\Gamma_{\infty}(f)$. Here 
we introduce their result in the new formulation of 
\cite{Tak-1}.  

\begin{definition} {\rm (Takeuchi-Tib\u ar \cite{T-T})} 
We say that a face $\gamma \prec \Gamma_{\infty}(f)$ 
is atypical if $0 \in \gamma$, 
$\dim \gamma \geq 1$ and 
the cone $\sigma ( \gamma ) \subset (\RR^n)^*$ 
which corresponds it in the dual 
subdivision of $\Gamma_{\infty}(f)$ 
is not contained in the 
first quadrant $(\RR^n)^*_+  \simeq \RR^n_+$ 
of $(\RR^n)^*$. 
\end{definition}
This definition is related to that of 
the bad faces of $NP(f-f(0))$ 
in N{\'e}methi-Zaharia \cite{N-Z} as follows. 
If a face $\Delta \prec NP(f-f(0))$ of $NP(f-f(0))$ 
is bad in the sense of \cite{N-Z}, then 
the convex hull $\gamma$ of $\{ 0 \} \cup 
\Delta$ in $\RR^n$ is an atypical face of $\Gamma_{\infty}(f)$. 
See also Remark \ref{bad} below. 

\begin{example}\label{EXP} 
Let $n=3$ and consider a non-convenient polynomial 
$f(x,y,z)$ on $\CC^3$ whose Newton polyhedron at 
infinity $\Gamma_{\infty}(f)$ is the convex hull of 
the points $(2,0,0), (2,2,0), (2,2,3) \in \RR^3_+$ 
and the origin $0=(0,0,0) \in \RR^3$. Then the line 
segment connecting the point $(2,2,0)$ 
(resp. $(2,0,0)$) 
and the origin $0 \in \RR^3$ is an atypical 
face of $\Gamma_{\infty}(f)$.  However the 
triangle whose vertices are the points 
$(2,0,0)$, $(2,2,0)$ 
and the origin $0 \in \RR^3$ is not so. 
Note that for the line segment $\gamma$ 
connecting $(2,0,0)$ and the origin 
we have $\dim \sigma ( \gamma ) 
\cap ( \RR^3)^*_+ =2$. 
\end{example}

Let $\gamma_1, \ldots, \gamma_m$ 
be the atypical faces of 
$\Gamma_{\infty}(f)$. 
For $1 \leq i \leq m$ let 
$K_i= f^{\gamma_i}( 
{\rm Sing} f^{\gamma_i}) \subset \CC$ 
be the set of the critical values of the $\gamma_i$-part 
\begin{equation}
f^{\gamma_i}: T= (\CC^*)^n \longrightarrow \CC
\end{equation}
of $f$. Let us set 
\begin{equation}
K_f = f( {\rm Sing} f) \cup \{ f(0) \} \cup 
( \cup_{i=1}^m K_i). 
\end{equation}
Then N{\'e}methi-Zaharia \cite{N-Z} 
proved the following very useful result. 

\begin{theorem}\label{BESS}
{\rm (N{\'e}methi-Zaharia \cite[Theorem 2]{N-Z})}  
Assume that $f$ is non-degenerate 
at infinity. Then we have 
$B_f \subset K_f$. 
\end{theorem}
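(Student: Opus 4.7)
The approach I plan is to use the Malgrange--Fedoryuk-type characterization of the bifurcation set: $c \notin B_f$ if and only if $c \notin f(\mathrm{Sing}\,f)$ and there is no sequence $\{x_k\}\subset \CC^n$ with $\|x_k\|\to\infty$, $f(x_k)\to c$ and $\|x_k\|\cdot\|\nabla f(x_k)\|\to 0$. Accordingly, suppose for contradiction that $c\in B_f\setminus K_f$ and pick such a sequence $\{x_k\}$. I shall derive a contradiction by analysing the limit of $\{x_k\}$ inside a toric compactification of $\CC^n$ adapted to $\Gamma_\infty(f)$, in the same spirit as the proof of Theorem \ref{thm:3-5-ad}.

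Let $\Sigma$ be a smooth subdivision of the dual subdivision of $\Gamma_\infty(f)$ covering all of $(\RR^n)^*$, and let $X_\Sigma \supset \CC^n$ be the associated smooth complete toric variety. After extracting a subsequence, $x_k$ converges in $X_\Sigma$ to a point $x_\infty$ lying on a torus orbit $O_\sigma$ with $\sigma\not\subset (\RR^n)^*_+$; let $\gamma:=\gamma(\sigma)\prec \Gamma_\infty(f)$ be its supporting face. In a smooth affine chart $\CC^n(\sigma^\prime)\simeq \CC^n_y$ containing $x_\infty$ (for some maximal $\sigma^\prime\succ \sigma$ in $\Sigma$), the pull-back of $f$ factors as $y^{\alpha}\cdot F_\sigma(y)$, where the $y_i$ corresponding to rays of $\sigma$ vanish at $x_\infty$ and $F_\sigma|_{O_\sigma}$ coincides, via the natural torus parametrisation of $O_\sigma$, with the $\gamma$-part $f^{\gamma}$.

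I then distinguish cases on $\gamma$. \emph{(i)} If $0\notin\gamma$, so $\gamma$ is a face at infinity, then some $\alpha_i<0$; the only way $f(x_k)\to c\in\CC$ can hold is if $F_\sigma(x_k)\to 0$, and the Malgrange estimate $\|x_k\|\cdot\|\nabla f(x_k)\|\to 0$ translates, in the logarithmic torus coordinates on $O_\sigma$, into the simultaneous vanishing of $f^\gamma$ and of its logarithmic differential at the limit point, contradicting the assumed non-degeneracy at infinity. \emph{(ii)} If $\gamma=\{0\}$, then $f$ extends continuously through $x_\infty$ with value $f(0)$, so $c=f(0)\in K_f$, contradiction. \emph{(iii)} If $0\in\gamma$ and $\dim\gamma\geq 1$, then since $O_\sigma$ lies at infinity we have $\sigma\not\subset (\RR^n)^*_+$, hence $\sigma(\gamma)\not\subset (\RR^n)^*_+$, so $\gamma$ is atypical, say $\gamma=\gamma_i$; a similar chart analysis identifies the limit as $c=f^{\gamma_i}(y_\infty^*)$ for a point $y_\infty^*\in T$ at which the logarithmic gradient of $f^{\gamma_i}$ vanishes, whence $c\in K_i\subset K_f$, again a contradiction.

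The main obstacle is case \emph{(iii)}: one must match the toric chart coordinates with the intrinsic coordinates on the orbit $O_\sigma$ (which is itself a torus quotient parametrising $f^{\gamma_i}$), and verify that the ambient Malgrange condition $\|x_k\|\cdot\|\nabla f(x_k)\|\to 0$ yields the vanishing of the \emph{intrinsic} logarithmic gradient of $f^{\gamma_i}: T\to \CC$ at the limit, rather than merely of the gradient in the chart. Concretely, tangent directions must be split into those parallel to $\sigma(\gamma)$ (tangential to $O_\sigma$, and governing $f^{\gamma_i}$) and those transverse along the rays of $\sigma$; the transverse contributions can be shown to decay using the non-degeneracy of $f^{\gamma_i}$, while the tangential contributions deliver the desired critical point of $f^{\gamma_i}$ over $c$.
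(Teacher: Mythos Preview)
The paper does not prove Theorem~\ref{BESS}; it merely states the result with attribution to N\'emethi--Zaharia \cite[Theorem~2]{N-Z} and then moves on. So there is no ``paper's own proof'' against which to compare your proposal.

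That said, a few remarks on your sketch. Your overall strategy---reduce to showing that the set $S_\infty(f)$ of asymptotic critical values (in the Malgrange sense) is contained in $K_f$, then analyse diverging sequences via a toric compactification adapted to $\Gamma_\infty(f)$---is essentially the strategy of \cite{N-Z}, though they work with the curve selection lemma rather than passing to $X_\Sigma$. One caution: your opening ``if and only if'' is stronger than what is known in general; only the inclusion $B_f\subset f(\mathrm{Sing}\,f)\cup S_\infty(f)$ is needed, and that is what is actually available (via Rabier, Parusi\'nski, etc.). In case~(i) you assert that $f(x_k)\to c$ finite forces $F_\sigma(x_k)\to 0$; this is not automatic, since the monomial weight $y^\alpha$ need not blow up if the exponents $\alpha_j$ along the other rays are positive---you must argue more carefully with the specific combinatorics of the supporting face, or pass to a further subsequence and a finer cone. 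Your case~(iii) analysis is where the real work lies, as you note; the passage from the ambient Malgrange estimate to a critical point of $f^{\gamma_i}$ on $T$ is exactly the content of N\'emethi--Zaharia's argument, and making the coordinate splitting precise (tangential versus transverse to $O_\sigma$) with uniform control on the error terms is genuinely delicate. The sketch is in the right direction but is not yet a proof.
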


\begin{remark}\label{bad} 
If for an atypical face $\gamma_i$ 
of $\Gamma_{\infty}(f)$ the face 
$\Delta = \gamma_i \cap NP(f-f(0)) 
\prec NP(f-f(0))$ of $NP(f-f(0))$ 
is not bad in the sense 
of N{\'e}methi-Zaharia \cite{N-Z}, 
then $\dim NP(f_{\gamma_i} -f(0))= \dim \Delta 
< \dim \gamma_i$, $f_{\gamma_i} - f(0)$ is 
a positively homogeneous Laurent polynomial 
on $T= (\CC^*)^n$ and hence 
we have $K_i = \{ f(0) \}$. 
Therefore the above inclusion $B_f \subset K_f$ 
coincides with the one in \cite{N-Z}. 
\end{remark} 

In \cite{N-Z} N{\'e}methi and Zaharia 
also proved the equality $B_f=K_f$ for $n=2$ 
and conjectured its validity 
in higher dimensions. So the remaining important 
problem is to prove the inverse inclusion 
$K_f \subset B_f$. Later Zaharia \cite{Zaharia} 
proved $K_f \setminus \{ f(0) \} \subset B_f$ 
for $n \geq 2$ 
assuming some conditions and 
that $f$ has isolated singularities 
at infinity on a fixed smooth toric compactification 
of $\CC^n$. More recently  
in \cite{Tak-1}, the author relaxed 
the conditions in \cite{Zaharia} with the 
help of some basic properties of 
perverse sheaves. See also \cite{Tanabe-G} 
for some recent progress in the problem. 
Also for more general polynomial maps 
$f=(f_1,f_2, \ldots, f_k)  
\colon \CC^n \longrightarrow \CC^k$ 
($1 \leq k \leq n$), one can define 
their bifurcation sets $B_f \subset \CC^k$ 
similarly. In \cite{C-D-T-T} and \cite{Nguyen}, 
as in Theorem \ref{BESS}  
the authors of them gave upper bounds 
for $B_f \subset \CC^k$ described in terms of 
Newton polyhedra of $f_j$ 
($1 \leq j \leq k$). For two polynomials 
$P(x), Q(x) \in \CC [x]= \CC [x_1,x_2, \ldots, x_n]$ on 
$\CC^n$ such that $Q(x) \not= 0$ we define 
a rational function $f(x)$ on $\CC^n$ by 
\begin{equation}
f(x)= \frac{P(x)}{Q(x)}  \qquad (x \in \CC^n \setminus Q^{-1}(0)) 
\end{equation}
and consider the holomorphic map $f: \CC^n \setminus Q^{-1}(0) 
\longrightarrow \CC$ associated to it. Also 
in this case, we can define 
the bifurcation set $B_f \subset \CC$ of $f$. 
Then it would be also an important problem to determine 
$B_f$ for such rational functions $f$. 
In \cite{Thang} and \cite{N-S-T} only some 
partial answers were given to it.

 Let $C_R=\{x\in \CC\ |\ |x|=R\}$ ($R\gg 0$) be a sufficiently 
large circle in $\CC$ such that $B_f\subset \{x \in \CC\ |\ |x|<R\}$. 
Then by restricting the locally trivial fibration $\CC^n \setminus f^{-1}(B_f) 
\longrightarrow \CC \setminus B_f$ to $C_R$ we obtain a 
geometric monodromy automorphism $\Phi_f^{\infty} \colon 
f^{-1}(R) \simto f^{-1}(R)$ and the linear maps
\begin{equation}
\Phi_j^{\infty} \colon H^j(f^{-1}(R) ;\CC) 
\overset{\sim}{\longrightarrow} H^j(f^{-1}(R) ;\CC) \ \ (j=0,1,\ldots)
\end{equation}
associated to it, where the orientation of $C_R$ 
is taken to be counter-clockwise as usual. We call $\Phi_j^{\infty}$'s 
the (cohomological) monodromies at infinity of $f$. Various formulas for 
their eigenvalues (i.e. the semisimple parts) 
were obtained by Libgober-Sperber \cite{L-S} 
and Gusein-Zade-Luengo-Melle-Hern\'andez 
\cite{GZ-L-MH-new}. 
Also, several results on their nilpotent parts were 
obtained by Garc{\'i}a-L{\'o}pez-N{\'e}methi \cite{L-N-2}, Dimca-Saito \cite{D-S-1} 
and Matsui-Takeuchi \cite{M-T-2}. 
For example, Dimca-Saito \cite{D-S-1} and 
Matsui-Takeuchi \cite{M-T-2} obtained upper 
bounds for the sizes of Jordan blocks in $\Phi_j^{\infty}$. 
For the special case $n=2$, see also \cite{Dimca2}. 
In Sections \ref{section 7} and \ref{section 9},  
we will introduce the combinatorial expressions of 
the Jordan normal form of $\Phi_{n-1}^{\infty}$ 
obtained in Matsui-Takeuchi 
\cite{M-T-3} and Stapledon \cite{Stapledon}. 
The monodromies at infinity $\Phi_j^{\infty}$ are important, because 
after a basic result \cite{N-N} of Neumann-Norbury, Dimca-N{\'e}methi 
\cite{D-N} proved that for any $j \in \ZZ$ 
the monodromy representation 
\begin{equation}
\pi_1(\CC \setminus B_f, c) \longrightarrow {\rm Aut}
 ( H^j(f^{-1}(c); \CC) ) \qquad (c \in \CC \setminus B_f)
\end{equation}
is described by the matrix of 
$\Phi_j^{\infty}$ for a basis in the decomposition of 
$H^j(f^{-1}(c); \CC)$ with respect to the vanishing 
cycles at the points in $B_f$. 
Recall that by Theorem \ref{tame} if $f$ is tame at infinity 
 $\Phi_{n-1}^{\infty}$ is 
the only non-trivial monodromy at infinity and its characteristic 
polynomial is calculated by the following zeta function 
$\zeta_f^{\infty}(t) \in \CC(t)^*$. 

\begin{definition}\label{dfn:3-4}
We define the monodromy zeta function at infinity $\zeta_f^{\infty}(t)$ of $f$ by
\begin{equation}
\zeta_{f}^{\infty}(t):=\prod_{j=0}^{\infty} 
\det(\id -t\Phi_j^{\infty})^{(-1)^j} \quad \in \CC(t)^*.
\end{equation}
\end{definition}

For a subset $\J\subset \{1,2,\ldots,n\}$, let 
$\RR^{\J} \simeq \RR^{\sharp S}$ be as in Section \ref{section 3} 
and set $\Gamma_{\infty}^{\J}(f):=\Gamma_{\infty}(f) \cap \RR^{\J}$. 
Then $f$ is convenient if and only if 
we have $\dim \Gamma_{\infty}^{\J}(f)=\sharp {\J}$ for any 
${\J} \subset \{1,2,\ldots,n\}$. For each non-empty 
subset ${\J} \subset \{1,2,\ldots, n\}$, 
let $\gamma_1^{\J},\gamma_2^{\J}, \ldots,
\gamma_{n({\J})}^{\J}$ be the $(\sharp {\J}-1)$-dimensional 
faces at infinity of $\Gamma_{\infty}^{\J}(f)$. 
Note that for $\J\subset \{1,2,\ldots,n\}$ such 
that ${\rm dim} \Gamma_{\infty}^{\J}(f) < \sharp S$ 
there is not such a face and hence we set $n(S)=0$. 
For $1 \leq i \leq n(\J)$, let 
$u_i^{\J} \in (\RR^{\J})^* \cap \ZZ^{\J}$ be the 
unique non-zero primitive vector which takes its minimum in 
$\Gamma_{\infty}^{\J}(f)$ on the whole $\gamma_i^{\J}$ and set
\begin{equation}
d_i^{\J}: = - \min_{v\in \Gamma_{\infty}^{\J}(f)} 
\langle u_i^{\J} ,v \rangle \in \ZZ_{>0}.
\end{equation}
We call $d_i^{\J}$ the lattice distance of $\gamma_i^{\J}$ from 
the origin $0 \in \RR^{\J}$. For each face $\gamma_i^{\J} 
\prec \Gamma_{\infty}^{\J}(f)$, let $\LL(\gamma_i^{\J})$ be 
the smallest affine linear subspace of $\RR^n$ 
containing $\gamma_i^{\J}$ and $\Vol_{\ZZ}(\gamma_i^{\J}) 
\in \ZZ_{>0}$ the normalized $(\sharp \J -1)$-dimensional 
volume of $\gamma_i^{\J}$ with 
respect to the lattice $\ZZ^n \cap \LL(\gamma_i^{\J})$.

\begin{theorem}\label{thm:3-5}{\rm (Libgober-Sperber \cite[Theorem 1]{L-S}, 
see also \cite[Theorem 3.1]{M-T-new3} for a slight generalization)} 
Assume that $f$ is non-degenerate at infinity. Then we have 
\begin{equation}
\zeta_f^{\infty}(t)=\prod_{{\J} \neq \emptyset }\zeta^{\infty}_{f, {\J}}(t),
\end{equation}
where for each non-empty subset $\J \subset 
\{1,2,\ldots,n\}$ we set
\begin{equation}
\zeta^{\infty}_{f,{\J}}(t):=\prod_{i=1}^{n({\J})}
(1-t^{d_i^{\J}})^{(-1)^{\sharp {\J}-1}\Vol_{\ZZ}(\gamma_i^{\J})}.
\end{equation}
\end{theorem}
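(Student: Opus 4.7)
The plan is to mimic the proof of Varchenko's theorem (Theorem \ref{thm:3-5-ad}), replacing the local toric resolution of $\CC^n$ at the origin with a global smooth toric compactification adapted to $\Gamma_\infty(f)$. First, I would express $\zeta_f^\infty(t)$ as a nearby cycle zeta function at infinity: using the graph $\Gamma_f \subset \CC^n \times \CC$ of $f$ and the change of coordinate $s = 1/t$ near $\infty \in \PP^1$, the monodromy at infinity of $f$ corresponds (up to orientation) to the Milnor-type monodromy at $s = 0$ of the appropriate compactified projection, so that the techniques of Theorem \ref{prp:2-99} and Corollary \ref{cpr:2-99} become applicable.

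Next, I would choose a complete smooth fan $\Sigma$ in $\RR^n$ refining both $\Sigma_0$ (the fan of $\CC^n$) and the dual subdivision of $\Gamma_\infty(f)$ in $(\RR^n)^*$. The toric variety $X_\Sigma$ is then a smooth projective compactification of $\CC^n$, and after taking the closure of $\Gamma_f$ in $X_\Sigma \times \PP^1$ and resolving indeterminacies if necessary, one obtains a proper morphism $\pi \colon Y \to \PP^1$ extending $f$. Applying Proposition \ref{prp.7.1.002} and Theorem \ref{prp:2-99} to $\pi$ with the constant sheaf reduces $\zeta_f^\infty(t)$ to an integral of local monodromy zeta functions over $\pi^{-1}(\infty) \subset Y$.

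I would then stratify $\pi^{-1}(\infty)$ by the torus orbits of $X_\Sigma$. The non-degeneracy of $f$ at infinity ensures that in each affine chart associated to a maximal cone $\sigma \in \Sigma$, the function $f$ lifts to a Laurent monomial times a regular function that is smooth along the torus strata. The A'Campo-type lemmas (Lemmas \ref{lem:2-999} and \ref{lem:2-ac-21}) then imply that only the one-dimensional cones $\tau \in \Sigma$ whose supporting face $\gamma(\tau) \prec \Gamma_\infty(f)$ is a maximal-dimensional face at infinity $\gamma_i^\J$ of $\Gamma_\infty^\J(f)$ (for some nonempty $\J \subset \{1,\ldots,n\}$) contribute nontrivially. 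For such a $\tau$, the pole order of $f$ along the associated toric divisor is exactly $d_i^\J$, and the Euler characteristic of the corresponding open stratum (the complement of the proper transform of a generic fiber in the relevant torus orbit) is computed by the Bernstein--Khovanskii--Kouchnirenko theorem (Theorem \ref{BKK-simple}), producing the claimed exponent $(-1)^{\sharp\J - 1} \Vol_\ZZ(\gamma_i^\J)$ of $(1 - t^{d_i^\J})$. Multiplying contributions over all $\J$ and $i$ then yields the asserted product.

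The main obstacle will be verifying that the non-degeneracy condition at infinity yields the precise transversality of proper transforms of generic fibers with toric boundary strata needed by A'Campo's lemma, together with correctly discarding the cones $\tau$ whose supporting face contains the origin (so that the monomial factor from $f$ has non-negative exponent and contributes no pole at infinity). A minor but necessary bookkeeping step is to identify the counterclockwise monodromy around a large circle $C_R \subset \CC$ with the nearby cycle at $s = 0$ after the change of coordinates $s = 1/t$; this orientation issue must be tracked consistently throughout.
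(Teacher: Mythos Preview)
Your overall strategy coincides with the paper's: pass to a nearby cycle at $\infty\in\PP^1$, use a toric compactification adapted to $\Gamma_\infty(f)$, resolve indeterminacy, then compute via Theorem~\ref{prp:2-99}, Lemmas~\ref{lem:2-ac-1}--\ref{lem:2-ac-21} and the Bernstein--Khovanskii--Kouchnirenko theorem. Two points of comparison are worth noting.

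First, the paper does not compactify $\CC^n$ directly. Instead it decomposes $\CC^n\setminus\{0\}=\bigsqcup_{S\neq\emptyset}T_S$ and treats each torus $T_S\simeq(\CC^*)^{\sharp S}$ separately, compactifying $T_S$ by a complete fan refining the dual subdivision of $\Gamma_\infty^S(f)$. This buys two things. (a) Compactifying a torus is always possible; by contrast, your requirement that a complete smooth $\Sigma$ simultaneously contain $\Sigma_0$ as a subfan and refine the dual subdivision of $\Gamma_\infty(f)$ forces $f$ to be convenient, which the theorem does not assume. (b) The relevant sheaf becomes $\iota_!\CC_{T_S}$, which vanishes on the entire toric boundary and on every exceptional divisor of the subsequent blow-ups, so only the proper transforms of the toric divisors at infinity contribute. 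With your ``constant sheaf'' on the full resolved space $Y$ the exceptional loci would contribute and would have to be analyzed separately; you should use the extension by zero from $\CC^n$ (or, as the paper does, from each $T_S$).

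Second, the step ``resolving indeterminacies if necessary'' is where most of the work lies. The meromorphic extension $\tilde f$ of $f$ to $X_\Sigma$ has indeterminacy along $D_i\cap\overline{f^{-1}(0)}$ for each toric divisor $D_i$ at infinity, and the paper eliminates it by an explicit tower of $a_i$ successive blow-ups along smooth centers (with $a_i$ the pole order along $D_i$). The resulting $\widetilde{X_\Sigma}$ is no longer toric, so your plan to ``stratify $\pi^{-1}(\infty)$ by the torus orbits of $X_\Sigma$'' cannot be taken literally; one has to track how the sheaf (extension by zero) behaves on the exceptional divisors, which is exactly where the $T_S$-decomposition and the choice $\iota_!\CC_{T_S}$ make the bookkeeping trivial.
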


This theorem was first proved by Libgober-Sperber \cite{L-S}. Here 
we introduce the new proof given later in \cite{M-T-new3}.

\begin{proof}
Let $j \colon \CC \longhookrightarrow \PP^1=\CC\sqcup \{\infty\}$ be the 
compactification of $\CC$ and take a local coordinate 
$h$ of $\PP^1$ on a neighborhood of the point 
$\infty \in \PP^1$ such that $\infty=\{h=0\}$. 
Then by the (generalized) Poincar\'e duality isomorphisms  
\begin{equation}
H_j(f^{-1}(R);\CC) \simeq H_c^{2n-2-j}(f^{-1}(R);\CC) \qquad 
(R \gg 0, j \in \ZZ_+) 
\end{equation}
we find that
\begin{equation}
\zeta_f^{\infty}(t)=\zeta_{h, \infty}(j_! {\rm R} f_!\CC_{\CC^n})(t) 
\qquad \in \CC(t)^*.
\end{equation}
Let $T_S, i_S$ 
etc. be as in the proof of Theorem \ref{thm:3-5-ad}. 
Then by Corollary \ref{new-corol},  
Lemma \ref{lemma:2-8} and  
the decomposition $\CC^n \setminus \{ 0 \} 
 = \sqcup_{\J \not= \emptyset}T_S$ 
of $\CC^n \setminus \{ 0 \}$ we obtain 
\begin{equation}
\zeta_{h, \infty}(j_! {\rm R} f_!\CC_{\CC^n})(t)
= \prod_{S \not= \emptyset} \ 
\zeta_{h, \infty}(j_! {\rm R} (f \circ i_S)_! \CC_{T_S})(t). 
\end{equation} 
Note that we have $ {\rm R} (f \circ i_S)_! \CC_{T_S} 
\simeq  {\rm R} (f|_{T_S})_! \CC_{T_S}$. 
Hence it suffices to 
prove that for any non-empty subset $\J\subset \{1,2,\ldots,n\}$ 
we have 
\begin{equation}
\zeta_{h, \infty}(j_! {\rm R} (f|_{T_S})_! \CC_{T_S})(t)
= \zeta^{\infty}_{f,{\J}}(t)
=\prod_{i=1}^{n({\J})}
\ (1-t^{d_i^{\J}})^{(-1)^{\sharp {\J}-1}\Vol_{\ZZ}(\gamma_i^{\J})}.
\end{equation}
Here we shall prove it only for $\J = \{1,2,\ldots,n\}$. 
The proof for the other cases are similar. 
For $\J = \{1,2,\ldots,n\}$ we set 
\begin{equation}
T:=T_S=T_{\{1,2,\ldots,n\}} =( \CC^*)^n \subset \CC^n. 
\end{equation}
Let $\Sigma$ be a smooth fan in $( \RR^n)^* \simeq \RR^n$ obtained by 
subdividing the dual subdivision of $\Gamma^S_{\infty}(f) =
\Gamma_{\infty}(f)$ in $( \RR^n)^*$. 
Let $X_{\Sigma}$ be the smooth 
toric variety  associated to $\Sigma$. 
Recall that $T=( \CC^*)^n$ 
acts on $X_{\Sigma}$ and the $T$-orbits are parametrized by 
the cones in $\Sigma$. For a cone $\sigma \in \Sigma$ denote by 
$T_{\sigma} \simeq (\CC^*)^{n-\dim \sigma}$ the corresponding $T$-orbit. 
We have also natural affine open subsets $\CC^n(\sigma) 
\simeq \CC^n$ of $X_{\Sigma}$ associated to $n$-dimensional 
cones $\sigma$ in $\Sigma$. 
Let $\sigma$ be an $n$-dimensional cone in $\Sigma$ and 
$w_1,\ldots, w_n \in \ZZ^n$ the primitive vectors 
on the edges of $\sigma$. Then there exists an 
affine open subset $\CC^n(\sigma)$ of $X_{\Sigma}$ such 
that $\CC^n(\sigma) \simeq \CC^n_y$ and 
the meromorphic function $\tl{f}$ on $X_{\Sigma}$ 
obtained by extending $f|_T: T \longrightarrow \CC$ 
to $X_{\Sigma}$ 
has the following form on it:
\begin{equation} 
\tl{f} (y)=\sum_{v \in \ZZ_+^n}a_{v}y_1^{\langle w_1,v \rangle}\cdots 
y_n^{\langle w_n, v \rangle} =y_1^{b_1} \cdots y_n^{b_n} \cdot f_{\sigma}(y), 
\end{equation}
where we set $f=\sum_{v \in \ZZ_+^n}a_{v}x^{v}$,
\begin{equation}
b_i=\min_{v\in \Gamma_{\infty}(f)} \langle w_i,v \rangle \leq 0 \qquad (i=1,2,\ldots,n)
\end{equation}
and $f_{\sigma}(y)$ is a polynomial on $\CC^n(\sigma) \simeq \CC^n_y$. 
Note that on the affine open subset 
$\CC^n( \sigma ) \simeq \CC^n$ of $X_{\Sigma}$ 
the hypersurface 
\begin{equation}
Z:= \overline{(f|_T)^{-1}(0)} \quad \subset X_{\Sigma} 
\end{equation}
is explicitly written 
as $f_{\sigma}^{-1}(0) \subset \CC^n( \sigma )$. 
So far we have extended $f|_T: T \longrightarrow \CC$ 
to the meromorphic function $\tl{f}$ on 
the smooth complete toric variety $X_{\Sigma}$. 
But $\tl{f}$ still has points of indeterminacy. 
This prevents us from using Theorem \ref{prp:2-99} 
to calculate 
$\zeta_{h, \infty}(j_! {\rm R} (f|_T)_! \CC_{T})(t) 
\in \CC (t)^*$. 
From now on, 
we shall eliminate the points of indeterminacy of 
$\tl{f}$ by blowing up $X_{\Sigma}$. 
Following \cite{L-S}, we say that a $T$-orbit $T_{\sigma}$ in $X_{\Sigma}$ 
is at infinity if the supporting face $\gamma(\sigma) \prec 
\Gamma_{\infty}(f)$ of $\sigma$ in $\Gamma_{\infty}(f)$ 
is at infinity i.e. $0 \notin \gamma(\sigma)$. 
Let $\rho_1, \rho_2, \ldots, \rho_m \in \Sigma$ be the 
$1$-dimensional cones 
in $\Sigma$ such that the $T$-orbits $T_i:=T_{\rho_i} 
\simeq ( \CC^*)^{n-1}$ are at infinity. 
Then for any $1 \leq i \leq m$ the toric divisor $D_i:=\overline{T_i}$ 
is a smooth hypersurface in $X_{\Sigma}$ 
and the poles of $\tl{f}$ are contained in 
their union $D_1 \cup \cdots \cup D_m$. 
Moreover by the non-degeneracy at infinity of $f$, the hypersurface 
$Z= \overline{(f|_T)^{-1}(0)}$ in $X_{\Sigma}$ 
intersects $D_I:= \bigcap_{i \in I}D_i$ transversally 
for any non-empty subset $I \subset \{ 1,2, \ldots, m \}$. 
At such intersection points, $\tl{f}$ has indeterminacy. Furthermore 
we denote the (unique non-zero) primitive vector in 
$\rho_i \cap \ZZ^n$ by $u_i$. Then the order 
$a_i>0$ of the pole of $\tl{f}$ along $D_i$ is given by
\begin{equation}
a_i=-\min_{v\in \Gamma_{\infty}(f)} \langle u_i,v \rangle.
\end{equation} 
Now, in order to eliminate the indeterminacy 
of the meromorphic function $\tl{f}$ on $X_{\Sigma}$, 
we first consider the blow-up $\pi_1 \colon X_{\Sigma}^{(1)} 
\longrightarrow X_{\Sigma}$ of $X_{\Sigma}$ along the $(n-2)$-dimensional 
smooth subvariety $D_1\cap Z$. Then the indeterminacy of the pull-back 
$\tl{f}\circ \pi_1$ of $\tl{f}$ to $X_{\Sigma}^{(1)}$ is improved. If 
$\tl{f}\circ \pi_1$ still has points of indeterminacy on the intersection 
of the exceptional divisor $E_1$ of $\pi_1$ and the proper transform 
$Z^{(1)}$ of $Z$, we construct the blow-up $\pi_2 \colon X_{\Sigma}^{(2)} 
\longrightarrow X_{\Sigma}^{(1)}$ of $X_{\Sigma}^{(1)}$ along $E_1 \cap Z^{(1)}$. 
By repeating this procedure $a_1$ times, we obtain a tower of blow-ups
\begin{equation}
X_{\Sigma}^{(a_1)} \underset{\pi_{a_1}}{\longrightarrow} 
\cdots \cdots
\underset{\pi_2}{\longrightarrow} X_{\Sigma}^{(1)} 
\underset{\pi_1}{\longrightarrow} X_{\Sigma}.
\end{equation}
Then the pull-back of $\tl{f}$ to $X_{\Sigma}^{(a_1)}$ has no indeterminacy over $T_1$. 
See \cite[Figures 1 and 2]{M-T-new3} 
and \cite[Figures 1,2 and 3]{M-T-3} for the details. 
Next we apply this construction to the proper transforms of $D_2$ and $Z$ 
in $X_{\Sigma}^{(a_1)}$. Then we obtain also a tower of blow-ups
\begin{equation}
X_{\Sigma}^{(a_1)(a_2)} \longrightarrow \cdots \cdots 
\longrightarrow X_{\Sigma}^{(a_1)(1)} \longrightarrow X_{\Sigma}^{(a_1)}
\end{equation}
and the indeterminacy of the pull-back of $\tl{f}$ to $X_{\Sigma}^{(a_1)(a_2)}$ 
is eliminated over $T_1 \sqcup T_2$. By applying the same 
construction to (the proper transforms of) $D_3, D_4,\ldots, D_m$, 
we finally obtain a birational morphism 
$\pi \colon \tl{X_{\Sigma}} \longrightarrow X_{\Sigma}$ 
such that $g:=\tl{f} \circ \pi$ has no point of 
indeterminacy on the whole $\tl{X_{\Sigma}}$. 
Note that the smooth compactification $\tl{X_{\Sigma}}$ of $T=( \CC^*)^n$ 
thus obtained is not a toric variety any more. 
Nevertheless, the union of the exceptional divisors of 
$\pi \colon \tl{X_{\Sigma}} \longrightarrow X_{\Sigma}$ 
and the proper transforms of $D_1, \ldots, D_m$ in $\tl{X_{\Sigma}}$ 
is normal crossing. 
We thus obtain a commutative diagram of holomorphic maps
\begin{equation}
\xymatrix{
T \ar@{^{(}->}[r]^{\iota} \ar[d]_{f|_T} & \tl{X_{\Sigma}} \ar[d]^g\\
\CC \ar@{^{(}->}[r]^j & \PP^1,}
\end{equation}
where $\iota$ and $j$ are the inclusion maps and 
$g$ is proper. 
Then there exists an isomorphism 
\begin{equation}
j_! {\rm R} (f|_T)_! \CC_T \simeq 
{\rm R} g_* ( \iota_! \CC_T)
\end{equation}
in $\Dbc(\PP^1)$. 
Now we apply Theorem \ref{prp:2-99}  
to the proper morphism $g \colon \tl{X_{\Sigma}} \longrightarrow \PP^1$, 
the function $h$ 
and the constructible sheaf $\CC_T
= \iota_! \CC_T \in \Dbc ( \tl{X_{\Sigma}} )$. 
Then for $S= \{1,2,\ldots,n\}$ we obtain an equality 
\begin{equation}
\zeta_{h, \infty}( {\rm R} g_* \CC_{T} )= 
\zeta_{h}( {\rm R} g_* \CC_{T} )( \infty )
= \int_{g^{-1}( \infty )}  \zeta_{h \circ g}( \CC_{T} ) 
\end{equation}
in $\CC (t)^*$. Note that $h\circ g$ is 
the meromorphic extension of 
$1/(f|_T): T \setminus f^{-1}(0) \longrightarrow \CC$ 
to $\tl{X_{\Sigma}}$ and we have 
$(h\circ g)^{-1}(0) =g^{-1}(\infty ) \subset \tl{X_{\Sigma}}$. 
Then as in the proof of Theorem \ref{thm:3-5-ad}, 
by calculating the constructible function 
$\zeta_{h \circ g}( \CC_{T} ) \in {\rm CF}_{\CC (t)^*}
(g^{-1}(\infty ))$ at each point of $g^{-1}(\infty )$ 
with the help of 
Lemmas \ref{lem:2-ac-1} and \ref{lem:2-ac-21} and 
Theorem \ref{BKK-simple}, for $S= \{ 1,2, \ldots, n \}$ 
we obtain the desired equality 
\begin{equation}
\int_{g^{-1}( \infty )}  \zeta_{h \circ g}( \CC_{T} )  =
\prod_{i=1}^{n({\J})} \ (1-t^{d_i^{\J}})^{(-1)^{\sharp {\J}-1}
\Vol_{\ZZ}(\gamma_i^{\J})}.
\end{equation}
Note that for $S= \{ 1,2, \ldots, n \}$ 
if ${\rm dim} \Gamma_{\infty}(f) < \sharp S=n$ 
then by Theorem \ref{BKK-simple} we have 
$\int_{g^{-1}( \infty )}  \zeta_{h \circ g}( \CC_{T} )=1$ 
and $n(S)=0$. 
This completes the proof.
\qed
\end{proof}

By this new proof of Theorem \ref{thm:3-5} we can easily obtain 
the following globalization of Oka's theorem. 
Let $f=(f_1,f_2, \ldots, f_k)$, $V \subset W \subset \CC^n$ 
and $g=f_k|_W: W \longrightarrow \CC$ be as in (the last 
half of) Section \ref{section 3}. 
Then there exists a finite subset $B \subset \CC$ such that the restriction
\begin{equation}
W \setminus g^{-1}(B) \longrightarrow \CC \setminus B
\end{equation}
of $g$ is a locally trivial fibration. 
Let $C_R=\{x\in \CC\ |\ |x|=R\}$ ($R\gg 0$) be a sufficiently 
large circle in $\CC$ such that $B \subset \{x \in \CC\ |\ |x|<R\}$ 
and consider the monodromies at infinity 
\begin{equation}
\Phi_j^{\infty} \colon H^j(g^{-1}(R) ;\CC) 
\overset{\sim}{\longrightarrow} H^j(g^{-1}(R) ;\CC) \ \ (j=0,1,\ldots)
\end{equation}
of $g=f_k|_W: W \longrightarrow \CC$ along it. Let 
$\zeta^{\infty}_g(t) \in \CC (t)^*$ be the 
monodromy zeta function associated to them. 
In order to describe it in terms of the Newton 
polyhedra $\Gamma_{\infty}(f_j)$ 
($1 \leq j \leq k$), for simplicity here we assume 
that $f_1, f_2, \ldots, f_k$ are convenient. Then, 
for each non-empty subset ${\J} \subset \{1,2,\ldots, n\}$ 
we set $I(S):= \{ 1,2, \ldots, k-1 \}$ and $m(S):=k$ and 
consider the Minkowski sum 
\begin{equation}
\Gamma_{\infty}^S(f):= \dsum_{j \in I(S) \sqcup \{ k \}} \ 
\Gamma_{\infty}^S(f_j) 
= \dsum_{j =1}^k \ \Gamma_{\infty}^S(f_j) \ 
\subset \RR^S_+. 
\end{equation}
Moreover we define the non-degeneracy at infinity of 
$f=(f_1,f_2, \ldots, f_k)$ by replacing the 
condition $u \in {\rm Int} ( \RR^S)^*_+$ for 
$u \in ( \RR^S)^*$ in Definition \ref{de-no-deg} 
by the one $u \in ( \RR^S)^* \setminus ( \RR^S)^*_+$. 
Then by defining $n(S)$, 
$d_i^S$ and $K_i^S$ as in Section \ref{section 3} 
we obtain the following result. 

\begin{theorem}\label{thm:3-5-CI-new}{\rm 
(Matsui-Takeuchi \cite[Theorem 5.1]{M-T-new2})} 
Assume that $f_1, f_2, \ldots, f_k$ are convenient and 
$f=(f_1,f_2, \ldots, f_k)$  
is non-degenerate at infinity. Then 
the monodromy zeta function $\zeta_{g}^{\infty}(t) \in \CC (t)^*$ 
at infinity 
of $g=f_k|_W: W \longrightarrow \CC$ is given by 
\begin{equation}
\zeta_{g}^{\infty}(t)=
\dprod_{S : \ k \leq \sharp S}
 \ \zeta_{g, S}^{\infty}(t),
\end{equation}
where for each subset $\J \subset \{1,2,\ldots,n\}$ 
such that  
$k \leq \sharp S$ we set
\begin{equation}
\zeta_{g, S}^{\infty}(t):=
\prod_{i=1}^{n({\J})}
\ (1-t^{d_i^{\J}})^{(-1)^{\sharp {\J}-m(S)} K_i^S}.
\end{equation}
\end{theorem}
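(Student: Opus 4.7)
The plan is to combine the proof strategy of Theorem \ref{thm:3-5} (Libgober-Sperber) with the complete-intersection analysis of Theorem \ref{thm:3-5-CI} (Oka). First, by the generalized Poincar\'e duality isomorphisms $H_j(g^{-1}(R);\CC) \simeq H_c^{2(n-k+1)-j}(g^{-1}(R);\CC)$ applied fiberwise on the locally trivial fibration over a large circle $C_R$, one rewrites
\begin{equation*}
\zeta_g^{\infty}(t) = \zeta_{h,\infty}\bigl( j_!\, {\rm R}(f_k|_W)_!\, \CC_W \bigr)(t),
\end{equation*}
where $j:\CC \hookrightarrow \PP^1$ is the compactification and $h$ is a local coordinate of $\PP^1$ at $\infty$. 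Using Corollary \ref{new-corol}, Lemma \ref{lemma:2-8} and the decomposition $W \setminus \{0\} = \bigsqcup_{S \neq \emptyset} (W \cap T_S)$, this factors as
\begin{equation*}
\zeta_g^{\infty}(t) = \prod_{S \neq \emptyset} \zeta_{h,\infty}\bigl( j_!\, {\rm R}(f_k|_{W \cap T_S})_!\, \CC_{W \cap T_S} \bigr)(t).
\end{equation*}
By the convenience of $f_1,\dots,f_k$ one has $I(S) = \{1,\dots,k-1\}$ and $m(S)=k$ for every non-empty $S$; when $\sharp S < k$ the set $W \cap T_S$ has negative expected dimension, and (as in the pointwise computation below) the corresponding factor is trivial, so the product reduces to those $S$ with $k \leq \sharp S$.

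Fix such an $S$. Take a smooth fan $\Sigma$ in $(\RR^S)^*$ refining simultaneously the fan of $\overline{T_S} \simeq \CC^{\sharp S}$ and the dual subdivision of the Minkowski sum $\Gamma_\infty^S(f) = \sum_{j=1}^{k} \Gamma_\infty^S(f_j)$, and let $X_\Sigma$ be the associated smooth toric compactification of $T_S$. Each $f_j|_{T_S}$ extends to a meromorphic function $\widetilde{f_j}$ on $X_\Sigma$; by the non-degeneracy at infinity of $f = (f_1,\ldots,f_k)$, the proper transforms of the closures $\overline{(f_j|_{T_S})^{-1}(0)}$ meet the toric divisors at infinity $D_1,\dots,D_m$ transversally, and together form a normal crossing configuration. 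As in the proof of Theorem \ref{thm:3-5}, perform the iterated tower of blow-ups along the intersections of the pole divisors of $\widetilde{f_k}$ with $\overline{W \cap T_S} = \bigcap_{j \in I(S)} \overline{(f_j|_{T_S})^{-1}(0)}$ to obtain a proper birational morphism $\pi:\widetilde{X_\Sigma} \to X_\Sigma$ such that $\widetilde{g} := \widetilde{f_k} \circ \pi$ extends to a holomorphic map $\widetilde{g}:\widetilde{X_\Sigma} \to \PP^1$.

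Then apply Theorem \ref{prp:2-99} to $\widetilde{g}$, to the coordinate $h$, and to $\iota_! \CC_{T_S \cap W}$, where $\iota:T_S \cap W \hookrightarrow \widetilde{X_\Sigma}$ is the natural locally closed inclusion:
\begin{equation*}
\zeta_{h,\infty}\bigl(j_!\,{\rm R}(f_k|_{W \cap T_S})_!\,\CC_{W \cap T_S}\bigr)(t) = \int_{\widetilde{g}^{-1}(\infty)} \zeta_{h \circ \widetilde{g}}\bigl( \iota_! \CC_{T_S \cap W} \bigr).
\end{equation*}
Evaluate the right-hand side pointwise along the $T_S$-orbits $T_\tau$ in $\widetilde{g}^{-1}(\infty)$. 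On an affine chart $\CC^{\sharp S}(\sigma)$ attached to a maximal cone $\sigma \supset \tau$ with primitive generators $w_1,\dots,w_{\sharp S}$, the extended functions have the monomial-times-polynomial form $\widetilde{f_j} = y_1^{b_{j,1}} \cdots y_{\sharp S}^{b_{j,\sharp S}}\, f_{j,\sigma}(y)$ with $b_{j,i} = \min_{v\in\Gamma_\infty^S(f_j)} \langle w_i, v\rangle$. Using Lemmas \ref{lem:2-ac-1}, \ref{lem:2-ac-21}, \ref{lem:2-999}, only the $1$-dimensional cones $\tau \in \Sigma$ whose supporting face in $\Gamma_\infty^S(f)$ is a $(\sharp S - 1)$-dimensional face at infinity $\gamma_i^S$ contribute non-trivially, and the local contribution is $(1 - t^{d_i^S})^{\chi_i^S}$, where $\chi_i^S$ is the Euler characteristic of $(T_\tau \cap \overline{W \cap T_S}) \setminus \widetilde{f_k}^{-1}(0)$. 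The full Bernstein-Khovanskii-Kouchnirenko theorem (Theorem \ref{BKK}) applied to the non-degenerate complete intersection cut out on $T_\tau \simeq (\CC^*)^{\sharp S - 1}$ by $f_{1,\sigma}, \ldots, f_{k-1,\sigma}$ and by $f_{k,\sigma}$, whose Newton polytopes are exactly the supporting faces $\gamma_{i,j}^S \prec \Gamma_\infty^S(f_j)$, yields $\chi_i^S = (-1)^{\sharp S - m(S)} K_i^S$. Assembling these factors gives $\zeta_{g,S}^\infty(t)$ and completes the proof.

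The main obstacle is to control the geometry of the tower of blow-ups $\pi:\widetilde{X_\Sigma} \to X_\Sigma$ in the complete-intersection setting and to verify that on each exceptional divisor the relevant strata of the proper transform of $\overline{W \cap T_S}$ are still non-degenerate complete intersections with Newton polytopes identified precisely with the supporting faces $\gamma_{i,j}^S$. This identification is what converts the pointwise Euler-characteristic calculation into the mixed-volume expression $K_i^S$ via Theorem \ref{BKK}. The restriction $k \leq \sharp S$ is exactly the condition under which these codimensions match: when $\sharp S < k$ one would have $m(S)-1 = k-1 > \sharp S - 1 = \dim T_\tau$, so $\overline{W \cap T_S}$ generically fails to meet $T_\tau$ and the corresponding contribution disappears.
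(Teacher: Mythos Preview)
Your proposal is correct and follows exactly the approach the paper intends: the paper does not give a detailed proof of Theorem~\ref{thm:3-5-CI-new} but simply states that it follows by combining the proof of Theorem~\ref{thm:3-5} (Libgober--Sperber, global) with that of Theorem~\ref{thm:3-5-CI} (Oka, complete intersection), which is precisely what you do.

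One minor correction: the blow-up centers should be $D_i \cap \overline{(f_k|_{T_S})^{-1}(0)}$ (the indeterminacy locus of the meromorphic extension $\widetilde{f_k}$), not $D_i \cap \overline{W \cap T_S}$ as you wrote; the latter would not resolve the indeterminacy of $\widetilde{f_k}$. After this correct tower of blow-ups you obtain the proper morphism $\widetilde{g}:\widetilde{X_\Sigma}\to\PP^1$ and then apply Theorem~\ref{prp:2-99} to the constructible sheaf $\iota_!\CC_{W\cap T_S}$, exactly as in the proof of Theorem~\ref{thm:3-5} with $\CC_T$ replaced by $\CC_{W\cap T_S}$. Also, the Poincar\'e duality shift should be $2(n-k)-j$ rather than $2(n-k+1)-j$ (the generic fiber $g^{-1}(R)$ has complex dimension $n-k$), though this even shift is immaterial for the alternating product defining the zeta function.
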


\section{Combinatorial formulas for equivariant 
Hodge-Deligne numbers of toric 
hypersurfaces}\label{section 5}

In this section, we introduce 
the refinements obtained in \cite[Section 2]{M-T-3}
of the results of Danilov-Khovanskii \cite{D-K}. 
We shall freely use standard notions of mixed Hodge structures, 
for which we refer to El Zein \cite{E-Z} and 
Voisin \cite{Voisin}. 
In the sequel, let us fix an element $\tau =(\tau_1,\ldots, \tau_n) \in 
T:=(\CC^*)^n$ and let $g$ be a Laurent polynomial on $(\CC^*)^n$ 
such that the complex hypersurface $Z^*=\{ x\in (\CC^*)^n \ |\ g(x)=0\}$ is non-degenerate 
and invariant by the automorphism $l_{\tau} \colon (\CC^*)^n 
\underset{\tau \times}{\simto}(\CC^*)^n$ induced by the multiplication by $\tau$. 
Set $\Delta =NP(g)$ and for simplicity assume that $\d \Delta=n$. Then there exists 
$\beta \in \CC$ such that $l_{\tau}^*g= g \circ l_{\tau}=\beta g$. This implies that 
for any vertex $v$ of $\Delta =NP(g)$ we have ${\tau}^v={\tau}_1^{v_1} \cdots 
{\tau}_n^{v_n}=\beta$. Moreover by the condition $\d \Delta=n$ we see that $\tau_1, \tau_2, \ldots , \tau_n$ 
are roots of unity. For $p,q \geq 0$ and $k \geq 0$, let $h^{p,q}(H_c^k(Z^*;\CC))$ be the mixed Hodge number of $H_c^k(Z^*;\CC)$ and set
\begin{equation}
e^{p,q}(Z^*)=\dsum_k (-1)^k h^{p,q}(H_c^k(Z^*;\CC))
\end{equation}
as in \cite{D-K}. The above automorphism of $(\CC^*)^n$ induces a morphism of mixed 
Hodge structures $l_{\tau}^* \colon H_c^k(Z^*;\CC) \simto H_c^k(Z^*;\CC)$ and 
hence $\CC$-linear automorphisms of the $(p,q)$-parts $H_c^k(Z^*;\CC)^{p,q}$ of $H_c^k(Z^*;\CC)$. 
For $\alpha \in \CC$, let $h^{p,q}(H_c^k(Z^*;\CC))_{\alpha}$ be the dimension of the 
$\alpha$-eigenspace $H_c^k(Z^*;\CC)_{\alpha}^{p,q}$ of this automorphism of $H_c^k(Z^*;\CC)^{p,q}$ and set
\begin{equation}
e^{p,q}(Z^*)_{\alpha}=\dsum_k (-1)^k h^{p,q}(H_c^k(Z^*;\CC))_{\alpha}.
\end{equation}
Since we have $l_{\tau}^r =\id_{Z^*}$ for some $r \gg 0$, these numbers are zero 
unless $\alpha$ is a root of unity. Obviously we have
\begin{equation}
e^{p,q}(Z^*)=\dsum_{\alpha \in \CC} e^{p,q}(Z^*)_{\alpha}, \qquad e^{p,q}(Z^*)_{\alpha}=e^{q,p}(Z^*)_{\overline{\alpha}}.
\end{equation}
In this setting, along the lines of Danilov-Khovanskii \cite{D-K} we can give an 
algorithm for computing these numbers $e^{p,q}(Z^*)_{\alpha}$ as follows. First of all, 
as in \cite[Section 3]{D-K} we can easily obtain the following result.

\begin{proposition}\label{prp:2-15}
For $p,q \geq 0$ such that $p+q >n-1$, we have
\begin{equation}
e^{p,q}(Z^*)_{\alpha}
=\begin{cases}
(-1)^{n+p+1}\binom{n}{p+1} & (\text{$\alpha=1$ and $p=q$}),\\
 & \\
\ 0 & (\text{otherwise}), 
\end{cases}
\end{equation}
where we used the convention $\binom{a}{b}=0$ ($0 \leq a <b$) for binomial coefficients.
\end{proposition}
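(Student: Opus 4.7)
The proposition is the $l_\tau$-equivariant refinement of the non-equivariant computation of $e^{p,q}(Z^*)$ in the range $p + q > n - 1$ due to Danilov-Khovanskii \cite{D-K}. My plan is to exploit the fact that in this bidegree range the Hodge-theoretic content of $Z^*$ is controlled by the ambient toric geometry, on which $l_\tau$ acts trivially because $T$ is a connected algebraic group acting on its compactifications.

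Concretely, I would first choose a smooth projective fan $\Sigma$ in $(\RR^n)^*$ refining the normal fan of $\Delta = NP(g)$, with the support function of $\Delta$ chosen $\Sigma$-strictly convex so that the closure $Z$ of $Z^*$ in $X = X_\Sigma$ is ample. By the non-degeneracy of $g$, $Z$ will be smooth and will meet the toric boundary $D = X \setminus T$ transversally, while $l_\tau$ extends algebraically to $X$ preserving $Z$, $D$, and every torus orbit. Since $X$ is a smooth projective toric variety, $H^*(X; \CC)$ is generated by classes of $T$-invariant algebraic cycles, so $H^{2r+1}(X) = 0$ and $H^{2r}(X)$ is pure of Hodge type $(r, r)$; and $l_\tau$ acts trivially on $H^*(X; \CC)$ as an element of the connected group $T$.

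I would then invoke the $l_\tau$-equivariant weak Lefschetz theorem applied to the ample smooth divisor $Z \subset X$ to obtain an isomorphism of equivariant mixed Hodge structures $H^k(Z; \CC)(-1) \simeq H^{k+2}(X; \CC)$ for $k > n - 1$ via the Gysin pushforward. Combined with the structure of $H^*(X)$, this shows that for $k > n - 1$, $H^k(Z; \CC)$ vanishes for odd $k$, is pure of Hodge type $(k/2, k/2)$ for even $k$, and carries trivial $l_\tau$-action. Consequently $e^{p, q}(Z)_\alpha = 0$ for $p + q > n - 1$ unless $\alpha = 1$ and $p = q$.

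The final step is to apply additivity of equivariant $E$-polynomials along the stratification $Z = Z^* \sqcup (Z \cap D)$, which gives $e^{p, q}(Z^*)_\alpha = e^{p, q}(Z)_\alpha - e^{p, q}(Z \cap D)_\alpha$. The divisor $Z \cap D$ is stratified by the intersections $Z^* \cap O_\sigma$ with the non-open torus orbits $O_\sigma \simeq (\CC^*)^{n - \dim \sigma}$; each stratum is a (possibly cylindrical) non-degenerate hypersurface in $O_\sigma$, defined by the face polynomial $g^{\gamma_\sigma}$ and preserved by $l_\tau$. Inducting on $n$ and combining with the K\"unneth decomposition of the cylindrical strata $Z^* \cap O_\sigma \simeq Z'_\sigma \times (\CC^*)^{m_\sigma}$, I would obtain the equivariant vanishing of $e^{p, q}(Z \cap D)_\alpha$ in the range $p + q > n - 1$ for $\alpha \neq 1$ or $p \neq q$; the diagonal value $e^{p, p}(Z^*)_1$ will then coincide with the non-equivariant Danilov-Khovanskii value $(-1)^{n + p + 1}\binom{n}{p + 1}$, since all other eigenvalues $\alpha$ contribute zero. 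The main technical obstacle will be the treatment of the cylindrical boundary strata when the face $\gamma_\sigma$ has dimension strictly less than $n - \dim \sigma$, where the K\"unneth convolution $e(Z'_\sigma)_\alpha \cdot (uv - 1)^{m_\sigma}$ can produce monomials of bidegree exceeding the inductive range; handling this requires either a careful analysis of the support of $e(Z'_\sigma)_\alpha$ or a choice of $\Sigma$ (with singular $X$) avoiding cylindrical strata, combined with a resolution argument.
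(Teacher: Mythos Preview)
Your core insight---that in the range $p+q > n-1$ the equivariant content is governed by ambient toric cohomology, on which $l_\tau \in T$ acts trivially because $T$ is connected---is exactly right, and is essentially all the equivariant refinement requires on top of the non-equivariant Danilov--Khovanskii result. But the execution you propose has two gaps. First, on any proper smooth refinement $\Sigma$ of the normal fan of $\Delta$ the support function of $\Delta$ is only piecewise-linear convex, not strictly convex (it is linear on each normal-fan cone, hence on every $\Sigma$-cone contained in one); so the closure $Z \subset X_\Sigma$ is merely semi-ample, not ample, and the projective weak Lefschetz isomorphism $H^k(Z)(-1)\simeq H^{k+2}(X)$ for $k>n-1$ is not available as stated. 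Second, the inductive bound on cylindrical boundary strata does not close: for $Z_\sigma \simeq Z'_\sigma \times (\CC^*)^{m_\sigma}$ with $Z'_\sigma \subset (\CC^*)^{d_\sigma}$, convolving the inductive vanishing $e^{p',q'}(Z'_\sigma)_\alpha = 0$ for $p'+q'>d_\sigma-1$ with the diagonal polynomial $(uv-1)^{m_\sigma}$ gives $e^{p,q}(Z_\sigma)_\alpha = 0$ only for $p+q > d_\sigma - 1 + 2m_\sigma$, which can far exceed $n-1$; so individual strata can contribute non-trivially for $\alpha \neq 1$ in the target range, and you would have to exhibit cancellations across strata.

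The route the paper takes via \cite[Section~3]{D-K} is shorter and avoids both issues. Danilov--Khovanskii establish there a Lefschetz-type theorem for the affine pair $(T,Z^*)$: the Gysin map $H_c^k(Z^*)(-1) \to H_c^{k+2}(T)$ is an isomorphism of mixed Hodge structures for every $k > n-1$. This map is intrinsic and $l_\tau$-equivariant, and $l_\tau$ acts trivially on $H_c^*(T)$. Since weights on $H_c^k$ of the smooth variety $Z^*$ are $\leq k$, for $p+q > n-1$ only degrees $k > n-1$ contribute to $e^{p,q}(Z^*)_\alpha$, and the proposition follows at once: you need not re-establish the Danilov--Khovanskii isomorphism equivariantly from scratch, only observe that it already respects the $l_\tau$-action.
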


For a vertex $w$ of $\Delta$, consider the translated polytope $\Delta^w:=\Delta -w$ 
such that $0 \prec \Delta^w$ and ${\tau}^v=1$ for any vertex $v$ of $\Delta^w$. Then for $\alpha \in \CC$ and $k \geq 0$ set
\begin{equation}
l^*(k\Delta)_{\alpha}=\sharp \{ v \in \Int (k\Delta^w) \cap \ZZ^n \ |\ {\tau}^v =\alpha\} \quad 
\in \ZZ_+
\end{equation}
and
\begin{equation}
l(k\Delta)_{\alpha}=\sharp \{ v \in (k\Delta^w) \cap \ZZ^n \ |\ {\tau}^v =\alpha\} \quad 
\in \ZZ_+.
\end{equation}
We can easily see that these numbers $l^*(k\Delta)_{\alpha}$ and $l(k\Delta)_{\alpha}$ 
do not depend on the choice of the vertex $w$ of $\Delta$. Next, define two formal power 
series $P_{\alpha}(\Delta;t)=\sum_{i \geq 0}\varphi_{\alpha, i}(\Delta)t^i$ and 
$Q_{\alpha}(\Delta;t)=\sum_{i \geq 0}\psi_{\alpha,i}(\Delta)t^i$ by
\begin{equation}
P_{\alpha}(\Delta;t)=(1-t)^{n+1} \left\{ \dsum_{k \geq 0} l^*(k\Delta)_{\alpha}t^k\right\}
\end{equation}
and
\begin{equation}
Q_{\alpha}(\Delta;t)=(1-t)^{n+1} \left\{ \dsum_{k \geq 0} l(k\Delta)_{\alpha}t^k\right\}
\end{equation}
respectively. Then we can easily show that $P_{\alpha}(\Delta;t)$ is actually a polynomial 
as in \cite[Section 4.4]{D-K}. Moreover as in Macdonald \cite{Macdonald}, we can easily prove 
that for any $\alpha \in \CC^*$ the function $h_{\Delta,\alpha}(k):=l(k\Delta)_{\alpha^{-1}}$ of $k \geq 0$ 
is a polynomial of degree $n$ with coefficients in $\QQ$. By a straightforward generalization of the Ehrhart 
reciprocity proved by \cite{Macdonald}, we obtain also an equality
\begin{equation}
h_{\Delta,\alpha}(-k)=(-1)^n l^*(k\Delta)_{\alpha}
\end{equation}
for $k> 0$. By an elementary computation (see \cite[Remark 4.6]{D-K}), this implies that we have
\begin{equation}\label{E:sym}
\varphi_{\alpha , i}(\Delta)= \psi_{\alpha^{-1}, n+1-i}(\Delta ) \qquad (i\in\ZZ ).
\end{equation}
In particular, $Q_{\alpha}(\Delta;t)=\sum_{i \geq 0}\psi_{\alpha, i}(\Delta)t^i$ is a polynomial for any $\alpha \in \CC^*$.

\begin{theorem}\label{thm:2-14}
In the situation as above, we have
\begin{equation}
\dsum_q e^{p,q}(Z^*)_{\alpha}
=\begin{cases}
(-1)^{p+n+1}\binom{n}{p+1} +(-1)^{n+1} \varphi_{\alpha, n-p}(\Delta) & 
(\alpha=1), \\
 & \\
(-1)^{n+1} \varphi_{\alpha, n-p}(\Delta) & (\alpha \not= 1). 
\end{cases}
\end{equation}
\end{theorem}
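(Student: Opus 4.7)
The plan is to reduce the equivariant Hodge--Deligne computation for the open hypersurface $Z^* \subset T$ to that of a smooth projective compactification, and then to extract the eigenspace information using weak Lefschetz and a toric Jacobian-ring description of the primitive middle cohomology. First I would choose a smooth projective fan $\Sigma$ in $\RR^n$ refining the normal fan of $\Delta$ so that the $l_\tau$-action on $T$ extends to the associated toric variety $X_\Sigma$, and let $\bar Z \subset X_\Sigma$ denote the closure of $Z^*$. Since $g$ is non-degenerate with respect to every face of $\Delta$, the hypersurface $\bar Z$ is smooth, $\tau$-invariant, and transverse to every torus orbit $T_\sigma$. In particular, stratifying by orbits yields
\begin{equation}
e^{p,q}(\bar Z)_\alpha \;=\; e^{p,q}(Z^*)_\alpha \;+\; \dsum_{\sigma \neq \{0\}} e^{p,q}(\bar Z \cap T_\sigma)_\alpha,
\end{equation}
where each $\bar Z \cap T_\sigma$ is itself a non-degenerate hypersurface in the lower-dimensional torus $T_\sigma$ whose Newton polytope is the face $\gamma(\sigma) \prec \Delta$ dual to $\sigma$; this would support an induction on the dimension $n$.

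Next, to compute $e^{p,q}(\bar Z)_\alpha$ I would exploit the fact that the $l_\tau$-action on $X_\Sigma$ factors through the action of the connected group $T$ and therefore acts trivially on $H^*(X_\Sigma; \CC)$. The weak Lefschetz theorem then gives $H^k(X_\Sigma;\CC) \simto H^k(\bar Z; \CC)$ for $k < n-1$, and Poincar\'e duality for the smooth projective variety $\bar Z$ upgrades this to a complete description away from the middle dimension. Consequently for $\alpha \neq 1$ the only eigenspace that contributes to $e^{p,q}(\bar Z)_\alpha$ is the primitive middle cohomology $H^{n-1}_{\mathrm{prim}}(\bar Z; \CC)_\alpha$, which is concentrated in Hodge bidegrees $(p,q)$ with $p+q = n-1$; whereas for $\alpha = 1$ there is in addition a Lefschetz-type contribution from $H^*(X_\Sigma;\CC)$ which, summed over $q$, will be evaluated to produce precisely the term $(-1)^{p+n+1}\binom{n}{p+1}$ (this is exactly compatible with Proposition \ref{prp:2-15} in the range $p+q > n-1$).

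The heart of the argument is then an equivariant Griffiths--Steenbrink style identification of $H^{n-1-p,p}_{\mathrm{prim}}(\bar Z;\CC)$ with a graded piece of a toric Jacobian ring of $g$, whose natural basis is given by monomials $x^v$ with $v \in \Int((p+1)\Delta^w) \cap \ZZ^n$ (for any vertex $w \prec \Delta$, and independently of the choice of $w$). Since the $\tau$-action multiplies $x^v$ by the character $\tau^v$, this identification would yield
\begin{equation}
\d H^{n-1-p,p}_{\mathrm{prim}}(\bar Z;\CC)_\alpha \;=\; l^*((p+1)\Delta)_\alpha.
\end{equation}
Combining this with the definition
$P_\alpha(\Delta;t) = (1-t)^{n+1}\sum_{k\geq 0} l^*(k\Delta)_\alpha \, t^k = \sum_i \varphi_{\alpha,i}(\Delta)\,t^i$,
the equivariant Ehrhart-type symmetry \eqref{E:sym}, and the inductive treatment of the boundary strata from the stratification above, one extracts the global $\varphi_{\alpha,n-p}(\Delta)$-contribution with the sign $(-1)^{n+1}$ to $\dsum_q e^{p,q}(Z^*)_\alpha$, thereby arriving at the claimed formula.

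The main obstacle I anticipate is the equivariant identification of the primitive Hodge pieces of $\bar Z$ with Jacobian-ring graded pieces: one must verify carefully that the intertwining of the $l_\tau$-action on cohomology with the character action $x^v \mapsto \tau^v x^v$ on monomials is compatible with the Hodge filtration, which is where the non-degeneracy hypothesis really gets used in depth. A subsidiary but nontrivial combinatorial obstacle is the book-keeping required to reconcile the inductive boundary contributions (coming from faces of $\Delta$ of every dimension) with the generating-function identities defining $P_\alpha(\Delta;t)$, so as to collapse many terms and produce exactly the index $n-p$ appearing in the statement.
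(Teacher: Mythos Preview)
The paper gives no detailed argument here; it simply says the result ``can be proved as in the proof for the formula in \cite[Section 4.4]{D-K}.'' Danilov--Khovanskii's method in that section is a direct sheaf-theoretic computation: one identifies $\sum_q e^{p,q}(Z^*)$ with a compactly supported Euler characteristic of a sheaf of logarithmic $p$-forms, and then uses the Poincar\'e residue exact sequence together with the standard description of cohomology of line bundles on the toric compactification to reduce everything to the Ehrhart-type generating function $P_\alpha(\Delta;t)$. The equivariant refinement amounts to tracking the $\tau$-weight of each monomial through those exact sequences, which is straightforward because the sheaves involved are $T$-linearized. No weak Lefschetz, no Jacobian ring, and no induction on $\dim\Delta$ is needed.

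Your route is genuinely different: you pass to a smooth projective $\bar Z \subset X_\Sigma$, use Lefschetz to kill the $\alpha\neq 1$ contributions outside middle degree, and then invoke a Griffiths/Batyrev--Cox description of $H^{n-1}_{\mathrm{prim}}(\bar Z)$ to read off the eigenspaces from interior lattice points. This is more geometric and, if carried through, actually yields the individual $h^{p,q}(\bar Z)_\alpha$ rather than just the row sums, which is more than the theorem asks for. The price is the combinatorial bookkeeping you already flagged.

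One genuine technical point to watch: on a \emph{smooth} refinement $\Sigma$ of the normal fan of $\Delta$, the divisor class of $\bar Z$ is only big and nef, not ample, so the classical Lefschetz hyperplane theorem does not apply verbatim. You either have to work on the (generally singular) toric variety $X_{\Sigma_\Delta}$ associated to the normal fan itself---where $\mathcal O(\Delta)$ is ample and $\bar Z$ is quasi-smooth, so an orbifold Lefschetz theorem applies---or invoke a big-and-nef version of weak Lefschetz. This is fixable, but it is exactly the kind of issue that the direct D--K argument sidesteps entirely.
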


This result can be proved as in the proof for the formula in \cite[Section 4.4]{D-K}. 
With Proposition \ref{prp:2-15} and Theorem \ref{thm:2-14} at hands, we can now easily calculate the 
numbers $e^{p,q}(Z^*)_{\alpha}$ on the non-degenerate hypersurface $Z^* \subset (\CC^*)^n$ for any $\alpha \in \CC$ as 
in \cite[Section 5.2]{D-K}. Indeed for a projective toric compactification $X$ of $(\CC^*)^n$ such that the 
closure $\overline{Z^*}$ of $Z^*$ in $X$ is smooth, the variety $\overline{Z^*}$ is smooth projective and hence there exists a perfect pairing
\begin{equation}
H^{p,q}(\overline{Z^*};\CC)_{\alpha} \times H^{n-1-p, n-1-q}(\overline{Z^*};\CC)_{\alpha^{-1}} \longrightarrow \CC
\end{equation}
for any $p,q \geq 0$ and $\alpha \in \CC^*$ (see for example \cite[Section 5.3.2]{Voisin}). Therefore, we obtain equalities 
$e^{p,q}(\overline{Z^*})_{\alpha}=e^{n-1-p,n-1-q}(\overline{Z^*})_{\alpha^{-1}}$ which are necessary to 
proceed the algorithm in \cite[Section 5.2]{D-K}. We have also the following analogue of \cite[Proposition 5.8]{D-K}.

\begin{proposition}\label{prp:new}
For any $\alpha \in \CC$ and $p> 0$ we have
\begin{equation}
e^{p,0}(Z^*)_{\alpha}=e^{0,p}(Z^*)_{\overline{\alpha}}= (-1)^{n-1} \sum_{\begin{subarray}{c} \Gamma 
\prec \Delta\\ \d \Gamma =p+1\end{subarray}}l^*(\Gamma)_{\alpha}.
\end{equation}
\end{proposition}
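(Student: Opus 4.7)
The first equality $e^{p,0}(Z^*)_{\alpha}=e^{0,p}(Z^*)_{\overline{\alpha}}$ is immediate from complex conjugation, which exchanges the $(p,q)$- and $(q,p)$-Hodge pieces and sends the $\alpha$-eigenspace of $l_{\tau}^*$ to the $\overline{\alpha}$-eigenspace. The substantive content is the combinatorial identity, which I would prove by adapting Danilov--Khovanskii \cite[Proposition 5.8]{D-K} to the equivariant setting, with the argument proceeding by induction on $n=\dim\Delta$ (the cases $n\leq 1$ being trivial).

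First I would fix a smooth projective toric compactification $X_{\Sigma}$ of $T=(\CC^*)^n$ whose fan refines the normal fan of $\Delta$ in such a way that $\overline{Z^*}\subset X_{\Sigma}$ is smooth and meets every torus orbit $T_{\sigma}\subset X_{\Sigma}$ transversely. Since $\tau$ is a torsion element, $l_{\tau}$ extends to $X_{\Sigma}$, preserving $\overline{Z^*}$ and the whole toric stratification. For each face $\Gamma\prec\Delta$, the intersection $Z^*_{\Gamma}:=\overline{Z^*}\cap T_{\Gamma}$ is a non-degenerate $l_{\tau}$-invariant hypersurface in $T_{\Gamma}\simeq(\CC^*)^{\dim\Gamma}$ with Newton polytope $\Gamma$. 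Additivity of the equivariant Hodge--Deligne numbers along the stratification $\overline{Z^*}=\bigsqcup_{\Gamma\prec\Delta} Z^*_{\Gamma}$ then gives
\begin{equation}
e^{p,0}(Z^*)_{\alpha}=e^{p,0}(\overline{Z^*})_{\alpha}-\sum_{\Gamma\prec\Delta,\,\Gamma\neq\Delta}e^{p,0}(Z^*_{\Gamma})_{\alpha}.
\end{equation}
Since $\overline{Z^*}$ is smooth and projective of dimension $n-1$, purity of $H^k(\overline{Z^*};\CC)$ forces $h^{p,0}(H^k(\overline{Z^*}))_{\alpha}=0$ unless $k=p$, so $e^{p,0}(\overline{Z^*})_{\alpha}=(-1)^p\dim H^0(\overline{Z^*},\Omega^p_{\overline{Z^*}})_{\alpha}$. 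Proper faces with $\dim\Gamma\leq p$ contribute zero since $\dim Z^*_{\Gamma}<p$, while faces with $\dim\Gamma\geq p+1$ are handled by the inductive hypothesis, giving $e^{p,0}(Z^*_{\Gamma})_{\alpha}=(-1)^{\dim\Gamma-1}\sum_{\Gamma'\prec\Gamma,\,\dim\Gamma'=p+1}l^*(\Gamma')_{\alpha}$.

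The main obstacle is then to establish the equivariant Khovanskii-type formula computing $\dim H^0(\overline{Z^*},\Omega^p_{\overline{Z^*}})_{\alpha}$ explicitly in terms of the refined lattice-point counts $l^*(\Gamma)_{\alpha}$. I would approach this through the adjunction sequence
\begin{equation}
0\to\Omega^p_{X_{\Sigma}}(\log\overline{Z^*})(-\overline{Z^*})\to\Omega^p_{X_{\Sigma}}(\log\partial X_{\Sigma})\to\Omega^p_{\overline{Z^*}}(\log\partial\overline{Z^*})\to 0
\end{equation}
on $X_{\Sigma}$, combined with the explicit $T$-eigenspace description of the logarithmic differentials on toric varieties; the $\alpha$-eigenspace on each side is identified with the set of lattice points $v\in\Int(\Gamma)\cap\ZZ^n$ satisfying $\tau^v=\alpha$ for appropriate faces $\Gamma\prec\Delta$. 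Once this combinatorial description is in hand, the inductive step reduces to an inclusion-exclusion identity over the face poset of $\Delta$, yielding the asserted equality for $e^{p,0}(Z^*)_{\alpha}$.
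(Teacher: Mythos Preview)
Your approach is exactly what the paper has in mind: the paper gives no explicit proof of this proposition, stating only that it is ``an analogue of \cite[Proposition 5.8]{D-K}'', and your outline is precisely the equivariant adaptation of the Danilov--Khovanskii argument (stratify $\overline{Z^*}$ by torus orbits, use purity on the smooth projective closure, induct on $\dim\Delta$, and feed in the toric description of holomorphic $p$-forms). One small correction: the short exact sequence you wrote has the wrong first term --- it should read $\Omega^p_{X_{\Sigma}}(\log\partial X_{\Sigma})(-\overline{Z^*})$ rather than $\Omega^p_{X_{\Sigma}}(\log\overline{Z^*})(-\overline{Z^*})$, so that the middle term is obtained by tensoring with $\mathcal{O}_{X_\Sigma}(\overline{Z^*})$ and the quotient is the restriction to $\overline{Z^*}$; but this does not affect the structure of the argument.
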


The following result is an analogue of \cite[Corollary 5.10]{D-K}. For $\alpha \in \CC$, denote by 
$\Pi(\Delta)_{\alpha}$ the number of the lattice points $v=(v_1,\ldots, v_n)$ on the $1$-skeleton of 
$\Delta^w=\Delta-w$ such that ${\tau}^v=\alpha$, where $w$ is a vertex of $\Delta$.

\begin{proposition}\label{prp:2-19}
In the situation as above, for any $\alpha \in \CC^*$ we have
\begin{equation}
e^{0,0}(Z^*)_{\alpha}
=\begin{cases}
(-1)^{n-1} \left(\Pi(\Delta)_{1}-1\right)  & (\alpha=1), \\
 & \\
(-1)^{n-1}  \Pi(\Delta)_{\alpha^{-1}} & (\alpha \not= 1). 
\end{cases}
\end{equation}
\end{proposition}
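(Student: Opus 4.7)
The plan is to follow the non-equivariant argument of Danilov-Khovanskii \cite[Corollary 5.10]{D-K}, adapted to the equivariant setting, by isolating $e^{0,0}(Z^*)_\alpha$ inside the column sum $\sum_q e^{0,q}(Z^*)_\alpha$ supplied by Theorem \ref{thm:2-14} and using Proposition \ref{prp:new} (combined with the Hodge-symmetry $e^{p,q}(Z^*)_\alpha = e^{q,p}(Z^*)_{\overline{\alpha}}$) to subtract the $q \geq 1$ terms. Since the eigenvalues $\alpha$ are roots of unity, we have $\overline{\alpha} = \alpha^{-1}$; the right-hand side of the desired formula therefore features $\Pi(\Delta)_{\alpha^{-1}}$.

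Concretely, I would first specialize Theorem \ref{thm:2-14} to $p=0$, obtaining
\begin{equation*}
\sum_q e^{0,q}(Z^*)_\alpha = (-1)^{n+1}\varphi_{\alpha, n}(\Delta) + (-1)^{n+1} n \cdot \delta_{\alpha,1}.
\end{equation*}
Then I would apply the reciprocity identity \eqref{E:sym} to convert $\varphi_{\alpha,n}(\Delta) = \psi_{\alpha^{-1},1}(\Delta)$, and extract the coefficient $\psi_{\alpha^{-1},1}(\Delta) = l(\Delta)_{\alpha^{-1}} - (n+1) l(\{0\})_{\alpha^{-1}}$ directly from the definition of $Q_{\alpha^{-1}}(\Delta;t)$; since $l(\{0\})_{\alpha^{-1}} = \delta_{\alpha,1}$, both cases collapse to
\begin{equation*}
\sum_q e^{0,q}(Z^*)_\alpha = (-1)^{n-1}\bigl(l(\Delta)_{\alpha^{-1}} - \delta_{\alpha,1}\bigr).
\end{equation*}
For $q\geq 1$, the symmetry $e^{0,q}(Z^*)_\alpha = e^{q,0}(Z^*)_{\alpha^{-1}}$ and Proposition \ref{prp:new} give
\begin{equation*}
\sum_{q\geq 1} e^{0,q}(Z^*)_\alpha = (-1)^{n-1} \sum_{\Gamma \prec \Delta,\ \dim\Gamma \geq 2} l^*(\Gamma)_{\alpha^{-1}}.
\end{equation*}

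Subtracting and using the disjoint decomposition of the translate $\Delta^w$ into the relative interiors of its (non-empty) faces, we get
\begin{equation*}
l(\Delta)_{\alpha^{-1}} - \sum_{\dim\Gamma \geq 2} l^*(\Gamma)_{\alpha^{-1}} = \sum_{\dim\Gamma \leq 1} l^*(\Gamma)_{\alpha^{-1}} = \Pi(\Delta)_{\alpha^{-1}},
\end{equation*}
since the $0$- and $1$-dimensional faces together account for exactly the lattice points of the $1$-skeleton (vertices, plus the relative-interior lattice points of the edges). This immediately yields $e^{0,0}(Z^*)_\alpha = (-1)^{n-1}(\Pi(\Delta)_{\alpha^{-1}} - \delta_{\alpha,1})$, which is the desired formula. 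The main technical point to handle carefully is the convention for $l^*$ on lower-dimensional faces: the symbol $l^*(\Gamma)$ appearing in Proposition \ref{prp:new} must be read in the relative-interior sense (so that the face decomposition above is exhaustive and disjoint), which is compatible with the absolute-interior convention used in the definition of $l^*(k\Delta)_\alpha$ only because $k\Delta$ is $n$-dimensional for $k\geq 1$. Once this bookkeeping of the vertex contributions and of the $\delta_{\alpha,1}$ correction is settled, the identification with $\Pi(\Delta)_{\alpha^{-1}}$ and the splitting between the cases $\alpha = 1$ and $\alpha \neq 1$ follows automatically.
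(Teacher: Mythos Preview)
Your proposal is correct and follows essentially the same approach as the paper: the paper's proof is a one-line sketch that points to Theorem~\ref{thm:2-14}, Proposition~\ref{prp:new}, and the reciprocity \eqref{E:sym}, and says to run the Danilov--Khovanskii argument for \cite[Corollary~5.10]{D-K}. You have faithfully carried out exactly those steps, including the subtraction of the $q\geq 1$ terms via Hodge symmetry and the identification of the residual $1$-skeleton count with $\Pi(\Delta)_{\alpha^{-1}}$.
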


\begin{proof}
By Theorem \ref{thm:2-14}, Proposition \ref{prp:new} and the equality \eqref{E:sym}, 
the assertion can be proved as in the proof \cite[Corollary 5.10]{D-K}. \qed
\end{proof}

For a vertex $w$ of $\Delta$, we define a closed convex cone $\Con(\Delta, w)$ by 
$\Con(\Delta,w)=\{ r \cdot (v -w) \ |\ r \in \RR_+, \ v \in \Delta\} \subset \RR^n$.

\begin{definition}\label{dfn:2-16}
Let $\Delta$ be an $n$-dimensional integral polytope in $(\RR^n, \ZZ^n)$.
\begin{enumerate}
\item (see \cite[Section 2.3]{D-K}) We say that $\Delta$ 
is prime if for any vertex $w$ of $\Delta$ the cone $\Con(\Delta,w)$ is generated by a basis of $\RR^n$.
\item We say that $\Delta$ is pseudo-prime if for any 
$1$-dimensional face $\gamma \prec \Delta$ the number of the 
$2$-dimensional faces $\gamma^{\prime} \prec \Delta$ such that $\gamma \prec \gamma^{\prime}$ is $n-1$.
\end{enumerate}
\end{definition}

By definition, prime polytopes are pseudo-prime. Moreover any face of a pseudo-prime polytope is again pseudo-prime. 
From now on, we assume that $\Delta=NP(g)$ is pseudo-prime. Let $\Sigma$ be the dual fan of $\Delta$ and $X_{\Sigma}$ 
the toric variety associated to it. Then except finite points $X_{\Sigma}$ is an orbifold and the 
closure $\overline{Z^*}$ of $Z^*$ in $X_{\Sigma}$ does not intersect such 
points by the non-degeneracy of $g$. Hence $\overline{Z^*}$ is an orbifold i.e. quasi-smooth 
in the sense of \cite[Proposition 2.4]{D-K}. In particular, there exists a Poincar{\'e} duality isomorphism
\begin{equation}
[H^{p,q}(\overline{Z^*};\CC)_{\alpha}]^* \simeq H^{n-1-p,n-1-q}(\overline{Z^*};\CC)_{\alpha^{-1}}
\end{equation}
for any $\alpha \in \CC^*$ (see for example \cite{Danilov-2} and \cite[Corollary 8.2.22]{H-T-T}). 
Then by slightly generalizing the arguments in \cite{D-K} we 
obtain the following analogue of \cite[Section 5.5 and Theorem 5.6]{D-K}.

\begin{proposition}\label{prp:2-17}
In the situation as above, for any $\alpha \in \CC \setminus \{1\}$ and $p,q \geq 0$, we have
\begin{equation}
e^{p,q}(\overline{Z^*})_{\alpha}
=\begin{cases}
 -\dsum_{\Gamma \prec \Delta}(-1)^{\d \Gamma} 
\varphi_{\alpha, \d \Gamma -p}(\Gamma) & (p+q=n-1),\\
 & \\
\ 0 & (\text{otherwise}), 
\end{cases}
\end{equation}
and 
\begin{equation}
e^{p,q}(Z^*)_{\alpha}
= (-1)^{n+p+q} \sum_{\begin{subarray}{c} \Gamma \prec \Delta\\ \d \Gamma =p+q+1\end{subarray}} 
\left\{ \sum_{\Gamma^{\prime} \prec \Gamma} (-1)^{\d 
\Gamma^{\prime}} \varphi_{\alpha, \d \Gamma^{\prime}-p}(\Gamma^{\prime})\right\}.
\end{equation}
\end{proposition}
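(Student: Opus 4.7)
The plan is to combine a stratification of $\overline{Z^*}$ by toric orbits with the additivity of equivariant Hodge-Deligne numbers, Theorem \ref{thm:2-14} applied to each stratum, a Lefschetz-type vanishing for the $\alpha$-eigenspaces with $\alpha \neq 1$, and finally a M\"obius inversion on the face poset of $\Delta$. First, I would stratify $\overline{Z^*}=\bigsqcup_{\Gamma \prec \Delta}Z_\Gamma^*$, where $Z_\Gamma^*:=\overline{Z^*}\cap T_\Gamma$ and $T_\Gamma \simeq (\CC^*)^{\d \Gamma}$ is the torus orbit in $X_\Sigma$ associated to $\Gamma$. By non-degeneracy of $g$, each $Z_\Gamma^*$ is the non-degenerate hypersurface in $T_\Gamma$ cut out by $g^\Gamma$, and it is $l_\tau$-invariant because $\tau$ preserves every face of $\Delta$. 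Since every stratum is $l_\tau$-invariant and morphisms of mixed Hodge structures are strict, the associated long exact sequence in compactly supported cohomology yields, for every face $\Gamma_0 \prec \Delta$ and every $\alpha \in \CC^*$, the equivariant additivity
\begin{equation}
e^{p,q}(\overline{Z_{\Gamma_0}^*})_\alpha=\sum_{\Gamma \prec \Gamma_0}e^{p,q}(Z_\Gamma^*)_\alpha.
\end{equation}

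Next, for $\alpha \neq 1$ I would prove the vanishing $e^{p,q}(\overline{Z^*})_\alpha=0$ whenever $p+q \neq n-1$. Because $\Delta$ is pseudo-prime, $\overline{Z^*}$ is a complete orbifold, so its rational cohomology is pure and $e^{p,q}(\overline{Z^*})_\alpha$ receives contributions only from $H^{p+q}(\overline{Z^*};\CC)_\alpha$. The rational cohomology of the ambient projective toric variety $X_\Sigma$ is generated by Chern classes of $T$-invariant divisors and is therefore fixed by $l_\tau^*$, so $H^k(X_\Sigma;\CC)_\alpha=0$ for every $k$ and every $\alpha \neq 1$. An equivariant weak Lefschetz argument for the quasi-smooth ample hypersurface $\overline{Z^*}\subset X_\Sigma$ then gives $H^k(\overline{Z^*};\CC)_\alpha=0$ for $k<n-1$, and Poincar\'e duality on the orbifold $\overline{Z^*}$ exchanges the $\alpha$- and $\alpha^{-1}$-eigenspaces, extending the vanishing to $k>n-1$. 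Combining this vanishing with the additivity above and with Theorem \ref{thm:2-14} (whose constant correction from Proposition \ref{prp:2-15} is absent for $\alpha \neq 1$), one obtains the first formula by summing the identities $\sum_q e^{p,q}(Z_\Gamma^*)_\alpha=(-1)^{\d \Gamma+1}\varphi_{\alpha,\d\Gamma-p}(\Gamma)$ over all faces of $\Delta$.

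To deduce the formula for $Z^*$ itself I would invert the additivity relation using the M\"obius function of the face poset of $\Delta$. Since this poset (augmented by $\hat{0}=\emptyset$) is Eulerian, $\mu(\Gamma,\Gamma_0)=(-1)^{\d\Gamma_0-\d\Gamma}$ for every $\Gamma \preceq \Gamma_0$, and M\"obius inversion applied to the additivity with $\Gamma_0=\Delta$ gives
\begin{equation}
e^{p,q}(Z^*)_\alpha=\sum_{\Gamma \prec \Delta}(-1)^{n-\d\Gamma}e^{p,q}(\overline{Z_\Gamma^*})_\alpha.
\end{equation}
Applying the first formula (with $\Gamma$ in place of $\Delta$ and $\d\Gamma$ in place of $n$) forces only the terms with $\d\Gamma=p+q+1$ to survive, and the sign simplification $(-1)^{n-p-q-1}\cdot(-1)=(-1)^{n+p+q}$ produces the second formula. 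The main obstacle is the middle step: rigorously justifying the equivariant weak Lefschetz theorem in the quasi-smooth (orbifold) setting, and verifying that Poincar\'e duality on $\overline{Z^*}$ really intertwines the $l_\tau^*$-eigenspaces of eigenvalues $\alpha$ and $\alpha^{-1}$, so that the vanishing for $k<n-1$ propagates to $k>n-1$ and forces the concentration in middle degree on which both formulas depend.
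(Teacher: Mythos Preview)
Your proposal is correct and follows essentially the same route as the paper, which simply states that the result is obtained ``by slightly generalizing the arguments in \cite{D-K}'' together with the orbifold Poincar\'e duality $[H^{p,q}(\overline{Z^*};\CC)_{\alpha}]^* \simeq H^{n-1-p,n-1-q}(\overline{Z^*};\CC)_{\alpha^{-1}}$ recorded just before the proposition. Your outline (torus-orbit stratification, equivariant additivity, Lefschetz-type concentration in middle degree for $\alpha\neq 1$, Theorem~\ref{thm:2-14} on each stratum, and M\"obius inversion on the face poset) is precisely the Danilov--Khovanskii scheme of \cite[\S 5.5--5.6]{D-K} carried out eigenvalue by eigenvalue, and your sign bookkeeping checks out; the paper also notes that every face of a pseudo-prime polytope is pseudo-prime, which is exactly what you need to apply the first formula inductively to each $\overline{Z_\Gamma^*}$.
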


For $\alpha \in \CC \setminus \{1\}$ and a face $\Gamma \prec \Delta$, set $\tl{\varphi}_{\alpha}(\Gamma)=
\sum_{i=0}^{\d \Gamma} \varphi_{\alpha, i}(\Gamma)$. Then Proposition \ref{prp:2-17} can be rewritten as follows.

\begin{corollary}\label{cor:2-18}
For any $\alpha \in \CC \setminus \{1\}$ and $r \geq 0$, we have
\begin{equation}
\sum_{p+q=r}e^{p,q}(Z^*)_{\alpha}=(-1)^{n+r} \sum_{\begin{subarray}{c} \Gamma \prec \Delta\\ \d 
\Gamma =r+1\end{subarray}} \left\{ \sum_{\Gamma^{\prime} \prec \Gamma} 
(-1)^{\d \Gamma^{\prime}}\tl{\varphi}_{\alpha}(\Gamma^{\prime})\right\}.
\end{equation}
\end{corollary}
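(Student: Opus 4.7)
The plan is to derive Corollary~\ref{cor:2-18} directly from Proposition~\ref{prp:2-17} by summing the second displayed formula there over all $p,q \geq 0$ with $p+q=r$, interchanging the order of summation, and identifying the resulting innermost sum with $\tilde{\varphi}_\alpha$.

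First I would fix $r \geq 0$ and note that in Proposition~\ref{prp:2-17} the sign $(-1)^{n+p+q}$ becomes the constant $(-1)^{n+r}$ and the dimension constraint $\dim \Gamma = p+q+1$ becomes $\dim \Gamma = r+1$ as soon as we restrict to $p+q=r$. Pulling out these common factors and swapping the order of summation gives
\begin{equation*}
\sum_{p+q=r}e^{p,q}(Z^*)_\alpha
= (-1)^{n+r}\sum_{\substack{\Gamma \prec \Delta\\ \dim\Gamma=r+1}}\sum_{\Gamma'\prec\Gamma}(-1)^{\dim\Gamma'}\Bigl(\sum_{p=0}^{r}\varphi_{\alpha,\dim\Gamma'-p}(\Gamma')\Bigr).
\end{equation*}

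Second, I would analyze the innermost sum face by face. For $\Gamma'\prec\Gamma$, write $d':=\dim\Gamma'$, so $0\le d'\le r+1$, and reindex via $i = d'-p$. Since $\varphi_{\alpha,i}(\Gamma')=0$ for $i<0$ by convention, the inner sum collapses to $\sum_{i=\max(0,\,d'-r)}^{d'}\varphi_{\alpha,i}(\Gamma')$. When $d'\leq r$ this is exactly $\tilde{\varphi}_\alpha(\Gamma')=\sum_{i=0}^{d'}\varphi_{\alpha,i}(\Gamma')$ by definition. In the remaining case $d'=r+1$ (so $\Gamma'=\Gamma$), the sum runs over $i=1,\dots,r+1$, and to recover $\tilde{\varphi}_\alpha(\Gamma')$ I would invoke the vanishing $\varphi_{\alpha,0}(\Gamma')=0$: by the defining formula $P_\alpha(\Gamma';t)=(1-t)^{d'+1}\sum_{k\geq 0}l^*(k\Gamma')_\alpha t^k$, the constant coefficient is $l^*(0\cdot\Gamma')_\alpha$, which is zero because $0\cdot\Gamma'=\{0\}$ is a single point and therefore has empty interior.

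Combining these two cases converts the identity into exactly the form asserted in Corollary~\ref{cor:2-18}. The only non-routine step is the boundary case $\Gamma'=\Gamma$, where one must check that the truncated sum $\sum_{i=1}^{r+1}\varphi_{\alpha,i}(\Gamma)$ still equals $\tilde{\varphi}_\alpha(\Gamma)$; once this elementary vanishing of $\varphi_{\alpha,0}(\Gamma)$ is verified, the corollary is a clean algebraic reshuffling of Proposition~\ref{prp:2-17}, with the hypothesis $\alpha\neq 1$ entering only through the latter.
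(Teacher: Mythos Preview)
Your proof is correct and follows exactly the route the paper intends: the corollary is stated right after Proposition~\ref{prp:2-17} as a direct rewriting, and your argument supplies precisely those details---summing over $p+q=r$, swapping the order of summation, and reindexing to recognize $\tilde{\varphi}_\alpha(\Gamma')$. The one substantive check you flag, namely $\varphi_{\alpha,0}(\Gamma)=0$ for the top face $\Gamma'=\Gamma$ (where $\dim\Gamma=r+1\geq 1$), is handled correctly via the vanishing of $l^*(0\cdot\Gamma)_\alpha$.
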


\section{Motivic Milnor fibers and their Hodge realizations}\label{section 6}

In this section, we introduce some important results in 
Denef-Loeser \cite{D-L-1} which will be used in 
subsequent sections. Since their original proofs in \cite{D-L-1} 
are written in terms of Chow motives and it is not so 
easy to compare them with the assertions in \cite{D-L-2}, 
here we rewrite them in the new style of \cite{D-L-2}. 
We shall freely use standard notions of mixed Hodge modules, 
for which we refer to  \cite[Section 8.3]{H-T-T}, \cite{Saito-1}, 
\cite{Saito-2} and \cite{Schnell}. 
First, let $\mathrm{Var}_{\CC}$ be the category of 
algebraic varieties defined over $\CC$. We denote 
by $K_{0}(\mathrm{Var}_{\CC})$ the abelian group 
generated by the isomorphism classes $[X]$ of the elements 
$X\in \mathrm{Var}_{\CC}$ of $\mathrm{Var}_{\CC}$ and defined by the 
relation $[X]=[X\setminus Y]+[Y]$ ($Y$ is a 
Zariski closed subset of $X$). By the products 
of varieties $K_{0}(\mathrm{Var}_{\CC})$ is a 
ring. We call it the motivic Grothendieck ring of 
varieties. For a positive integer $d \in \ZZ_{>0}$, let $\mu_d 
:= \{t\in\CC\ | \ t^{d}=1\} 
\simeq \ZZ/\ZZ d$ be the multiplicative 
group consisting of the $d$-roots in $\CC$ 
and $\zeta_{d}:=\exp(2\pi \sqrt{-1}/d)\in\mu_{d}$ 
its standard generator. Then they form a projective system 
by the homomorphisms 
\begin{equation}
\mu_{rd} \longrightarrow  \mu_{d}\qquad (t\longmapsto t^{r})
\end{equation}
and we obtain the projective limit
\begin{equation}
\hat{\mu}:= \underset{d}{\varprojlim} \ \mu_d
\end{equation}
associated to it. 
Following Denef-Loeser \cite[Section 2.4]{D-L-2}, we say 
that an action of the group $\mu_d$ on an 
algebraic variety $X$ is good if any oribit of it is contained 
in an affine open subset. Moreover we say that 
an action of the group $\hat{\mu}$ on $X$ is good if it is induced by 
a good one of $\mu_d$ for some $d \in \ZZ_{>0}$. 
We thus obtain the Grothendieck ring 
$K^{\hat{\mu}}_{0}(\mathrm{Var}_{\CC})$ of 
varieties over $\CC$ with good $\hat{\mu}$-action. 
In fact, it is noting but the inductive limit 
defined by the ring homomorphisms 
$K_{0}^{\mu_{d}}(\mathrm{Var}_{\CC})
\longrightarrow K_{0}^{\mu_{rd}}(\mathrm{Var}_{\CC})$. 
We call the element 
$\mathbb{L}:=[\CC] \in 
K^{\hat{\mu}}_{0}(\mathrm{Var}_{\CC})$ of 
$K^{\hat{\mu}}_{0}(\mathrm{Var}_{\CC})$ defined by 
the affine line $\CC$ endowed with the trivial 
$\hat{\mu}$-action the Lefschetz motive. Then 
we denote by $\M_{\CC}^{\hat{\mu}}$ 
the ring obtained from the Grothendieck ring $\KK_0^{\hat{\mu}}(\Var_{\CC})$ 
by inverting the Lefschetz motive 
$\LL \in \KK_0^{\hat{\mu}}(\Var_{\CC})$.

In what folows, let $Z$ be a (not necessarily 
complete) smooth algebraic variety over 
$\CC$ and $g\colon Z \longrightarrow \CC$ 
a regular function on it such that 
$E:=g^{-1}(0)\subset Z$ is a normal 
crossing divisor. Assume also that 
the irreducible components $E_{1},E_{2},\dots, E_{k}$ 
of $E$ are smooth. We denote the multiplicity 
of $g$ along $E_i$ by $m_i>0$. For each subset 
$I\subset \{1,2,\dots,k\}$ we set 
\begin{equation}
E_{I}=\bigcap_{i\in I}E_{i},
\quad E^{\circ}_{i}=E_{I}\setminus \bigcup_{i\notin I}E_{i}
\end{equation}
and $m_{I}={\rm gcd}(m_{i})_{i\in I}>0$. 
Then we can construct an unramified Galois covering 
$\tl{E_{I}^{\circ}}\longrightarrow E_{I}^{\circ}$ of 
$E^{\circ}_{I}$ as follows. 
First, for a point $p \in E_I^{\circ}$ we take an affine open neighborhood 
$W \subset Z \setminus ( \cup_{i \notin I} E_i)$ of $p$ on which there 
exist regular functions $\xi_i$ ($i\in I$) 
such that $E_i \cap W=\{ \xi_i=0 \}$ for any $i \in I$ and set 
\begin{equation}
g_{1,W}:=g \prod_{i \in I}\xi_i^{-m_i}, \qquad 
g_{2,W}:=\prod_{i \in I} \xi_i^{\frac{m_i}{m_I}}. 
\end{equation}
Note that $g_{1,W}$ is a unit i.e. invertible 
on $W$ and $g_{2,W} \colon W \longrightarrow \CC$ is a regular function. 
Then on $W$ we obtain a decomposition 
\begin{equation}
g=g_{1,W} \cdot (g_{2,W})^{m_{I}}
\end{equation}
of $g$. 
It is easy to see that $E_I^{\circ}$ is covered by 
such affine open subsets $W$ of $Z \setminus ( \cup_{i \notin I} E_i)$. 
Then as in \cite[Section 3.3]{D-L-2} by gluing the varieties
\begin{equation}\label{eq:6-26}
\tl{E_{I,W}^{\circ}}=\{(t,z) \in \CC^* \times 
(E_I^{\circ} \cap W) \ |\ t^{m_I} =(g_{1,W})^{-1}(z)\}
\end{equation}
together in the following way, we obtain the covering 
$\tl{E_I^{\circ}}$ over $E_I^{\circ}$ of degree $m_{I}$. 
If $W^{\prime}$ is another such open subset and 
$g=g_{1,W^{\prime}} \cdot (g_{2,W^{\prime}})^{m_I}$ is the 
decomposition of $g$ on it, we patch $\tl{E_{I,W}^{\circ}}$ and $\tl{E_{I,W^{\prime}}^{\circ}}$ 
by the morphism $(t,z) \longmapsto 
(g_{2,W^{\prime}}(z)( g_{2,W})^{-1}(z) \cdot t, z)$ defined 
over $W \cap W^{\prime}$.
Let $\mu_{m_{I}}=\{t\in\CC\ |\ t^{m_{I}}=1\}\simeq \ZZ/\ZZ m_{I}$ 
be the cyclic group of order $m_I$. Then by 
assigning the automorphism 
$(t,z)\mapsto (\zeta_{m_{I}}\cdot t, z)$ of $\tl{E_{I}^{\circ}}$
to its standard generator 
$\zeta_{m_{I}}=\exp(2\pi\sqrt{-1}/m_{I})\in \mu_{m_{I}}$ 
we obtain a good action of $\mu_{m_{I}}$ on $\tl{E_{I}^{\circ}}$. 
We denote the element of 
the motivic Grothendieck ring $\M^{\hat{\mu}}_{\CC}$ 
with good action of $\hat{\mu}$ thus obtained 
by $[\tl{E_{I}^{\circ}}]$. Moreover we set 
\begin{equation}
\MCS_{g}=\sum_{I\not= \emptyset}(1-
\mathbb{L})^{|I|-1}\cdot {[\tl{E_{I}^{\circ}}]} 
\quad \in\M^{\hat{\mu}}_{\CC} 
\end{equation}
and call it the motivic Milnor fiber of $g$. 
From now, let us explain the Hodge realizations 
of the elements in $\M^{\hat{\mu}}_{\CC}$ 
defined in Denef-Loeser \cite[Section 3.1]{D-L-2}. 
In their terminology, 
a (mixed) Hodge structure is a vector space $V$ 
over $\QQ$ for which we have a direct sum 
decomposition 
$\CC\otimes_{\QQ}V\simeq \oplus_{p,q\in \ZZ}V^{p,q}$ 
satisfying the following two conditions:  
\begin{equation}
(i)\ V^{p,q}=\overline{V^{q,p}}, \quad (ii)\ 
\bigoplus_{p+q=m}V^{p,q}\mbox{ is defined over $\QQ$ 
for any $m \in \ZZ$}. 
\end{equation}
By defining the morphisms of Hodge structures naturally, 
we obtain an abelian category $\mathrm{HS}$ of 
Hodge structures. Similarly, we can define an 
abelian category $\mathrm{HS}^{\mathrm{mon}}$ 
of Hodge structures with a quasi-unipotent 
automorphism (``quasi-unipotent"  
means that some powers of it is unipotent 
as in \cite[Section 3.1.3]{D-L-2}) 
and denote by $K_{0}(\mathrm{HS}^{\mathrm{mon}})$ its 
Grothendieck group. 
By the tensor product of 
Hodge structures, we see that 
$K_{0}(\mathrm{HS}^{\mathrm{mon}})$ has a 
ring structure. Then we can define a ring 
homomorphism 
\begin{equation}
\chi_{h}\colon\M^{\hat{\mu}}_{\CC}\longrightarrow
 K_{0}(\mathrm{HS}^{\mathrm{mon}})
\end{equation}
in the following way. For a variety $Y$ with 
good action of the cyclic group $\mu_{d}$ 
we set 
\begin{equation}
\chi_{h}([Y])=\sum_{j\in\ZZ}(-1)^{j}
{[H^{j}_{c}(Y;\QQ)]} \qquad \in K_{0}(\mathrm{HS}^{\mathrm{mon}}), 
\end{equation}
where $H^{j}_{c}(Y;\QQ)$ stands for the Hodge structure 
endowed with the (quasi-unipotent) automorphism induced by 
the morphism $Y \longrightarrow Y$  
($y\longmapsto \zeta_{d}y$). 
We call the ring homomorphism $\chi_{h}$ 
thus obtained the Hodge characteristic morphism. 
Let us define an element of 
$K_{0}(\mathrm{HS}^{\mathrm{mon}})$ 
associated to $g$. Let 
\begin{equation}
\psi^{H}_{g}\colon \Db ({\rm MHM}(Z))\longrightarrow 
\Db ({\rm MHM}(E))
\end{equation}
be the nearby cycle functor along $g$ for 
mixed Hodge modules. Applying it to the 
constant Hodge module $\CC^{H}_{Z}[n]\in {\rm MHM}(Z)$ 
on $Z$ we obtain a mixed Hodge module 
$\psi^{H}_{g}(\CC^{H}_{Z}[n])\in {\rm MHM}(E)$ on 
$E=g^{-1}(0)$. Here, to be consistent with 
the notations of Denef-Loeser \cite{D-L-1} we 
denote $\QQ_{Z}^{H}[n]$ by $\CC^{H}_{Z}[n]$. 
Note that its underlying $\QQ$-perverse sheaf is 
\begin{equation}
\mathrm{rat}\psi^{H}_{g}(\CC^{H}_{Z}[n])
= 
^p\!\psi_{g} (\QQ_{Z}[n])=\psi_{g}(\QQ_{Z}[n-1]). 
\end{equation}
Let 
\begin{equation}
\psi^{H}_{g}(\CC^{H}_{Z}[n])\simeq 
\bigoplus_{\lambda\in\CC}\psi^{H}_{g,\lambda}(\CC^{H}_{Z}[n])
\end{equation}
be the direct sum decomposition of $\psi^{H}_{g}(\CC^{H}_{Z}[n])$ 
by the generalized eigenspaces of its 
monodromy automorphism. Here $\lambda\in\CC$ 
ranges through roots of unity. Applying the 
proper direct image functor 
\begin{equation}
(a_{E})_{!}\colon \Db ({\rm MHM}(E))\longrightarrow  \Db 
({\rm MHM}({\rm pt}))
\end{equation}
for the map 
$a_{E}\colon E=g^{-1}(0)\longrightarrow {\rm pt}$ 
to the one point space ${\rm pt}$ to 
$\psi^{H}_{g}(\CC^{H}_{Z}[n])$, we obtain 
mixed Hodge structures 
\begin{equation}
H^{j}(a_{E})_{!}\psi^{H}_{g}(\CC^{H}_{Z}[n])
\in {\rm MHM}({\rm pt})\qquad (j\in\ZZ)
\end{equation}
and quasi-unipotent automorphisms of them 
induced by the semisimple parts of their 
monodromies. We thus obtain an element 
\begin{equation}
\chi_{c}(E;\psi^{H}_{g}(\CC^{H}_{Z}[n])):=
\sum_{j\in\ZZ}(-1)^{j}{[H^{j}(a_{E})_{!}
\psi^{H}_{g}(\CC^{H}_{Z}[n])]}
\end{equation}
of $K_{0}(\mathrm{HS}^{\mathrm{mon}})$. 

\begin{theorem}{\rm (Denef-Loeser 
\cite[Theorem 4.2.1]{D-L-1})}\label{Th-DL} 
In the Grothendieck ring $K_{0}(\mathrm{HS}^{\mathrm{mon}})$ 
there exists an equality 
\begin{equation}
\chi_{c}(E;\psi^{H}_{g}(\CC^{H}_{Z}[n]))=
(-1)^{n-1}\chi_{h}(\MCS_{g}).
\end{equation}
\end{theorem}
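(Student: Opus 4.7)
The plan is to reduce both sides of the identity to a sum of local contributions indexed by the strata of the normal crossing stratification $E = \bigsqcup_{I \neq \emptyset} E_I^\circ$. On the right-hand side this decomposition is built into the definition of $\MCS_g$. For the left-hand side, I would use additivity of $\chi_c$ with respect to the locally closed stratification: since the underlying constructible complex of $\psi_g^H(\CC_Z^H[n])$ is adapted to the normal crossing stratification of $E$, one has
\[
\chi_c(E; \psi_g^H(\CC_Z^H[n])) = \sum_{I \neq \emptyset} \chi_c(E_I^\circ; \psi_g^H(\CC_Z^H[n])|_{E_I^\circ})
\]
in $K_0(\mathrm{HS}^{\mathrm{mon}})$. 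Thus it suffices to establish, for each nonempty $I \subset \{1,\dots,k\}$, the stratum-wise identity
\[
\chi_c(E_I^\circ; \psi_g^H(\CC_Z^H[n])|_{E_I^\circ}) = (-1)^{n-1}(1-\LL)^{|I|-1}\chi_h([\tl{E_I^\circ}]),
\]
and then sum over $I$ and compare with the definition of $\MCS_g$.

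To prove the stratum-wise identity, I would start from the local topological picture first observed by A'Campo. Near a point $x \in E_I^\circ$, there exist local coordinates in which $g$ factors as $u \cdot \prod_{i \in I} \xi_i^{m_i}$ with $u$ a unit, and the Milnor fiber of $g$ at $x$ is homotopy equivalent to the total space of a $(\CC^*)^{|I|-1}$-bundle over the fiber $\mu_{m_I}$ of the cyclic cover $\tl{E_I^\circ} \longrightarrow E_I^\circ$, with the semisimple part of the geometric monodromy acting on $\mu_{m_I}$ by multiplication by $\zeta_{m_I}$ and trivially on the torus factor. Taking the Hodge characteristic of this local Milnor fiber with its monodromy action then yields exactly the motivic factor $(1-\LL)^{|I|-1}[\tl{E_I^\circ}]_x$ (the torus $(\CC^*)^{|I|-1}$ contributing $(1-\LL)^{|I|-1}$, and the \'etale $\mu_{m_I}$-covering contributing $[\tl{E_I^\circ}]_x$). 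The task is to globalize this pointwise identification over $E_I^\circ$ at the level of variations of mixed Hodge structure, which is where the main work lies.

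This global upgrade is the content of Steenbrink's, resp.\ M.\ Saito's, explicit computation of the nearby cycle mixed Hodge module along a normal crossing divisor: the restriction $\psi_g^H(\CC_Z^H[n])|_{E_I^\circ}$ is described, as a mixed Hodge module on $E_I^\circ$, in terms of the direct image under the finite \'etale cover $\tl{E_I^\circ} \to E_I^\circ$ of the constant mixed Hodge module on $\tl{E_I^\circ}$, with its natural $\mu_{m_I}$-action furnishing the semisimple part of the monodromy, and twisted by the cohomology mixed Hodge structure of the auxiliary torus $(\CC^*)^{|I|-1}$. Applying $\chi_c$ on $E_I^\circ$ and proper base change, the torus contributes $(1-\LL)^{|I|-1}$ while the shift $[n]$ combined with the perverse conventions for the nearby cycle functor produces the overall sign $(-1)^{n-1}$. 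The hard part — and the main obstacle — is precisely this identification at the level of mixed Hodge modules, i.e.\ matching the geometric $\mu_{m_I}$-action from the cyclic cover $\tl{E_I^\circ}$ with the semisimple part of the monodromy on $\psi_g^H(\CC_Z^H[n])|_{E_I^\circ}$; this is essentially Saito's equivariant refinement of Steenbrink's spectral sequence, and once it is in place the theorem follows by summation over $I$ and direct comparison with the definition of $\MCS_g$.
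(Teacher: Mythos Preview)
Your approach is sound and reaches the same conclusion, but the paper organizes the computation quite differently. Rather than stratifying $E$ first and identifying $\psi_g^H(\CC_Z^H[n])|_{E_I^\circ}$ directly as (a twist of) the pushforward from the cyclic cover, the paper decomposes first by eigenvalue $\lambda$, then by weight via the primitive decomposition of the monodromy filtration. The key input there is the Saito--Denef--Loeser lemma identifying the primitive pieces $\mathrm{P}\,\Gr^W_{n-1+k}\psi^H_{g,\lambda}(\CC^H_Z[n])$ with direct sums of intersection complexes ${\rm IC}_{E_I}(\F_\lambda|_{E_I^\circ})(-k)$ over $I\subset J(\lambda)$ with $\sharp I=k+1$, where $\F_\lambda$ is the rank-one local system pulled back from $\L_{\lambda^{-1}}$ on $\CC^*$. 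The factor $(1-\LL)^{\sharp I-1}$ then emerges not from a torus Milnor fiber but from a binomial-theorem identity after summing $\chi_c(U_I;\F_\lambda|_{U_I})$ over towers $I\subset I'\subset J(\lambda)$; only at the very end is $\chi_c(E_I^\circ;\F_\lambda|_{E_I^\circ})$ identified with the $\lambda$-part of $\chi_h([\tl{E_I^\circ}])$ via the Cartesian diagram for the $m_I$-fold cover. Your route is more geometric and bypasses the IC-sheaf combinatorics, but the paper's route has the payoff of making the weight filtration completely explicit --- exactly the information exploited later for Jordan normal forms of monodromies. One small caution on your side: the restriction $\psi_g^H(\CC_Z^H[n])|_{E_I^\circ}$ is a complex, not a single mixed Hodge module, so your ``described as a mixed Hodge module'' should be read at the level of $K_0$ (or of the graded pieces), which is all that $\chi_c$ sees; and the sign bookkeeping (your torus contributes $(\LL-1)^{\sharp I-1}$, not $(1-\LL)^{\sharp I-1}$) needs to be reconciled with the shift $[n-1]$ and the dimension of $E_I^\circ$.
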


\begin{proof}
First we recall the results explained in 
e.g. Dimca-Saito \cite[Section 1.4]{D-S}. 
For a root of unity $\lambda\in\CC$ 
the direct sum 
$\psi^{H}_{g,\lambda}(\CC^{H}_{Z}[n])\oplus 
\psi^{H}_{g,\overline{\lambda}}(\CC^{H}_{Z}[n])$ 
corresponds to a $\QQ$-perverse sheaf and 
has a weight filtration $W$ of a mixed 
Hodge module. 
We denote the filtration 
induced by it on $\psi^{H}_{g,\lambda}(\CC^{H}_{Z}[n])$ 
by the same letter $W$. Then for the logarithm 
\begin{equation}
N\colon \psi^{H}_{g,\lambda}(\CC^{H}_{Z}[n])\longrightarrow 
\psi^{H}_{g,\lambda}(\CC^{H}_{Z}[n])
\end{equation}
of the unipotent part of the monodromy on 
$\psi^{H}_{g,\lambda}(\CC^{H}_{Z}[n])$,  
there exists an isomorphism 
\begin{equation}
N^{k}\colon \Gr^{W}_{n-1+k}\psi^{H}_{g,\lambda}(
\CC^{H}_{Z}[n])\simto 
\Gr^{W}_{n-1-k}\psi^{H}_{g,\lambda}(\CC^{H}_{Z}[n])
\end{equation}
for any $k\geq 0$. Namely the filtration $W$ on 
the eigenvalue $\lambda$-part 
$\psi^{H}_{g,\lambda}(\CC^{H}_{Z}[n])$
is the monodromy filtration with center 
$n-1=\dim{Z}-1$. For $k\geq 0$ we define a 
submodule of 
$\Gr^{W}_{n-1+k}\psi^{H}_{g,\lambda}(\CC^{H}_{Z}[n])$ by 
\begin{align*}
&\mathrm{P}\Gr^{W}_{n-1+k}\psi^{H}_{g,\lambda}(\CC^{H}_{Z}[n])\\
:=& {\rm Ker} {\left[N^{k+1}\colon \Gr^{W}_{n-1+k}\psi^{H}_{g,\lambda}(
\CC^{H}_{Z}[n]) \longrightarrow  \Gr^{W}_{n-1-k-2}
\psi^{H}_{g,\lambda}(\CC^{H}_{Z}[n])\right]}. 
\end{align*}
We call it the primitive part of 
$\Gr^{W}_{n-1+k}\psi^{H}_{g,\lambda}(\CC^{H}_{Z}[n])$ 
with respect to $N$. Then there exists an 
isomorphism 
\begin{equation}
\bigoplus_{j}\Gr^{W}_{j}\psi^{H}_{g,\lambda}(\CC^{H}_{Z}[n])\simeq 
\bigoplus_{k\geq 0}\bigoplus_{i=0}^{k}{\left[N^{i}\mathrm{P}
\Gr^{W}_{n-1+k}\psi^{H}_{g,\lambda}(\CC^{H}_{Z}[n])\right]}(i). 
\end{equation}
Here $(i)$ stands for the Tate twist which raises 
weights by $2i$. We can describe the structure of 
the primitive part $\mathrm{P}\Gr^{W}_{n-1+k}  
\psi^{H}_{g,\lambda}(\CC^{H}_{Z}[n])$ precisely 
as follows. For a non-zero complex number $\alpha\in\CC$ 
let $\L_{\alpha}$ be the rank one complex local 
system over $\CC^{*}$ having the 
monodromy $\alpha$ around the origin 
$0 \in \CC$. For the root of unity 
$\lambda$, we take $d \in \ZZ_{>0}, b\in\ZZ_{+}$ which 
are coprime each other such that 
\begin{equation}
\lambda=\exp\left(2\pi\sqrt{-1}\frac{b}{d}\right)
\end{equation}
and consider the covering map 
\begin{equation}
\pi_{d}\colon \CC^{*}\longrightarrow 
 \CC^{*}\qquad (t\longmapsto t^{d})
\end{equation}
of degree $d$. Then there exists 
an isomorphism 
\begin{equation}
{\rm R } 
(\pi_{d})_{*}\CC^{H}_{\CC^{*}}\simeq 
\bigoplus_{\alpha:\alpha^{d}=1}\L_{\alpha}.
\end{equation}
Moreover, by the inclusion map 
$j\colon Z^{*}:=Z\setminus E \hookrightarrow Z$
and the restriction $g|_{Z^{*}}\colon Z^{*}\longrightarrow 
 \CC^{*}$ of $g$ to $Z^{*}$ we define a constructible 
sheaf $\F_{\lambda}$ on $Z$ by 
\begin{equation}
\F_{\lambda}:=j_{*}(g|_{Z^{*}})^{-1}\L_{\lambda^{-1}}
\end{equation}
and set 
\begin{equation}
J(\lambda):=\{1\leq i \leq k \ |\ \lambda^{m_{i}}=1\
 ( \Longleftrightarrow d| m_{i})\}\ 
\subset\ \{1,2,\dots,k\}. 
\end{equation}
Since the monodromy of the local system 
$(g|_{Z^{*}})^{-1}\L_{\lambda^{-1}}$ on $Z^*$ 
around the divisor $E_i \subset Z$ 
is $\lambda^{-m_{i}}\in\CC^{*}$, we see that 
$\F_{\lambda}$ is a rank one complex local system 
on a neighborhood of 
\begin{equation}
U(\lambda):=\bigsqcup_{I\subset J(\lambda)}E^{\circ}_{I}
\quad \subset Z. 
\end{equation}
The following result was proved by Saito 
\cite[Section 3.3]{Saito-2} and 
Denef-Loeser \cite[Lemma 4.2.4]{D-L-1}. 

\begin{lemma}{\rm (Saito \cite[Section 3.3]{Saito-2} and 
Denef-Loeser \cite[Lemma 4.2.4]{D-L-1})} 
For any $k \geq 0$ there exists an isomorphism 
\begin{equation}
{\rm rat} \ 
\mathrm{P}\Gr^{W}_{n-1+k}\psi^{H}_{g,\lambda}(\CC^{H}_{Z}[n])\simeq 
\bigoplus_{\substack{I\subset J(\lambda)\\ \sharp I =k+1}}
{\rm IC}_{E_{I}}(\F_{\lambda}|_{E^{\circ}_{I}})(-k), 
\end{equation}
where ${\rm IC}_{E_{I}}(\F_{\lambda}|_{E^{\circ}_{I}})$ 
stands for the minimal 
extension of the perverse sheaf 
$\F_{\lambda}|_{E^{\circ}_{I}}[n- \sharp I]$ 
on $E^{\circ}_{I}$ to $E_{I}$. 
\end{lemma}

For a non-empty subset $I\subset J(\lambda)$ 
we define an open subset $U_{I}\subset U(\lambda)$ of 
$E_I$ containing $E^{\circ}_{I}$ by 
\begin{equation}
U_{I}:=E_{I}\setminus \bigcup_{i\notin 
J(\lambda)}E_{i}=E_{I}\cap U(\lambda)\subset E_{I}. 
\end{equation}
Then by \cite[Corollary 8.2.6]{H-T-T} the minimal extension 
${\rm IC}_{E_{I}}(\F_{\lambda}|_{E^{\circ}_{I}})$ 
is isomorphic to $\F_{\lambda}|_{U_{I}}[n- \sharp I ]$ 
on $U_I$. Moreover for the inclusion map 
$j_{I}\colon U_{I}\hookrightarrow E_{I}$ we have an 
isomorphism 
\begin{equation}
(j_{I})_{!}(\F_{\lambda}|_{U_{I}}[n- \sharp I ])
\overset{\sim}{\longrightarrow}
{\rm R} (j_{I})_{*}(\F_{\lambda}|_{U_{I}}[n- \sharp I ]). 
\end{equation}
Hence this is isomorphic to the minimal extension 
${\rm IC}_{E_{I}}(\F_{\lambda}|_{E^{\circ}_{I}})$. 
We thus obtain isomorphisms 
\begin{align*}
&\chi_{c}(E;\psi^{H}_{g,\lambda}(\CC^{H}_{Z}[n]))\\
=&\sum_{k\geq 0}\sum_{i=0}^{k}\chi_{c}(E;N^{i}\mathrm{P}
\Gr^{W}_{n-1+k}\psi^{H}_{g,\lambda}(\CC^{H}_{Z}[n])(i))\\
=&\sum_{k\geq 0}\chi_{c}(E;\mathrm{P}\Gr^{W}_{n-1+k}
\psi^{H}_{g,\lambda}(\CC^{H}_{Z}[n]))\cdot(\sum_{i=0}^{k}\CC(i))\\
=&\sum_{k\geq 0}\sum_{\substack{I\subset J(\lambda)\\ \sharp I =k+1}}
\chi_{c}(U_{I};\F_{\lambda}|_{U_{I}}[n- \sharp I ])\cdot (\sum_{i=0}^{k}\CC(-i)).
\end{align*}
Note that for the Lefschetz motive $\mathbb{L}=[\CC]\in 
\M_{\CC}^{\hat{\mu}}$ we have $\chi_{h}(\mathbb{L})=\CC(-1)$. 
Since for any non-empty subset $I\subset J(\lambda)$ we have 
\begin{equation}
U_{I}=\bigsqcup_{I'\subset J(\lambda), I'\supset I}E^{\circ}_{I'}, 
\end{equation}
there exists an equality 
\begin{equation}
\chi_{c}(U_{I};\F_{\lambda}|_{U_{I}}[n- \sharp I ])=(-1)^{n- \sharp I }
\sum_{I'\subset J(\lambda),I'\supset I}\chi_{c}(
E^{\circ}_{I'};\F_{\lambda}|_{E^{\circ}_{I'}}). 
\end{equation}
If we rewrite it replacing $I'\subset J(\lambda)$ 
by $I$, we obtain an equality 
\begin{equation}
\chi_{c}(E;\psi^{H}_{g,\lambda}(\CC^{H}_{Z}[n]))=
\sum_{I\subset J(\lambda)}\alpha_{I}\cdot 
\chi_{c}(E^{\circ}_{I};\F_{\lambda}|_{E^{\circ}_{I}}), 
\end{equation}
where we set 
\begin{equation}
\alpha_{I}=\sum_{k=0}^{ \sharp I -1}(-1)^{n-k-1}
\binom{ \sharp I }{k+1}\cdot \{1+\CC(-1)+\dots +\CC(-k)\}. 
\end{equation}
Moreover, by the binomial theorem, we can easily show 
\begin{equation}
\alpha_{I}=(-1)^{n-1}(1-\CC(-1))^{ \sharp I -1}=
(-1)^{n-1}\chi_{h}((1-\mathbb{L})^{ \sharp I -1}). 
\end{equation}
Therefore it suffices to prove the equality 
\begin{equation}
\chi_{c}(E_{I}^{\circ};\F_{\lambda}|_{E^{\circ}_{I}})
=\chi_{h}({[\tl{E^{\circ}_{I}}]})_{\lambda}
\end{equation}
for any non-empty subset $I\subset J(\lambda)$. Here 
$\chi_{h}([\tl{E^{\circ}_{I}}])_{\lambda}$ stands 
for the eigenvalue $\lambda$-part of 
$\chi_{h}({[\tl{E^{\circ}_{I}}]})$. On an 
affine open neighborhood $W \subset Z$ of a point 
of $E^{\circ}_{I}$ in $Z$ we decompose $g$ as 
\begin{equation}
g=g_{1,W} \cdot (g_{2,W})^{m_{I}}\qquad (\mbox{$g_{1,W}$ 
is invertible on $W$} ). 
\end{equation}
Then we can easily see that $\F_{\lambda}$ is 
isomorphic to $(g_{1,\lambda})^{-1}\L_{\lambda^{-1}}$. 
We define a covering $\widehat{E^{\circ}_{I}\cap W}$ 
of the Zariski open subset $E^{\circ}_{I}\cap W$ of 
$E^{\circ}_{I}$ of degree $m_{I}$ by 
the Cartesian square: 
\begin{equation}
\begin{xy}
\xymatrix{
\widehat{E^{\circ}_{I}\cap W}\ar@{}[rd]|-{\Box}
\ar[d]_{q}\ar[r]^{p} &E^{\circ}_{I}\cap W\ar[d]^{g_{1,W}}\\
\CC^{*}\ar[r]_{\pi_{m_{I}}}&\CC^{*}.
}
\end{xy}
\end{equation}
Namely we set 
\begin{equation}
\widehat{E^{\circ}_{I}\cap W}:=
\{(t,z)\in\CC^{*}\times (E^{\circ}_{I}\cap W)\ |\ t^{m_{I}}=g_{1,W}(z)\}.
\end{equation}
Then there exist isomorphisms 
\begin{align*}
& {\rm R} 
\Gamma_{c}(\widehat{E^{\circ}_{I}\cap W};\CC_{\widehat{E^{\circ}_{I}\cap W}})\\
\simeq & {\rm R} 
\Gamma_{c}(E^{\circ}_{I}\cap W; {\rm R} p_{*}
\CC_{\widehat{E^{\circ}_{I}\cap W}})\\
\simeq & {\rm R}
\Gamma_{c}(E^{\circ}_{I}\cap W;(g_{1,W})^{-1} 
{\rm R} (\pi_{m_{I}})_{*}\CC_{\CC^{*}}).
\end{align*}
Since $\widehat{E^{\circ}_{I}\cap W}$ is isomorphic to 
the part $\tl{E^{\circ}_{I}\cap W}$ of 
the unramified Galois covering $\tl{E^{\circ}_{I}}$ over 
$E^{\circ}_{I}\cap W\subset E^{\circ}_{I}$ by 
the map $(t,z)\mapsto (t^{-1},z)$, we see also 
that the eigenvalue $\lambda$-part of 
${\rm R} \Gamma_{c}(\tl{E^{\circ}_{I}\cap W};
\CC_{\tl{E^{\circ}_{I}\cap W}})$ corresponds to 
${\rm R} \Gamma_{c}(E^{\circ}_{I}\cap W;(g_{1,W})^{-1}\L_{\lambda^{-1}})$ 
in the above isomorphisms. Then by the 
Mayer-Vietoris exact sequence for an open covering 
of $E^{\circ}_{I}$ by such affine open subsets  
$W \subset Z$ we immediately obtain the assertion. 
This completes the proof. 
\qed
\end{proof}

\section{Motivic Milnor fibers at infinity and their applications}\label{section 7}

In this section, we introduce the theory of 
motivic Milnor fibers at infinity developed in Matsui-Takeuchi 
\cite{M-T-3} and Raibaut \cite{Raibaut}, 
\cite{Raibaut-1.5}, \cite{Raibaut-1.8}. 
Combining it with the results in Section \ref{section 5},  
we describe the 
equivariant mixed Hodge numbers 
of motivic Milnor fibers at infinity 
in terms of Newton polyhedra at infinity of polynomials. 
Then we show how the combinatorial 
expressions of the Jordan normal forms of 
monodromies at infinity in \cite{M-T-3} 
are obtained. First, we shall recall the definition of 
motivic Milnor fibers at infinity. For this purpose, 
we consider the situation in Section \ref{section 4}. 
Let $f \colon \CC^n \longrightarrow \CC$ be a polynomial map. 
We take a smooth compactification $X$ of $\CC^n$. 
Then by eliminating the points of indeterminacy of the 
meromorphic extension of $f$ to $X$ we obtain a commutative diagram
\begin{equation}
\xymatrix{
\CC^n \ar@{^{(}->}[r]^{\iota} \ar[d]_f & \tl{X} \ar[d]^g
\\
\CC \ar@{^{(}->}[r]^j & \PP^1}
\end{equation}
of holomorphic maps such that $\tl{X}$ is smooth, 
$g$ is proper and $\tl{X} \setminus \CC^n$, 
$Y:=g^{-1}( \infty )$ are normal crossing divisors in $\tl{X}$. Take a local 
coordinate $h$ of $\PP^1$ on a neighborhood of $\infty\in \PP^1$ 
such that $\infty=\{h=0\}$ and set $\tl{g}=h\circ g$. Note that $\tl{g}$ 
is a holomorphic function defined on a neighborhood of the closed subvariety 
$Y=\tl{g}^{-1}(0)=g^{-1}(\infty) \subset \tl{X} \setminus \CC^n$ of $\tl{X}$. Then 
by Proposition \ref{prp.7.1.002}, for $R \gg 0$ we 
obtain isomorphisms 
\begin{equation}\label{eq:4-2}
H_c^j(f^{-1}(R);\CC) \simeq H^j \psi_h( j_! {\rm R}f_! \CC_{\CC^n} ) \simeq 
H^j(Y; \psi_{\tl{g}}(\iota_! \CC_{\CC^n})).
\end{equation}
Let us define an open subset $\Omega$ of $\tl{X}$ by
\begin{equation}
\Omega=\Int (\iota(\CC^n) \sqcup Y)
\end{equation}
and set $U=\Omega \cap Y \subset Y$. Then $U$ (resp. the complement of $\Omega$ 
in $\tl{X}$) is a normal crossing divisor in $\Omega$ 
(resp. $\tl{X}$). Hence by Corollary \ref{new-corol} 
and Lemma \ref{lem:2-ac-1}, 
we can easily prove the isomorphisms
\begin{equation}
H^j(Y;\psi_{\tl{g}}(\iota_! \CC_{\CC^n}))\simeq 
H^j(Y; \psi_{\tl{g}}(\iota_!^{\prime}\CC_{\Omega}))\simeq H_c^j(U; \psi_{\tl{g}}(\CC_{\tl{X}})),
\label{eq:4-5}
\end{equation}
where $\iota^{\prime} \colon \Omega \longhookrightarrow \tl{X}$ 
is the inclusion map. Now let $E_1, E_2, \ldots, E_k$ be the irreducible components of 
the normal crossing divisor $U=\Omega \cap Y$ in $\Omega \subset \tl{X}$. 
For each $1 \leq i \leq k$, let $b_i>0$ be the order of the zero of 
$\tl{g}$ along $E_i$. For a non-empty subset $I \subset \{1,2,\ldots, k\}$, let us set
\begin{equation}
E_I=\bigcap_{i \in I} E_i,\hspace{10mm}E_I^{\circ}=E_I \setminus \bigcup_{i \not\in I}E_i
\end{equation}
and $d_I=\gcd (b_i)_{i \in I}>0$. 
Then as in Section \ref{section 6}, we 
can construct an unramified Galois covering $\tl{E_I^{\circ}} \longrightarrow E_I^{\circ}$ of $E_I^{\circ}$ 
of degree $d_I$. 
Moreover it is endowed with 
a natural good action of the cyclic group $\mu_{d_I}$. 
Namely the variety $\tl{E_I^{\circ}}$ has a good 
$\hat{\mu}$-action in the sense of 
Denef-Loeser \cite[Section 2.4]{D-L-2}. 

\begin{definition}{\rm (Matsui-Takeuchi \cite{M-T-3} and Raibaut \cite{Raibaut-1.5})}\label{def-MT-R}
We define the motivic Milnor fiber at infinity 
$\SS_f^{\infty} \in \M_{\CC}^{\hat{\mu}}$ of the polynomial map $f \colon \CC^n \longrightarrow \CC$ by
\begin{equation}\label{MMF}
\SS_f^{\infty} =\sum_{I \neq \emptyset} (1-\LL)^{\sharp I -1} [\tl{E_I^{\circ}}] 
\quad \in \M_{\CC}^{\hat{\mu}}.
\end{equation}
\end{definition}

By Guibert-Loeser-Merle \cite[Theorem 3.9]{G-L-M}, 
the motivic Milnor fiber at infinity $\SS_f^{\infty}$ of $f$ does not 
depend on the choice of the smooth compactification $\tl{X}$ of $\CC^n$. 
Recall that $\HSm$ denotes the abelian category of 
Hodge structures with a quasi-unipotent endomorphism. 
Then, to the object $\psi_h(j_! {\rm R} f_!\CC_{\CC^n})\in \Dbc(\{\infty\})$ 
and the semisimple part of the monodromy automorphism acting on it, we can associate an element
\begin{equation}
[H_f^{\infty}] \in \KK_0(\HSm)
\end{equation}
in a standard way. Similarly, to $\psi_h( {\rm R} j_* {\rm R} f_*\CC_{\CC^n})
\in \Dbc(\{\infty\})$ we associate an element
\begin{equation}
[G_f^{\infty}] \in \KK_0(\HSm).
\end{equation}
According to a deep result \cite[Theorem 13.1]{Sabbah-2} of Sabbah, 
if $f$ is tame at infinity then the weights of the element $[G_f^{\infty}]$ are 
defined by the monodromy filtration up to some Tate twists (see also \cite{Saito-1} and \cite{Saito-2}). 
This implies that for the calculation of the monodromy 
at infinity $\Phi_{n-1}^{\infty}\colon H^{n-1}(f^{-1}(R);\CC) \simto 
H^{n-1}(f^{-1}(R);\CC)$ ($R \gg 0$) of $f$ it suffices to 
calculate $[H_f^{\infty}]\in \KK_0(\HSm)$ which is the dual of $[G_f^{\infty}]$. 
To describe the element $[H_f^{\infty}]\in \KK_0(\HSm)$ in 
terms of $\SS_f^{\infty}\in \M_{\CC}^{\hat{\mu}}$, let
\begin{equation}
\chi_h \colon \M_{\CC}^{\hat{\mu}} \longrightarrow \KK_0(\HSm)
\end{equation}
be the Hodge characteristic morphism. Then by applying Theorem \ref{Th-DL} 
to our situation \eqref{eq:4-2} and \eqref{eq:4-5}, we obtain the following result.

\begin{theorem}{\rm (Matsui-Takeuchi \cite[Theorem 4.4]{M-T-3} and Raibaut \cite{Raibaut-1.5})}\label{thm:7-6}
In the Grothendieck group $\KK_0(\HSm)$, we have
\begin{equation}
[H_f^{\infty}]=\chi_h(\SS_f^{\infty}).
\end{equation}
\end{theorem}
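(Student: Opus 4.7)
The plan is to apply the Denef--Loeser Theorem \ref{Th-DL} to the smooth open subset $\Omega \subset \tl{X}$ endowed with the restriction of $\tl{g}$. Since $\dim \Omega = n$ and $U = \tl{g}^{-1}(0) \cap \Omega$ is a normal crossing divisor whose smooth irreducible components $E_1,\ldots, E_k$ carry multiplicities $b_1,\ldots, b_k$, the cyclic $\mu_{d_I}$-coverings $\tl{E_I^{\circ}} \to E_I^{\circ}$ constructed locally on $\Omega$ in the proof of Theorem \ref{Th-DL} coincide term-by-term with those appearing in Definition \ref{def-MT-R}. Consequently $\MCS_{\tl{g}|_{\Omega}} = \MCS_f^{\infty}$, and Theorem \ref{Th-DL} yields
\begin{equation}
\chi_c\bigl(U;\, \psi^{H}_{\tl{g}}(\CC^{H}_{\Omega}[n])\bigr) \ = \ (-1)^{n-1}\chi_h(\MCS_f^{\infty}) \quad \text{in } \KK_0(\HSm).
\end{equation}

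Next I would unpack the left-hand side. Since the underlying perverse sheaf of $\psi^{H}_{\tl{g}}(\CC^{H}_{\Omega}[n])$ is ${}^{p}\!\psi_{\tl{g}}(\CC_{\Omega}[n]) = \psi_{\tl{g}}(\CC_{\Omega})[n-1]$, reindexing the alternating sum gives
\begin{equation}
\chi_c\bigl(U;\, \psi^{H}_{\tl{g}}(\CC^{H}_{\Omega}[n])\bigr) \ = \ (-1)^{n-1}\sum_{k} (-1)^{k} \bigl[H^{k}_{c}(U;\, \psi_{\tl{g}}(\CC_{\Omega}))\bigr].
\end{equation}
Comparing with the previous identity, the common factor $(-1)^{n-1}$ cancels and I obtain $\chi_h(\MCS_f^{\infty}) = \sum_k (-1)^k [H^{k}_{c}(U;\, \psi_{\tl{g}}(\CC_{\Omega}))]$ in $\KK_0(\HSm)$.

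It remains to identify this alternating sum with $[H_f^{\infty}]$. For this I would promote the chain of sheaf-level isomorphisms \eqref{eq:4-2}--\eqref{eq:4-5} to the derived category $\Db(\MHM(\mathrm{pt}))$ of mixed Hodge modules on a point. The first isomorphism in \eqref{eq:4-2} comes from the mixed Hodge module version of Proposition \ref{prp.7.1.002} applied to the proper map $g$, while the successive isomorphisms in \eqref{eq:4-5} come from Corollary \ref{new-corol} combined with the vanishing of nearby cycles on constructible sheaves supported in the boundary $Y \setminus U$ exhibited by Lemma \ref{lem:2-ac-1}; each of these steps lifts to Saito's formalism. The resulting identifications preserve mixed Hodge structures and commute with the semisimple parts of the monodromies, so they equate $[H^{k}_{c}(U;\, \psi_{\tl{g}}(\CC_{\Omega}))]$ with $[H^{k}\psi_{h}(j_{!}{\rm R}f_{!}\CC_{\CC^{n}})]$ in $\KK_0(\HSm)$. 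Summing with alternating signs yields the desired equality $[H_f^{\infty}] = \chi_h(\MCS_f^{\infty})$.

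The main obstacle will be this Hodge-theoretic upgrade of \eqref{eq:4-2}--\eqref{eq:4-5}. At the level of $\QQ$-vector spaces with monodromy the identifications are essentially tautological, but the very definition of $[H_f^{\infty}]$ hinges on the full mixed Hodge structure together with the semisimple part of the monodromy acting on $\psi_h(j_!{\rm R}f_!\CC_{\CC^n})$, so one must argue that every adjunction, proper base change, and constructible-sheaf vanishing used to derive \eqref{eq:4-5} admits a counterpart in $\Db(\MHM(\mathrm{pt}))$. This is standard in Saito's theory once stated carefully, and once it is in place the rest of the argument is a purely formal comparison of two alternating sums already computed by Theorem \ref{Th-DL}.
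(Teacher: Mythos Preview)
Your proposal is correct and follows exactly the approach the paper indicates: the paper simply states that Theorem~\ref{thm:7-6} is obtained ``by applying Theorem~\ref{Th-DL} to our situation \eqref{eq:4-2} and \eqref{eq:4-5}'', and you have spelled out precisely that derivation---applying Theorem~\ref{Th-DL} to $(\Omega,\tl{g}|_{\Omega})$, matching $\MCS_{\tl{g}|_{\Omega}}$ with $\SS_f^{\infty}$, and lifting the identifications \eqref{eq:4-2}--\eqref{eq:4-5} to mixed Hodge modules. Your explicit discussion of the MHM upgrade is more detailed than what the paper records, but the underlying argument is the same.
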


On the other hands, the results in \cite{Sabbah} and \cite{Sabbah-2} imply 
the following symmetry of the weights of the element $[H_f^{\infty}] \in \KK_0(\HSm)$ when 
$f$ is tame at infinity. 
See \cite[Appendix]{M-T-3} 
for the details. Later another proof 
was found in Dimca-Saito \cite{D-S-new}. Recall that if $f$ is tame at infinity 
we have $H_c^j(f^{-1}(R); \CC )=0$ ($R \gg 0$) for $j \not= n-1, 2n-2$ and $H_c^{2n-2}(f^{-1}(R); \CC ) 
\simeq [H^0(f^{-1}(R); \CC )]^* \simeq \CC$. For an element 
$[V] \in \KK_0(\HSm)$ defined by $V \in \HSm $ with a quasi-unipotent endomorphism 
$\Theta \colon V \simto V$, $p, q \geq 0$ and $\lambda \in \CC$ denote by 
$e^{p,q}([V])_{\lambda}$ the dimension of the $\lambda$-eigenspace of 
the morphism $V^{p,q} \simto V^{p,q}$ induced by $\Theta$ on the $(p,q)$-part $V^{p,q}$ of $V$.

\begin{theorem}{\rm (Sabbah \cite{Sabbah} and \cite{Sabbah-2})}\label{cor:7-6-2}
Assume that $f$ is tame at infinity. Then
\begin{enumerate}
\item Let $\lambda \in \CC^* \setminus \{1\}$. Then we have 
$e^{p,q}( [H_f^{\infty}])_{\lambda}=0$ for $(p,q) \notin [0,n-1] \times [0,n-1]$. 
Moreover for $(p,q) \in [0,n-1] \times [0,n-1]$ we have
\begin{equation}
e^{p,q}( [H_f^{\infty}])_{\lambda}=e^{n-1-q,n-1-p}( [H_f^{\infty}])_{\lambda}.
\end{equation}
\item We have $e^{p,q}( [H_f^{\infty}])_{1}=0$ for $(p,q) \notin \{(n-1, n-1)\} 
\sqcup ([0,n-2] \times [0,n-2])$ and $e^{n-1,n-1}( [H_f^{\infty}])_{1}=1$. Moreover for $(p,q) \in [0,n-2] \times [0,n-2]$ we have
\begin{equation}
e^{p,q}( [H_f^{\infty}])_{1}=e^{n-2-q,n-2-p}( [H_f^{\infty}])_{1}.
\end{equation}
\end{enumerate}
\end{theorem}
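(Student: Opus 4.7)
The plan is to invoke Sabbah's purity theorem \cite{Sabbah}, \cite{Sabbah-2} for the limit mixed Hodge structure at infinity of a tame polynomial. The first step is to exploit the tameness hypothesis together with the identification $H^j \psi_h(j_! {\rm R} f_! \CC_{\CC^n}) \simeq H_c^j(f^{-1}(R); \CC)$ recalled before the theorem: this forces the contributions to $[H_f^\infty] \in \KK_0(\HSm)$ to come only from degrees $j = n-1$ and $j = 2n-2$, so I would write
\begin{equation*}
[H_f^\infty] = (-1)^{n-1} [H_c^{n-1}(f^{-1}(R); \CC)] + [H_c^{2n-2}(f^{-1}(R); \CC)]
\end{equation*}
in $\KK_0(\HSm)$, each summand endowed with the monodromy at infinity.

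The top-degree piece is immediate: since $f^{-1}(R)$ is smooth of complex dimension $n-1$, Poincar\'e duality gives $H_c^{2n-2}(f^{-1}(R); \CC) \simeq \CC(-(n-1))$ with trivial monodromy. This is a pure Tate Hodge structure of type $(n-1,n-1)$, producing the claimed contribution $e^{n-1,n-1}([H_f^\infty])_1 = 1$ and no further contribution for this degree. This is the only source of Hodge pieces with $p + q > n - 1$ and accounts for the stated vanishing ranges in both (i) and (ii).

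For the middle-degree piece, the key input is Sabbah's theorem: the cohomology $H_c^{n-1}(f^{-1}(R); \CC)$ carries a canonical limit mixed Hodge structure such that, on each generalized eigenspace of the monodromy $\Phi_{n-1}^\infty$, the weight filtration coincides (up to a canonical shift) with the monodromy weight filtration of the nilpotent part $N$. The center of this monodromy filtration is $n-1$ on the $\lambda$-eigenspace for $\lambda \neq 1$ and $n-2$ on the $1$-eigenspace, the one-unit shift being the usual phenomenon familiar from the local nearby cycle formalism of Section \ref{section 2}. Granting this, the symmetry
\begin{equation*}
N^k : \Gr^W_{c+k} \simto \Gr^W_{c-k}(-k)
\end{equation*}
of pure polarizable Hodge structures yields the pointwise identity $h^{p,q}(\Gr^W_{c+k}) = h^{p-k, q-k}(\Gr^W_{c-k})$, which after the substitution $(p, q) \mapsto (c - q, c - p)$ becomes $h^{p,q} = h^{c-q, c-p}$ on each eigenspace. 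Setting $c = n-1$ (for $\lambda \neq 1$) and $c = n-2$ (for $\lambda = 1$) and using the sign $(-1)^{n-1}$ from the decomposition above gives the two symmetries stated in (i) and (ii).

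The hard part is Sabbah's purity result itself: its original proof in \cite{Sabbah}, \cite{Sabbah-2} proceeds via the theory of Brieskorn lattices and Hodge filtrations on irregular meromorphic connections, while the alternative approach of Dimca-Saito \cite{D-S-new} uses Saito's mixed Hodge modules on a compactification together with a careful analysis of the vanishing cycles at infinity; either way this is a substantial theorem that I would take as a black box. The remaining technical points are the careful bookkeeping of Tate twists and sign conventions when passing between $H_c^{n-1}(f^{-1}(R); \CC)$ and the element $[H_f^\infty]$, and the verification that the limit mixed Hodge structure of \cite{Sabbah} matches the one naturally realized on $\psi_h(j_! {\rm R} f_! \CC_{\CC^n})$ in the framework of Section \ref{section 6}; both are routine once the purity statement is in hand.
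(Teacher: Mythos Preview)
The paper does not give a proof of this theorem: it is stated as a consequence of Sabbah's results \cite{Sabbah}, \cite{Sabbah-2}, with a pointer to \cite[Appendix]{M-T-3} for the details and to Dimca--Saito \cite{D-S-new} for an alternative argument. The surrounding discussion in the paper sketches exactly the ingredients you use: the concentration $H_c^j(f^{-1}(R);\CC)=0$ for $j\neq n-1,2n-2$ with $H_c^{2n-2}\simeq\CC$ coming from tameness, and Sabbah's deep input that the weight filtration on the limit mixed Hodge structure is the monodromy filtration (stated there for $[G_f^{\infty}]$, with $[H_f^{\infty}]$ obtained by duality). Your outline is therefore essentially the same approach, and correctly identifies Sabbah's purity theorem as the nontrivial black box; the only cosmetic difference is that the paper passes through $[G_f^{\infty}]$ and dualizes rather than working with $[H_f^{\infty}]$ directly.
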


Since the weights of $[G_f^{\infty}] \in \KK_0(\HSm)$ are defined by 
the monodromy filtration and $[G_f^{\infty}]$ is the dual of $[H_f^{\infty}]$ up to some Tate twist, we obtain the following result.

\begin{theorem}{\rm (Matsui-Takeuchi \cite[Theorem 4.6]{M-T-3})}\label{thm:7-6-2}
Assume that $f$ is tame at infinity. Then we have 
\begin{enumerate}
\item Let $\lambda \in \CC^* \setminus \{1\}$ and $k \geq 1$. Then the number of the 
Jordan blocks for the eigenvalue $\lambda$ with sizes $\geq k$ in $\Phi_{n-1}^{\infty} 
\colon H^{n-1}(f^{-1}(R);\CC) \simto H^{n-1}(f^{-1}(R);\CC)$ ($R \gg 0$) is equal to
\begin{equation}
(-1)^{n-1} \sum_{p+q=n-2+k, n-1+k} e^{p,q}( \chi_h(\SS_f^{\infty}))_{\lambda}.
\end{equation}
\item For $k \geq 1$, the number of the Jordan blocks for the 
eigenvalue $1$ with sizes $\geq k$ in $\Phi_{n-1}^{\infty}$ is equal to
\begin{equation}
(-1)^{n-1} \sum_{p+q=n-2-k, n-1-k} e^{p,q}( \chi_h(\SS_f^{\infty}))_{1}.
\end{equation}
\end{enumerate}
\end{theorem}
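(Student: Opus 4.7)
The plan is to decode the Jordan structure of $\Phi_{n-1}^{\infty}$ from the weight graded pieces of $[H_f^{\infty}] \in \KK_0(\HSm)$, and then rewrite these via Theorem \ref{thm:7-6} as Hodge-Deligne numbers of $\chi_h(\SS_f^\infty)$. First, the tameness of $f$ combined with Theorem \ref{tame} and Poincar\'e duality forces $H^j_c(f^{-1}(R); \CC) = 0$ for $j \notin \{n-1, 2n-2\}$, with $H^{2n-2}_c(f^{-1}(R); \CC) \simeq \CC$ pure of type $(n-1, n-1)$ and carrying trivial monodromy. Hence in $\KK_0(\HSm)$,
\begin{equation}
[H_f^\infty] = (-1)^{n-1}\bigl[H^{n-1}_c(f^{-1}(R); \CC)\bigr] + \bigl[H^{2n-2}_c(f^{-1}(R); \CC)\bigr],
\end{equation}
so that for $\lambda \in \CC \setminus \{1\}$ only the middle-degree term contributes to $[H_f^\infty]_\lambda$, while for $\lambda = 1$ the extra trivial piece sits entirely at weight $2n-2$.

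The next input is Theorem \ref{cor:7-6-2}, which is Sabbah's purity theorem transported via Poincar\'e duality onto $H_f^\infty$: it identifies the weight filtration on $[H^{n-1}_c(f^{-1}(R))]_\lambda$ with the monodromy filtration of the semisimple part of $\Phi_{n-1}^\infty$, centered at weight $n-1$ when $\lambda \neq 1$, and at weight $n-2$ on the non-trivial part of the $\lambda = 1$ piece.

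I would then apply the standard linear-algebra identity for the monodromy filtration: for a nilpotent $N$ on a finite-dimensional $V$ with monodromy filtration $W$ centered at $w_0$, the Lefschetz-type primitive decomposition of $\gr^W_\bullet V$ shows that the number of Jordan blocks of size exactly $m$ equals $\dim \gr^W_{w_0+m-1} V - \dim \gr^W_{w_0+m+1} V$. Telescoping over $m \geq k$ and collecting contributions of both parities gives
\begin{equation}
\sharp \{\text{Jordan blocks of size} \geq k\} = \dim \gr^W_{w_0+k-1} V + \dim \gr^W_{w_0+k} V,
\end{equation}
and by the monodromy symmetry $\dim \gr^W_{w_0+j} = \dim \gr^W_{w_0-j}$ this equals $\dim \gr^W_{w_0-k+1} V + \dim \gr^W_{w_0-k} V$ as well.

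Combining the three ingredients: for $\lambda \neq 1$, the identity $[H_f^\infty]_\lambda = (-1)^{n-1}[H^{n-1}_c]_\lambda$ gives $(-1)^{n-1}\sum_{p+q = w} e^{p,q}([H_f^\infty])_\lambda = \dim \gr^W_w H^{n-1}_c(f^{-1}(R))_\lambda$, and applying the counting formula with $w_0 = n-1$ at weights $w = n-2+k, n-1+k$ yields part (i). For $\lambda = 1$ and $k \geq 1$, the weights $n-2-k$ and $n-1-k$ lie strictly below $2n-2$, so the trivial piece contributes nothing; applying the formula with $w_0 = n-2$ on the negative side produces exactly the weights $n-2-k, n-1-k$, giving part (ii). Theorem \ref{thm:7-6} then converts every $e^{p,q}([H_f^\infty])_\lambda$ into $e^{p,q}(\chi_h(\SS_f^\infty))_\lambda$. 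The principal obstacle is really external, namely Sabbah's deep purity theorem \cite[Theorem 13.1]{Sabbah-2} which underlies Theorem \ref{cor:7-6-2}; once that is granted, the proof reduces to careful bookkeeping of weights, signs, and the standard Lefschetz-type combinatorics governing the monodromy filtration.
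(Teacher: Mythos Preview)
Your proposal is correct and follows essentially the same approach as the paper, which gives only a one-line sketch (``Since the weights of $[G_f^{\infty}]$ are defined by the monodromy filtration and $[G_f^{\infty}]$ is the dual of $[H_f^{\infty}]$ up to some Tate twist, we obtain the following result''). You have correctly unpacked the Jordan-block counting from the monodromy filtration (with centers $n-1$ for $\lambda\neq 1$ and $n-2$ for the nontrivial $\lambda=1$ part, matching the symmetry in Theorem~\ref{cor:7-6-2}), and your use of Theorem~\ref{thm:7-6} to pass from $[H_f^\infty]$ to $\chi_h(\SS_f^\infty)$ is exactly what the paper intends; the only minor imprecision is that Theorem~\ref{cor:7-6-2} as stated records only the weight symmetry, so you should cite directly the stronger input (Sabbah's identification of the weight filtration on $[G_f^\infty]$ with the monodromy filtration, then dualized) rather than attribute the full identification to Theorem~\ref{cor:7-6-2} itself.
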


In terms of the Newton polyhedron at infinity $\Gamma_{\infty}(f)$ of $f$, 
we can rewrite the above results as follows. For this purpose, assume 
that $f\in \CC[x_1,\ldots,x_n]$ is convenient and non-degenerate at infinity. 
Then $f$ is tame at infinity and it suffices to calculate $\Phi_j^{\infty}$ 
only for $j=n-1$. In fact, as was shown in Takeuchi-Tib{\u a}r 
\cite{T-T}, even if we do not assume that $f$ is convenient 
we can obtain also the combinatorial expressions of the Jordan 
normal forms of $\Phi_{n-1}^{\infty}$ for some 
eigenvalues $\lambda \not= 1$. However, in order to 
introduce the results for the eigenvalue $1$ in \cite{M-T-3}, 
here we restrict ourselves to the case where 
 $f$ is convenient. Then, unlike 
in the proof of Theorem \ref{thm:3-5}, we first 
construct a smooth toric compactification of 
not $T= ( \CC^*)^n$ but $\CC^n$ itself as follows. 
Let us consider $\CC^n$ as the toric variety associated to 
the fan $\Sigma_0$ formed by the all faces of the first 
quadrant $(\RR^n)^*_+ \subset (\RR^n)^*$.  
We denote by $\Sigma_1$ the dual fan of 
$\Gamma_{\infty}(f)$ in $(\RR^n)^*$. 
Then by the convenience of $f$, we see that 
any cone $\sigma \in \Sigma_0$ in $\Sigma_0$ is 
an element of $\Sigma_1$. Namely $\Sigma_0$ is 
a subfan of $\Sigma_1$. Hence we can construct a 
smooth subdivision $\Sigma$ of $\Sigma_1$ without 
subdividing the cones in $\Sigma_0$ 
(see e.g. \cite[Chapter II, Lemma (2.6)]{Oka}). 
Let $X_{\Sigma}$ be the smooth toric variety associated 
to $\Sigma$. Since $\Sigma_0$ is still a subfan of 
$\Sigma$ by our construction, $\CC^n$ is an 
affine open subset of $X_{\Sigma}$. 
Let $\rho_1, \ldots, \rho_m$ be the $1$-dimensional cones in 
the smooth fan $\Sigma$ such that $\rho_i \not\subset ( \RR^n)^*_+$. 
We call them the rays at infinity. 
Each ray $\rho_i$ at infinity corresponds to a 
smooth toric divisor $D_i$ in $X_{\Sigma}$ 
and the divisor $D:=D_1\cup \cdots \cup D_m=X_{\Sigma} \setminus \CC^n$ 
in $X_{\Sigma}$ is normal crossing. 
We denote by $a_i>0$ the order of the poles 
of $f$ along $D_i$. Then as in the proof of Theorem \ref{thm:3-5}, 
we can eliminate the points of 
indeterminacy of the meromorphic extension of $f$ to $X_{\Sigma}$ 
and construct a commutative diagram
\begin{equation}
\xymatrix{
\CC^n \ar@{^{(}->}[r]^{\iota} \ar[d]_f & \tl{X_{\Sigma}} \ar[d]^g\\\CC \ar@{^{(}->}[r]^j & \PP^1}
\end{equation}
of holomorphic maps. Take a local coordinate $h$ of $\PP^1$ on a neighborhood of 
 $\infty \in \PP^1$ such that $\infty=\{h=0\}$ and set $\tl{g}=h\circ g$, $Y=\tl{g}^{-1}(0)=g^{-1}(\infty) 
\subset \tl{X_{\Sigma}}$ and $\Omega=\Int(\iota(\CC^n) \sqcup Y)$. For simplicity, let us set 
$\tl{g}=\frac{1}{f}$. Then the divisor $U=Y \cap \Omega$ in $\Omega$ contains 
not only the proper transforms $D_1^{\prime}, \ldots, D_m^{\prime}$ of $D_1, \ldots, D_m$ in $\tl{X_{\Sigma}}$ 
but also the exceptional divisors of the blow-up: $\tl{X_{\Sigma}} \longrightarrow X_{\Sigma}$ 
(see the proof of Theorem \ref{thm:3-5} for the details). 
However these exceptional divisors are not necessary to compute the 
monodromy at infinity of $f \colon \CC^n \longrightarrow \CC$ as we see below. 
For each non-empty subset $I \subset \{1,2,\ldots, m\}$, set $D_I= \bigcap_{i \in I} D_i$,
\begin{equation}
D_I^{\circ}=D_I \setminus \left\{ \( \bigcup_{i \notin I}D_i\) \cup \overline{f^{-1}(0)}\right\} \subset X_{\Sigma}
\end{equation}
and $d_I =\gcd(a_i)_{i \in I} >0$. Then the function 
$\tl{g}=\frac{1}{f}$ is regular on $D_I^{\circ}$ and we can decompose it as $\frac{1}{f}=\tl{g_{1}}(\tl{g_{2}})^{d_I}$ globally 
on a Zariski open neighborhood $W$ of $D_I^{\circ}$ in $X_{\Sigma}$, where $\tl{g_1}$ is a unit on $W$ 
and $\tl{g_2} \colon W \longrightarrow \CC$ is regular. Therefore we can construct an unramified Galois covering 
$\tl{D_I^{\circ}}$ of $D_I^{\circ}$ with a natural $\mu_{d_I}$-action. Let 
$[\tl{D_I^{\circ}}]$ be the element of the ring $\M_{\CC}^{\hat{\mu}}$ defined by $\tl{D_I^{\circ}}$.

\begin{theorem}{\rm (Matsui-Takeuchi \cite[Theorem 4.7]{M-T-3})}\label{thm:7-7}
Assume that $f$ is convenient and non-degenerate at infinity. Then we have an equality
\begin{equation}
\chi_h\(\SS_f^{\infty}\)= \dsum_{I \neq \emptyset}\chi_h\( (1-\LL)^{\sharp I -1} [\tl{D_I^{\circ}}]\)
\end{equation}
in the Grothendieck group $\KK_0(\HSm)$.
\end{theorem}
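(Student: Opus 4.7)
The plan is to decompose the sum defining $\SS_f^\infty$ according to how the strata $E_I^\circ$ in $\tl{X_\Sigma}$ map, via the resolution $\pi\colon\tl{X_\Sigma}\to X_\Sigma$, to the toric stratification of $X_\Sigma$. The irreducible components of $U=Y\cap\Omega$ split into two disjoint families: the proper transforms $D_1',\ldots,D_m'$ of the toric divisors $D_1,\ldots,D_m$ at infinity, and the exceptional divisors $F_1,\ldots,F_p$ created by the successive blowups along (proper transforms of) $\overline{f^{-1}(0)}\cap(D_1\cup\cdots\cup D_m)$. Since $\pi$ was constructed precisely to separate $Y=g^{-1}(\infty)$ from $\overline{f^{-1}(0)}$, one has $Y\cap\overline{f^{-1}(0)}=\emptyset$ in $\tl{X_\Sigma}$, and consequently every exceptional divisor $F_\alpha$ lies over a point of the indeterminacy locus $\overline{f^{-1}(0)}\cap(D_1\cup\cdots\cup D_m)\subset X_\Sigma$.

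For each $I\subset\{1,\ldots,m\}$ I would first identify the stratum $E_I^\circ$ with $\pi^{-1}(D_I^\circ)$. Indeed, $E_I^\circ\subset Y$ avoids $\overline{f^{-1}(0)}\subset\tl{X_\Sigma}$, and it also avoids every $D_j'$ with $j\notin I$ and every $F_\alpha$; hence any $q\in E_I^\circ$ satisfies $\pi(q)\in D_I$, $\pi(q)\notin D_j$ for $j\notin I$, and $\pi(q)\notin\overline{f^{-1}(0)}$ (otherwise $q$ would lie on some $F_\alpha$), which forces $\pi(q)\in D_I^\circ$. Since $\pi$ is an isomorphism over $D_I^\circ$, and the local decomposition $1/f=\tl{g_1}\cdot(\tl{g_2})^{d_I}$ used to construct $\tl{D_I^\circ}$ matches under this isomorphism the decomposition used in \eqref{eq:6-26} for $\tl{E_I^\circ}$, the $\mu_{d_I}$-equivariant varieties $\tl{E_I^\circ}$ and $\tl{D_I^\circ}$ coincide. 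This accounts exactly for all the terms on the right-hand side of the claimed identity.

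It remains to show that the contribution of the strata $E_I^\circ$ with $I\not\subset\{1,\ldots,m\}$ vanishes after applying $\chi_h$:
\begin{equation*}
\chi_h\Bigl(\sum_{I\,\not\subset\,\{1,\ldots,m\}}(1-\LL)^{\sharp I-1}[\tl{E_I^\circ}]\Bigr)=0\quad\text{in }\KK_0(\HSm).
\end{equation*}
This is the main obstacle, and I would approach it by reducing the sum to a local computation over each stratum of $\overline{f^{-1}(0)}\cap D_J$ ($J\subset\{1,\ldots,m\}$), grouping the exceptional strata according to their image in $X_\Sigma$. Using the explicit toric coordinates from the proof of Theorem \ref{thm:3-5}, the successive blowups over such a stratum take the standard monomial form, each $F_\alpha$ is a projective bundle over a smooth subvariety of $\overline{f^{-1}(0)}\cap D_J$, and the multiplicity of $\tl{g}$ along $F_\alpha$ is an explicit nonnegative linear combination of the $b_i$. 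The required cancellation will then follow from A'Campo-type vanishing identities generalizing Lemmas \ref{lem:2-ac-1} and \ref{lem:2-999} to the equivariant Hodge setting, combined with the projective-bundle formula $[\PP^N]=1+\LL+\cdots+\LL^N$ and the binomial manipulations already exploited in the proof of Theorem \ref{Th-DL}. The delicate step is verifying $\hat\mu$-compatibility at each stage of the tower of blowups: the multiplicities shift with each blowup and one must check that the eigenspace structures on the $\tl{E_I^\circ}$ assemble correctly. The non-degeneracy at infinity is essential here, since it guarantees the transversality required for the exceptional strata to have the clean local model treated in Section \ref{section 6}.
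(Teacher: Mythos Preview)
The paper does not include its own proof here, citing \cite{M-T-3}. Your strategy is the natural one and is correct: split the components of $U$ into the proper transforms $D_i'$ and the exceptional divisors, identify the contribution of the former with the right-hand side, and show the exceptional contribution vanishes under $\chi_h$. Your step~2 is correctly argued: since every point of $D_I\cap\overline{f^{-1}(0)}$ lies in a blow-up center, its $\pi$-preimage meets an exceptional divisor, so $\pi$ restricts to an isomorphism $E_I^\circ\simto D_I^\circ$, and the local decompositions of $\tl g$ defining the $\hat\mu$-covers match under it.

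The gap is that step~3 is only a sketch, and your appeal to ``A'Campo-type vanishing'' is not quite the right mechanism---Lemmas~\ref{lem:2-ac-1} and \ref{lem:2-999} concern stalks of $\psi$ and monodromy zeta functions, not classes in $K_0(\HSm)$. The actual cancellation is more explicit. Over a generic point of $D_i\cap Z$ the blow-up tower yields a chain $D_i'{-}E_1{-}\cdots{-}E_{a_i}{-}Z'$ with $\tl g$-multiplicities $a_i,a_i{-}1,\ldots,1,0$. Note that $E_{a_i}$ has multiplicity~$0$, so it is \emph{not} a component of $U$ but is instead excised from $\Omega$ (a subtlety your setup elides). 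Each open stratum $E_k^\circ$ ($1\le k\le a_i{-}1$) is a $\CC^*$-bundle over (an open part of) $D_i\cap Z$; its $\mu_{a_i-k}$-cover is again such a bundle with the action given by rotation of the $\CC^*$-factor, which is homotopically trivial, so $\chi_h([\tl{E_k^\circ}])=(\LL-1)\cdot\chi_h(\text{base})$ with trivial monodromy. The adjacent intersections contribute $(1-\LL)\cdot\chi_h(\text{base})$, and there are $a_i{-}1$ of each, giving a telescoping cancellation. Over deeper strata $D_J\cap Z$ with $|J|\ge 2$ the exceptional configuration is more intricate (the towers for different $D_i$ are built sequentially, not as a product), and carrying out the cancellation there---together with the $\hat\mu$-equivariance---is the substantive work that your sketch does not yet supply.
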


Recall that a face $\gamma \prec \Gamma_{\infty}(f)$ is 
at infinity if $0 \notin \gamma$. 
For a face at infinity $\gamma \prec \Gamma_{\infty}(f)$ of $\Gamma_{\infty}(f)$, 
let $d_{\gamma}>0$ be the lattice distance of $\gamma$ from the origin $0 \in \RR^n$ and $\Delta_{\gamma}$ 
the convex hull of $\{0\} \sqcup \gamma$ in $\RR^n$. Let $\LL(\Delta_{\gamma})$ be 
the $(\dim \gamma +1)$-dimensional linear subspace of $\RR^n$ spanned by $\Delta_{\gamma}$ 
and consider the lattice $M_{\gamma}=\ZZ^n \cap \LL(\Delta_{\gamma}) \simeq \ZZ^{\dim \gamma+1}$ in it. 
Then we set 
\begin{equation}
T_{\Delta_{\gamma}}:=\Spec (\CC[M_{\gamma}]) \simeq (\CC^*)^{\dim \gamma +1}.
\end{equation}
Moreover let $\LL(\gamma)$ be the smallest affine linear subspace of $\RR^n$ containing $\gamma$ 
and for $v \in M_{\gamma}$ define their lattice heights $\height (v, \gamma) \in \ZZ$ 
from $\LL(\gamma)$ in $\LL(\Delta_{\gamma})$ so that we have $\height (0, \gamma)=d_{\gamma}>0$. 
Then to the group homomorphism $M_{\gamma} \longrightarrow \CC^*$ defined by $v \longmapsto 
\zeta_{d_{\gamma}}^{\height (v, \gamma)}$ we can naturally associate an element 
$\tau_{\gamma} \in T_{\Delta_{\gamma}}$. We define a Laurent polynomial 
$g_{\gamma}=\sum_{v \in M_{\gamma}}b_v x^v$ on $T_{\Delta_{\gamma}}$ by
\begin{equation}
b_v=\begin{cases}
a_v & (v \in \gamma),\\
 & \\
-1 & (v=0),\\
 & \\
\ 0 & (\text{otherwise}),
\end{cases}
\end{equation}
where we set $f=\sum_{v \in \ZZ^n_+} a_v x^v$. Then we have $NP(g_{\gamma}) =\Delta_{\gamma}$, 
$\supp g_{\gamma} \subset \{ 0\} \sqcup \gamma$ and the complex hypersurface $Z_{\Delta_{\gamma}}^*=
\{ x \in T_{\Delta_{\gamma}}\ |\ g_{\gamma}(x)=0\}$ is non-degenerate (see the proof of 
\cite[Proposition 5.3]{M-T-3}). 
Since $Z_{\Delta_{\gamma}}^* \subset T_{\Delta_{\gamma}}$ is invariant by 
the multiplication $l_{\tau_{\gamma}} \colon  T_{\Delta_{\gamma}} \simto T_{\Delta_{\gamma}}$ 
by $\tau_{\gamma}$, $Z_{\Delta_{\gamma}}^*$ admits an action of $\mu_{d_{\gamma}}$. 
We thus obtain an element $[Z_{\Delta_{\gamma}}^*]$ of $\M_{\CC}^{\hat{\mu}}$. 
For a face at infinity $\gamma \prec \Gamma_{\infty}(f)$ let 
$S_{\gamma} \subset \{1,2,\ldots, n\}$ be the minimal subset of $\{1,2,\ldots,n\}$ 
such that $\gamma \subset \RR^{S_{\gamma}}$ and set $m_{\gamma}=\sharp S_{\gamma}-\dim \gamma -1\geq 0$.

\begin{theorem}{\rm (Matsui-Takeuchi \cite[Theorem 5.7]{M-T-3})}\label{thm:7-12}
Assume that $f$ is convenient and non-degenerate at infinity. Then we have the following 
results, where in the sums $\sum_{\gamma}$ below the face $\gamma$ of $\Gamma_{\infty}(f)$ ranges through those at infinity.
\begin{enumerate}
\item In the Grothendieck group $\KK_0(\HSm)$, we have
\begin{equation}
[H_f^{\infty}]=\chi_h(\SS_f^{\infty})=\sum_{\gamma} \chi_h((1-\LL)^{m_{\gamma}} \cdot [Z_{\Delta_{\gamma}}^*]).
\end{equation}
\item Let $\lambda \in \CC^* \setminus \{1\}$ and $k \geq 1$. Then the number of 
the Jordan blocks for the eigenvalue $\lambda$ with sizes $\geq k$ in 
$\Phi_{n-1}^{\infty} \colon H^{n-1}(f^{-1}(R);\CC) \simto H^{n-1}(f^{-1}(R);\CC)$ ($R \gg 0$) is equal to
\begin{equation}
(-1)^{n-1} \sum_{p+q=n-2+k, n-1+k} \left\{ \sum_{\gamma} e^{p,q} 
\left( \chi_h((1-\LL)^{m_{\gamma}}\cdot [Z_{\Delta_{\gamma}}^*])\right)_{\lambda} \right\}.
\end{equation}
\item For $k \geq 1$, the number of the Jordan blocks for the eigenvalue $1$ with sizes $\geq k$ in $\Phi_{n-1}^{\infty}$ is equal to
\begin{equation}
(-1)^{n-1} \sum_{p+q= n-2-k, n-1-k} \left\{\sum_{\gamma} e^{p,q} 
\left( \chi_h((1-\LL)^{m_{\gamma}}\cdot [Z_{\Delta_{\gamma}}^*])\right)_{1} 
\right\}.
\end{equation}
\end{enumerate}
\end{theorem}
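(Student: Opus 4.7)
The plan is to deduce the three statements from Theorem \ref{thm:7-7}, the Danilov--Khovanskii--type formulas of Section \ref{section 5}, and the Sabbah symmetry result Theorem \ref{cor:7-6-2}. By Theorem \ref{thm:7-6} and Theorem \ref{thm:7-7}, we have
\begin{equation}
[H_f^{\infty}] = \chi_h(\SS_f^{\infty}) = \sum_{I \neq \emptyset} \chi_h \bigl( (1-\LL)^{\sharp I - 1} [\tl{D_I^{\circ}}] \bigr),
\end{equation}
where $I$ ranges over non-empty subsets of $\{1,\ldots,m\}$ such that the cone $\sigma_I \in \Sigma$ generated by the rays $\rho_i$ ($i\in I$) at infinity is actually a cone of $\Sigma$. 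The strategy of (i) is to regroup this sum according to the supporting face $\gamma(\sigma_I) \prec \Gamma_{\infty}(f)$ of $\sigma_I$, and to show that the contribution from each face at infinity $\gamma$ matches $\chi_h\bigl((1-\LL)^{m_{\gamma}} \cdot [Z_{\Delta_{\gamma}}^*]\bigr)$.

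The key geometric step is the following local description: fix a face at infinity $\gamma \prec \Gamma_{\infty}(f)$ and let $\sigma_I$ be a cone of $\Sigma$ with $\gamma(\sigma_I) = \gamma$. In a toric affine chart around a point of $D_I^{\circ}$, the meromorphic extension of $1/f$ has the form $y_1^{a_{i_1}} \cdots y_{\sharp I}^{a_{i_{\sharp I}}} \cdot \widetilde{g_{1}}$ with $\widetilde{g_1}$ a unit whose restriction to the torus $T_{\sigma_I}$ is (up to normalization) the Laurent polynomial $f^{\gamma}$ on a quotient torus of $T_{\sigma_I}$. Hence $D_I^{\circ}$ fibers as a product of an algebraic torus in the directions transverse to $\LL(\gamma) \cap \RR^{S_{\gamma}}$ times the complement of the hypersurface $\{f^{\gamma} = 0\}$ in the quotient torus, and the $\mu_{d_I}$-covering $\tl{D_I^{\circ}}$ respects this product structure. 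Taking $\chi_h$ and using that $\chi_h(\CC^*) = 1 - \LL$, one can rewrite each $(1-\LL)^{\sharp I - 1}\chi_h([\tl{D_I^{\circ}}])$ in terms of the class of the hypersurface $Z_{\Delta_\gamma}^*$ in $T_{\Delta_\gamma}$ times a power of $(1-\LL)$ recording the codimension data.

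The main obstacle will be the combinatorial telescoping: after this regrouping, for each fixed $\gamma$ one must sum over all cones $\sigma \in \Sigma$ with $\gamma(\sigma) = \gamma$ and check that the alternating sum of $(1-\LL)$ powers (arising from the subdivision of $\sigma(\gamma)$ into simplicial cones inside $\Sigma$) collapses to the single term $(1-\LL)^{m_\gamma}$. This is precisely the Euler relation for the subdivided cone $\sigma(\gamma) \subset (\RR^n)^*$: the cones with supporting face $\gamma$ form a subdivision of $\sigma(\gamma)$, and multiplying $\chi_h$ with the torus factor $\CC^*$ to the appropriate power yields $(1-\LL)^{m_{\gamma}}$. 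The restriction to faces \emph{at infinity} comes from the fact that cones $\sigma$ with $0 \in \gamma(\sigma)$ correspond to toric strata in $X_{\Sigma}$ where $f$ does not have a pole, and hence $\tl{D_I^{\circ}}$ is trivial (or the strata do not meet $Y$). This yields assertion (i).

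For (ii) and (iii), the plan is to combine (i) with Theorem \ref{cor:7-6-2} exactly as in Theorem \ref{thm:7-6-2}: since $f$ is convenient and non-degenerate at infinity, it is tame at infinity (by the result of Broughton recalled in Section \ref{section 4}), so Sabbah's symmetry of weights applies to $[H_f^{\infty}]$, and duality between $[H_f^{\infty}]$ and $[G_f^{\infty}]$ (whose weight filtration is the monodromy filtration) translates the Hodge-Deligne numbers $e^{p,q}$ into Jordan block counts. The shifts in the bidegrees (namely $p+q = n-2 \pm k, n-1 \pm k$ and the sign flip for $\lambda = 1$ coming from the extra $H^{2n-2}_c$ contribution) are identical to those in Theorem \ref{thm:7-6-2}, so inserting the expression from (i) gives (ii) and (iii).
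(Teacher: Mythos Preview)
The paper does not actually prove Theorem~\ref{thm:7-12}; it is stated with a citation to \cite[Theorem 5.7]{M-T-3} and then immediately used. So there is no in-paper argument to compare against line by line. Your overall plan is the right one and matches the strategy of the cited reference: deduce (i) from Theorem~\ref{thm:7-7} by regrouping the sum over $I$ according to the supporting face $\gamma(\sigma_I)$, then read off (ii) and (iii) from (i) via Theorem~\ref{thm:7-6-2} and the tameness of $f$.

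The one place where your sketch is too loose to stand on its own is the identification of $[\tl{D_I^\circ}]$ with $[Z_{\Delta_\gamma}^*]$ times a split torus. You say that $D_I^\circ$ is a torus times the complement of $\{f^{\gamma}=0\}$ and that the $\mu_{d_I}$-covering ``respects this product structure'', but the actual content lies precisely in making this identification $\hat{\mu}$-equivariantly. Concretely one has to show, after a monomial change of coordinates, that the covering $\{t^{d_I}=\tl{g_1}^{-1}\}$ over $D_I^\circ$ becomes the hypersurface $\{g_\gamma=0\}\subset T_{\Delta_\gamma}$ times a torus with trivial action, and that the generator $\zeta_{d_I}\in\mu_{d_I}$ acts on the first factor as multiplication by the element $\tau_\gamma\in T_{\Delta_\gamma}$ built from lattice heights. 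This is exactly the content of \cite[Proposition 5.3]{M-T-3}, which the present paper invokes (see the parenthetical after the definition of $g_\gamma$) but does not reproduce; it is where the specific choice of $g_\gamma$ with constant term $-1$ at the origin and the definition of $\tau_\gamma$ become relevant. Your ``Euler relation'' telescoping over the cones subdividing $\sigma(\gamma)$ is also correct in spirit, but you should note that it is applied only to cones in $\Sigma$ whose rays are all at infinity (i.e.\ not in $(\RR^n)^*_+$), which is why only faces $\gamma$ with $0\notin\gamma$ survive; your remark that strata with $0\in\gamma(\sigma)$ ``do not meet $Y$'' is the right reason. With these two points made precise, your argument for (i) goes through, and your derivation of (ii), (iii) from (i) via Theorem~\ref{thm:7-6-2} is exactly what the paper intends.
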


The part (i) of Theorem \ref{thm:7-12} was obtained also by Raibaut \cite{Raibaut-1.8} 
independently. 
Note also that by using the results in Section \ref{section 5} we can always 
calculate $e^{p,q}(\chi_h((1-\LL)^{m_{\gamma}}\cdot [Z_{\Delta_{\gamma}}^*]))_{\lambda}$ 
explicitly. From now on, we shall introduce some closed formulas in \cite{M-T-3} 
for the numbers of the Jordan blocks in $\Phi_{n-1}^{\infty}$. 
First let us consider the numbers of the Jordan blocks for the eigenvalues 
$\lambda \in \CC \setminus \{1\}$. Let $q_1,\ldots,q_l$ (resp. $\gamma_1,\ldots, 
\gamma_{l^{\prime}}$) be the $0$-dimensional (resp. $1$-dimensional) faces of $\Gamma_{\infty}(f)$ 
such that $q_i\in \Int (\RR_+^n)$ (resp. the relative interior $\relint(\gamma_i)$ of $\gamma_i$ is 
contained in $\Int(\RR_+^n)$). Obviously these faces are at infinity. For each $q_i$ (resp. 
$\gamma_i$), denote by $d_i >0$ (resp. $e_i>0$) the lattice distance $\dist(q_i, 0)$ (resp. $\dist(\gamma_i,0)$) 
of it from the origin $0\in \RR^n$. For $1\leq i \leq l^{\prime}$, let $\Delta_i$ be the convex hull of 
$\{0\}\sqcup \gamma_i$ in $\RR^n$. Then for $\lambda \in 
\CC \setminus \{1\}$ and $1 \leq i \leq l^{\prime}$ such that $\lambda^{e_i}=1$ we set
\begin{align}
n(\lambda)_i
 := & 
\sharp\{ v\in \ZZ^n \cap \relint(\Delta_i) \ |\ \height (v, \gamma_i)=k\} 
\nonumber 
\\
 & +  \sharp \{ v\in \ZZ^n \cap \relint(\Delta_i) \ |\ \height (v, \gamma_i)=e_i-k\},
\end{align}
where $k$ is the minimal positive integer satisfying $\lambda=\zeta_{e_i}^{k}$ and 
for $v\in \ZZ^n \cap \relint(\Delta_i)$ we denote by 
$\height (v, \gamma_i)$ the lattice height of $v$ from the base $\gamma_i$ of $\Delta_i$. 

\begin{theorem}{\rm (Matsui-Takeuchi \cite[Theorem 5.9]{M-T-3})}\label{thm:7-11}
Assume that $f$ is convenient and non-degenerate at infinity and let $\lambda \in \CC^* \setminus \{1\}$. 
Then we have 
\begin{enumerate}
\item The number of the Jordan blocks for the eigenvalue $\lambda$ with the 
maximal possible size $n$ in $\Phi_{n-1}^{\infty} \colon H^{n-1}(f^{-1}(R);\CC) 
 \simto H^{n-1}(f^{-1}(R);\CC)$ ($R \gg 0$) is equal to $\sharp \{q_i \ |\ \lambda^{d_i}=1\}$. 
\item The number of the Jordan blocks for the eigenvalue $\lambda$ with size 
$n-1$ in $\Phi_{n-1}^{\infty}$ is equal to $\sum_{i \colon \lambda^{e_i}=1} n(\lambda)_i$.
\end{enumerate}
\end{theorem}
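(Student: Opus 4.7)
\smallskip

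\noindent\textbf{Proof proposal.}
The strategy is to apply Theorem \ref{thm:7-12}(ii) with $k=n$ and with $k=n-1$, and in each case to isolate which faces $\gamma$ at infinity of $\Gamma_\infty(f)$ can contribute to the required weights $p+q\in\{2n-2,2n-1\}$ or $p+q\in\{2n-3,2n-2\}$. The key restriction is purely dimensional. For $\lambda\neq 1$, Proposition \ref{prp:2-15} applied to the non-degenerate hypersurface $Z^*_{\Delta_\gamma}\subset T_{\Delta_\gamma}\simeq(\CC^*)^{\dim\gamma+1}$ gives $e^{p,q}([Z^*_{\Delta_\gamma}])_\lambda=0$ whenever $p+q>\dim\gamma$. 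Since the factor $(1-\LL)^{m_\gamma}$ raises weights by at most $2m_\gamma$, contributions vanish unless $p+q\le \dim\gamma+2m_\gamma = 2\sharp S_\gamma-\dim\gamma-2$. Combined with $\sharp S_\gamma\le n$, this forces $\sharp S_\gamma=n$, $\dim\gamma=0$ for the size $\ge n$ count (i.e. $\gamma=\{q_i\}$), and allows in addition $\sharp S_\gamma=n$, $\dim\gamma=1$ (i.e. $\gamma=\gamma_j$) for the size $\ge n-1$ count.

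For a $0$-dimensional face $\gamma=\{q_i\}$ one has $T_{\Delta_\gamma}\simeq\CC^*$, $m_\gamma=n-1$, and $g_\gamma$ is of the form $ax^{q_i}-1$, so $Z^*_{\Delta_\gamma}$ consists of $d_i$ distinct points on which $\mu_{d_i}$ acts by a single free orbit. Its $H^0_c$ is therefore the regular representation of $\mu_{d_i}$, so $e^{0,0}([Z^*_{\Delta_\gamma}])_\lambda = \mathbf{1}[\lambda^{d_i}=1]$. Pairing with the top Tate twist $(-\LL)^{n-1}$ of $(1-\LL)^{n-1}$ yields $(-1)^{n-1}\,\mathbf{1}[\lambda^{d_i}=1]$ at $(p,q)=(n-1,n-1)$, with no contribution at $p+q=2n-1$. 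Inserting into Theorem \ref{thm:7-12}(ii) and cancelling the overall $(-1)^{n-1}$ produces exactly $\sharp\{i\mid \lambda^{d_i}=1\}$ blocks of size $\ge n$, which equals the number of blocks of size exactly $n$ since $n$ is the maximal possible Jordan size. This proves (i).

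For (ii), note that the $0$-dimensional faces contribute the same quantity to the size $\ge n-1$ and size $\ge n$ counts, so they cancel in the difference. The remaining contribution to size exactly $n-1$ comes only from the $1$-dimensional faces $\gamma_j$ with $\lambda^{e_j}=1$. For such $\gamma_j$, the polytope $\Delta_j$ is a triangle (hence pseudo-prime in the sense of Definition \ref{dfn:2-16}), $m_{\gamma_j}=n-2$, and $Z^*_{\Delta_j}\subset(\CC^*)^2$ is a smooth non-degenerate $\mu_{e_j}$-invariant curve. Only the top Tate twist $(-\LL)^{n-2}$ can pair with a weight-$1$ piece of $[Z^*_{\Delta_j}]$ to reach $p+q=2n-3$, so after cancellation of signs one finds
\begin{equation}
\#\{\text{size exactly }n-1\} \;=\; -\sum_{j:\lambda^{e_j}=1}\bigl(e^{1,0}([Z^*_{\Delta_j}])_\lambda+e^{0,1}([Z^*_{\Delta_j}])_\lambda\bigr).
\end{equation}
By Corollary \ref{cor:2-18} applied with $n=2$, $r=1$ to the pseudo-prime triangle $\Delta_j$, the right-hand side equals an alternating sum of $\tilde\varphi_\lambda(\Gamma')$ over the faces $\Gamma'\prec\Delta_j$. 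Using the reciprocity $\varphi_{\alpha,i}(\Gamma')=\psi_{\alpha^{-1},\dim\Gamma'+1-i}(\Gamma')$ from Section \ref{section 5} and the fact that $\tau_{\gamma_j}^v=\zeta_{e_j}^{\mathrm{ht}(v,\gamma_j)}$, one identifies this sum with the number of lattice points in $\mathrm{rel.int}(\Delta_j)$ lying at lattice height $k$ or $e_j-k$ from $\gamma_j$, which is exactly $n(\lambda)_j$.

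\smallskip

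\noindent\textbf{Main obstacle.} The hardest step is the last one: collapsing the alternating face sum $-\sum_{\Gamma'\prec\Delta_j}(-1)^{\dim\Gamma'}\tilde\varphi_\lambda(\Gamma')$ to the interior-height count $n(\lambda)_j$. This requires a careful Ehrhart-style bookkeeping on the three edges of the triangle $\Delta_j$, especially the two lateral edges $[0,v_\ell]$, where the height function restricted to lattice points interacts subtly with the $\tau_{\gamma_j}$-character; the boundary lattice contributions on these edges must telescope cleanly against those on $\gamma_j$ itself so that only the interior heights $k$ and $e_j-k$ survive with coefficient $+1$.
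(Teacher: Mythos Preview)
Your approach is essentially the same as the paper's: apply Theorem~\ref{thm:7-12}(ii), use the dimensional bound from Proposition~\ref{prp:2-15} to isolate the $0$-dimensional faces $q_i$ (for part (i)) and additionally the $1$-dimensional faces $\gamma_j$ with $\relint(\gamma_j)\subset\Int(\RR_+^n)$ (for part (ii)), then read off the relevant equivariant Hodge numbers. Your treatment of (i) matches the paper's exactly; for (ii) the paper says only that it is proved ``similarly'' by expressing $e^{n-1,n-2}(\chi_h(\SS_f^\infty))_\lambda+e^{n-2,n-1}(\chi_h(\SS_f^\infty))_\lambda$ in terms of the $1$-dimensional faces, so your write-up actually supplies more detail than the paper does.

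One remark on your ``main obstacle''. You route the computation of $-(e^{1,0}+e^{0,1})([Z^*_{\Delta_j}])_\lambda$ through Corollary~\ref{cor:2-18} and then face a telescoping problem on the edges of $\Delta_j$. There is a shorter path that avoids this entirely: apply Proposition~\ref{prp:new} directly with ambient dimension $2$ and $p=1$. It gives
\[
e^{1,0}([Z^*_{\Delta_j}])_\lambda=-\,l^*(\Delta_j)_\lambda,\qquad
e^{0,1}([Z^*_{\Delta_j}])_\lambda=\overline{e^{1,0}([Z^*_{\Delta_j}])_{\overline{\lambda}}}=-\,l^*(\Delta_j)_{\overline{\lambda}},
\]
so that $-(e^{1,0}+e^{0,1})=l^*(\Delta_j)_\lambda+l^*(\Delta_j)_{\overline{\lambda}}$. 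Since $\tau_{\gamma_j}^v=\zeta_{e_j}^{\height(v,\gamma_j)}$ and interior lattice points of $\Delta_j$ have heights strictly between $0$ and $e_j$, these two terms count exactly the interior points at height $k$ and at height $e_j-k$, giving $n(\lambda)_j$ with no bookkeeping required. This is the route the paper takes in the parallel proof of Theorem~\ref{thm:7-16}(ii), and it makes your identified obstacle disappear. (Incidentally, in your ``Main obstacle'' paragraph the alternating sum should carry a $+$ sign rather than $-$, but with the direct approach via Proposition~\ref{prp:new} this becomes moot.)
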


\begin{proof}
\noindent (i) By Theorem \ref{thm:7-12} (ii), the number of the Jordan blocks for the 
eigenvalue $\lambda \in \CC^* \setminus \{1\}$ with the maximal possible size $n$ in $\Phi_{n-1}^{\infty}$ is
\begin{align}
(-1)^{n-1} e^{n-1,n-1}(\chi_h(\SS_f^{\infty}))_{\lambda}
&=(-1)^{n-1} \sum_{i=1}^l e^{n-1,n-1}(\chi_h((1-\LL)^{n-1}\cdot [ Z_{\Delta_{q_i}}^* ]))_{\lambda}\\
&= \sum_{i=1}^l e^{0,0}(\chi_h( [Z_{\Delta_{q_i}}^* ]))_{\lambda}.
\end{align}
Note that $Z_{\Delta_{q_i}}^*$ is a finite subset of $\CC^*$ consisting of $d_i$ points. Then (i) follows from 
\begin{equation}
\sum_{i=1}^l e^{0,0}(\chi_h([Z_{\Delta_{q_i}}^* ]))_{\lambda}=\sharp \{q_i \ |\ \lambda^{d_i}=1\}.
\end{equation}
The assertion (ii) can be proved similarly by expressing 
$e^{n-1,n-2}(\chi_h(\SS_f^{\infty}))_{\lambda} +e^{n-2,n-1}(\chi_h(\SS_f^{\infty}))_{\lambda}$ 
in terms of the $1$-dimensional faces at infinity $\gamma_i$ of $\Gamma_{\infty}(f)$. \qed
\end{proof}

\begin{example}
Let $f(x,y)\in \CC[x,y]$ be a convenient polynomial whose Newton 
polyhedron at infinity $\Gamma_{\infty}(f)$ 
is the convex hull of the five points 
$(0,0), (5,0), (5,1), (2,4), (0,4) \in \RR^2_+$ in $\RR^2_+$. 
Assume also that $f$ is non-degenerate at infinity. 
Then by Libgober-Sperber's theorem (Theorem \ref{thm:3-5}) 
the characteristic polynomial $P(\lambda)$ of $\Phi_1^{\infty} 
\colon H^1(f^{-1}(R);\CC) \simto H^1(f^{-1}(R);\CC)$ ($R \gg 0$) is given by
\begin{equation}
P(\lambda)=(\lambda-1)(\lambda^4-1)(\lambda^6-1)^3.
\end{equation}
In particular, the total multiplicity of the roots $-1$ in $P( \lambda )=0$ is $4$. 
For $\lambda \in \CC$, denote by $H^1(f^{-1}(R);\CC)_{\lambda}$ the generalized $\lambda$-eigenspace 
of the monodromy operator $\Phi_1^{\infty}$ at infinity. First, by the monodromy theorem the 
restriction of $\Phi_1^{\infty}$ to $H^1(f^{-1}(R);\CC)_1 \simeq \CC^5$ is semisimple. 
Moreover by Theorem \ref{thm:7-11} (i) the Jordan normal form of the restriction of 
$\Phi_1^{\infty}$ to $H^1(f^{-1}(R);\CC)_{-1} \simeq \CC^4$ is
\begin{equation}
\begin{pmatrix}
-1 &1&0&0\\
0&-1&0&0\\
0&0&-1&0\\
0&0&0&-1
\end{pmatrix}.
\end{equation}
In the same way, we can show that for $\lambda=\zeta_6, 
 \sqrt{-1}, \zeta_3, \zeta_3^2, -\sqrt{-1}, \zeta_6^5$ 
the restriction of $\Phi_1^{\infty}$ to $H^1(f^{-1}(R);\CC)_{\lambda}$ is semisimple.
\end{example}

Next we consider the number of the Jordan blocks for the eigenvalue $1$ in $\Phi_{n-1}^{\infty}$. 
By the results in Section \ref{section 5}, we can rewrite Theorem \ref{thm:7-12} (ii), (iii) as follows. 
Denote by $\Pi_f$ the number of the lattice points on the $1$-skeleton of $\partial \Gamma_{\infty}(f) \cap \Int (\RR^n_+)$. 
For a face at infinity $\gamma \prec \Gamma_{\infty}(f)$, we denote by $l^*(\gamma)$ the number of 
the lattice points on the relative interior $\relint(\gamma)$ of $\gamma$. 

\begin{theorem}{\rm (Matsui-Takeuchi \cite[Theorems 5.11 and 5.12]{M-T-3})}\label{thm:7-16}
Assume that $f$ is convenient and non-degenerate at infinity. Then we have 
\begin{enumerate}
\item The number of the Jordan blocks for the eigenvalue $1$ with 
the maximal possible size $n-1$ in $\Phi_{n-1}^{\infty}$ is $\Pi_f$.
\item The number of the Jordan blocks for the eigenvalue $1$ with size $n-2$ 
in $\Phi_{n-1}^{\infty}$ is equal to $2 \sum_{\gamma} l^*(\gamma)$, where 
$\gamma$ ranges through the faces at infinity of $\Gamma_{\infty}(f)$ such 
that $\d \gamma =2$ and $\relint(\gamma) \subset \Int (\RR^n_+)$. 
In particular, this number is even.
\end{enumerate}
\end{theorem}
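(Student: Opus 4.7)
The plan is to deduce Theorem \ref{thm:7-16} directly from the general combinatorial formula of Theorem \ref{thm:7-12}(iii), by evaluating the equivariant Hodge numbers on its right-hand side via the Danilov--Khovanskii-type algorithm summarized in Section \ref{section 5}. Throughout, I use that a Tate twist $\CC(-j)$ only contributes to the $(p,p)$-diagonal of a mixed Hodge structure, so in the product $\chi_h((1-\LL)^{m_\gamma}\cdot[Z_{\Delta_\gamma}^*])$ the binomial expansion of $(1-\LL)^{m_\gamma}$ affects the $(0,0)$-entry (and more generally the $(p,q)$-entries with $p \neq q$) only through its $j=0$ term. Consequently the factor $(1-\LL)^{m_\gamma}$ can be dropped whenever we read off $e^{0,0}$, $e^{1,0}$, or $e^{0,1}$ with eigenvalue $1$.

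For part (i), since $n-1$ is the maximal possible Jordan size, blocks of size exactly $n-1$ coincide with blocks of size $\geq n-1$. Setting $k=n-1$ in Theorem \ref{thm:7-12}(iii), only the pair $(p,q)=(0,0)$ survives in the double sum. By the observation above, the count equals
\begin{equation}
(-1)^{n-1}\sum_{\gamma} e^{0,0}\bigl([Z_{\Delta_\gamma}^*]\bigr)_1,
\end{equation}
the sum running over faces at infinity $\gamma \prec \Gamma_\infty(f)$. Applying Proposition \ref{prp:2-19} inside the torus $T_{\Delta_\gamma}$ of dimension $\dim\gamma+1$ rewrites each summand as $(-1)^{\dim\gamma}\bigl(\Pi(\Delta_\gamma)_1-1\bigr)$, where $\Pi(\Delta_\gamma)_1$ counts $\tau_\gamma$-invariant lattice points on the $1$-skeleton of $\Delta_\gamma=\mathrm{conv}(\{0\}\cup\gamma)$. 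For part (ii), subtracting the size-$\geq (n-1)$ count from the size-$\geq (n-2)$ count leaves the contributions from $(p,q)=(1,0)$ and $(0,1)$. The complex-conjugate symmetry $e^{0,1}(\cdot)_1=e^{1,0}(\cdot)_{\overline{1}}=e^{1,0}(\cdot)_1$ doubles the $(1,0)$-entry, and Proposition \ref{prp:new} converts it into a sum of $l^*(\Gamma)_1$ over $2$-dimensional faces $\Gamma\prec\Delta_\gamma$, producing
\begin{equation}
2(-1)^{n-1}\sum_{\gamma}(-1)^{\dim\gamma}\!\!\sum_{\substack{\Gamma\prec\Delta_\gamma\\ \dim\Gamma=2}}\!\! l^*(\Gamma)_1.
\end{equation}

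The main remaining work, and the principal obstacle, is a purely combinatorial simplification of these signed face-poset sums. In each $\Delta_\gamma$, the $2$-faces $\Gamma$ come in two types: the ``flat'' ones lying in $\gamma$ (so $0\notin\Gamma$) and the ``cone'' ones of the form $\mathrm{conv}(\{0\}\cup\Gamma')$ for an edge $\Gamma'\prec\gamma$. The plan is to reorganize the double sum by the target face $\Gamma$ and apply inclusion--exclusion over the faces $\gamma\succ\Gamma$ of $\Gamma_\infty(f)$. The alternating sum over such $\gamma$ collapses by the Euler characteristic of a closed-link simplicial complex, killing every contribution from $\Gamma$ meeting a coordinate hyperplane or of the ``cone'' type, and leaving exactly those $\Gamma=\gamma$ that are themselves $2$-faces of $\Gamma_\infty(f)$ at infinity with $\relint(\gamma)\subset\Int(\RR_+^n)$. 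For such $\gamma$ one has $\tau_\gamma=1$ on the lattice of $\gamma$ (since the ambient face contains the origin with height $d_\gamma$ and $m_\gamma=\sharp S_\gamma-\dim\gamma-1$), so $l^*(\gamma)_1=l^*(\gamma)$. This yields the asserted formula $2\sum_\gamma l^*(\gamma)$ of part (ii); the same organizing principle applied one dimension lower (to $1$-skeletons) telescopes $\sum_\gamma(-1)^{\dim\gamma}(\Pi(\Delta_\gamma)_1-1)$ to $(-1)^{n-1}\Pi_f$, proving part (i) and explaining the parity statement as a consequence of the complex-conjugate pairing.
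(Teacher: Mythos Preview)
Your approach is correct and is essentially the same as the paper's. Both start from Theorem~\ref{thm:7-12}(iii), strip off the $(1-\LL)^{m_\gamma}$ factor (harmless for the relevant $(p,q)$), invoke Proposition~\ref{prp:2-19} for $e^{0,0}$ and Proposition~\ref{prp:new} for $e^{1,0}=e^{0,1}$, and then collapse the resulting alternating sum over faces $\gamma$ at infinity. The paper compresses your cone/flat analysis into the single observation $\Pi(\Delta_\gamma)_1-1=\Pi(\gamma)$, which is exactly the statement that interior lattice points on the ``cone'' edges of $\Delta_\gamma$ have height strictly between $0$ and $d_\gamma$ and hence $\tau_\gamma^v\neq 1$; and it leaves the final inclusion--exclusion (your Euler-characteristic collapse over the star of $\Gamma$) completely implicit.

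Two minor corrections to your write-up. First, the cone-type $2$-faces $\Gamma=\mathrm{conv}(\{0\}\cup\Gamma')$ already satisfy $l^*(\Gamma)_1=0$ for the same height reason, so they drop out \emph{before} the inclusion--exclusion, not because of it; only the flat $2$-faces $\Gamma\prec\gamma$ survive and these are genuine $2$-faces at infinity of $\Gamma_\infty(f)$, over which your reorganization by $\Gamma$ makes sense. Second, the combinatorial identity you need is precisely
\[
\sum_{\substack{\gamma\text{ at infinity}\\ \Gamma\preceq\gamma}}(-1)^{\dim\gamma}
=\begin{cases}(-1)^{n-1}&\text{if }\relint(\Gamma)\subset\Int(\RR^n_+),\\ 0&\text{otherwise},\end{cases}
\]
which follows since, for convenient $f$, the facets of $\Gamma_\infty(f)$ through $0$ are exactly the coordinate-hyperplane sections, so the join $\Gamma\vee\{0\}$ equals the whole polytope if and only if $\Gamma$ avoids every $\{x_i=0\}$.
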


\begin{proof}
For a face at infinity $\gamma \prec \Gamma_{\infty}(f)$, denote by $\Pi(\gamma)$ 
the number of the lattice points on the $1$-skeleton of $\gamma$. 
Since for each face at infinity $\gamma \prec \Gamma_{\infty}(f)$ we have $\Pi(\Delta_{\gamma})_1-1=\Pi(\gamma)$ 
(for the definition of $\Pi(\Delta_{\gamma})_1$, see Section \ref{section 5}), 
the assertion (i) follows from Theorem \ref{thm:7-12} (iii) and Proposition \ref{prp:2-19}. 
We can prove also the assertion (ii) by Theorem \ref{thm:7-12} (iii) and Proposition \ref{prp:new}. 
\qed
\end{proof}

From now on, we assume that any face at infinity $\gamma \prec \Gamma_{\infty}(f)$ is prime in 
the sense of Definition \ref{dfn:2-16} (i) 
and rewrite Theorem \ref{thm:7-12} (ii) and (iii) more explicitly. 
First, recall that by Proposition \ref{prp:2-15} for $\lambda \in \CC^* 
\setminus \{1\}$ and a face at infinity $\gamma\prec \Gamma_{\infty}(f)$ 
we have $e^{p,q}(Z_{\Delta_{\gamma}}^*)_{\lambda}=0$ for any $p,q \geq 0$ such that 
$p+q >\d \Delta_{\gamma}-1=\dim \gamma$. So the non-negative integers $r \geq 0$ 
such that $\sum_{p+q=r}e^{p,q}(Z_{\Delta_{\gamma}}^*)_{\lambda}\neq 0$ 
are contained in the closed interval $[0,\d \gamma]\subset \RR$.

\begin{definition}
For a face at infinity $\gamma \prec \Gamma_{\infty}(f)$ and $k \geq 1$, 
we define a finite subset $J_{\gamma,k}\subset [0,\d \gamma] \cap \ZZ$ by
\begin{equation}
J_{\gamma,k}=\{0 \leq r\leq \d \gamma \ |\ n-2+k \equiv r \mod 2\}.
\end{equation}
For each $r\in J_{\gamma,k}$, set
\begin{equation}
d_{k,r}=\dfrac{n-2+k-r}{2}\in \ZZ_+.
\end{equation}
\end{definition}

Since for any face at infinity $\gamma \prec \Gamma_{\infty}(f)$ the polytope 
$\Delta_{\gamma}$ is pseudo-prime in the sense of Definition \ref{dfn:2-16} (ii), 
by Corollary \ref{cor:2-18} for $\lambda \in \CC^* \setminus \{1\}$ and an integer $r \geq 0$ such that $r\in [0,\d \gamma] $ we have
\begin{equation}
\sum_{p+q=r}e^{p,q}(\chi_h([Z_{\Delta_{\gamma}}^*]))_{\lambda}=(-1)^{\d \gamma +r+1} 
\sum_{\begin{subarray}{c} \Gamma\prec \Delta_{\gamma} \\ \d \Gamma=r+1\end{subarray}} 
\left\{ \sum_{\Gamma^{\prime} \prec \Gamma} (-1)^{\d \Gamma^{\prime}} \tl{\varphi}_{\lambda}(\Gamma^{\prime})\right\}.
\end{equation}
For simplicity, we denote this last integer by $e(\gamma,\lambda)_r$. Then by Theorem \ref{thm:7-12} (ii) we obtain the following result.

\begin{theorem}{\rm (Matsui-Takeuchi \cite[Theorem 5.14]{M-T-3})}\label{thm:7-15}
Assume that $f$ is convenient and non-degenerate at infinity and  
any face at infinity $\gamma \prec \Gamma_{\infty}(f)$ is prime. Let  
$\lambda \in \CC^* \setminus\{1\}$ and $k\geq 1$. 
Then the number of the Jordan blocks for the eigenvalue $\lambda$ with sizes $\geq k$ 
in $\Phi_{n-1}^{\infty} \colon H^{n-1}(f^{-1}(R);\CC) \simto H^{n-1}(f^{-1}(R);\CC)$ ($R\gg 0$) is equal to
\begin{equation}
(-1)^{n-1}\sum_{\gamma} \ \left\{ \sum_{r \in J_{\gamma, k}} (-1)^{d_{k,r}} 
\binom{m_{\gamma}}{d_{k,r}} \cdot e(\gamma,\lambda)_r + \sum_{r \in J_{\gamma, k+1}} 
(-1)^{d_{k+1,r}} \binom{m_{\gamma}}{d_{k+1,r}} \cdot e(\gamma,\lambda)_r\right\},
\end{equation}
where in the sum $\sum_{\gamma}$ the face $\gamma$ of $\Gamma_{\infty}(f)$ 
ranges through those at infinity (we used also the convention $\binom{a}{b}=0$ ($0 \leq a <b$) for binomial coefficients).
\end{theorem}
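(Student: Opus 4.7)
The plan is to start from Theorem \ref{thm:7-12}(ii), which already expresses the number of Jordan blocks for the eigenvalue $\lambda \in \CC^* \setminus \{1\}$ with sizes $\geq k$ as
\begin{equation}
N_k(\lambda) = (-1)^{n-1} \sum_{s \in \{n-2+k,\, n-1+k\}} \sum_{\gamma} \sum_{p+q=s} e^{p,q}\bigl( \chi_h((1-\LL)^{m_\gamma}\cdot [Z^*_{\Delta_\gamma}]) \bigr)_\lambda,
\end{equation}
and to reduce the two-dimensional Hodge-Deligne sum on the right to the one-dimensional invariants $e(\gamma,\lambda)_r$ introduced just before the statement. The bookkeeping is the whole content; there is no geometric obstacle left, since Theorem \ref{thm:7-12} has already packaged the topological input.

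First, I would expand $(1-\LL)^{m_\gamma}$ in $\M^{\hat\mu}_\CC$ by the binomial theorem, getting
\begin{equation}
(1-\LL)^{m_\gamma} = \sum_{j=0}^{m_\gamma} (-1)^j \binom{m_\gamma}{j} \LL^j,
\end{equation}
with $\LL$ carrying the trivial $\hat\mu$-action. Applying the Hodge characteristic morphism $\chi_h$ sends $\LL^j$ to the Tate twist $\CC(-j)$, which is pure of type $(j,j)$ and carries the trivial monodromy, hence acts on equivariant Hodge-Deligne numbers by shifting bidegrees diagonally:
\begin{equation}
e^{p,q}\bigl( \chi_h(\LL^j \cdot [Z^*_{\Delta_\gamma}]) \bigr)_\lambda = e^{p-j,\, q-j}\bigl( \chi_h([Z^*_{\Delta_\gamma}]) \bigr)_\lambda.
\end{equation}
Consequently, for any fixed diagonal $s$,
\begin{equation}
\sum_{p+q=s} e^{p,q}\bigl( \chi_h((1-\LL)^{m_\gamma}[Z^*_{\Delta_\gamma}]) \bigr)_\lambda = \sum_{j=0}^{m_\gamma} (-1)^j \binom{m_\gamma}{j} \sum_{p'+q'=s-2j} e^{p',q'}\bigl( \chi_h([Z^*_{\Delta_\gamma}]) \bigr)_\lambda.
\end{equation}

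Next I would invoke the hypothesis that each face at infinity $\gamma$ is prime, which forces $\Delta_\gamma$ to be pseudo-prime. Under this condition Corollary \ref{cor:2-18} identifies precisely
\begin{equation}
\sum_{p'+q'=r} e^{p',q'}\bigl( \chi_h([Z^*_{\Delta_\gamma}]) \bigr)_\lambda = e(\gamma,\lambda)_r,
\end{equation}
and Proposition \ref{prp:2-15} ensures this vanishes outside $r \in [0,\dim\gamma]$. Substituting $r = s-2j$ into the previous formula gives $j = (s-r)/2$, which is a non-negative integer precisely when $r$ and $s$ have the same parity and $r \leq s$. For $s = n-2+k$ the parity and range conditions carve out exactly the set $J_{\gamma,k}$, and $j$ becomes $d_{k,r}$; for $s = n-1+k$ one gets $J_{\gamma,k+1}$ and $j = d_{k+1,r}$.

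Plugging these two substitutions back into the displayed expression for $N_k(\lambda)$ produces precisely the claimed combinatorial identity. The whole argument is purely a binomial-identity manipulation on top of Theorem \ref{thm:7-12}(ii) and Corollary \ref{cor:2-18}; the only point requiring care is checking that the binomial coefficients $\binom{m_\gamma}{d_{k,r}}$ and $\binom{m_\gamma}{d_{k+1,r}}$ vanish automatically when $d_{k,r} > m_\gamma$, so the sums over $J_{\gamma,k}$ and $J_{\gamma,k+1}$ may be extended to all admissible $r$ without spurious contributions. There is no deeper obstacle.
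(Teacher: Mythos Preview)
Your proposal is correct and follows essentially the same approach as the paper: the paper sets up $J_{\gamma,k}$, $d_{k,r}$, and $e(\gamma,\lambda)_r$ in the paragraphs preceding the theorem, notes via Proposition~\ref{prp:2-15} and Corollary~\ref{cor:2-18} (using that $\Delta_\gamma$ is pseudo-prime when $\gamma$ is prime) that $\sum_{p+q=r}e^{p,q}(\chi_h([Z^*_{\Delta_\gamma}]))_\lambda = e(\gamma,\lambda)_r$, and then simply says ``by Theorem~\ref{thm:7-12}~(ii) we obtain the following result.'' Your binomial expansion of $(1-\LL)^{m_\gamma}$ and the parity bookkeeping identifying $j=d_{k,r}$ and $j=d_{k+1,r}$ are precisely the omitted details behind that one-line deduction.
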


If any face at infinity $\gamma \prec \Gamma_{\infty}(f)$ is prime we can also explicitly 
describe the number of the Jordan blocks for the eigenvalue $1$ in $\Phi_{n-1}^{\infty}$ 
(see Matsui-Takeuchi \cite[Section 5]{M-T-3} for the details). 
The following result is a global analogue of the Steenbrink conjecture 
proved by Varchenko-Khovanskii \cite{K-V} and Saito \cite{Saito-3}. 
Now we return to the general case. 

\begin{definition}{\rm (Sabbah \cite{Sabbah} and Steenbrink-Zucker \cite{S-Z})} 
As a Puiseux series, we define the spectrum at infinity $\sp_f^{\infty}(t)$ of $f$ by
\begin{equation}
\sp_f^{\infty}(t)
= \sum_{\beta \in (0,1] \cap \QQ} \left[ \sum_{i=0}^{n-1} (-1)^{n-1}
\left\{ \sum_{q \geq 0} e^{i,q}([H_f^{\infty}])_{\exp(2\pi \sqrt{-1}\beta)}\right\} t^{i+\beta}\right]+(-1)^{n}t^n.
\end{equation}
\end{definition}

When $f$ is tame at infinity, by Theorem \ref{cor:7-6-2} we can easily 
see that the support of $\sp_f^{\infty}(t)$ is contained in the open interval $(0,n)$ and has the symmetry
\begin{equation}
\sp_f^{\infty}(t)=t^n \sp_f^{\infty}\( \frac{1}{t}\)
\end{equation}
with center at $\frac{n}{2}$. From now on, we assume that $f$ is convenient and non-degenerate 
at infinity. In order to describe $\sp_f^{\infty}(t)$ by $\Gamma_{\infty}(f)$, for each face 
at infinity $\gamma$ of $\Gamma_{\infty}(f)$ let $s_{\gamma}=\sharp S_{\gamma}\in \ZZ_{\geq 1}$ 
be the dimension of the minimal coordinate plane containing $\gamma$ and set $\Cone(\gamma)=\RR_+\gamma$. 
 Next, let $h_f \colon \RR_+^n \longrightarrow \RR$ be the continuous function on $\RR_+^n$ which is 
linear on each cone $\Cone(\gamma)$ and satisfies the condition $h_f|_{\partial \Gamma_{\infty}(f) 
\cap \Int(\RR_+^n)} \equiv 1$. For a face at infinity $\gamma$ of $\Gamma_{\infty}(f)$, let 
$L_{\gamma}$ be the semigroup $\Cone(\gamma) \cap \ZZ_+^n$ and define its Poincar{\'e} series $P_{\gamma}(t)$ by
\begin{equation}
P_{\gamma}(t)=\sum_{\beta \in \QQ_+} \sharp \{ v \in L_{\gamma} \ |\ h_f(v) =\beta\} t^{\beta}.
\end{equation}
Then we have the following result. 

\begin{theorem}{\rm (Matsui-Takeuchi \cite[Theorem 5.16]{M-T-3})}\label{thm:7-19}
Assume that $f$ is convenient and non-degenerate at infinity. Then we have
\begin{equation}
\sp_f^{\infty}(t)=\sum_{\gamma} \ (-1)^{n-1-\d \gamma} (1-t)^{s_{\gamma}}P_{\gamma}(t) +(-1)^n,
\end{equation}
where in the above sum $\gamma$ ranges through the faces at infinity of $\Gamma_{\infty}(f)$.
\end{theorem}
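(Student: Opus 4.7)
The plan is to derive the formula face by face, starting from the motivic decomposition of Theorem \ref{thm:7-12} (i) and converting the Hodge-Deligne data on each piece $Z^*_{\Delta_\gamma}$ into the Ehrhart-style generating functions of Section \ref{section 5}. Concretely, by Theorem \ref{thm:7-12} (i) we have
\begin{equation}
[H_f^\infty]=\sum_\gamma \chi_h\bigl((1-\LL)^{m_\gamma}\cdot [Z^*_{\Delta_\gamma}]\bigr)
\end{equation}
in $\KK_0(\HSm)$, with the sum running over faces at infinity $\gamma\prec\Gamma_\infty(f)$. Since the spectrum at infinity is by definition built from the equivariant Hodge numbers of $[H_f^\infty]$ (with the $(-1)^n t^n$ correction tracking the missing $H^{2n-2}_c$-term), it suffices to prove a face-by-face identity
\begin{equation}
\sp^\infty_\gamma(t):=\sum_{\beta\in(0,1]\cap\QQ}\sum_{i=0}^{n-1}(-1)^{n-1}\Bigl\{\sum_{q\ge 0}e^{i,q}\bigl(\chi_h((1-\LL)^{m_\gamma}[Z^*_{\Delta_\gamma}])\bigr)_{\exp(2\pi\sqrt{-1}\beta)}\Bigr\}t^{i+\beta} \stackrel{?}{=}(-1)^{n-1-\dim\gamma}(1-t)^{s_\gamma}P_\gamma(t)
\end{equation}
modulo the constant $(-1)^n$ coming from the supplementary term.

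First I will apply the machinery of Section \ref{section 5} to the polytope $\Delta_\gamma$ (of dimension $\dim\gamma+1$) together with the $\mu_{d_\gamma}$-action induced by the distinguished element $\tau_\gamma\in T_{\Delta_\gamma}$. By Theorem \ref{thm:2-14}, the equivariant $E$-polynomial of $Z^*_{\Delta_\gamma}$ is controlled by the polynomials $P_\alpha(\Delta_\gamma;t)=\sum_i \varphi_{\alpha,i}(\Delta_\gamma)t^i$, while Proposition \ref{prp:2-17} and Corollary \ref{cor:2-18} (both of which apply because $\Delta_\gamma$ is pseudo-prime) give the full bigraded splitting in terms of $\varphi_{\alpha,j}(\Gamma')$ for faces $\Gamma'\prec\Delta_\gamma$. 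Multiplying by the Tate twist $(1-\LL)^{m_\gamma}$ amounts to multiplying generating functions by $(1-t)^{m_\gamma}$ (with an appropriate bidegree shift), so combining everything yields an expression for the Hodge spectrum of $(1-\LL)^{m_\gamma}[Z^*_{\Delta_\gamma}]$ as a rational function involving $\sum_k l^*(k\Delta_\gamma)_\alpha t^k$ or, by Macdonald--Ehrhart reciprocity \eqref{E:sym}, $\sum_k l(k\Delta_\gamma)_\alpha t^k$, with $\alpha=\exp(2\pi\sqrt{-1}\beta)$ and exponent $k+\beta$.

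Next I translate these Ehrhart sums into cone data. For each vertex $w$ of $\Delta_\gamma$ lying on $\gamma$, the linear function $h_f$ restricted to $\Cone(\gamma)$ satisfies $h_f\equiv 1$ on $\gamma$ and is the unique $\RR$-linear extension to $\Cone(\gamma)$; consequently every lattice point $v\in L_\gamma=\Cone(\gamma)\cap\ZZ^n_+$ with $h_f(v)=\beta$ corresponds, writing $\beta=k+\beta_0$ with $\beta_0\in(0,1]$, to a lattice point of $k\Delta_\gamma^w$ whose $\tau_\gamma$-eigenvalue equals $\exp(2\pi\sqrt{-1}\beta_0)$. Under this correspondence
\begin{equation}
\sum_{k\ge 0}\sum_{\beta_0\in(0,1]\cap\QQ}l(k\Delta_\gamma)_{\exp(2\pi\sqrt{-1}\beta_0)}\,t^{k+\beta_0} \;=\; P_\gamma(t)
\end{equation}
modulo an elementary correction for the vertex $0$ (which will absorb the $(-1)^n$ term globally). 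Stripping the denominator $(1-t)^{\dim\gamma+1}$ hidden in the definition of $P_\alpha(\Delta_\gamma;t)$ and repackaging the factor $(1-t)^{m_\gamma}=(1-t)^{s_\gamma-\dim\gamma-1}$ from the Tate twist produces the predicted factor $(1-t)^{s_\gamma}$, while the signs $(-1)^{n-1}$ from the spectrum definition combine with the $(-1)^{\dim\Delta_\gamma-1}=(-1)^{\dim\gamma}$ arising from Corollary \ref{cor:2-18} to give the advertised $(-1)^{n-1-\dim\gamma}$.

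The hard part will be step three: matching the equivariant Ehrhart count on $\Delta_\gamma$ (indexed by faces $\Gamma'\prec\Delta_\gamma$, lattice heights of interior points, and the $\tau_\gamma$-eigenvalue) with the unstratified sum $P_\gamma(t)$ over the cone $L_\gamma$. This requires a telescoping identity, summing the alternating contributions of $\Gamma'\prec\Delta_\gamma$ in Corollary \ref{cor:2-18} and showing that the signed total over $\Gamma'$ collapses to an integral sum over the relative interior of $\Cone(\gamma)$, in the spirit of the Stanley--Brion decomposition of a cone into the relative interiors of its faces. The constant $(-1)^n$ at the end of the statement will then arise from (i) the $(-1)^n t^n$ explicit in the spectrum's definition, together with (ii) the contribution of the origin $0$ (corresponding to $\beta=0$) that we excluded when restricting $\beta\in(0,1]$; a direct check that these two constants coincide up to sign completes the argument.
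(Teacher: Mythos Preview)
The paper does not supply a proof of this theorem; it merely states the result with a citation to \cite{M-T-3} and notes that Douai later gave an alternative proof. So there is nothing in the present paper to compare against beyond the surrounding machinery, and your outline---Theorem~\ref{thm:7-12}(i) followed by the equivariant Danilov--Khovanskii formulas of Section~\ref{section 5} and an Ehrhart/cone translation---is the natural route and is presumably what \cite{M-T-3} does.

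That said, there is one concrete error and one genuine gap in your sketch. The error is the assertion that ``$\Delta_\gamma$ is pseudo-prime'', which you use to invoke Proposition~\ref{prp:2-17} and Corollary~\ref{cor:2-18}. A cone $\Delta_\gamma=\mathrm{conv}(\{0\}\cup\gamma)$ is pseudo-prime only when $\gamma$ itself is prime; this is precisely the extra hypothesis singled out in Theorem~\ref{thm:7-15}, and it is \emph{not} assumed in Theorem~\ref{thm:7-19}. Fortunately you do not need those two results: the spectrum depends only on $\sum_q e^{p,q}([H_f^\infty])_\lambda$, and Theorem~\ref{thm:2-14} computes $\sum_q e^{p,q}(Z^*_{\Delta_\gamma})_\alpha$ in terms of $\varphi_{\alpha,\dim\Delta_\gamma-p}(\Delta_\gamma)$ with no primality hypothesis whatsoever. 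So drop Proposition~\ref{prp:2-17} and Corollary~\ref{cor:2-18} from the argument and work directly with Theorem~\ref{thm:2-14}.

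The gap is what you yourself call ``the hard part'': the passage from the equivariant Ehrhart data $\sum_k l(k\Delta_\gamma)_\alpha t^k$ (equivalently the $\varphi_{\alpha,i}(\Delta_\gamma)$) to the cone Poincar\'e series $P_\gamma(t)$. Your key displayed identity is asserted rather than proved, and the bookkeeping of the fractional exponent $\beta$ versus the lattice-height eigenvalue $\tau_\gamma^v=\zeta_{d_\gamma}^{\height(v,\gamma)}$ needs to be made precise (in particular one has $\tau_\gamma^v=e^{-2\pi\sqrt{-1}\,h_f(v)}$, so the condition $\tau_\gamma^v=e^{2\pi\sqrt{-1}\beta}$ pins down $h_f(v)\bmod 1$, not $h_f(v)$ itself). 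You also need to track separately the $\alpha=1$ part, where Theorem~\ref{thm:2-14} has the extra binomial term $(-1)^{p+n+1}\binom{n}{p+1}$; this is where the stray constant $(-1)^n$ (and the $(-1)^n t^n$ from the definition) actually get reconciled. None of this is deep, but the telescoping you allude to has to be written out rather than invoked.
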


Recently in \cite{Douai} Douai obtained a new proof of Theorem 
\ref{thm:7-19}. For this purpose, he used a global 
Brieskorn lattice instead of motivic Milnor fibers at infinity.

\section{Jordan normal forms of Milnor monodromies}\label{section 8}

 Let $f(x)= \sum_{v \in \ZZ_+^n} a_v x^v \in \CC[x_1,\ldots,x_n]$ be a polynomial on 
$\CC^n$ such that the hypersurface 
$f^{-1}(0)= \{ x \in \CC^n \ |\ f(x)=0 \} \subset \CC^n$ has 
an isolated singular point at the origin $0\in \CC^n$. Then 
by Theorem \ref{thm.7.2.003}, the Milnor fiber $F_0$ of $f$ at 
$0 \in \CC^n$ has the homotopy 
type of bouquet of $(n-1)$-spheres. In particular, we have 
$H^j(F_0;\CC) \simeq 0$ ($j\neq 0, \ n-1$). Let 
\begin{equation}
\Phi_{n-1,0} \colon H^{n-1}(F_0;\CC) \simto H^{n-1}(F_0;\CC)
\end{equation}
\noindent be the $(n-1)$-th Milnor monodromy of $f$ at $0 \in \CC^n$. 
By the theories of A'Campo \cite{A'Campo} and 
Varchenko \cite{Varchenko} etc. explained in Sections \ref{section 2} 
and \ref{section 3}, 
the eigenvalues of $\Phi_{n-1,0}$ were fairly 
well-understood. 
From now, we shall introduce a combinatorial description obtained in 
\cite{M-T-1} of the Jordan normal 
form of $\Phi_{n-1,0}$ by motivic Milnor fibers. 
Let $\pi \colon X \longrightarrow \CC^n$ be an embedded resolution of $f^{-1}(0)$ 
such that $X$ is smooth, the restriction $X \setminus \pi^{-1}( \{ 0 \} ) 
\longrightarrow \CC^n \setminus \{ 0 \}$ of $\pi$ is 
an isomorphism and 
$\pi^{-1}(0)$ and $\pi^{-1}(f^{-1}(0))$ are normal crossing divisors in $X$. 
Let $D_1, D_2, \ldots, D_m$ be the irreducible components of $\pi^{-1}(0)$ 
and denote by $Z$ the proper transform of $f^{-1}(0)$ in $X$. For $1 \leq i \leq m$ 
denote by $a_i>0$ the order of the zero of $g:= f \circ \pi$ along $D_i$. 
For a non-empty subset $I \subset \{ 1,2, \ldots, m\}$ we set $d_I=
{\rm gcd} (a_i)_{i \in I}>0$, $D_I=\bigcap_{i \in I}D_i$ and
\begin{equation}
D_I^{\circ}=D_I \setminus \left\{ \( \bigcup_{i \notin I}D_i\) \cup Z \right\} \subset X.
\end{equation}
Moreover we set
\begin{equation}
Z_I^{\circ}=\left\{ D_I \setminus \left( \bigcup_{i \notin I}D_i\right) \right\} \cap Z \subset X. 
\end{equation}
Then, as in Section \ref{section 6}, 
we can construct an unramified Galois 
covering $\tl{D_I^{\circ}} \longrightarrow D_I^{\circ}$ of $D_I^{\circ}$. 
Moreover the variety $\tl{D_I^{\circ}}$ is endowed with a good $\hat{\mu}$-action 
in the sense of Denef-Loeser \cite[Section 2.4]{D-L-2}. 
Note that also the variety $Z_I^{\circ}$ is equipped with the trivial good 
$\hat{\mu}$-action. 

\begin{definition} {\rm (Denef and Loeser \cite{D-L-1} 
and \cite{D-L-2})}\label{dfn:3-1-1} We define the motivic Milnor fiber 
$\SS_{f,0} \in \M_{\CC}^{\hat{\mu}}$ of $f$ at $0 \in \CC^n$ by
\begin{equation}\label{MMF-new}
\SS_{f,0} =\sum_{I \neq \emptyset}\left\{ (1-\LL)^{\sharp I -1} 
[\tl{D_I^{\circ}}] + (1-\LL)^{\sharp I} [Z_I^{\circ}]\right\} 
\quad  \in \M_{\CC}^{\hat{\mu}}.
\end{equation}
\end{definition}
By using the theory of motivic zeta functions, we can show that this 
definition of $\SS_{f,0} \in \M_{\CC}^{\hat{\mu}}$ does 
not depend on the choice of the  embedded resolution 
$\pi \colon X \longrightarrow \CC^n$ of $f^{-1}(0)$. 
See \cite{D-L-2} for the details.  
On the other hand, as in Section \ref{section 7}, 
to the cohomology groups $H^j(F_0;\CC)$ 
and the semisimple parts of their monodromy automorphisms, 
we can naturally associate an element
\begin{equation}
[H_f] \in \KK_0(\HSm). 
\end{equation}
To describe the element $[H_f]\in \KK_0(\HSm)$ in terms of $\SS_{f,0} \in \M_{\CC}^{\hat{\mu}}$, let
\begin{equation}
\chi_h \colon \M_{\CC}^{\hat{\mu}} \longrightarrow \KK_0(\HSm)
\end{equation}
be the Hodge characteristic morphism. Then we have the following fundamental result. 

\begin{theorem}\label{thm:7-6-new}
{\rm (Denef-Loeser \cite[Theorem 4.2.1]{D-L-1})} In 
the Grothendieck group $\KK_0(\HSm)$, we have
\begin{equation}
[H_f]=\chi_h(\SS_{f,0}).
\end{equation}
\end{theorem}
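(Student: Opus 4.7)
The plan is to transfer the computation from $(\CC^n, f)$ to the pair $(X, g)$ via the embedded resolution $\pi$, and then to mimic the Denef--Loeser argument of Theorem \ref{Th-DL} for $g$ on $X$, with integration localized over the compact closed subset $\pi^{-1}(0) \subset g^{-1}(0)$. Since $\pi$ is proper and restricts to an isomorphism over $\CC^n \setminus \{0\}$, the cone of the adjunction morphism $\CC_{\CC^n}^H \to R\pi_*^H \CC_X^H$ is supported at $\{0\} \subset f^{-1}(0)$. Because the nearby cycle functor vanishes on any constructible sheaf supported in the zero set of $f$, Proposition \ref{prp.7.1.002} in its mixed Hodge module version then yields
\[
\psi_f^H(\CC_{\CC^n}^H) \simeq R(\pi|_{g^{-1}(0)})_*^H \psi_g^H(\CC_X^H).
\]
Taking the stalk at $0$ and using compactness of $\pi^{-1}(0)$ identifies
\[
[H_f] = \sum_{j \in \ZZ} (-1)^j \bigl[ H^j\bigl(\pi^{-1}(0);\, \psi_g^H(\CC_X^H)\bigr) \bigr] \quad \in \quad \KK_0(\HSm).
\]

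I would then run the monodromic decomposition and primitive-IC argument from the proof of Theorem \ref{Th-DL} on the normal crossing divisor $g^{-1}(0) = D_1 \cup \cdots \cup D_m \cup Z$, indexing $Z$ by $0$ and the exceptional divisors by $1, \ldots, m$. Saito's description identifies each primitive graded piece of $\psi_{g,\lambda}^H(\CC_X^H[n])$ with a direct sum of minimal extensions of rank-one local systems $\mathcal{F}_\lambda$ on open strata $E_J^\circ$ indexed by nonempty subsets $J$ of $\{0,1,\ldots,m\}$; a Mayer--Vietoris computation then reduces each stratum's contribution to the Hodge realization of its unramified Galois cover $\tl{E_J^\circ}$, weighted by $(1-\LL)^{\sharp J-1}$. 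Localizing the integration to $\pi^{-1}(0)$ keeps only those $J$ with $E_J^\circ \subset \pi^{-1}(0)$, i.e.\ $J \cap \{1,\ldots,m\} \neq \emptyset$. These split into two families: (a) $J = I \subset \{1,\ldots,m\}$ nonempty, where $E_J^\circ = D_I^\circ$ contributes $(1-\LL)^{\sharp I-1}[\tl{D_I^\circ}]$; (b) $J = \{0\} \cup I$ with $I \subset \{1,\ldots,m\}$ nonempty, where $E_J^\circ = Z_I^\circ$ contributes $(1-\LL)^{\sharp I}[\tl{Z_I^\circ}]$. Since $g$ has multiplicity $1$ along the proper transform $Z$, one has $d_{\{0\}\cup I} = \gcd(1, (a_i)_{i \in I}) = 1$, so $\tl{Z_I^\circ} = Z_I^\circ$ with trivial $\hat{\mu}$-action. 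The excluded stratum $J = \{0\}$ equals $Z \setminus \pi^{-1}(0)$, which lies outside $\pi^{-1}(0)$ and contributes nothing. Summing both families yields precisely $\SS_{f,0}$.

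The main obstacle is the careful localization of the Mayer--Vietoris step in the proof of Theorem \ref{Th-DL} to $\pi^{-1}(0)$: one must verify that the identity $\chi_c(U_J;\, \mathcal{F}_\lambda|_{U_J}[n-\sharp J]) = (-1)^{n-\sharp J} \sum_{J' \supset J} \chi_c(E_{J'}^\circ;\, \mathcal{F}_\lambda|_{E_{J'}^\circ})$ from the proof of Th-DL truncates correctly to super-strata meeting the exceptional locus, and that the combinatorial coefficients $\alpha_J = (-1)^{n-1}\chi_h((1-\LL)^{\sharp J -1})$ carry through verbatim so that the alternating signs combine with the shift conventions relating $\psi_g^H(\CC_X^H[n])$ to $\psi_g^H(\CC_X^H)$ to yield $\chi_h(\SS_{f,0})$.
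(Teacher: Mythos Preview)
Your proposal is correct and follows exactly the approach one would extract from the paper, which does not give an independent proof of this theorem but states it as a consequence of Denef--Loeser (whose argument the paper reproduces as Theorem~\ref{Th-DL}). Your reduction via the embedded resolution $\pi$ and the localization of the primitive--IC computation of Theorem~\ref{Th-DL} to the compact fiber $\pi^{-1}(0)$ is precisely the intended derivation; the splitting of strata into $J=I$ and $J=\{0\}\cup I$ with $d_{\{0\}\cup I}=1$ recovers the two families in Definition~\ref{dfn:3-1-1} on the nose.

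One remark on the point you flag as the ``main obstacle'': the IC sheaf for $J=\{0\}$ is supported on all of $E_{\{0\}}=Z$, not just on the open stratum $E_{\{0\}}^\circ$, so restricting to $\pi^{-1}(0)$ does \emph{not} kill the $J=\{0\}$ term at the IC level. Rather, $\chi_c(Z\cap\pi^{-1}(0);\mathrm{IC}_Z)$ decomposes as $(-1)^{n-1}\sum_{J'\supsetneq\{0\}}\chi_c(E_{J'}^\circ;\mathcal{F}_1)$, i.e.\ the same sum as in the global case but with the single term $J'=\{0\}$ removed. After reindexing by the final stratum $J'$, this means that for every $J'\neq\{0\}$ the coefficient $\alpha_{J'}$ still receives contributions from \emph{all} nonempty $J\subset J'$ (including $J=\{0\}$ when $0\in J'$), hence $\alpha_{J'}=(-1)^{n-1}\chi_h((1-\LL)^{\sharp J'-1})$ exactly as before, while the stratum $J'=\{0\}$ is dropped. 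So your conclusion is right, but the mechanism is slightly more delicate than ``$J=\{0\}$ contributes nothing''.
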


For $[H_f] \in \KK_0(\HSm)$, the following result due 
to Steenbrink \cite{Steenbrink} and Saito \cite{Saito-1}, 
\cite{Saito-2} is fundamental. Recall that 
the weights of $[H_f] \in \KK_0(\HSm)$ are defined by 
the monodromy filtrations. For an excellent review  
on this subject, see Kulikov \cite{Kulikov}. 

\begin{theorem} 
{\rm (Steenbrink \cite{Steenbrink} and Saito \cite{Saito-1}, \cite{Saito-2})} 
\label{S-S}
In the situation as above, we have
\begin{enumerate}
\item Let $\lambda \in \CC^* \setminus \{1\}$. 
Then we have $e^{p,q}( [H_f])_{\lambda}=0$ for $(p,q) \notin [0,n-1] 
\times [0,n-1]$. Moreover for $(p,q) \in [0,n-1] \times [0,n-1]$ we have
\begin{equation}
e^{p,q}( [H_f])_{\lambda}=e^{n-1-q,n-1-p}( [H_f])_{\lambda}.
\end{equation}
\item We have $e^{p,q}( [H_f])_{1}=0$ for $(p,q) \notin \{(0, 0)\} 
\sqcup ([1,n-1] \times [1,n-1])$ and $e^{0,0}( [H_f])_{1}=1$. 
Moreover for $(p,q) \in [1,n-1] \times [1,n-1]$ we have
\begin{equation}
e^{p,q}( [H_f])_{1}=e^{n-q,n-p}( [H_f])_{1}.
\end{equation}
\end{enumerate}
\end{theorem}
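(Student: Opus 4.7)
The plan is to pass to Saito's mixed Hodge modules of nearby and vanishing cycles and read off the symmetries from the structure of the limit mixed Hodge structure — in particular, from the weight filtration being the monodromy filtration together with the polarization of each primitive graded piece. Since $f$ has an isolated singular point at $0 \in \CC^n$, Theorem \ref{thm.7.2.003} yields
\begin{equation}
[H_f] \ = \ [H^0(F_0;\CC)] + (-1)^{n-1}[H^{n-1}(F_0;\CC)] \quad \in \KK_0(\HSm).
\end{equation}
The first summand is the trivial Hodge-Tate structure $\CC(0)$ with trivial monodromy, contributing exactly $e^{0,0}([H_f])_1 = 1$ and nothing to any $e^{p,q}([H_f])_\lambda$ with $(p,q,\lambda) \neq ((0,0),1)$. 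By Corollary \ref{cor.7.2.0078}, the generalized $\lambda$-eigenspace of $H^{n-1}(F_0;\CC)$ is the stalk at $0$ of $H^0(\ ^p\!\psi_{f,\lambda}(\CC_X[n]))$, and the reduced version $\widetilde{H}^{n-1}(F_0;\CC)_\lambda$ is the stalk of $H^0(\ ^p\!\phi_{f,\lambda}(\CC_X[n]))$. These perverse sheaves are the underlying $\QQ$-structures of Saito's mixed Hodge modules $^p\!\psi_{f,\lambda}^H(\CC^H_X[n])$ and $^p\!\phi_{f,\lambda}^H(\CC^H_X[n])$, so the Hodge realization reduces the computation to an analysis of the associated graded pieces of the weight filtration on these mixed Hodge modules at the point $0$.

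For the case $\lambda \neq 1$, the triangle $\CC^H_{\{0\}}[n-1] \to\ ^p\!\psi_f^H(\CC^H_X[n]) \to\ ^p\!\phi_f^H(\CC^H_X[n]) \overset{+1}{\to}$ yields an isomorphism $^p\!\psi_{f,\lambda}^H(\CC^H_X[n]) \simeq\ ^p\!\phi_{f,\lambda}^H(\CC^H_X[n])$, because $\CC^H_{\{0\}}[n-1]$ has trivial monodromy and hence only the eigenvalue $1$. As recalled in the proof of Theorem \ref{Th-DL}, the weight filtration $W$ on this common object is the monodromy filtration of its nilpotent endomorphism $N$ centered at $n-1$, and the primitive decomposition gives isomorphisms $N^k\colon \Gr^{W}_{n-1+k} \simto \Gr^{W}_{n-1-k}(-k)$ of pure Hodge structures. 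Combining this with the polarization of each primitive piece $P_{n-1+k}$ (a pure polarized Hodge structure of weight $n-1+k$), the Poincar\'e-Verdier self-duality of $^p\!\psi_{f,\lambda}^H$ (which intertwines the $\lambda$- and $\lambda^{-1}$-eigenspaces up to a Tate twist), and complex conjugation (which exchanges $\lambda$ and $\overline{\lambda}=\lambda^{-1}$, using that $\lambda$ is a root of unity by the monodromy theorem), one obtains the symmetry $e^{p,q}([H_f])_\lambda = e^{n-1-q,\, n-1-p}([H_f])_\lambda$. The containment $(p,q)\in[0,n-1]^2$ then follows from standard bounds on the Hodge filtration of a mixed Hodge module on the smooth variety $\CC^n$ after the perverse shift.

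For the case $\lambda = 1$, the $H^0(F_0;\CC)$ summand accounts for the $((0,0),1)$-contribution, and the $\widetilde{H}^{n-1}(F_0;\CC)_1$ part is controlled by $^p\!\phi_{f,1}^H(\CC^H_X[n])$. The crucial structural input, due to Saito \cite{Saito-1}, \cite{Saito-2} and established earlier by Steenbrink \cite{Steenbrink} via the limit mixed Hodge structure attached to a semistable reduction coming from an embedded resolution of $f^{-1}(0)$, is that the weight filtration on $^p\!\phi_{f,1}^H(\CC^H_X[n])$ is the monodromy filtration of $N$ centered at $n$ rather than $n-1$. This upward shift of the center by one upon passing from $^p\!\psi_{f,1}^H$ to $^p\!\phi_{f,1}^H$ comes from the factorization $N=\mathrm{var}\circ\mathrm{can}$, in which the canonical and variation maps each shift weight by one. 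Granting this shift, the same combination of primitive decomposition, polarization, and self-duality — now with center $n$ — yields the symmetry $e^{p,q}([H_f])_1 = e^{n-q,\, n-p}([H_f])_1$ on $(p,q)\in[1,n-1]^2$, while the vanishing outside $\{(0,0)\}\sqcup[1,n-1]^2$ is a Hodge-filtration bound together with the absence of $(0,\ast)$ and $(\ast,0)$ components in the reduced unipotent part.

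The main obstacle I expect is precisely justifying this shift of the monodromy-filtration center from $n-1$ on $^p\!\psi_{f,1}^H$ to $n$ on $^p\!\phi_{f,1}^H$: this is the technical heart of Saito's and Steenbrink's theorems, and I would import it as a black box from \cite{Saito-1}, \cite{Saito-2}, \cite{Steenbrink} rather than attempt to rederive it. Once this shift is in hand, both symmetries and the characterization of the $(0,0)$-contribution follow uniformly from the primitive decomposition, the polarization of primitive graded pieces, and standard Hodge-filtration bounds on Saito's mixed Hodge modules.
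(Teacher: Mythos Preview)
The paper does not prove this theorem; it is stated as a cited result of Steenbrink and Saito, with a pointer to Kulikov for an exposition. Your sketch correctly outlines the standard argument from those references: the weight filtration on the nearby (resp.\ unipotent vanishing) cycle mixed Hodge module is the monodromy filtration centered at $n-1$ (resp.\ $n$), and combining the primitive decomposition, polarization, and self-duality yields the stated symmetries, with the $H^0(F_0;\CC)$ summand accounting for the isolated $(0,0)$ contribution at $\lambda=1$. Since you are effectively reproducing the content of the cited works rather than offering an alternative route, there is nothing to compare; your plan is sound, and you are right that the shift of center from $n-1$ to $n$ on $^p\!\phi_{f,1}^H$ is the non-formal input that must be imported from \cite{Steenbrink}, \cite{Saito-1}, \cite{Saito-2}.
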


Together with Theorem \ref{thm:7-6-new}, we obtain the following result.

\begin{theorem}\label{MF}
In the situation as above, we have
\begin{enumerate}
\item Let $\lambda \in \CC^* \setminus \{1\}$ and $k \geq 1$. 
Then the number of the Jordan blocks for the eigenvalue $\lambda$ 
with sizes $\geq k$ in $\Phi_{n-1,0}\colon H^{n-1}(F_0;\CC) \simto H^{n-1}
(F_0;\CC)$ is equal to
\begin{equation}
(-1)^{n-1} \sum_{p+q=n-2+k, n-1+k} e^{p,q}( \chi_h(\SS_{f,0} ))_{\lambda}.
\end{equation}
\item For $k \geq 1$, the number of the Jordan blocks for the 
eigenvalue $1$ with sizes $\geq k$ in $\Phi_{n-1, 0}$ is equal to
\begin{equation}
(-1)^{n-1} \sum_{p+q=n-1+k, n+k} e^{p,q}( \chi_h(\SS_{f,0} ))_{1}.
\end{equation}
\end{enumerate}
\end{theorem}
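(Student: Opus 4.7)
The plan is to deduce Theorem \ref{MF} by combining the Denef--Loeser realization formula in Theorem \ref{thm:7-6-new} with the Steenbrink--Saito symmetry of Theorem \ref{S-S}, essentially in parallel with the (omitted) proof of Theorem \ref{thm:7-6-2} in the global setting. The starting point is the observation that for an isolated singularity one has $H^{j}(F_{0};\CC)=0$ for $j \neq 0, n-1$ and $H^{0}(F_{0};\CC)\simeq \CC$ with trivial monodromy of type $(0,0)$, so that in $\KK_{0}(\HSm)$ the equality
\begin{equation}
[H_{f}]=[H^{0}(F_{0};\CC)]+(-1)^{n-1}[H^{n-1}(F_{0};\CC)]
\end{equation}
holds. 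Hence for $\lambda\in\CC^{*}\setminus\{1\}$ we get $h^{p,q}(H^{n-1}(F_{0};\CC))_{\lambda}=(-1)^{n-1}e^{p,q}([H_{f}])_{\lambda}$, while for $\lambda=1$ the contribution of $H^{0}$ only affects the bidegree $(0,0)$, so $h^{p,q}(H^{n-1}(F_{0};\CC))_{1}=(-1)^{n-1}e^{p,q}([H_{f}])_{1}$ whenever $(p,q)\neq(0,0)$.

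Next I would recall the key consequence of Theorem \ref{S-S}: the weight filtration on the eigenvalue $\lambda$-part of $H^{n-1}(F_{0};\CC)$ is the monodromy filtration $W$ centered at $n-1$ when $\lambda\neq 1$, and the weight filtration on the reduced eigenvalue $1$-part $\tilde H^{n-1}(F_{0};\CC)_{1}$ is the monodromy filtration centered at $n$. Using the standard Jordan-block/monodromy-filtration dictionary (a Jordan block of size $s$ for the unipotent part $N$ contributes $1$ to $\dim \mathrm{Gr}^{W}_{m+s-1-2j}$ for $j=0,1,\dots,s-1$ when the filtration is centered at $m$), an elementary telescoping gives
\begin{equation}
\#\{\text{Jordan blocks of size}\geq k\}
=\dim \mathrm{Gr}^{W}_{m+k-1}+\dim \mathrm{Gr}^{W}_{m+k}
\end{equation}
for every $k\geq 1$.

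For $\lambda\neq 1$ one applies this with $m=n-1$. Summing the identity $h^{p,q}(H^{n-1}(F_{0};\CC))_{\lambda}=(-1)^{n-1}e^{p,q}([H_{f}])_{\lambda}$ over $p+q=n-2+k$ and $p+q=n-1+k$ and replacing $[H_{f}]$ by $\chi_{h}(\MCS_{f,0})$ via Theorem \ref{thm:7-6-new} yields the formula of (i). For $\lambda=1$ one instead takes $m=n$, so the relevant weights become $p+q=n-1+k$ and $p+q=n+k$. Since $k\geq 1$ these sums never involve $(p,q)=(0,0)$, and the trivial $H^{0}$-contribution drops out; substituting $\chi_{h}(\MCS_{f,0})$ for $[H_{f}]$ again delivers (ii).

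The step I expect to be the main obstacle is the careful justification that the weight filtration on $\psi_{f}^{H}(\CC_{\CC^{n}}^{H}[n])_{0}$ restricted to each eigenspace is really the monodromy filtration with the stated center, together with the $+1$ shift (coming from the perverse shift $[n-1]$ on the nearby cycle stalk) that is responsible for the different centering ($n-1$ versus $n$) in the $\lambda\neq 1$ and $\lambda=1$ cases. This is precisely where Theorem \ref{S-S} and the Saito lemma invoked in the proof of Theorem \ref{Th-DL} have to be used with care; everything else reduces to the standard linear-algebraic translation of weight-graded dimensions into Jordan-block counts.
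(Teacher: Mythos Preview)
Your proposal is correct and follows essentially the same approach as the paper: the paper states Theorem \ref{MF} as a direct consequence of combining Theorem \ref{S-S} (weights on $[H_f]$ are the monodromy filtration, centered at $n-1$ for $\lambda\neq 1$ and at $n$ for the reduced $\lambda=1$ part) with Theorem \ref{thm:7-6-new} ($[H_f]=\chi_h(\SS_{f,0})$), exactly via the weight-to-Jordan-block translation you spell out. Your identification of the centers, the telescoping identity $\#\{\text{size}\geq k\}=\dim\Gr^{W}_{m+k-1}+\dim\Gr^{W}_{m+k}$, and the observation that the $H^{0}$ contribution at $(0,0)$ never interferes with the $\lambda=1$ sums for $k\geq 1$ are all on target.
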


From now, we shall rewrite Theorem \ref{MF} 
in terms of the Newton polyhedron 
$\Gamma_+(f)$ of $f$. As usual, we call the union of 
the compact faces of $\Gamma_+(f)$ the 
Newton boundary of $f$ and denote it by $\Gamma_f$. 
Recall that generic polynomials having a fixed Newton polyhedron 
are non-degenerate at $0\in \CC^n$. From now on, we always assume 
also that $f=\sum_{v \in \ZZ^n_+} a_v x^v\in \CC[x_1,\ldots,x_n]$ 
is convenient and non-degenerate at $0\in \CC^n$. For each face 
$\gamma \prec \Gamma_+(f)$ such that $\gamma \subset \Gamma_f$, 
let $d_{\gamma}>0$ be the lattice distance of $\gamma$ from the 
origin $0 \in \RR^n$ and $\Delta_{\gamma}$ the convex hull of $\{0\} 
\sqcup \gamma$ in $\RR^n$. Let $\LL(\Delta_{\gamma})$ be the $(\dim \gamma +1)$-dimensional 
linear subspace of $\RR^n$ spanned by $\Delta_{\gamma}$ and consider 
the lattice $M_{\gamma}=\ZZ^n \cap \LL(\Delta_{\gamma}) \simeq \ZZ^{\dim \gamma+1}$ 
in it. Then we set $T_{\Delta_{\gamma}}:=\Spec (\CC[M_{\gamma}]) \simeq 
(\CC^*)^{\dim \gamma +1}$. Moreover let $\LL(\gamma)$ be the 
smallest affine linear subspace of $\RR^n$ containing $\gamma$ 
and for $v \in M_{\gamma}$ define their lattice heights $\height (v, \gamma) 
\in \ZZ$ from $\LL(\gamma)$ in $\LL(\Delta_{\gamma})$ so that 
we have $\height (0, \gamma)=d_{\gamma}>0$. 
Then to the group homomorphism $M_{\gamma} \longrightarrow \CC^*$ defined by 
$v \longmapsto \zeta_{d_{\gamma}}^{-\height (v, \gamma)}$ we can naturally associate an 
element $\tau_{\gamma} \in T_{\Delta_{\gamma}}$. We define a Laurent 
polynomial $g_{\gamma}=\sum_{v \in M_{\gamma}}b_v x^v$ on $T_{\Delta_{\gamma}}$ by
\begin{equation}
b_v=\begin{cases}
a_v & (v \in \gamma),\\
 & \\
-1 & (v=0),\\
 & \\
\ 0 & (\text{otherwise}).
\end{cases}
\end{equation}
Then we have $NP(g_{\gamma}) =\Delta_{\gamma}$, $\supp (g_{\gamma}) \subset 
\{ 0\} \sqcup \gamma$ and the hypersurface $Z_{\Delta_{\gamma}}^*=\{ x \in 
T_{\Delta_{\gamma}}\ |\ g_{\gamma}(x)=0\}$ is non-degenerate (see the proof of 
\cite[Proposition 5.3]{M-T-3}). Moreover $Z_{\Delta_{\gamma}}^* \subset T_{\Delta_{\gamma}}$ 
is invariant by the multiplication $l_{\tau_{\gamma}} \colon 
T_{\Delta_{\gamma}} \simto T_{\Delta_{\gamma}}$ by $\tau_{\gamma}$, and hence we 
obtain an element $[Z_{\Delta_{\gamma}}^*]$ of $\M_{\CC}^{\hat{\mu}}$. Let $\LL(\gamma)^{\prime} 
\simeq \RR^{\d \gamma}$ be a linear subspace of $\RR^n$ such that 
$\LL(\gamma)=\LL(\gamma)^{\prime}+w $ for some $w \in \ZZ^n$ 
and set $\gamma^{\prime}=\gamma -w \subset \LL(\gamma)^{\prime}$. 
We define a Laurent polynomial $g_{\gamma}^{\prime} 
=\sum_{v \in \LL(\gamma)^{\prime} \cap \ZZ^n}b_v^{\prime} x^v$ 
on $T(\gamma ):= \Spec (\CC[\LL(\gamma)^{\prime} \cap \ZZ^n])\simeq (\CC^*)^{\d \gamma}$ by
\begin{equation}
b_v^{\prime}=\begin{cases}
a_{v+w} & (v \in \gamma^{\prime} ),\\
 & \\
\ 0 & (\text{otherwise}).
\end{cases}
\end{equation}
Then we have $NP(g_{\gamma}^{\prime}) =\gamma^{\prime}$ and the 
hypersurface $Z_{\gamma}^*=\{ x \in T(\gamma ) \ |\ g_{\gamma}^{\prime}(x)=0\}$ is non-degenerate. 
We define $[Z_{\gamma}^*] \in \M_{\CC}^{\hat{\mu}}$ to be the class of the variety $Z_{\gamma}^*$ with 
the trivial action of $\hat{\mu}$. Finally let $S_{\gamma} \subset \{1,2,\ldots, n\}$ be the 
minimal subset of $\{1,2,\ldots,n\}$ such that 
$\gamma \subset \RR^{S_{\gamma}}$ and set $m_{\gamma}:=
\sharp S_{\gamma}-\dim \gamma -1\geq 0$. Then 
we have the following result. 

\begin{theorem}{\rm (Matsui-Takeuchi \cite[Theorem 4.3]{M-T-1})}\label{thm:8-3}
Assume that $f$ is convenient and non-degenerate at $0\in \CC^n$. Then 
we have
\begin{enumerate}
\item In the Grothendieck group $\KK_0(\HSm)$, we have
\begin{equation}\label{twot}
\chi_h(\SS_{f,0})= \sum_{\gamma \subset \Gamma_f} 
\chi_h\big((1-\LL)^{m_{\gamma}}\cdot[Z_{\Delta_{\gamma}}^*]\big)
+\sum_{\begin{subarray}{c}\gamma \subset \Gamma_f\\ \d \gamma \geq 1\end{subarray}} 
\chi_h\big((1-\LL)^{m_{\gamma}+1}\cdot[Z_{\gamma}^*]\big).
\end{equation}
\item Let $\lambda \in \CC^*\setminus\{1\}$ and $k\geq 1$. Then the number of 
the Jordan blocks for the eigenvalue $\lambda$ with sizes $\geq k$ in 
$\Phi_{n-1,0}\colon H^{n-1}(F_0;\CC) \simto H^{n-1}(F_0;\CC)$ is equal to
\begin{equation}
(-1)^{n-1}\sum_{p+q=n-2+k, n-1+k}\left\{ \sum_{\gamma \subset 
\Gamma_f} e^{p,q}\( \chi_h\((1-\LL)^{m_{\gamma}} \cdot [Z_{\Delta_{\gamma}}^*]\)\)_{\lambda} \right\}.
\end{equation}
\item For $k\geq 1$, the number of the Jordan blocks for the 
eigenvalue $1$ with sizes $\geq k$ in $\Phi_{n-1,0}$ is equal to
\begin{eqnarray}
(-1)^{n-1}\sum_{p+q=n-1+k, n+k}\lefteqn{\Bigg\{ \sum_{\gamma \subset \Gamma_f} 
e^{p,q}\big( \chi_h\big((1-\LL)^{m_{\gamma}} \cdot [Z_{\Delta_{\gamma}}^*]\big)\big)_{1}} \nonumber \\
& & +\sum_{\begin{subarray}{c}\gamma \subset \Gamma_f\\ \d \gamma 
\geq 1\end{subarray}} e^{p,q}\big( \chi_h\big((1-\LL)^{m_{\gamma}+1} \cdot [Z_{\gamma}^*]\big)\big)_{1} \Bigg\}.
\end{eqnarray}
\end{enumerate}
\end{theorem}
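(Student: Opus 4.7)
The plan is to mimic the strategy used to establish Theorem \ref{thm:7-12} (the global analogue at infinity), but now working with an embedded toric resolution of the singular point at the origin.

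First, I would construct an appropriate embedded resolution of $f^{-1}(0)$ as follows. Regard $\CC^n$ as the affine toric variety associated to the fan $\Sigma_0$ of faces of $(\RR^n)^*_+$, and let $\Sigma_1$ be the dual fan of $\Gamma_+(f)$ in $(\RR^n)^*_+$. As in the proof of Lemma \ref{leme:3-5-ad}, since $f$ is convenient, one may take a smooth subdivision $\Sigma$ of $\Sigma_1$ that does not subdivide any cone of $\Sigma_0$ contained in $\partial(\RR^n)^*_+$. The associated morphism $\pi \colon X_\Sigma \longrightarrow \CC^n$ is then a proper birational map which is an isomorphism away from $\pi^{-1}(0)$, and by non-degeneracy of $f$ the proper transform $Z$ of $f^{-1}(0)$ in $X_\Sigma$ meets the toric strata of $\pi^{-1}(0)$ transversally. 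Hence $\pi$ is an embedded resolution and we may compute $\MCS_{f,0}$ from \eqref{MMF-new} using this specific $\pi$.

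Next I would rewrite the motivic class stratum by stratum. The irreducible components $D_1,\dots,D_m$ of $\pi^{-1}(0)$ correspond to the rays of $\Sigma$ not contained in $\partial(\RR^n)^*_+$, and for a non-empty subset $I$ such that $\cap_{i \in I} D_i \ne \emptyset$ the relative interior of the cone $\tau_I \in \Sigma$ spanned by the associated rays lies in $\mathrm{Int}(\RR^n)^*_+$, with a well-defined supporting face $\gamma(\tau_I)$ of $\Gamma_+(f)$ contained in $\Gamma_f$. I would then regroup the strata $D_I^{\circ}$ and $Z_I^{\circ}$ according to their supporting face $\gamma \subset \Gamma_f$. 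A local calculation, exactly analogous to the one in the proof of Theorems \ref{thm:3-5-ad} and \ref{thm:7-12}, shows that after summing over all cones with a fixed supporting face $\gamma$, the contribution from the $D_I^\circ$ strata to $\chi_h(\MCS_{f,0})$ equals $\chi_h\bigl((1-\LL)^{m_\gamma}[Z^*_{\Delta_\gamma}]\bigr)$ (the unramified covering $\tl{D_I^\circ}$ being identified with the Zariski open subset $\{g_\gamma \ne 0\} \subset T_{\Delta_\gamma}$ of the torus, twisted by a factor $(1-\LL)^{m_\gamma}$ coming from the toric orbits of dimension $\sharp S_\gamma - \dim\gamma - 1$), while the contribution of the $Z_I^\circ$ strata (carrying the trivial $\hat\mu$-action) equals $\chi_h\bigl((1-\LL)^{m_\gamma+1}[Z^*_\gamma]\bigr)$ provided $\dim\gamma \geq 1$, with no contribution from vertex faces since then $Z_I^\circ = \emptyset$. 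Summing over all compact faces $\gamma \subset \Gamma_f$ yields the identity \eqref{twot} of part (i). The main obstacle here is the bookkeeping needed to show independence from the choice of smooth subdivision $\Sigma$ and to pin down the exponents $m_\gamma$ and $m_\gamma+1$ correctly; this is handled by an additivity argument analogous to Guibert--Loeser--Merle, using that cones subdividing a fixed one all have the same supporting face.

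Finally, parts (ii) and (iii) follow by inserting (i) into Theorem \ref{MF}. For $\lambda \in \CC^* \setminus \{1\}$, the second sum in \eqref{twot} involves only classes with trivial $\hat\mu$-action, so $e^{p,q}\bigl(\chi_h((1-\LL)^{m_\gamma+1}[Z^*_\gamma])\bigr)_\lambda = 0$; only the first sum survives, yielding (ii). For $\lambda = 1$, both sums contribute, and the weight ranges $p+q = n-1+k, n+k$ in Theorem \ref{MF}(ii) (rather than $n-2+k, n-1+k$ in (i)) reflect the shift imposed by Theorem \ref{S-S}(ii). Combining these gives the formula of (iii). The only delicate point in this final step is verifying that the shifts of weights match between the two types of contributions, which is immediate from the multiplication by $(1-\LL) = -\LL \cdot \chi_h^{-1}$ of Tate twist $(-1)$ relating the two classes in the displayed formula.
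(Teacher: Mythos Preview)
Your approach is correct and matches the strategy the paper indicates. The paper does not give an explicit proof of this theorem but states it as a citation to \cite{M-T-1} and makes clear (in the discussion following Theorem~\ref{thm:8-5}) that the argument is the local analogue of the one for Theorem~\ref{thm:7-12}, obtained by replacing $\Gamma_\infty(f)$ with $\Gamma_+(f)$; this is precisely what you outline.

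A few small points deserve tightening. First, your description of the covering is slightly off: $\widetilde{D_I^\circ}$ is not identified with the open set $\{g_\gamma \ne 0\}$ in $T_{\Delta_\gamma}$, but rather (for a single ray $I=\{i\}$ with supporting facet $\gamma$) with the non-degenerate hypersurface $Z^*_{\Delta_\gamma}=\{g_\gamma=0\}$ itself, as in \cite[Proposition~5.3]{M-T-3}. The factor $(1-\LL)^{m_\gamma}$ then arises after summing over all cones $\tau$ with the same supporting face, using the combinatorics of the subdivision and $\chi_h(\CC^*)=\LL-1$. Second, your final sentence ``$(1-\LL)=-\LL\cdot\chi_h^{-1}$'' does not parse; what you mean is simply that $[Z^*_\gamma]$ carries the trivial $\hat\mu$-action, so its contribution to $e^{p,q}(\cdot)_\lambda$ vanishes for $\lambda\ne 1$, and for $\lambda=1$ both sums remain. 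No weight-shift argument beyond this is needed: parts (ii) and (iii) follow directly by substituting (i) into Theorem~\ref{MF}~(i) and (ii) respectively, with the different index ranges $p+q=n-2+k,n-1+k$ versus $p+q=n-1+k,n+k$ coming from Theorem~\ref{MF} itself, not from any interaction between the two sums.
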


Let $q_1,\ldots, q_l$ (resp. $\gamma_1, \ldots, \gamma_{l^{\prime}}$) be the $0$-dimensional 
(resp. $1$-dimensional) faces of $\Gamma_+(f)$ such that $q_i \in \Int (\RR_+^n)$ 
(resp. $\relint(\gamma_i) \subset \Int(\RR_+^n)$). 
For each $q_i$ (resp. $\gamma_i$), denote by $d_i >0$ (resp. $e_i>0$) 
the lattice distance $\dist(q_i, 0)$ (resp. $\dist(\gamma_i,0)$) of it from the origin 
$0\in \RR^n$. For $1\leq i \leq l^{\prime}$, let $\Delta_i$ be the convex hull of 
$\{0\}\sqcup \gamma_i$ in $\RR^n$. Then for $\lambda \in \CC \setminus \{1\}$ and 
$1 \leq i \leq l^{\prime}$ such that $\lambda^{e_i}=1$ we set
\begin{align}
n(\lambda)_i
 := & 
\sharp\{ v\in \ZZ^n \cap \relint(\Delta_i) \ |\ \height (v, \gamma_i)=k\} 
\nonumber 
\\
 & +  \sharp \{ v\in \ZZ^n \cap \relint(\Delta_i) \ |\ \height (v, \gamma_i)=e_i-k\},
\end{align}
where $k$ is the minimal positive integer satisfying $\lambda=\zeta_{e_i}^{k}$ and for 
$v\in \ZZ^n \cap \relint(\Delta_i)$ we denote by $\height (v, \gamma_i)$ 
the lattice height of 
$v$ from the base $\gamma_i$ of $\Delta_i$. 
As in Theorem \ref{thm:7-11}, we obtain the following theorem. 

\begin{theorem}{\rm (Matsui-Takeuchi \cite[Theorem 4.4]{M-T-1})}\label{thm:8-4}
Assume that $f$ is convenient and non-degenerate at $0\in \CC^n$. Then 
for $\lambda \in \CC^* \setminus\{1\}$, we have
\begin{enumerate}
\item The number of the Jordan blocks for the eigenvalue $\lambda$ with 
the maximal possible size $n$ in $\Phi_{n-1,0}$ is equal to $\sharp \{q_i \ |\ \lambda^{d_i}=1\}$.
\item The number of the Jordan blocks for the eigenvalue $\lambda$ with 
size $n-1$ in $\Phi_{n-1,0}$ is equal to $\sum_{i \colon \lambda^{e_i}=1} n(\lambda)_i$.
\end{enumerate}
\end{theorem}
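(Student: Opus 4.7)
The strategy is to imitate the proof of Theorem \ref{thm:7-11} with the local motivic Milnor fiber $\SS_{f,0}$ in place of $\SS_f^{\infty}$, exploiting the parallel structures of Theorem \ref{thm:8-3} (local) and Theorem \ref{thm:7-12} (at infinity).

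First I would apply Theorem \ref{MF}. For (i) with $k=n$ (maximal size), the symmetry of Theorem \ref{S-S}(i) (constraining $(p,q) \in [0,n-1]^2$ for $\lambda \neq 1$) forces the relevant Hodge number to collapse to
\begin{equation}
(-1)^{n-1} e^{n-1,n-1}(\chi_h(\SS_{f,0}))_{\lambda},
\end{equation}
and by Theorem \ref{thm:8-3}(i) this expands as a sum over compact faces $\gamma \subset \Gamma_f$ of two types of contributions. The crucial simplification is that for $\lambda \in \CC^* \setminus\{1\}$, the terms coming from $[Z^*_{\gamma}]$ (equipped with the trivial $\hat\mu$-action, so all eigenvalues are $1$) drop out, leaving only the sum
\begin{equation}
(-1)^{n-1}\sum_{\gamma\subset\Gamma_f} e^{n-1,n-1}\bigl(\chi_h((1-\LL)^{m_\gamma}\cdot [Z^*_{\Delta_\gamma}])\bigr)_{\lambda}.
\end{equation}

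Next I would expand $(1-\LL)^{m_\gamma}=\sum_k(-1)^k\binom{m_\gamma}{k}\LL^k$ and use that the Tate twist by $\LL^k$ raises Hodge bidegrees by $(k,k)$, so the $(n-1,n-1)$-part picks out $e^{n-1-k,n-1-k}(Z^*_{\Delta_\gamma})_{\lambda}$. Since $Z^*_{\Delta_\gamma}$ is a hypersurface in a torus of dimension $\dim\gamma+1$, Proposition \ref{prp:2-15} and the affine dimension bound force $k\ge n-1-\dim\gamma$, while the combinatorial bound gives $k\le m_\gamma = s_\gamma-\dim\gamma-1$; these are compatible only when $s_\gamma=n$ and $k=m_\gamma$. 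The only compact faces with $s_\gamma=n$ and such that the resulting top Hodge number of $Z^*_{\Delta_\gamma}$ is nontrivial for $\lambda\neq 1$ are, by Proposition \ref{prp:2-15} applied diagonally, precisely the $0$-dimensional faces $\gamma=q_i\in \Int(\RR^n_+)$. For each such $q_i$, $Z^*_{\Delta_{q_i}}$ is the set of $d_i$ roots of $a_{q_i}\xi^{d_i}=1$ in $\CC^*$, permuted cyclically by multiplication by $\tau_{q_i}$, which one computes to be a primitive $d_i$-th root of unity; hence $e^{0,0}([Z^*_{\Delta_{q_i}}])_{\lambda}=1$ exactly when $\lambda^{d_i}=1$. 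Summing the signs $(-1)^{n-1}\cdot(-1)^{m_{q_i}}=(-1)^{n-1}\cdot(-1)^{n-1}=1$ yields the count in (i).

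For (ii), the number of Jordan blocks of size exactly $n-1$ is the difference between (sizes $\ge n-1$) and (sizes $\ge n$), which equals
\begin{equation}
(-1)^{n-1}\bigl[e^{n-1,n-2}+e^{n-2,n-1}\bigr](\chi_h(\SS_{f,0}))_{\lambda}.
\end{equation}
The same Tate-twist analysis shows only faces with $s_\gamma=n$ and $k=m_\gamma$ contribute, and only off-diagonal bidegrees $(\dim\gamma, \dim\gamma-1)$ and $(\dim\gamma-1,\dim\gamma)$ of $Z^*_{\Delta_\gamma}$ matter; by Proposition \ref{prp:2-15} these vanish for $\lambda\neq 1$ unless $\dim\gamma=1$, so only the $1$-dimensional faces $\gamma_i$ with $\relint(\gamma_i)\subset\Int(\RR^n_+)$ (which are precisely those with $s_{\gamma_i}=n$) survive. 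For each such $\gamma_i$, Proposition \ref{prp:new} applied to the triangle $\Delta_{\gamma_i}$ gives $e^{1,0}(Z^*_{\Delta_{\gamma_i}})_{\lambda}=-l^*(\Delta_{\gamma_i})_{\lambda}$ and $e^{0,1}(Z^*_{\Delta_{\gamma_i}})_{\lambda}=-l^*(\Delta_{\gamma_i})_{\bar\lambda}$; translating these lattice-point counts through the definition $\tau_{\gamma_i}^v=\zeta_{e_i}^{-\height(v,\gamma_i)}$ identifies their sum with $n(\lambda)_i$ (the $+k$ term coming from $\bar\lambda$, the $+(e_i-k)$ term from $\lambda$), and the total signs collapse to $+1$, yielding the claimed formula.

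The main obstacle is the accurate sign and bidegree bookkeeping in Step 2: one must simultaneously track the $(-1)^{n-1}$ from Theorem \ref{MF}, the alternating sign from $(1-\LL)^{m_\gamma}$, and the sign from Proposition \ref{prp:new}, and argue via Propositions \ref{prp:2-15} and \ref{prp:new} that all other compact faces of $\Gamma_f$ contribute $0$ to the relevant extremal Hodge numbers for $\lambda\neq 1$. Once this vanishing is established the combinatorial identifications in (i) and (ii) are essentially formal.
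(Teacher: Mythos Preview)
Your proposal is correct and follows essentially the same route as the paper. The paper does not give an independent proof of Theorem~\ref{thm:8-4}; it simply says ``As in Theorem~\ref{thm:7-11}'', and the proof of Theorem~\ref{thm:7-11} is exactly the computation you describe: reduce via Theorem~\ref{MF} (resp.~\ref{thm:7-6-2}) and Theorem~\ref{thm:8-3} (resp.~\ref{thm:7-12}) to $(-1)^{n-1}e^{n-1,n-1}(\chi_h(\SS_{f,0}))_{\lambda}$, observe that only the $0$-dimensional interior vertices $q_i$ contribute because $(1-\LL)^{m_\gamma}$ can raise bidegrees by at most $(m_\gamma,m_\gamma)$ while $Z^*_{\Delta_\gamma}$ has top bidegree $(\dim\gamma,\dim\gamma)$ for $\lambda\neq 1$, and then count eigenvalues on the finite set $Z^*_{\Delta_{q_i}}$. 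Your write-up is in fact more detailed than the paper's, which leaves the vanishing of the other faces and the sign bookkeeping implicit; your use of Propositions~\ref{prp:2-15} and~\ref{prp:new} to justify these steps is exactly what is needed.
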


As in Theorem \ref{thm:7-15}, we  
can rewrite Theorem \ref{thm:8-3} (ii) more explicitly in the case where any face 
$\gamma \prec \Gamma_{+}(f)$ such that $\gamma \subset \Gamma_f$ is prime. 
See \cite[Theorem 4.6]{M-T-1} for the details. 
We can also obtain the corresponding results for the eigenvalue $1$ by 
rewriting Theorem \ref{thm:8-3} (iii) more simply as follows.

\begin{theorem}{\rm (Matsui-Takeuchi \cite[Theorem 4.8]{M-T-1})}\label{thm:8}
In the situation of Theorem \ref{thm:8-3}, for $k\geq 1$ the number of the 
Jordan blocks for the eigenvalue $1$ with sizes $\geq k$ in $\Phi_{n-1,0}$ is equal to
\begin{equation}
(-1)^{n-1}\sum_{p+q=n-2-k, n-1-k}\left\{ \sum_{\gamma \subset \Gamma_f} 
e^{p,q}\( \chi_h\((1-\LL)^{m_{\gamma}} \cdot [Z_{\Delta_{\gamma}}^*]\)\)_{1} \right\}.
\end{equation}
\end{theorem}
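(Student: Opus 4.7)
The plan is to derive Theorem~\ref{thm:8} from Theorem~\ref{thm:8-3}~(iii) by using the Steenbrink-Saito symmetry of the eigenvalue-$1$ part of the nearby cycles (Theorem~\ref{S-S}~(ii)) together with the Danilov-Khovanskii-type formulas from Section~\ref{section 5}. Both formulas compute the same quantity---the number of Jordan blocks of size $\geq k$ for the eigenvalue $1$ in $\Phi_{n-1,0}$---so the task reduces to showing that the expression in Theorem~\ref{thm:8-3}~(iii), which sums over $p+q\in\{n-1+k,n+k\}$ and involves both the $[Z_{\Delta_\gamma}^*]$- and the $[Z_\gamma^*]$-pieces, equals the cleaner expression in Theorem~\ref{thm:8}, summing over $p+q\in\{n-2-k,n-1-k\}$ and involving only the $[Z_{\Delta_\gamma}^*]$-pieces.

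First I would rewrite the sum in Theorem~\ref{thm:8-3}~(iii) using the symmetry $e^{p,q}([H_f])_{1}=e^{n-q,n-p}([H_f])_{1}$ on $(p,q)\in[1,n-1]^{2}$ provided by Theorem~\ref{S-S}~(ii), handling the corner term $e^{0,0}([H_f])_{1}=1$ separately via the same result. This converts the sum over $p+q\in\{n-1+k,n+k\}$ into a sum over $p+q\in\{n+1-k,n-k\}$. Substituting back the decomposition of $\chi_h(\SS_{f,0})$ from Theorem~\ref{thm:8-3}~(i), the problem reduces to a combinatorial identity on the Hodge-Deligne numbers of $[Z_{\Delta_\gamma}^*]$ and $[Z_\gamma^*]$ on their eigenvalue-$1$ parts. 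The second step is to apply a Poincar\'e duality on each quasi-smooth compactification $\overline{Z_{\Delta_\gamma}^*}$ inside the projective toric variety associated to the normal fan of $\Delta_\gamma$ (available because each $\Delta_\gamma$ is pseudo-prime), identifying the boundary of $\overline{Z_{\Delta_\gamma}^*}$ with the union of the $Z_{\gamma'}^*$ for proper subfaces $\gamma'\prec\gamma$. Together with the explicit formulas of Propositions~\ref{prp:2-15} and~\ref{prp:2-17} and Corollary~\ref{cor:2-18}, this identification transforms the $[Z_\gamma^*]$-summand in Theorem~\ref{thm:8-3}~(iii) into a piece that precisely cancels against the appropriate weight-shifted portion of the $[Z_{\Delta_\gamma}^*]$-summand, leaving the single clean sum of Theorem~\ref{thm:8}.

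The main obstacle is matching each $(1-\LL)^{m_\gamma+1}[Z_\gamma^*]$-contribution in Theorem~\ref{thm:8-3}~(iii) with a shifted dual piece of $(1-\LL)^{m_\gamma}[Z_{\Delta_\gamma}^*]$ through both the Steenbrink-Saito symmetry and the Ehrhart-type reciprocity~\eqref{E:sym}. The extra factor of $(1-\LL)$ accounts for the dimension gap between $Z_\gamma^*$ and $Z_{\Delta_\gamma}^*$, while the reciprocity $\varphi_{\alpha,i}(\Delta)=\psi_{\alpha^{-1},n+1-i}(\Delta)$ together with the symmetry converts the range $\{n+1-k,n-k\}$ into $\{n-2-k,n-1-k\}$. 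One must also verify that the few corner and low-dimensional exceptional cases balance: the vertex $0\in\Delta_\gamma$ contributes a point-like piece whose $(0,0)$-type Hodge number must be tracked via the separate $(n-1,n-1)$ boundary clause of Theorem~\ref{S-S}~(ii), and the convenience of $f$ is needed to ensure that the full-dimensional face $\gamma = \Gamma_f\cap\RR^S$ contributions balance properly. Once these corner contributions are accounted for, the face-by-face bookkeeping collapses to the single clean expression claimed in Theorem~\ref{thm:8}.
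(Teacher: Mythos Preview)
Your overall architecture matches the paper's exactly: starting from Theorem~\ref{thm:8-3}~(iii), apply the Steenbrink--Saito symmetry $e^{p,q}([H_f])_1=e^{n-q,n-p}([H_f])_1$ of Theorem~\ref{S-S}~(ii) to replace the range $p+q\in\{n-1+k,\,n+k\}$ by $p+q\in\{n+1-k,\,n-k\}$, and then invoke the shift identity
\[
\sum_{\gamma}e^{p,q}\bigl(\chi_h\bigl((1-\LL)^{m_\gamma}[Z_{\Delta_\gamma}^*]\bigr)\bigr)_1
=\sum_{\gamma}e^{p+1,q+1}\bigl(\chi_h\bigl((1-\LL)^{m_\gamma}[Z_{\Delta_\gamma}^*]+(1-\LL)^{m_\gamma+1}[Z_\gamma^*]\bigr)\bigr)_1
\]
to drop the $[Z_\gamma^*]$-terms and land on $p+q\in\{n-2-k,\,n-1-k\}$. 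The paper packages this shift identity as a separate result, Theorem~\ref{thm:7-6-1}, and simply says Theorem~\ref{thm:8} is a direct consequence of it.

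Where your sketch goes wrong is in the proposed proof of this shift identity. You invoke Proposition~\ref{prp:2-17} and Corollary~\ref{cor:2-18}, but both are stated and proved only for $\alpha\in\CC\setminus\{1\}$; for $\alpha=1$ the formulas acquire the extra binomial terms visible already in Proposition~\ref{prp:2-15} and Theorem~\ref{thm:2-14}, and these do not cancel in the way you suggest. Moreover, you assert that ``each $\Delta_\gamma$ is pseudo-prime'', which is what makes $\overline{Z_{\Delta_\gamma}^*}$ an orbifold and Poincar\'e duality available; but $\Delta_\gamma$ is the cone over $\gamma$, and it is pseudo-prime only when $\gamma$ itself is prime (look at the edges through the apex $0$: the number of $2$-faces containing such an edge equals the number of edges of $\gamma$ at the corresponding vertex). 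Theorem~\ref{thm:8} makes no primeness hypothesis on the faces of $\Gamma_f$, so this route is blocked in general. The actual proof of Theorem~\ref{thm:7-6-1} in \cite{M-T-1} proceeds differently and does not rely on these $\alpha\neq 1$ formulas or on pseudo-primeness; your reduction to that identity is correct, but your sketch of its proof is not.
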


Indeed Theorem \ref{thm:8} is a direct 
consequence of the following more precise result.

\begin{theorem}{\rm (Matsui-Takeuchi \cite[Theorem 4.10]{M-T-1})}\label{thm:7-6-1}
In the situation of Theorem \ref{thm:8-3}, for any $0 \leq p,q \leq n-2$ we have
\begin{eqnarray}
\lefteqn{\sum_{\gamma \subset \Gamma_f}e^{p,q}\( \chi_h\( (1-\LL)^{m_{\gamma}}[Z_{\Delta_{\gamma}}^*]\)\)_1}\nonumber \\
&=&\sum_{\gamma \subset \Gamma_f}e^{p+1,q+1}
\( \chi_h\( (1-\LL)^{m_{\gamma}}[Z_{\Delta_{\gamma}}^*]+ (1-\LL)^{m_{\gamma}+1}[Z_{\gamma}^*]\)\)_1.
\label{eq:7-6-1}
\end{eqnarray}
\end{theorem}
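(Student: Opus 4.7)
The plan is to reduce the identity to a combinatorial statement on the Newton boundary $\Gamma_f$ in three stages: first I will identify the eigenvalue-$1$ component of $\chi_h([Z_{\Delta_\gamma}^*])$ using a free group action, then substitute this into both sides of \eqref{eq:7-6-1} to reduce to a comparison involving only the torus class $[T(\gamma)]$, and finally conclude with a short Euler-characteristic computation.

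For the first step, I will choose a splitting $M_\gamma\simeq M_\gamma'\oplus\ZZ$ where the second factor is generated by a primitive vector of lattice height $1$ above $\LL(\gamma)$. This induces an isomorphism $T_{\Delta_\gamma}\simeq T(\gamma)\times\CC^*_s$ in which the defining equation reads $g_\gamma(x',s)=g_\gamma'(x')-s^{d_\gamma}$ (the vertex $0\in\Delta_\gamma$ sitting at height $d_\gamma$), and in which $\tau_\gamma$ acts by $(x',s)\mapsto(x',\zeta_{d_\gamma}^{-1}s)$. Hence $\mu_{d_\gamma}$ acts freely on $Z_{\Delta_\gamma}^*=\{g_\gamma'(x')=s^{d_\gamma}\}$ with quotient $T(\gamma)\setminus Z_\gamma^*$. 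Taking $\mu_{d_\gamma}$-invariants in compactly supported rational cohomology (which preserves mixed Hodge structures) together with the additivity relation $[T(\gamma)\setminus Z_\gamma^*]=[T(\gamma)]-[Z_\gamma^*]$ in the Grothendieck ring yields
\[
\chi_h([Z_{\Delta_\gamma}^*])_{1}=\chi_h([T(\gamma)])-\chi_h([Z_\gamma^*])\quad\text{in }\KK_0(\mathrm{HS}),
\]
where the subscript denotes the eigenvalue-$1$ part. Note that $(1-\LL)^{m_\gamma}$ carries trivial monodromy, so passing to the $1$-eigenvalue part commutes with multiplication by such scalars.

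For the second step, I substitute this identification into both sides of \eqref{eq:7-6-1} and use $(1-\LL)^{m_\gamma+1}=(1-\LL)^{m_\gamma}-\LL(1-\LL)^{m_\gamma}$ together with the Tate-twist shift $e^{p+1,q+1}(\LL\cdot X)=e^{p,q}(X)$. The $[Z_\gamma^*]$-contributions on both sides cancel exactly, and the identity reduces to
\[
\sum_{\gamma\subset\Gamma_f}e^{p,q}\bigl((1-\LL)^{m_\gamma}\chi_h([T(\gamma)])\bigr)=\sum_{\gamma\subset\Gamma_f}e^{p+1,q+1}\bigl((1-\LL)^{m_\gamma}\chi_h([T(\gamma)])\bigr).
\]
Since $\chi_h([T(\gamma)])=(\LL-1)^{\dim\gamma}$ and $m_\gamma+\dim\gamma=s_\gamma-1$, we have $(1-\LL)^{m_\gamma}\chi_h([T(\gamma)])=(-1)^{\dim\gamma}(1-\LL)^{s_\gamma-1}$. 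Combining with $e^{p,q}((1-\LL)^k)=(-1)^p\binom{k}{p}\delta_{p,q}$ and applying Pascal's identity $\binom{s_\gamma-1}{p}+\binom{s_\gamma-1}{p+1}=\binom{s_\gamma}{p+1}$, the difference of the two sides collapses to
\[
(-1)^p\,\delta_{p,q}\sum_{\gamma\subset\Gamma_f}(-1)^{\dim\gamma}\binom{s_\gamma}{p+1},
\]
so it remains only to show that this alternating sum vanishes for $0\le p\le n-2$.

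Finally, I prove $\sum_{\gamma\subset\Gamma_f}(-1)^{\dim\gamma}\binom{s_\gamma}{p+1}=0$ by grouping faces according to their minimal coordinate subspace $S_\gamma$. For each non-empty $S\subseteq\{1,\ldots,n\}$, the faces $\gamma$ with $S_\gamma=S$ constitute the relatively open part of the compact Newton boundary of $\Gamma_+^S(f)$ inside $\RR_+^S$. By the convenience of $f$, this compact Newton boundary is a topological closed $(|S|-1)$-disk (via radial projection to the positive unit sphere), with boundary sphere corresponding exactly to faces with $S_\gamma\subsetneq S$; hence $\sum_{\gamma:S_\gamma=S}(-1)^{\dim\gamma}$ equals the Euler characteristic of an open $(|S|-1)$-disk, namely $(-1)^{|S|-1}$. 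The whole sum therefore equals
\[
\sum_{k=1}^n(-1)^{k-1}\binom{n}{k}\binom{k}{p+1}=(-1)^p\binom{n}{p+1}(1-1)^{n-p-1},
\]
which vanishes precisely when $p\le n-2$. The most delicate part is the coordinate description and free-quotient analysis of Step 1; once it is in place, the Tate-twist bookkeeping and the binomial-Euler identity are routine.
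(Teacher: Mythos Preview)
Your argument is correct. The paper itself does not supply a proof of this statement; it only records the result with a reference to \cite[Theorem 4.10]{M-T-1}, so there is nothing in the present text to compare against directly. Your three-step reduction (free $\mu_{d_\gamma}$-quotient identifying the eigenvalue-$1$ part, cancellation of the $[Z_\gamma^*]$-contributions via the Tate shift $e^{p+1,q+1}(\LL\cdot X)=e^{p,q}(X)$, and the binomial--Euler identity over the Newton boundary) is exactly the kind of argument that underlies the original proof in \cite{M-T-1}, and each step checks out.

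One small remark on presentation: in Step~1 the displayed normal form $g_\gamma(x',s)=g_\gamma'(x')-s^{d_\gamma}$ is not literally what one obtains from an arbitrary splitting $M_\gamma\simeq M_\gamma'\oplus\ZZ$; in general a monomial factor $(x')^{w'}$ appears in front of $g_\gamma'(x')$ (coming from the chosen base vertex $w\in\gamma$). This does not affect anything you use: the $\mu_{d_\gamma}$-action is still $(x',s)\mapsto(x',\zeta_{d_\gamma}^{\pm 1}s)$, it is still free, and the quotient (via $s\mapsto s^{d_\gamma}$) is still the graph of a nowhere-zero function over $T(\gamma)\setminus Z_\gamma^*$, hence isomorphic to $T(\gamma)\setminus Z_\gamma^*$. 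So the conclusion $\chi_h([Z_{\Delta_\gamma}^*])_1=\chi_h([T(\gamma)])-\chi_h([Z_\gamma^*])$ stands. You may want to phrase that paragraph slightly more carefully, but the mathematics is fine.
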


As in Theorem \ref{thm:7-16}, we obtain the following result. 
Denote by $\Pi_f$ the number of the lattice points on the $1$-skeleton of $\Gamma_f \cap \Int(\RR_+^n)$. 
Also, for a compact face $\gamma \prec \Gamma_+(f)$ we denote by $l^*(\gamma)$ the number of the lattice points on $\relint(\gamma)$.

\begin{theorem}{\rm (Matsui-Takeuchi \cite[Corollary 4.9]{M-T-1})}\label{thm:8-5}
Assume that $f$ is convenient and non-degenerate at $0\in \CC^n$. Then we have
\begin{enumerate}
\item {\rm (van Doorn-Steenbrink \cite{D-St})} The number of the Jordan blocks for the eigenvalue 
$1$ with the maximal possible size $n-1$ in $\Phi_{n-1,0}$ is $\Pi_f$.
\item The number of the Jordan blocks for the eigenvalue $1$ with size $n-2$ in $\Phi_{n-1,0}$ 
is equal to $2\sum_{\gamma} l^*(\gamma)$, where $\gamma$ ranges through the compact faces of 
$\Gamma_+(f)$ such that $\d \gamma=2$ and $\relint(\gamma) \subset \Int(\RR_+^n)$.
\end{enumerate}
\end{theorem}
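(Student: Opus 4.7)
The plan is to apply Theorem \ref{thm:8} combined with the equivariant Hodge--Deligne formulas of Section \ref{section 5}, mirroring the proof of the ``at infinity'' version Theorem \ref{thm:7-16}. For part (i), I set $k=n-1$ in Theorem \ref{thm:8}: only $(p,q)=(0,0)$ contributes because $p,q \geq 0$ and $p+q \in \{-1,0\}$, and since $\LL$ has Hodge bidegree $(1,1)$ the factor $(1-\LL)^{m_\gamma}$ leaves $e^{0,0}$ unchanged, reducing the count to $(-1)^{n-1}\sum_{\gamma \subset \Gamma_f}e^{0,0}([Z^*_{\Delta_\gamma}])_1$. Proposition \ref{prp:2-19} applied to the $(\dim\gamma+1)$-dimensional polytope $\Delta_\gamma$ with the torus element $\tau_\gamma$ of order $d_\gamma$ then gives $e^{0,0}([Z^*_{\Delta_\gamma}])_1 = (-1)^{\dim\gamma}(\Pi(\Delta_\gamma)_1 - 1)$. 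A short height computation on $\Delta_\gamma = \mathrm{conv}(\{0\}\cup\gamma)$ shows $\Pi(\Delta_\gamma)_1 - 1 = \Pi(\gamma)$: since the lattice height function equals $0$ on $\gamma$ and $d_\gamma$ at the origin, a lattice point $jp$ on the segment from $0$ to a vertex $v_0 = mp$ of $\gamma$ has height $(m-j)d_\gamma/m$, which is a multiple of $d_\gamma$ only at the endpoints.

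It then remains to prove the combinatorial identity $(-1)^{n-1}\sum_{\gamma \subset \Gamma_f}(-1)^{\dim\gamma}\Pi(\gamma) = \Pi_f$. Exchanging summation, I write the left side as $(-1)^{n-1}\sum_P\sum_{\gamma_P \preceq \gamma \subset \Gamma_f}(-1)^{\dim\gamma}$, where the outer sum ranges over lattice points $P$ lying on the $1$-skeleton of some face of $\Gamma_f$ and $\gamma_P$ is the smallest face of $\Gamma_+(f)$ containing $P$. A standard Euler-characteristic argument on the combinatorial star of $\gamma_P$ in $\Gamma_+(f)$ shows that the inner sum equals $(-1)^{n-1}$ when $\relint(\gamma_P)\subset\Int(\RR_+^n)$ (the relevant link is an $(n-\dim\gamma_P-2)$-sphere) and vanishes when $\gamma_P$ meets $\partial\RR_+^n$ (the link is a contractible cell). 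Only lattice points with $\dim\gamma_P \leq 1$ and $P \in \Int(\RR_+^n)$ survive, which is precisely the defining count of $\Pi_f$.

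For part (ii), I apply Theorem \ref{thm:8} with $k=n-2$ and subtract the count from part (i); the surviving bidegrees are $(1,0)$ and $(0,1)$, and the Hodge symmetry $e^{1,0}(\cdot)_1 = e^{0,1}(\cdot)_1$ produces a factor of $2$, so that the number of size $n-2$ blocks becomes $2(-1)^{n-1}\sum_{\gamma \subset \Gamma_f}e^{1,0}([Z^*_{\Delta_\gamma}])_1$. Proposition \ref{prp:new} gives
\begin{equation}
e^{1,0}([Z^*_{\Delta_\gamma}])_1 = (-1)^{\dim\gamma}\sum_{\substack{\Gamma \prec \Delta_\gamma\\\dim\Gamma=2}} l^*(\Gamma)_1.
\end{equation}
The $2$-faces of $\Delta_\gamma$ are either $2$-faces of $\gamma$ (on which the height function vanishes, so $l^*(\Gamma)_1 = l^*(\Gamma)$) or triangles $\mathrm{conv}(\{0,v_0,v_1\})$ where $v_0v_1$ is an edge of $\gamma$ (on whose relative interior the height is strictly between $0$ and $d_\gamma$, hence $l^*(\Gamma)_1 = 0$). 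The same star/link argument as in (i) collapses the remaining alternating double sum over $(\gamma,\Gamma)$ to $\sum_\Gamma l^*(\Gamma)$ over $2$-faces $\Gamma \subset \Gamma_f$ satisfying $\relint(\Gamma)\subset\Int(\RR_+^n)$, which is the stated $2\sum_\gamma l^*(\gamma)$.

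The hard part is the ``interior versus boundary'' Euler-characteristic dichotomy for stars in $\Gamma_+(f)$: faces whose relative interior lies in $\Int(\RR_+^n)$ have spherical links and contribute $(-1)^{n-1}$, while faces touching a coordinate hyperplane have contractible links and cancel. Making this rigorous requires careful inspection of the local combinatorial structure of $\Gamma_+(f)$ along each face, particularly how the unbounded part of $\Gamma_+(f)$ interacts with the stars of faces near the coordinate hyperplanes.
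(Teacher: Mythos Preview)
Your approach is correct and is essentially the same as the paper's: the paper derives Theorem~\ref{thm:8-5} exactly as you do, by specializing Theorem~\ref{thm:8} to $k=n-1$ and $k=n-2$, using that only the $(0,0)$- and $(1,0)/(0,1)$-bidegrees survive, and then invoking Proposition~\ref{prp:2-19} for~(i) and Proposition~\ref{prp:new} for~(ii), together with the identity $\Pi(\Delta_\gamma)_1-1=\Pi(\gamma)$. The paper's proof is much terser (it simply says ``as in Theorem~\ref{thm:7-16}'' and, there, merely cites the relevant propositions), whereas you have correctly filled in the details it omits: the height computation on the edges $[0,v_0]$ of $\Delta_\gamma$, the vanishing $l^*(\Gamma)_1=0$ for the triangular $2$-faces of $\Delta_\gamma$, and the alternating-sum identity $\sum_{\gamma\supseteq\gamma_P,\ \gamma\subset\Gamma_f}(-1)^{\dim\gamma}=(-1)^{n-1}$ or $0$ according to whether $\relint(\gamma_P)\subset\Int(\RR_+^n)$; the last of these is most cleanly verified via the Euler relation for the interval $[\gamma_P,K]$ in the face lattice of the polytope $K=\overline{\RR_+^n\setminus\Gamma_+(f)}$, splitting off the faces that contain $0$ (indexed by coordinate subsets $S\supseteq S_{\gamma_P}$), which is exactly your ``interior versus boundary'' dichotomy.
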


Note that Theorem \ref{thm:8-5} (i) was previously obtained in van Doorn-Steenbrink \cite{D-St} by different methods. 
These results assert that by replacing $\Gamma_+(f)$ with the Newton 
polyhedron at infinity $\Gamma_{\infty}(f)$ 
the combinatorial description of the local monodromy $\Phi_{n-1,0}$ is the same as 
that of the global one $\Phi_{n-1}^{\infty}$ introduced in Section 
\ref{section 7}. We thus see a beautiful symmetry between local and global. 
In Esterov-Takeuchi \cite{E-T}, 
some of the results in this section were extended 
to the case where $f$ is a regular function on 
a complete intersection subvariety of $\CC^n$ 
as in Theorem \ref{thm:3-5-CI}. For a related result, 
see also Ebeling-Steenbrink \cite{E-S}. 
Moreover in \cite{Saito}, T. Saito extended some of the results in 
this section to the case where $f$ is not necessarily convenient. 
For this purpose, he proved a new vanishing theorem 
and overcame the difficulty arising from the 
non-isolated singular points in $f^{-1}(0) \subset \CC^n$. 
Along the same line as in the proof of Theorem 
\ref{thm:7-19} in \cite{M-T-3}, in \cite{Saito} 
he obtained also a formula for the Hodge spectrum of 
such $f$ at the origin $0 \in \CC^n$ and 
extended the results of \cite{K-V} and \cite{Saito-3} 
to the non-convenient case.  For related results on 
the Hodge spectrum, see also Budur \cite{Budur}, 
Budur-Saito \cite{B-S} and Jung-Kim-Saito-Yoon \cite{J-K-S-Y}.

\section{Theory of Katz and Stapledon}\label{section 9}

In this section, assuming 
that the polynomial $f(x) \in \CC[x_1,\ldots,x_n]$ 
is convenient and non-degenerate at $0\in \CC^n$, we 
introduce the full combinatorial description of the Jordan 
normal form of its Milnor monodromy $\Phi_{n-1, 0}$ for 
the eigenvalues $\lambda \not= 1$ obtained by 
Stapledon \cite{Stapledon}. For its generalization to 
the non-convenient case, see also Saito \cite{Saito}. 
In \cite{Stapledon}, Stapledon also extended our setting in 
Section \ref{section 7} to more general one of sch\"on families of complex 
hypersurfaces in $\CC^n$ over a punctured 
disk $\{ t \in \CC \ | \ 0<|t| < \varepsilon \} \subset \CC$ 
and obtained a formula for their equivariant limit 
Hodge numbers. This is a beautiful application of 
the ideas in intersection cohomology, tropical geometry  
and mirror symmetry. For the further developments of Stapledon's 
theory and its applications to geometric monodromies, 
we refer also to Saito-Takeuchi \cite{S-T}.

\subsection{Equivariant Ehrhart theory of 
Katz-Stapledon}\label{sbbsec:7}

First we recall some 
polynomials in the Equivariant Ehrhart theory 
of Katz-Stapledon \cite{Ka-St-1}, 
\cite{Ka-St-2} and Stapledon~\cite{Stapledon}. 
Here we regard the empty set 
$\emptyset$ as a $(-1)$-dimensional polytope,
and as a face of any polytope.
Let $P$ be a polytope.
If a subset $F\subset P$ is a face of $P$, we write $F\prec P$.
For a pair of faces $F\prec F' \prec P$ of $P$,
we denote by $[F,F']$ the face poset $\{F''\prec 
P\mid F\prec F''\prec F'\}$,
and by $[F,F']^{*}$ a poset which is equal to 
$[F,F']$ as a set with the reversed order.

\begin{definition}
Let $B$ be a poset $[F,F']$ or $[F,F']^{*}$.
We define a polynomial $g(B,t)$ of degree 
$\leq(\dim F' -\dim F)/2$ as follows.
If $F = F'$, we set $g(B;t)=1$.
If $F \neq F'$ and $B=[F,F']$ (resp. $B=[F,F']^{*}$), 
we define $g(B;t)$ inductively by
\begin{align}
t^{\dim{F'}-\dim{F}}g(B;t^{-1})=\sum_{F''\in[F,F']}
(t-1)^{\dim{F'}-\dim{F''}}g([F,F''];t).
\\ ({\rm resp.}~t^{\dim{F'}-\dim{F}}g(B;t^{-1})=
\sum_{F''\in[F,F']^{*}}(t-1)^{\dim{F''}-\dim{F}}g([F'',F']^{*};t).) 
\end{align}
\end{definition}

In what follows, we assume that $P$ is a lattice polytope in $\RR^n$.
Let $S$ be a subset of $P\cap \ZZ^n$ containing the 
vertices of $P$, and $\omega \colon S\to \ZZ$ be a function.
We denote by ${\rm UH}_{\omega}$ the convex hull in $\RR^n\times \RR$ 
of the set $\{(v,s)\in \RR^n\times\RR \mid v\in S, s\geq \omega(v)\}$.
Then, the set of all the projections of the bounded faces of ${\rm UH}_{\omega}$ 
to $\RR^n$ defines a lattice polyhedral subdivision $\mathcal{S}$ of $P$.
Here a lattice polyhedral subdivision $\mathcal{S}$ 
of a polytope $P$ is a set of some polytopes in $P$
such that the intersection of any two polytopes in $\mathcal{S}$ is a 
face of both and all vertices of any polytope 
in $\mathcal{S}$ are in $\ZZ^{n}$. 
Moreover, the set of all the bounded faces of ${\rm UH}_{\omega}$ defines 
a piecewise $\QQ$-affine convex function $\nu\colon P\to \RR$.
For a cell $F\in\mathcal{S}$, we denote by 
$\sigma(F)$ the smallest face of $P$ containing $F$,
and $\LK_{\mathcal{S}}(F)$ the set of all cells 
of $\mathcal{S}$ containing $F$.
We call $\LK_{\mathcal{S}}(F)$ the link of $F$ in $\mathcal{S}$.
Note that $\sigma(\emptyset)=\emptyset$ and 
$\LK_{\mathcal{S}}(\emptyset)=\mathcal{S}$.

\begin{definition}
For a cell $F\in \mathcal{S}$, the $h$-polynomial 
$h(\LK_{\mathcal{S}}(F);t)$ 
of the link $\LK_{\mathcal{S}}(F)$ of $F$ is defined by
\begin{equation}
t^{\dim{P}-\dim{F}}h(\LK_{\mathcal{S}}(F);t^{-1})=
\sum_{F'\in \LK_{\mathcal{S}}(F)}g([F,F'];t)(t-1)^{\dim{P}-\dim{F'}}.
\end{equation}
The local $h$-polynomial $l_{P}(\mathcal{S},F;t)$ 
of $F$ in $\mathcal{S}$ is defined by
\begin{equation}
l_{P}(\mathcal{S},F;t)=\sum_{\sigma(F)
\prec Q\prec P}(-1)^{\dim{P}-\dim Q}h(
\LK_{{\mathcal{S}}|_{Q}}(F);t) \cdot g([Q,P]^{*};t).
\end{equation}
\end{definition}

For $\lambda\in \CC$ and 
$v\in mP\cap{\ZZ^n}$ ($m\in \ZZ_+:=\ZZ_{\geq 0}$) we set 
\begin{align}
w_{\lambda}(v)=
\left\{
\begin{array}{ll}
1& \Bigl( \exp\ \bigl( 2\pi\sqrt{-1}\cdot m\nu(\frac{v}{m})\bigr)=
\lambda \Bigr) \\\\
0&( \text{otherwise} ). \\
\end{array}
\right.
\end{align}
We define the $\lambda$-weighted Ehrhart polynomial 
$\phi_{\lambda}(P,\nu;m)\in\ZZ[m]$ 
of $P$ with respect to $\nu:P\to \RR$ by
\begin{equation}
\phi_{\lambda}(P,\nu;m):=\sum_{v\in {mP}\cap\ZZ^{n}}w_{\lambda}(v).
\end{equation}
Then $\phi_{\lambda}(P,\nu;m)$ is a polynomial in $m$ 
with coefficients $\ZZ$ 
whose degree is $\leq\dim P$ (see \cite{Stapledon}).  

\begin{definition}{\rm (Stapledon \cite{Stapledon})}
\begin{enumerate}
\item[\rm{(i)}] We define the $\lambda$-weighted $h^{*}$-polynomial 
$h^{*}_{\lambda}(P,\nu;u)
\in \ZZ[u]$ by 
\begin{equation}
\sum_{m\geq 0} \phi_{\lambda}(P,\nu;m)u^{m}=
\frac{h^{*}_{\lambda}(P,\nu;u)}{(1-u)^{\dim{P}+1}}.
\end{equation}
If $P$ is the empty polytope, we set $h^{*}_{1}
(P,\nu;u)=1$ and $h^{*}_{\lambda}(P,\nu;u)=0~(\lambda\neq 1)$.
\item[\rm{(ii)}] We define the $\lambda$-local weighted 
$h^{*}$-polynomial $l^{*}_{\lambda}(P,\nu;u)\in\ZZ[u]$ by
\begin{equation}
l^{*}_{\lambda}(P,\nu;u)=\sum_{Q\prec P}
(-1)^{\dim{P}-\dim{Q}}h^{*}_{\lambda}(Q,\nu|_{Q};u)
 \cdot g([Q,P]^{*};u).
\end{equation}
If $P$ is the empty polytope, we set $l^{*}_{1}(P,\nu;u)=1$ 
and $l^{*}_{\lambda}(P,\nu;u)=0~(\lambda\neq 1)$.
\end{enumerate}
\end{definition} 

\begin{definition}{\rm (Stapledon \cite{Stapledon})}\label{def:poly}
\begin{enumerate}
\item[\rm{(i)}] We define the $\lambda$-weighted limit mixed 
$h^{*}$-polynomial $h^{*}_{\lambda}(P,\nu;u,v)\in\ZZ[u,v]$ by
\begin{equation}
h^{*}_{\lambda}(P,\nu;u,v):=\sum_{F\in\MCS}v^{\dim{F}+1}
l^{*}_{\lambda}(F,\nu|_{F};uv^{-1}) \cdot h(\LK_{\MCS}(F);uv).
\end{equation}
\item[\rm{(ii)}] We define the $\lambda$-local weighted limit mixed 
$h^{*}$-polynomial $l^{*}_{\lambda}(P,\nu;u,v)\in\ZZ[u,v]$ by
\begin{equation}
l^{*}_{\lambda}(P,\nu;u,v):=\sum_{F\in\MCS}v^{\dim{F}+1}
l^{*}_{\lambda}(F,\nu|_{F};uv^{-1}) \cdot l_{P}(\MCS,F;uv).
\end{equation}
\end{enumerate}
\end{definition}

\subsection{Stapledon's full combinatorial description of the Jordan 
normal forms of Milnor monodromies}\label{sbbsec:8}

Assume that $f(x) \in \CC[x_1,\ldots,x_n]$ 
is convenient and non-degenerate at $0\in \CC^n$. 
Recall that for a compact face $\gamma \prec 
\Gamma_+(f)$ of its Newton polyhedron $\Gamma_+(f)$ we denote by 
$\Delta_{\gamma}$ the convex hull of $\{ 0 \} \sqcup \gamma$ in $\RR^n$. 
Denote by $K$ the convex hull 
of the closure of 
$\RR^n_+ \setminus \Gamma_+(f)$ in $\RR^n$ and
define a piecewise $\QQ$-affine function 
$\nu$ on $K$ which takes the value $1$ (resp. $0$)
on $\{ 0 \} \subset \RR^n$ 
(resp. on the convex hull of the Newton boundary 
$\Gamma_f \subset \RR^n$ of $f$) such that 
for any compact face $\gamma$ of 
$\Gamma_+(f)$ 
the restriction of $\nu$ to 
$\Delta_{\gamma}$ is an affine function. 
For $\lambda \in \CC$ we define 
the equivariant Hodge-Deligne polynomial for the eigenvalue $\lambda$ 
(of the mixed Hodge structures of the cohomology groups of the Milnor 
fiber $F_{0}$) $E_{\lambda}(F_{0};u,v) \in \ZZ [u,v]$ by 
\begin{equation}
E_{\lambda}(F_{0};u,v)= 
\sum_{p,q\in\ZZ}
\sum_{j\in\ZZ}(-1)^{j}h^{p,q}_{\lambda}(H^{j}(F_{0};\CC))
u^pv^q\in\ZZ[u,v],
\end{equation}
where $h^{p,q}_{\lambda}(H^{j}(F_{0};\CC))$ is the dimension of 
$\GR^{p}_{F}\GR^{W}_{p+q}H^{j}(F_{0};\CC)_{\lambda}$. 
Then for $\lambda \not= 1$, by applying the results of 
\cite{Stapledon} to Theorem \ref{thm:8-3} we can 
calculate the $\lambda$-part of  
the Hodge realization 
of the motivic Milnor fiber 
$\mathcal{S}_{f,0}$ of $f$ and 
obtain the following formula for 
$E_{\lambda}(F_{0};u,v)$. 

\begin{theorem}{\rm (Stapledon \cite[Theorem 6.20]{Stapledon})}\label{the-3214} 
Assume that $f$ is convenient and non-degenerate at $0\in \CC^n$. Then 
for any $\lambda \not= 1$ we have 
\begin{equation}
uv E_{\lambda}(F_{0};u,v)
=(-1)^{n-1}l^{*}_{\lambda}(K,\nu;u,v). 
\end{equation}
\end{theorem}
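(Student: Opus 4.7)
The plan is to reduce the statement to a purely combinatorial identity about equivariant Hodge--Deligne numbers of the non-degenerate toric hypersurfaces $Z^{*}_{\Delta_{\gamma}}$ and $Z^{*}_{\gamma}$ appearing in the Hodge realization of the motivic Milnor fiber $\MCS_{f,0}$, and then to match this identity against the defining formula of $l^{*}_{\lambda}(K,\nu;u,v)$. First I would invoke the Denef--Loeser formula (Theorem \ref{thm:7-6-new}) to rewrite $E_{\lambda}(F_{0};u,v)$ as the $\lambda$-part of the equivariant Hodge--Deligne polynomial attached to $\chi_{h}(\MCS_{f,0})$. By Theorem \ref{thm:8-3}~(i) this breaks into two sums indexed by the compact faces $\gamma\subset\Gamma_{f}$,
\begin{equation}
\chi_{h}(\MCS_{f,0}) = \sum_{\gamma\subset\Gamma_{f}} \chi_{h}\bigl((1-\LL)^{m_{\gamma}}[Z^{*}_{\Delta_{\gamma}}]\bigr)+\sum_{\substack{\gamma\subset\Gamma_{f}\\ \dim\gamma\geq 1}} \chi_{h}\bigl((1-\LL)^{m_{\gamma}+1}[Z^{*}_{\gamma}]\bigr),
\end{equation}
so the problem becomes computing each summand on the $\lambda$-eigenspace $(\lambda\neq 1)$ and packaging the result as a single bivariate polynomial.

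Next I would apply the refined Danilov--Khovanskii theory of Section \ref{section 5} (Theorem \ref{thm:2-14}, Proposition \ref{prp:2-17}, Corollary \ref{cor:2-18}), which expresses each $e^{p,q}(Z^{*}_{\Delta})_{\lambda}$ for $\lambda\neq 1$ in terms of the polynomials $\varphi_{\lambda,i}(\Gamma)$ attached to faces $\Gamma\prec\Delta$. The key observation is that the subdivision $\MCS=\{\Delta_{\gamma}\}_{\gamma\prec\Gamma_{f}}$ of $K$ is exactly the coherent lattice polyhedral subdivision induced by the piecewise $\QQ$-affine function $\nu$ (value $1$ at $0$, value $0$ on $\Gamma_{f}$), and for each cell $\Delta_{\gamma}$ the Laurent polynomial $g_{\gamma}$ realizing $Z^{*}_{\Delta_{\gamma}}$ is supported on $\{0\}\cup\gamma$ with the torsion point $\tau_{\gamma}$ acting by the character $v\mapsto\zeta_{d_{\gamma}}^{-{\rm ht}(v,\gamma)}$. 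Translating into the generating functions of Section \ref{sbbsec:7}, one recognises that the Ehrhart-type counts appearing in $e^{p,q}(Z^{*}_{\Delta_{\gamma}})_{\lambda}$ are precisely the $\lambda$-weighted lattice point counts $\phi_{\lambda}(\Delta_{\gamma},\nu|_{\Delta_{\gamma}};m)$, and analogously $e^{p,q}(Z^{*}_{\gamma})_{\lambda}$ correspond to the face-contributions $\phi_{\lambda}(\gamma,\nu|_{\gamma};m)$. This step converts the motivic sum into a sum of weighted $h^{*}$-polynomials.

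Then I would introduce the bivariate refinement. Using the symmetry of Theorem \ref{S-S}~(i) (equivalently the $\lambda$-part of the monodromy filtration), the $(p,q)$-bidegree on each $[Z^{*}_{\Delta_{\gamma}}]$-piece is encoded by the $\lambda$-local weighted $h^{*}$-polynomial $l^{*}_{\lambda}(\Delta_{\gamma},\nu|_{\Delta_{\gamma}};uv^{-1})$ (up to the uniform shift by $v^{\dim\Delta_{\gamma}+1}$ that accounts for the $(1-\LL)^{m_{\gamma}}$ factors and the codimension of $\Delta_{\gamma}$ in $K$). At this point the sum over $\gamma$ becomes
\begin{equation}
uv\,E_{\lambda}(F_{0};u,v) = (-1)^{n-1}\sum_{F\in\MCS} v^{\dim F+1}\, l^{*}_{\lambda}(F,\nu|_{F};uv^{-1})\cdot C_{F}(uv),
\end{equation}
for some univariate polynomials $C_{F}(t)$ that have to be identified with $l_{K}(\MCS,F;t)$. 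The contributions of the second sum in Theorem \ref{thm:8-3}~(i), those coming from $(1-\LL)^{m_{\gamma}+1}[Z^{*}_{\gamma}]$, are what produce the non-trivial link contribution and effect the passage from the $h$-polynomial of $\LK_{\MCS}(F)$ (which would appear in $h^{*}_{\lambda}(K,\nu;u,v)$) to the local $h$-polynomial $l_{K}(\MCS,F;\cdot)$.

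The main obstacle will be this last identification $C_{F}(t)=l_{K}(\MCS,F;t)$. It is a purely combinatorial matching of two different M{\"o}bius-style alternating sums over the face poset of $K$: the one produced by expanding $(1-\LL)^{m_{\gamma}}$ and $(1-\LL)^{m_{\gamma}+1}$ and then inserting Corollary \ref{cor:2-18} on each $Z^{*}_{\Delta_{\gamma}}$ and $Z^{*}_{\gamma}$, versus the one built into the definition of the local $h$-polynomial via the $g$-polynomials $g([Q,K]^{*};t)$. I expect to establish it by fixing $F$ and showing that both sides satisfy the same recursion with respect to the inductive definitions of $g$ and $l^{*}_{\lambda}$, using the characterization of $l_{K}(\MCS,F;t)$ as the unique solution of the Katz--Stapledon decomposition theorem for $h(\MCS;t)$ applied to the star of $F$. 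Once $C_{F}=l_{K}(\MCS,F;\cdot)$ is established, Definition \ref{def:poly}~(ii) gives the claimed formula.
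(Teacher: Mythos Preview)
The paper does not supply its own proof of this theorem; it is stated as Stapledon's result, preceded only by the remark that one obtains it ``by applying the results of \cite{Stapledon} to Theorem \ref{thm:8-3}''. Your outline is faithful to that hint: start from the decomposition of $\chi_h(\MCS_{f,0})$ in Theorem \ref{thm:8-3} (i), compute the $\lambda$-part of each piece, and repackage combinatorially. In that sense your plan and the paper's indicated route agree.

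There is, however, a genuine gap in the execution you propose. You intend to compute the individual $e^{p,q}(Z^*_{\Delta_\gamma})_\lambda$ via Proposition \ref{prp:2-17} and Corollary \ref{cor:2-18}, but those results are stated under the hypothesis that the polytope is pseudo-prime. The cone $\Delta_\gamma$ over $\gamma$ with apex $0$ is pseudo-prime if and only if $\gamma$ itself is a simple (prime) polytope: the edges of $\Delta_\gamma$ from $0$ to a vertex $v$ of $\gamma$ lie in exactly as many $2$-faces as there are edges of $\gamma$ at $v$. This primeness is \emph{not} assumed in Theorem \ref{the-3214}. In the paper, the analogous explicit formulas at infinity (e.g.\ Theorem \ref{thm:7-15}) are stated only after imposing precisely this extra hypothesis. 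Theorem \ref{thm:2-14} alone, which does hold without pseudo-primeness, yields only $\sum_q e^{p,q}$ and cannot separate the variables $u$ and $v$.

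What Stapledon actually supplies, and what you relegate to the ``main obstacle'', is a formula for the equivariant Hodge--Deligne polynomial of the nearby fiber valid without any simplicity assumption. The $g$-polynomials $g([Q,K]^*;t)$ built into the definition of $l_K(\MCS,F;t)$ in Section \ref{sbbsec:7} are exactly the combinatorial shadows of intersection cohomology that replace the Danilov--Khovanskii computation in the non-simple case. The identification $C_F(t)=l_K(\MCS,F;t)$ is therefore not a bookkeeping step amenable to a short recursion; it is the substance of Stapledon's theorem and requires his machinery (tropical/intersection-cohomology input) or an equivalent. Your plan would go through as written only under the additional hypothesis that every compact face of $\Gamma_+(f)$ is prime.
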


Let $\mathcal{S}_{\nu}$ be the polyhedral 
subdivision of the polytope $K$ defined by $\nu$. 
By the definition of the $h^*$-polynomial, 
for $\lambda \not= 1$ we have
\begin{equation}
l^{*}_{\lambda}(K,\nu;u,u)=
\sum_{\gamma \prec \Gamma_+(f) :{\rm compact}}
u^{\dim \Delta_{\gamma} +1}
l^{*}_{\lambda}( \Delta_{\gamma} ,\nu;1)
\cdot l_{K}( \mathcal{S}_{\nu}, \Delta_{\gamma} ;u^2),
\end{equation}
where in the sum $\Sigma$ the face $\gamma$ 
ranges through the compact ones of $\Gamma_+(f)$.
The polynomial $l_{K}( \mathcal{S}_{\nu}, \Delta_{\gamma} ;t)$ 
is symmetric and unimodal centered at 
$(n-\dim \gamma -1)/2$, i.e. if $a_{i}\in \ZZ$ 
is the coefficient of $t^i$ in $l_{K}
( \mathcal{S}_{\nu}, \Delta_{\gamma} ;t)$ we have 
$a_{i}=a_{n-\dim \gamma -1-i}$ and $a_{i}\leq a_{j}$ 
for $0\leq i\leq j\leq (n-\dim \gamma -1)/2$.
Therefore, it can be expressed in the form
\begin{equation}
l_{K}( \mathcal{S}_{\nu}, \Delta_{\gamma} ;t)=
\sum_{i=0}^{\lfloor (n-1-\dim \gamma )/2\rfloor}
\tl{l}_{\gamma ,i} \cdot (t^i+t^{i+1}+\dots+t^{n-1-\dim \gamma -i}),
\end{equation}
for some non-negative integers $\tl{l}_{{\gamma},i}\in\ZZ_{\geq 0}$.
We set
\begin{equation}
\tl{l}_{K}( \mathcal{S}_{\nu}, \Delta_{\gamma} ,t):=
\sum_{i=0}^{\lfloor (n-1-\dim \gamma )/2\rfloor}
\tl{l}_{\gamma,i} \cdot t^i.
\end{equation}
For $k\in\ZZ_{> 0}$ and $\lambda \in\CC$ 
we denote by $J_{k,\lambda}$ the number of 
the Jordan blocks in $\Phi_{n-1, 0}$ with size $k$ 
for the eigenvalue $\lambda$.
Then we obtain the following formula for them.

\begin{theorem}{\rm (Stapledon \cite[Corollary 6.22]{Stapledon})}\label{jordan}
Assume that $f$ is convenient and non-degenerate at $0\in \CC^n$. Then 
for any $\lambda \not= 1$ we have
\begin{equation}
\sum_{0\leq k\leq n-1}J_{n-k,\lambda}u^{k+2}
=
\sum_{\gamma \prec \Gamma_+(f) :{\rm compact}}
u^{\dim \Delta_{\gamma} +1}l^{*}_{\lambda}
(\Delta_{\gamma},\nu;1)\cdot\tl{l}_{K}( \mathcal{S}_{\nu},
\Delta_{\gamma} ;u^2),
\end{equation}
where in the sum $\Sigma$ of the right 
hand side the face $\gamma$ ranges through 
the compact ones of $\Gamma_+(f)$.
\end{theorem}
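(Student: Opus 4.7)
The plan is to specialize Theorem~\ref{the-3214} to $v=u$ and identify both sides in a common basis of ``Lefschetz chain polynomials''. I introduce
\[
C_s(u) := u^{n-s+2} + u^{n-s+4} + \cdots + u^{n+s} = u^{n-s+2}\bigl(1 + u^2 + \cdots + u^{2(s-1)}\bigr),\quad s\geq 1,
\]
and note that $\{C_s(u)\}_{s\geq 1}$ are linearly independent because their top monomials $u^{n+s}$ are distinct.

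For the left hand side of Theorem~\ref{the-3214} at $v=u$: by Lemma~\ref{leme:3-5-ad} the hypersurface $f^{-1}(0)$ has at worst an isolated singularity at $0\in\CC^n$, so Milnor's bouquet theorem (Theorem~\ref{thm.7.2.003}) forces $H^j(F_0;\CC)=0$ for $j\neq 0, n-1$. Since $H^0(F_0;\CC)$ carries the trivial monodromy, only $H^{n-1}(F_0;\CC)_\lambda$ contributes to $E_\lambda(F_0;u,u)$ when $\lambda\neq 1$. Next, by Theorem~\ref{S-S}(i) together with the underlying Steenbrink-Saito theorem, the weight filtration on $H^{n-1}(F_0;\CC)_\lambda$ is the monodromy weight filtration centered at $n-1$. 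The primitive Lefschetz decomposition then says that each Jordan block of size $s$ yields one primitive vector of weight $n-2+s$, contributing the symmetric chain $u^{n-s} + u^{n-s+2} + \cdots + u^{n+s-2}$ to $\sum_w \dim\Gr^W_w u^w$. Multiplying by $(-1)^{n-1}u^2$ and invoking Theorem~\ref{the-3214} gives
\[
l^*_\lambda(K,\nu;u,u) \;=\; \sum_{s=1}^{n} J_{s,\lambda}\cdot C_s(u).
\]

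For the right hand side, I would substitute the decomposition of $l^*_\lambda(K,\nu;u,u)$ stated just before Theorem~\ref{jordan} and then replace $l_K(\mathcal{S}_\nu,\Delta_\gamma;u^2)$ by its symmetric unimodal expansion $\sum_i \tl{l}_{\gamma,i}(t^i + \cdots + t^{n-1-\dim\gamma-i})|_{t=u^2}$. A short computation, using $\dim\Delta_\gamma = \dim\gamma + 1$, shows that $u^{\dim\Delta_\gamma+1}(u^{2i} + \cdots + u^{2(n-1-\dim\gamma-i)}) = C_{n-\dim\gamma-2i}(u)$, so
\[
l^*_\lambda(K,\nu;u,u) \;=\; \sum_\gamma l^*_\lambda(\Delta_\gamma,\nu;1)\sum_i \tl{l}_{\gamma,i}\cdot C_{n-\dim\gamma-2i}(u).
\]

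Equating the two expressions for $l^*_\lambda(K,\nu;u,u)$ and using the linear independence of $\{C_s(u)\}$, I extract $J_{s,\lambda} = \sum_\gamma l^*_\lambda(\Delta_\gamma,\nu;1)\cdot \tl{l}_{\gamma,(n-\dim\gamma-s)/2}$ for each $s\geq 1$ (with the convention that $\tl{l}_{\gamma,i}=0$ when $i$ is not a non-negative integer in the admissible range). Substituting $s=n-k$ and reading this as an equality of coefficients of $u^{k+2}$ on both sides of the claimed identity completes the proof. The main subtle point, on which everything rests, is the identification of the weight filtration on $H^{n-1}(F_0;\CC)_\lambda$ with the monodromy weight filtration centered at $n-1$ for $\lambda\neq 1$; once this is in hand, the remaining argument is combinatorial bookkeeping in the Lefschetz chain basis $\{C_s(u)\}$.
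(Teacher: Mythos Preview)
Your proof is correct and follows essentially the same route the paper sketches in the text preceding the theorem: specialize Theorem~\ref{the-3214} at $v=u$, invoke the displayed decomposition of $l^{*}_{\lambda}(K,\nu;u,u)$ over the cells $\Delta_\gamma$, and use the symmetric unimodal expansion of $l_K(\mathcal{S}_\nu,\Delta_\gamma;t)$; the paper leaves the final matching of coefficients implicit, whereas you make it explicit via the linearly independent ``Lefschetz chain'' polynomials $C_s(u)$, which is a clean device. The one point worth saying explicitly (you allude to it as ``the formula stated just before Theorem~\ref{jordan}'') is why, for $\lambda\neq 1$, the sum in Definition~\ref{def:poly}(ii) over all cells $F\in\mathcal{S}_\nu$ reduces to the sum over the $\Delta_\gamma$ only: this is because $\nu$ vanishes identically on the cells contained in $\Gamma_f$ and is integral on $\{0\}$, forcing $l^*_\lambda(F,\nu|_F;1)=0$ for those $F$ when $\lambda\neq 1$.
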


We can check that Theorem \ref{thm:8-4} follows from 
Theorem \ref{jordan}, but we omit the details here.

\section{Milnor fibers and 
monodromies of meromorphic functions}\label{section 10}

In \cite{GZ-L-MH} Gusein-Zade, Luengo and Melle-Hern\'andez 
generalized Milnor's fibration theorem 
to meromorphic functions 
and defined their Milnor fibers. Moreover they obtained 
a formula for their monodromy zeta functions. 
Since then many mathematicians studied Milnor fibers of 
meromorphic functions (see e.g. \cite{B-P}, 
\cite{B-P-S}, \cite{GV-L-M}, \cite{GZ-L-MH-new-new}, 
\cite{L-W}, \cite{Thang}, \cite{N-S-T}, 
\cite{N-T}, \cite{P}, 
\cite{Raibaut-2}, \cite{S-T-2}, 
\cite{Tibar}). However, in contrast to 
Milnor fibers of holomorphic functions, 
the geometric structures of those 
of meromorphic functions are not fully understood yet. 
For example, as we saw in Theorem \ref{thm.7.2.003}, 
if a holomorphic function 
has an isolated singular point then the Milnor 
fiber at it has the homotopy type of a bouquet of 
some spheres. This implies that its reduced 
cohomology groups are concentrated in the 
middle dimension. Unfortunately, so far 
we do not know such a nice structure theorem for 
Milnor fibers of meromorphic functions. 
For this reason, we cannot know any property 
of each Milnor monodromy operator of 
a meromorphic function even if we 
have the formula of \cite{GZ-L-MH} for its monodromy zeta function. 
Recently in \cite{N-T}, 
by developing a theory of 
nearby and vanishing cycle functors for 
meromorphic functions, Nguyen and the author  
overcame this problem partially. In this section, 
we introduce some of the above-mentioned results for Milnor fibers 
and monodromies of meromorphic functions and explain 
some related problems. 

 Let $X$ be a complex manifold and $P(x), Q(x)$ 
holomorphic functions on it. Assume that $Q(x)$ 
is not identically zero on each connected component 
of $X$. Then we define a meromorphic function $f(x)$ on 
$X$ by 
\begin{equation}
f(x)= \frac{P(x)}{Q(x)}  \qquad (x \in X \setminus Q^{-1}(0)). 
\end{equation}
Let us set $I(f)=P^{-1}(0) \cap Q^{-1}(0) \subset X$. 
If $P$ and $Q$ are coprime in the local ring 
$\O_{X,x}$ at a point $x \in X$, then 
$I(f)$ is nothing but the set of the indeterminacy 
points of $f$ on a neighborhood of $x$. Note that 
the set $I(f)$ depends on the pair $(P(x), Q(x))$ of 
holomorphic functions representing $f(x)$. 
For example, if we take a holomorphic function 
$R(x)$ on $X$ (which is not identically zero on 
each connected component of $X$) and set 
\begin{equation}
g(x)= \frac{P(x)R(x)}{Q(x)R(x)}  \qquad (x \in X), 
\end{equation}
then the set $I(g)=I(f) \cup R^{-1}(0)$ might be 
bigger than $I(f)$. In this way, we 
distinguish $f(x)= \frac{P(x)}{Q(x)}$ from 
$g(x)= \frac{P(x)R(x)}{Q(x)R(x)}$ even if 
their values coincide over an open dense 
subset of $X$. 
This is the convention due to 
Gusein-Zade, Luengo and Melle-Hern\'andez 
\cite{GZ-L-MH} etc. 
Then we have the following 
fundamental theorem due to \cite{G-L-M}. 

\begin{theorem}\label{the-fib} 
{\rm (Gusein-Zade, Luengo and Melle-Hern\'andez 
\cite[Section 1]{GZ-L-MH})}  
For any point $x \in P^{-1}(0)$ there exists 
$\e_0> 0$ such that for any $0< \e < \e_0$ 
and the open ball $B(x; \e ) \subset X$ of 
radius $\e >0$ with center at $x$ 
(in a local chart of $X$) the restriction 
\begin{equation}
B(x; \e ) \setminus Q^{-1}(0) 
\longrightarrow \CC
\end{equation}
of $f: X \setminus Q^{-1}(0) 
\longrightarrow \CC$ is a locally trivial 
fibration over a sufficiently small 
punctured disk in $\CC$ with center at 
the origin $0 \in \CC$ 
\end{theorem}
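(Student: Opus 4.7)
The plan is to reduce to the classical Milnor--L\^e fibration theorem (Theorem \ref{thm.7.2.001}) by resolving the indeterminacy of the meromorphic function $f=P/Q$. By Hironaka's theorem, one can construct a proper modification $\pi\colon \tl{X} \longrightarrow X$ which is an isomorphism outside $I(f)=P^{-1}(0)\cap Q^{-1}(0)$ and which makes the rational map $f \circ \pi$ into a genuine holomorphic map $\tl{f}\colon \tl{X} \longrightarrow \PP^1$, so that moreover the divisor $(\pi^*P)+(\pi^*Q)$ has normal crossings on $\tl{X}$. In local coordinates on $\tl{X}$ adapted to this normal crossing divisor, $\tl{f}$ takes the familiar monomial form (up to a unit), and its zero-fiber $\tl{f}^{-1}(0)$ consists of certain components of the exceptional divisor together with the proper transform of $P^{-1}(0)$.

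Next, I would work on $\tl{X}$ near the compact set $\pi^{-1}(x) \subset \tl{f}^{-1}(0)$. For $\e>0$ small, $\pi^{-1}(\overline{B(x;\e)})$ is compact and the pulled-back squared-distance function $\rho_x \circ \pi$ (where $\rho_x(y)=\|y-x\|^2$) is a proper real analytic function on it whose zero set is $\pi^{-1}(x)$. The key geometric input is a transversality statement: for all sufficiently small $\e>0$, the real analytic hypersurface $\pi^{-1}(S(x;\e))$ is transverse in $\tl{X}$ to all fibers $\tl{f}^{-1}(t)$ for $t$ in some small punctured disk $D_{\delta}^{*} \subset \CC^*$ around $0$. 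This is established by the standard curve selection lemma argument of Milnor, applied to a Whitney stratification of $\tl{X}$ compatible with the normal crossing divisor $\tl{f}^{-1}(0) \cup (\pi^*Q)^{-1}(\infty)$: any sequence of tangency points would yield a real analytic arc on which both $\rho_x\circ \pi$ and $|\tl{f}|$ would have incompatible asymptotic behavior at $\pi^{-1}(x)$.

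Combining these two inputs with Sard's theorem applied to $\tl{f}$ restricted to the strata, I conclude that for $0<\delta\ll \e\ll 1$, the restriction
\begin{equation}
\tl{f}\colon \pi^{-1}(B(x;\e)) \cap \tl{f}^{-1}(D_{\delta}^{*}) \longrightarrow D_{\delta}^{*}
\end{equation}
is a proper stratified submersion, and hence a locally trivial fibration by Thom's first isotopy lemma (or Ehresmann's theorem applied stratum by stratum). Since $\pi$ restricts to an isomorphism between $\tl{X}\setminus \pi^{-1}(Q^{-1}(0))$ and $X\setminus Q^{-1}(0)$, and since the preimage $\pi^{-1}(B(x;\e)\setminus Q^{-1}(0))$ coincides with $\pi^{-1}(B(x;\e))\cap \tl{f}^{-1}(\CC)$ outside $\pi^{-1}(Q^{-1}(0))$, transporting the fibration back through $\pi$ gives the desired locally trivial fibration
\begin{equation}
f\colon B(x;\e)\setminus Q^{-1}(0) \supset f^{-1}(D_{\delta}^{*}) \longrightarrow D_{\delta}^{*}.
\end{equation}

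The main obstacle will be the transversality claim in the second paragraph. Unlike in the holomorphic case, the ``origin'' of the Milnor fibration $\pi^{-1}(x) \subset \tl{f}^{-1}(0)$ can meet several components of the normal crossing divisor, including those coming from $Q^{-1}(0)$ (the poles of $\tl{f}$); handling these pole components correctly, so that the curve selection argument still forces transversality of the sphere $\pi^{-1}(S(x;\e))$ with $\tl{f}^{-1}(t)$ for \emph{finite} nonzero $t$ close to $0$, is the technical heart of the proof and the point where the meromorphic case genuinely differs from Milnor's original setting.
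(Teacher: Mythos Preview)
The paper does not supply its own proof of this theorem: it is stated with a citation to Gusein-Zade, Luengo and Melle-Hern\'andez \cite[Section 1]{GZ-L-MH} and then used as input for the rest of Section~\ref{section 10}. So there is nothing in the paper to compare your argument against line by line.

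That said, your overall strategy---resolve the indeterminacy of $f=P/Q$ by a proper modification $\pi\colon\tl X\to X$, get a holomorphic map $\tl f\colon\tl X\to\PP^1$ with normal-crossing special fibers, prove a Whitney-stratified fibration over a small punctured disk via curve selection and Thom's first isotopy lemma, then push it back down through $\pi$---is precisely the approach of \cite{GZ-L-MH} and is sound. Two points worth tightening. First, in the transport step you want more than ``$\pi$ is an isomorphism off $\pi^{-1}(Q^{-1}(0))$'': what you actually use is that the stratified isotopy produced by Thom's lemma respects the strata, so it trivializes the \emph{open} stratum $\pi^{-1}\bigl(B(x;\e)\setminus Q^{-1}(0)\bigr)\cap\tl f^{-1}(D_\delta^*)$ separately; only then does the identification with $B(x;\e)\setminus Q^{-1}(0)$ via $\pi$ give the desired fibration downstairs. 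Second, your last paragraph correctly isolates the genuine new difficulty: near components of the exceptional divisor on which $\tl f$ has a \emph{pole}, one must check that small spheres $\pi^{-1}(S(x;\e))$ are still transverse to the finite fibers $\tl f^{-1}(t)$ for $0<|t|\ll\e$. The curve-selection argument goes through because on such strata $|\tl f|\to\infty$ along any arc approaching the pole locus, so no tangency with $\tl f^{-1}(t)$ for small finite $t$ can accumulate there; spelling this out is the only place your sketch needs a couple of extra sentences.
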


We call the fiber in this theorem the Milnor fiber of 
the meromorphic function $f(x)= \frac{P(x)}{Q(x)}$ 
at $x \in P^{-1}(0)$ and denote it by $F_x$. 
As in the holomorphic case, we obtain also 
its Milnor monodromy operators
\begin{equation}
\Phi_{j,x}: H^j(F_x; \CC ) \simto H^j(F_x; \CC ) \qquad 
(j \in \ZZ ). 
\end{equation}
Then we define 
the monodromy zeta function $\zeta_{f,x}(t) \in \CC (t)$ 
of $f$ at $x \in P^{-1}(0)$ by 
\begin{equation}
\zeta_{f,x}(t) = \prod_{j \in \ZZ} 
\Bigl\{ {\rm det}( \id - t \Phi_{j,x}) \Bigr\}^{(-1)^j} 
\quad \in \CC (t). 
\end{equation}

 From now, we shall introduce the formula of 
Gusein-Zade, Luengo and Melle-Hern\'andez in \cite{GZ-L-MH} 
which expresses $\zeta_{f,x}(t) \in \CC (t)$ 
in terms of the Newton polyhedra of $P$ and $Q$ at $x$.  
The problem being local, we may assume that $X$ is an open 
neighborhood of the origin $0 \in \CC^n$ in 
$\CC^n$ and $x=0 \in I(f)=P^{-1}(0) \cap Q^{-1}(0)$. 
Let $\Gamma_+(P), \Gamma_+(Q) \subset \RR^n_+$ be 
the Newton polyhedra of $P$ and $Q$ at 
the origin $0 \in X \subset \CC^n$ and consider 
their Minkowski sum 
\begin{equation}
\Gamma_+(f):= \Gamma_+(P) + \Gamma_+(Q) \quad 
\subset \RR^n_+. 
\end{equation}
Now, let $\Sigma_f$, 
$\Sigma_P$ and $\Sigma_Q$ 
be the dual fans of $\Gamma_{+}(f)$, 
$\Gamma_{+}(P)$ and $\Gamma_{+}(Q)$ 
in $(\RR^n)_+^* \simeq \RR^n_+$ respectively. Then the 
dual fan $\Sigma_f$ of 
the Minkowski sum 
$\Gamma_+(f)= \Gamma_+(P) + \Gamma_+(Q)$ 
is the coarsest common 
subdivision of $\Sigma_P$ and $\Sigma_Q$. 
This implies that for each face 
$\gamma \prec \Gamma_{+}(f)$ 
we have the corresponding faces 
\begin{equation}
\gamma(P) \prec \Gamma_+(P), \qquad 
\gamma(Q) \prec \Gamma_+(Q)
\end{equation}
such that 
\begin{equation}
\gamma = \gamma(P) + \gamma(Q). 
\end{equation}

\begin{definition}\label{def-3}
{\rm (Gusein-Zade, Luengo and Melle-Hern\'andez \cite{GZ-L-MH})} 
We say that the meromorphic 
function $f(x)= \frac{P(x)}{Q(x)}$ is 
non-degenerate at the origin $0 \in X \subset \CC^n$ if for any 
compact face $\gamma$ of $\Gamma_{+}(f)$ the complex 
hypersurfaces $\{x \in T=(\CC^*)^n\ |\ 
P^{\gamma(P)}(x)=0\}$ and $\{x \in T=(\CC^*)^n\ |\ 
Q^{\gamma(Q)}(x)=0\}$ are smooth and reduced 
and intersect transversally in $T=(\CC^*)^n$. 
\end{definition}
For a subset 
$S \subset \{ 1,2, \ldots, n \}$ let 
$\RR^S \simeq \RR^{\sharp S}$ etc. be 
as in Section \ref{section 3} and set 
\begin{equation}
\Gamma_+^S(f):= \Gamma_+(f) \cap \RR^S 
\quad \subset \RR^S_+. 
\end{equation}
Similarly, we define 
$\Gamma_+^S(P), \Gamma_+^S(Q) \subset \RR^S_+$ 
so that we have 
\begin{equation}
\Gamma_+^S(f)= \Gamma_+^S(P) + \Gamma_+^S(Q). 
\end{equation}
For a non-empty subset 
$S \subset \{ 1,2, \ldots, n \}$
let $\gamma_1^S, \gamma_2^S, \ldots, \gamma_{n(S)}^S$ 
be the compact facets of $\Gamma_+^S(f)$ and for 
each $\gamma_i^S$ ($1 \leq i \leq n(S)$) consider 
the corresponding faces 
\begin{equation}
\gamma_i^S(P) \prec \Gamma_+^S(P), \qquad 
\gamma_i^S(Q) \prec \Gamma_+^S(Q)
\end{equation}
such that 
\begin{equation}
\gamma_i^S= \gamma_i^S(P) + \gamma_i^S(Q). 
\end{equation}
By using the primitive inner conormal vector 
$u_i^S \in \ZZ_+^S \setminus \{ 0 \}$ 
of the facet $\gamma_i^S \prec \Gamma_+^S(f)$ 
we define the lattice distance 
$d_i^S(P)>0$ (resp. $d_i^S(Q)>0$) of 
$\gamma_i^S(P)$ (resp. $\gamma_i^S(Q)$) from 
the origin $0 \in \RR^S$ and set 
\begin{equation}
d_i^S=d_i^S(P)-d_i^S(Q) \in \ZZ. 
\end{equation}
Finally by using 
the normalized ($\sharp S-1$)-dimensional volume 
$\Vol_{\ZZ}(\ \cdot\ )$ we set 
\begin{equation}
K_i^S = 
\dsum_{k=0}^{\sharp S-1} 
\Vol_{\ZZ}(
\underbrace{\gamma_i^S(P),
\ldots,\gamma_i^S(P)}_{\text{
$k$-times}},
\underbrace{\gamma_i^S(Q),
\ldots,\gamma_i^S(Q)}_{\text{($\sharp S-1-k$)-times}})
\in \ZZ.
\end{equation}

\begin{theorem}\label{the-2} 
{\rm (Gusein-Zade, Luengo and Melle-Hern\'andez 
\cite[Theorem 3]{GZ-L-MH})} 
Assume that the meromorphic 
function $f(x)= \frac{P(x)}{Q(x)}$ is 
non-degenerate at the origin $0 \in X \subset \CC^n$. 
Then we have 
\begin{equation}
\zeta_{f,0}(t) = \prod_{S \not= \emptyset} \ \Bigl\{ 
\prod_{i: \ d_i^S >0} 
( 1- t^{d_i^S})^{(-1)^{\sharp S-1} K_i^S} \Bigr\}. 
\end{equation}
\end{theorem}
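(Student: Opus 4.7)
The plan is to adapt the combined strategies from the proofs of Varchenko's theorem (Theorem \ref{thm:3-5-ad}) and Libgober-Sperber's theorem (Theorem \ref{thm:3-5}) to the meromorphic setting. The first step is to decompose the problem by torus orbits: using Corollary \ref{new-corol}, Lemma \ref{lemma:2-8} and Theorem \ref{prp:2-99}, the decomposition $\CC^n \setminus \{0\} = \bigsqcup_{S \neq \emptyset} T_S$ yields
\begin{equation}
\zeta_{f,0}(t) = \prod_{S \neq \emptyset} \zeta_{f \circ i_S, 0}(\CC_{T_S})(t),
\end{equation}
where $i_S \colon \overline{T_S} \hookrightarrow \CC^n$ is the closed embedding. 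The $S$-factor is trivial whenever $\Gamma_+^S(f) = \emptyset$, so it suffices to prove the formula in the principal case $S = \{1,\ldots,n\}$; the remaining cases follow from the same argument applied inside $\overline{T_S} \simeq \CC^{\sharp S}$.

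For the principal case with $T = (\CC^*)^n$, the strategy is to build a toric resolution monomializing both $P$ and $Q$. Choose a smooth subdivision $\Sigma$ of the common refinement $\Sigma_f$ of $\Sigma_P$ and $\Sigma_Q$ in $(\RR^n)^*_+$, and let $\pi \colon X_\Sigma \to \CC^n$ be the associated proper toric modification. On the affine chart $\CC^n(\sigma) \simeq \CC^n_y$ attached to a maximal cone $\sigma \in \Sigma$ with primitive edge generators $w_1,\ldots,w_n$, one obtains
\begin{equation}
P \circ \pi(y) = y_1^{a_1(P)}\cdots y_n^{a_n(P)}\cdot P_\sigma(y), \qquad Q \circ \pi(y) = y_1^{a_1(Q)}\cdots y_n^{a_n(Q)}\cdot Q_\sigma(y),
\end{equation}
with $a_i(P) = \min_{v \in \Gamma_+(P)}\langle w_i,v\rangle$, $a_i(Q) = \min_{v \in \Gamma_+(Q)}\langle w_i,v\rangle$, and $P_\sigma, Q_\sigma$ polynomials whose restrictions to the toric orbits are non-degenerate by Definition \ref{def-3}. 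Thus $f \circ \pi$ is a monomial times a ratio of units away from the locus $\{P_\sigma = Q_\sigma = 0\}$; to kill the remaining indeterminacy one performs a sequence of blow-ups along (the proper transforms of) $\{P_\sigma = 0\} \cap \{Q_\sigma = 0\}$ analogous to the construction in the proof of Theorem \ref{thm:3-5}, producing $\tilde\pi \colon \tilde X_\Sigma \to \CC^n$ and a holomorphic extension $g \colon \tilde X_\Sigma \to \PP^1$ of $f \circ \tilde\pi$.

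Next, I would apply Theorem \ref{prp:2-99} to $\tilde\pi$ and the constant sheaf $\CC_T$ to rewrite $\zeta_{f,0}(\CC_T)(t)$ as $\int_{\tilde\pi^{-1}(0)} \zeta_{f \circ \tilde\pi}(\CC_T)$. By Lemma \ref{lem:2-ac-1} the exceptional divisors from the indeterminacy-resolving blow-ups contribute trivially (their strata come with a spurious $\CC^*$ factor of vanishing Euler characteristic), so only toric orbits $T_\tau \subset X_\Sigma$ with $\mathrm{rel.int}(\tau) \subset \mathrm{Int}(\RR^n)^*_+$ survive. Combining Lemma \ref{lem:2-ac-21} with the vanishing argument appearing in the proof of Theorem \ref{thm:3-5-ad}, only rays $\tau \in \Sigma$ corresponding to compact facets $\gamma_i = \gamma_i^{\{1,\ldots,n\}}$ of $\Gamma_+(f)$ contribute. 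On such a ray, the exponent of $f \circ \pi$ in the transverse coordinate equals $d_i(P) - d_i(Q) = d_i$: when $d_i > 0$ the function has a genuine zero of order $d_i$ along $T_\tau$ and Lemma \ref{lem:2-ac-21} produces the factor $(1 - t^{d_i})$; when $d_i < 0$ the function has a pole and its contribution is to the monodromy at $\infty$ of $g$ (not to $\zeta_{f,0}$); when $d_i = 0$ the function is locally a unit and produces the trivial factor $1$.

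The concluding computation is the Euler characteristic of the transverse locus on $T_\tau \simeq (\CC^*)^{n-1}$ when $d_i > 0$, namely $T_\tau \setminus (P_\sigma^{-1}(0) \cup Q_\sigma^{-1}(0))$: by non-degeneracy, $\{P_\sigma = 0\} \cap T_\tau$ and $\{Q_\sigma = 0\} \cap T_\tau$ are smooth transverse hypersurfaces defined by Laurent polynomials with Newton polytopes $\gamma_i(P)$ and $\gamma_i(Q)$, so inclusion--exclusion combined with the full Bernstein-Khovanskii-Kouchnirenko formula (Theorem \ref{BKK}) yields
\begin{equation}
\chi\bigl(T_\tau \setminus (P_\sigma^{-1}(0) \cup Q_\sigma^{-1}(0))\bigr) = (-1)^{n-1}\sum_{k=0}^{n-1} \mathrm{Vol}_\ZZ\bigl(\underbrace{\gamma_i(P),\ldots,\gamma_i(P)}_{k},\underbrace{\gamma_i(Q),\ldots,\gamma_i(Q)}_{n-1-k}\bigr) = (-1)^{\sharp S - 1} K_i^S,
\end{equation}
giving the exponent in the stated formula. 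Multiplying the local factors over all rays and reassembling over all $S$ completes the proof. The main obstacle will be the bookkeeping of how the blow-ups resolving the $P = Q = 0$ indeterminacy interact with the toric strata; the exceptional divisors must be shown to contribute trivially even when they meet the proper transforms of the toric divisors, and this requires a local analysis paralleling the construction of Theorem \ref{thm:3-5} but simultaneously tracking the zero divisor and the pole divisor of $f$.
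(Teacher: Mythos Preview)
Your approach is correct and follows the same general outline (torus--orbit decomposition, toric modification, local A'Campo analysis, BKK count) that the paper has in mind; the final Euler characteristic computation via inclusion--exclusion and Theorem~\ref{BKK} giving $(-1)^{\sharp S-1}K_i^S$ is exactly right.

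The one difference worth noting is how the meromorphic nature of $f$ is handled. You reduce to the holomorphic setting by explicitly resolving the indeterminacy of $f\circ\pi$ through blow-ups, as in the proof of Theorem~\ref{thm:3-5}, and then invoke the holomorphic tools (Corollary~\ref{new-corol}, Lemma~\ref{lemma:2-8}, Theorem~\ref{prp:2-99}). The paper instead points to the meromorphic nearby cycle functor $\psi_f^{\mer}$: Lemma~\ref{lem-1}~(ii),(iii) plays the role of Corollary~\ref{new-corol} and Theorem~\ref{thm.7.2.006}, and Proposition~\ref{PDI} replaces Proposition~\ref{prp.7.1.002}/Theorem~\ref{prp:2-99}. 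With these in hand, one applies Proposition~\ref{PDI} directly to the toric modification $\pi$ without any further blow-ups---the indeterminacy is absorbed into the definition of $\psi_f^{\mer}$, and the local analysis on each $T_\tau$ is then a direct (meromorphic) A'Campo computation. Your route is essentially that of the original paper of Gusein-Zade--Luengo--Melle-Hern\'andez and buys concreteness; the paper's route via $\psi_f^{\mer}$ buys economy, since the bookkeeping you flag at the end (interaction of the indeterminacy-resolving exceptional divisors with the toric strata) simply does not arise. A small caveat: your opening decomposition cites the holomorphic results for a function that is not yet holomorphic; to make that step rigorous you should either invoke Lemma~\ref{lem-1}~(ii),(iii) in their place, or postpone the orbit decomposition until after the global resolution $\tilde\pi$.
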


Now we return to the general case where $X$ is 
a complex manifold and extend the classical 
notion of nearby cycle functors to 
meromorphic functions as follows 
(see also Raibaut \cite{Raibaut-2} for a 
similar but sligthly different approach to them). 
For the function $f= \frac{P}{Q}: 
X \setminus Q^{-1}(0) \longrightarrow \CC$ 
we consider the (not necessarily) closed embedding 
\begin{equation}
i_f: X \setminus Q^{-1}(0) \hookrightarrow 
X \times \CC_t \qquad (x \longmapsto (x,f(x))).  
\end{equation}
Let $t: X \times \CC \rightarrow \CC$ be the 
second projection. 

\begin{definition}
{\rm (Nguyen-Takeuchi \cite{N-T})} For 
$\F \in \Db(X)$ we set 
\begin{equation}
\psi_f^{\mer}( \F ):= \psi_t( {\rm R} i_{f*}
( \F |_{X \setminus Q^{-1}(0)}) ) 
\qquad \in \Db(X). 
\end{equation}
We call $\psi_f^{\mer}( \F )$ the meromorphic 
nearby cycle complex of $\F$ along $f$. 
\end{definition} 
We thus obtain a functor 
\begin{equation}
\psi_f^{\mer}(  \cdot  ) : 
\Db(X) \longrightarrow \Db(X). 
\end{equation}
We call it the meromorphic 
nearby cycle functor along $f$. 
As in the case where $f$ is holomorphic i.e. 
$f=\frac{P}{1}$, we have the following results. 

\begin{lemma}\label{lem-1} 
{\rm (see \cite[Lemma 2.1]{N-T})} 
\begin{enumerate}
\item[\rm{(i)}] The support of $\psi_f^{\mer}( \F )$ is 
contained in $P^{-1}(0)$. 
\item[\rm{(ii)}] There exists an isomorphism 
\begin{equation}
\psi_f^{\mer}( \F ) \simto 
\psi_f^{\mer}( {\rm R} 
\Gamma_{X \setminus (P^{-1}(0) \cup Q^{-1}(0))} 
( \F )). 
\end{equation}
\item[\rm{(iii)}] For any point $x \in P^{-1}(0)$ 
there exist isomorphisms 
\begin{equation}
H^j (F_x; \F ) \simeq H^j \psi_f^{\mer}( \F )_x 
 \qquad (j \in \ZZ ). 
\end{equation}
Moreover these isomorphisms are 
compatible with the monodromy automorphisms 
on the both sides. 
\end{enumerate}
\end{lemma}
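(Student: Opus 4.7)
The plan is to transport every statement from the graph embedding to $X \times \CC_t$ and to invoke the corresponding property of the ordinary nearby cycle functor $\psi_t$ established earlier. Throughout, write $U := X \setminus Q^{-1}(0)$, $j_U \colon U \hookrightarrow X$, and $\mathcal{G} := {\rm R}(i_f)_*(j_U^{-1}\F)$, so that by definition $\psi_f^{\mer}(\F) = \psi_t(\mathcal{G})$. For part (i), I would fix a point $y \in X \setminus P^{-1}(0)$ and show that $\mathcal{G}$ vanishes on a polydisc $W \times B(0;\eta) \subset X \times \CC_t$ about $(y,0)$, which forces $\psi_f^{\mer}(\F)_y = \psi_t(\mathcal{G})_{(y,0)} = 0$. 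There are two subcases: if $Q(y) \neq 0$, then $f$ is holomorphic near $y$ with $f(y) \neq 0$, so the graph of $f$ misses $W \times B(0;\eta)$ for small $W, \eta$; if $Q(y) = 0$ (whence $P(y) \neq 0$), then $|P(z)| \geq c > 0$ on a sufficiently small $W \ni y$, so the condition $|f(z)| < \eta$ forces $|Q(z)| > c/\eta$, which is inconsistent with $Q(z) \to 0$ as $z \to y$. In either case $i_f^{-1}(W \times B(0;\eta)) = \emptyset$.

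For (ii), set $V := X \setminus (P^{-1}(0) \cup Q^{-1}(0))$ and denote the natural open embeddings $j_V' \colon V \hookrightarrow U$, $j_V \colon V \hookrightarrow X$ and $k \colon X \times \CC^* \hookrightarrow X \times \CC_t$. Since $V$ is open in $U$, one has $j_U^{-1} {\rm R}(j_V)_* j_V^{-1}\F \simeq {\rm R}(j_V')_* j_V^{-1}\F$. Observing that the square with vertices $V, U, X \times \CC^*, X \times \CC_t$ and horizontal maps $i_f|_V$ and $i_f$ is Cartesian, base change gives $k^{-1}\mathcal{G} \simeq {\rm R}(i_f|_V)_* j_V^{-1}\F$, and hence
\begin{equation}
{\rm R}k_* k^{-1}\mathcal{G} \simeq {\rm R}(i_f)_* {\rm R}(j_V')_* j_V^{-1}\F \simeq {\rm R}(i_f)_* j_U^{-1}\bigl({\rm R}\Gamma_V \F\bigr).
\end{equation}
Applying Corollary \ref{new-corol} to $\psi_t$ on $X \times \CC_t$ (taking $X_0 := X \times \{0\}$, so that $X \setminus X_0 = X \times \CC^*$) yields $\psi_t(\mathcal{G}) \simeq \psi_t({\rm R}\Gamma_{X \times \CC^*}\mathcal{G}) = \psi_t({\rm R}k_*k^{-1}\mathcal{G})$, and by the above the right-hand side equals $\psi_f^{\mer}({\rm R}\Gamma_V\F)$.

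For (iii), I would apply Theorem \ref{thm.7.2.006} to the function $t \colon X \times \CC_t \to \CC$, the point $(x,0)$ and the constructible complex $\mathcal{G}$, to obtain
\begin{equation}
H^j\psi_f^{\mer}(\F)_x = H^j\psi_t(\mathcal{G})_{(x,0)} \simeq H^j(\tilde F_{(x,0)};\mathcal{G}),
\end{equation}
where $\tilde F_{(x,0)}$ is the Milnor fiber of $t$ at $(x,0)$ and, for $0 < |t_0| \ll \e$, can be identified with the slice $B(x;\e) \times \{t_0\}$. Picking a tubular open neighborhood $W_\delta := B(x;\e) \times B(t_0;\delta)$ that retracts onto this slice, the adjunction ${\rm R}\Gamma(W_\delta;\mathcal{G}) \simeq {\rm R}\Gamma(i_f^{-1}(W_\delta); j_U^{-1}\F)$ together with $i_f^{-1}(W_\delta) = B(x;\e) \cap U \cap f^{-1}(B(t_0;\delta))$ and the local triviality supplied by Theorem \ref{the-fib} (identifying this set up to homotopy with $F_x$ for $\delta$ small) give $H^j\psi_f^{\mer}(\F)_x \simeq H^j(F_x;\F)$. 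Compatibility with monodromies is automatic from the construction: the automorphism of $\psi_f^{\mer}(\F)$ is by definition the one on $\psi_t(\mathcal{G})$, and under the above identification a small positively oriented loop around $0 \in \CC_t$ corresponds to a small loop around the base point in the Milnor fibration of $f$ at $x$.

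The main technical obstacle lies in (iii): near points $(y,t_0)$ with $y \in I(f) = P^{-1}(0) \cap Q^{-1}(0)$ inside $B(x;\e)$, the locally closed embedding $i_f$ fails to be proper and $\mathcal{G}$ may carry nontrivial stalks coming from the limit behavior of $f$ as $z \to y$ through $U$. The delicate point is to verify that shrinking $\delta \to 0$ (with $\e, \eta$ chosen compatibly) controls these contributions uniformly, so that the cohomology of $W_\delta$ genuinely stabilizes to that of $F_x$; this is precisely the role of the meromorphic Milnor fibration statement in Theorem \ref{the-fib}.
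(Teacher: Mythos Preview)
The paper does not prove this lemma; it simply cites \cite[Lemma 2.1]{N-T}. So there is no paper proof to compare against, and your outline is the natural one.

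Parts (i) and (ii) are fine. For (ii), note that the isomorphism $\psi_t(\mathcal{G}) \simeq \psi_t({\rm R}k_*k^{-1}\mathcal{G})$ already follows from Definition~\ref{def.7.1.001} itself (the functor $\psi_t$ factors through $j^{-1}$), so you do not even need Corollary~\ref{new-corol} or constructibility of $\mathcal{G}$.

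For (iii) there is a genuine gap, though not quite where you locate it. You first invoke Theorem~\ref{thm.7.2.006} to get $H^j\psi_t(\mathcal{G})_{(x,0)}\simeq H^j(\tilde F_{(x,0)};\mathcal{G})$ with $\tilde F_{(x,0)}=B(x;\e)\times\{t_0\}$, and then try to replace the slice by the open tube $W_\delta=B(x;\e)\times B(t_0;\delta)$ via the obvious retraction. That step is not justified: $\mathcal{G}={\rm R}(i_f)_*(\F|_U)$ is concentrated on the closure of the graph $\Gamma_f$, hence is highly non-constant along the $t$-direction, so the retraction $W_\delta\to\tilde F_{(x,0)}$ does not induce an isomorphism on $R\Gamma(\,\cdot\,;\mathcal{G})$ in general. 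The ``technical obstacle'' you flag at the end (stalks of $\mathcal{G}$ over $I(f)\times\{t_0\}$) is a symptom of this: restricting $\mathcal{G}$ to the slice brings in exactly those contributions, and there is no reason they should match $R\Gamma(W_\delta;\mathcal{G})$.

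The fix is to bypass the slice altogether. Do not quote Theorem~\ref{thm.7.2.006}; instead rerun the proof of Theorem~\ref{thm.7.2.004} for the function $t$ and the sheaf $\mathcal{G}$. That argument identifies $H^j\psi_t(\mathcal{G})_{(x,0)}$ directly with a stalk of ${\rm R}p_*\mathcal{G}$ on the punctured disk (via the universal cover), and this stalk is
\[
({\rm R}p_*\mathcal{G})_{t_0}=\varinjlim_{\delta\to 0}R\Gamma\bigl(B(x;\e)\times B(t_0;\delta);\mathcal{G}\bigr)
=\varinjlim_{\delta\to 0}R\Gamma\bigl(B(x;\e)\cap U\cap f^{-1}(B(t_0;\delta));\F\bigr),
\]
using only the adjunction for ${\rm R}(i_f)_*$ over the \emph{open} set $W_\delta$. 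Now Theorem~\ref{the-fib} gives the local triviality of $B(x;\e)\cap U\to B(0;\eta)^*$, so the last inverse limit is $R\Gamma(F_x;\F)$ and the cohomology sheaves ${\rm R}^jp_*\mathcal{G}$ are local systems, which is exactly what the proof of Theorem~\ref{thm.7.2.004} needs. This route never touches the slice $\tilde F_{(x,0)}$ or the stalks of $\mathcal{G}$ over $I(f)$, and your worry dissolves.
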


\begin{theorem}\label{the-1} 
{\rm (see \cite[Theorem 2.2]{N-T})} 
\begin{enumerate}
\item[\rm{(i)}] If $\F \in \Db(X)$ is constructible, then 
$\psi_f^{\mer}( \F ) \in \Db(X)$ is also constructible. 
\item[\rm{(ii)}] If $\F \in \Db(X)$ is perverse, then 
$\psi_f^{\mer}( \F )[-1] \in \Db(X)$ is also perverse. 
\end{enumerate}
\end{theorem}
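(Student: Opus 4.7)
My approach is to unfold the defining composition $\psi_f^{\mer}(\F) = \psi_t(R i_{f*}(\F|_{U}))$, where $U := X \setminus Q^{-1}(0)$, through the factorization of the graph embedding as $i_f = J \circ \gamma_f$. Here $\gamma_f \colon U \hookrightarrow U \times \CC$, $x \mapsto (x, f(x))$, is the closed embedding given by the graph of $f|_U$, and $J \colon U \times \CC \hookrightarrow X \times \CC$ is the open embedding whose complement $Q^{-1}(0) \times \CC$ is a complex analytic subset of $X \times \CC$.

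For (i), the argument is a chain of preservation statements. The open restriction $\F|_U$ is constructible on $U$; the pushforward $\gamma_{f*}(\F|_U) = R\gamma_{f!}(\F|_U)$ is constructible on $U \times \CC$, since $\gamma_f$ is a closed embedding; the derived pushforward $RJ_* \gamma_{f*}(\F|_U)$ is constructible on $X \times \CC$, by the standard result that $Rj_*$ for an open inclusion $j$ with complex analytic complement preserves constructibility; and finally $\psi_t$ of a constructible complex is constructible by Theorem \ref{thm.7.1.004}. Composing these gives (i).

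For (ii), the plan is to reduce to the classical perversity statement Theorem \ref{thm.7.2.007} by resolving the indeterminacy of $f$. The assertion being local on $X$ and $\psi_f^{\mer}(\F)$ being supported on $P^{-1}(0)$ by Lemma \ref{lem-1}(i), I check perversity of $\psi_f^{\mer}(\F)[-1]$ near each $x_0 \in P^{-1}(0)$. When $x_0 \notin Q^{-1}(0)$, the function $f$ is holomorphic near $x_0$, $i_f$ is the classical graph embedding there, and $\psi_f^{\mer}(\F)[-1]$ coincides on a neighborhood of $x_0$ with the classical $\psi_f(\F)[-1]$, which is perverse by Theorem \ref{thm.7.2.007}. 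When $x_0 \in I(f) := P^{-1}(0) \cap Q^{-1}(0)$, I invoke Hironaka's resolution of indeterminacy to obtain a proper bimeromorphic $\pi \colon \tl{X} \to X$ with $\tl{X}$ smooth, such that $\pi$ is an isomorphism over $X \setminus I(f)$, the pullback divisors of $P \circ \pi$ and $Q \circ \pi$ are in normal crossing position with the exceptional locus, and $\tl{f} := f \circ \pi$ extends to a holomorphic map $\tl{F} \colon \tl{X} \to \PP^1$. Writing $\tl{U} := \pi^{-1}(U)$ and using the identity $i_f \circ \pi|_{\tl{U}} = (\pi \times \id_\CC) \circ \tl{i}_{\tl{f}}$, Proposition \ref{prp.7.1.002} applied to the proper morphism $\pi \times \id_\CC$ yields an identification of $\psi_f^{\mer}(\F)$ with $R\pi_*$ of the classical nearby cycle $\psi_{\tl{F}}$ applied to a suitable perverse (possibly after shift) complex on $\tl{X}$. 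Then Theorem \ref{thm.7.2.007} gives the required perversity upstairs, and the transfer down to $X$ uses the good properties of the proper bimeromorphic $\pi$.

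\textbf{Main obstacle.} The delicate step is the descent of perversity along the bimeromorphic $\pi$: neither is $\pi^{-1}\F$ automatically perverse (since $\pi$ is not smooth), nor is $R\pi_*$ of a perverse sheaf perverse in general. The plan to overcome this is to combine Lemma \ref{lem-1}(ii), which allows replacing $\F$ by $R\Gamma_{X \setminus (P^{-1}(0) \cup Q^{-1}(0))}(\F)$, with the fact that $\pi$ is an isomorphism over the open subset where both $P$ and $Q$ are non-vanishing, so that the needed base change and perversity transfer can be verified on a locus where $\pi$ is smooth, ultimately circumventing the general non-exactness issue.
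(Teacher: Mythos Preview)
The paper does not prove this theorem; it only cites \cite[Theorem 2.2]{N-T}. So there is no ``paper's proof'' to compare with, and I evaluate your proposal on its own merits.

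Your argument for (i) is correct: the factorization $i_f = J \circ \gamma_f$ with $\gamma_f$ a closed embedding and $J$ an open embedding with analytic complement, followed by $\psi_t$, preserves constructibility step by step.

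For (ii), however, there is a genuine gap, and the detour through resolution of indeterminacy is the wrong route. You correctly identify the obstacle: $\pi^{-1}\F$ is not perverse when $\pi$ is not smooth, and $R\pi_*$ of a perverse sheaf is not perverse in general. Your proposed workaround via Lemma~\ref{lem-1}(ii) does not close this gap: replacing $\F$ by $R\Gamma_{X\setminus(P^{-1}(0)\cup Q^{-1}(0))}(\F)$ trades one non-perverse object for another, and the fact that $\pi$ is an isomorphism over that open set says nothing about perversity over the exceptional locus, which is exactly where the support of $\psi_f^{\mer}(\F)$ lies.

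The clean argument stays with your factorization $i_f = J \circ \gamma_f$ and never leaves $X\times\CC$. The point you are missing is that $J\colon (X\setminus Q^{-1}(0))\times\CC \hookrightarrow X\times\CC$ is an \emph{affine} (Stein) open embedding, since its complement $Q^{-1}(0)\times\CC$ is a hypersurface defined by the single equation $Q=0$. For such embeddings both $RJ_*$ and $J_!$ are $t$-exact for the perverse $t$-structure (Artin vanishing; see e.g.\ \cite[Theorem 5.2.18 and Corollary 6.0.5]{Sch} or \cite[Theorem 4.1.1]{Dimca}). Hence: $\F$ perverse $\Rightarrow$ $\F|_U$ perverse (open restriction is $t$-exact) $\Rightarrow$ $\gamma_{f*}(\F|_U)$ perverse on $U\times\CC$ (closed embeddings are $t$-exact) $\Rightarrow$ $RJ_*\gamma_{f*}(\F|_U)=Ri_{f*}(\F|_U)$ perverse on $X\times\CC$ $\Rightarrow$ $\psi_t(Ri_{f*}(\F|_U))[-1]$ perverse by Theorem~\ref{thm.7.2.007}. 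This is $\psi_f^{\mer}(\F)[-1]$, and the proof of (ii) is complete in one line once the affineness of $J$ is recognized.
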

By this theorem we obtain a functor 
\begin{equation}
\psi_f^{\mer}(  \cdot ) : 
\Dbc (X) \longrightarrow \Dbc (X). 
\end{equation}

\begin{remark}\label{rem-1} 
Assume that the meromorphic function $f(x)= \frac{P(x)}{Q(x)}$ 
is holomorphic on a neighborhood of a point 
$x \in X$ i.e. there exists a holomorphic function 
$g(x)$ defined on a neighborhood of 
$x \in X$ such that $P(x)=Q(x) \cdot g(x)$ on it. 
Then by identifying $f(x)$ with the 
holomorphic function $g(x)$, 
we have an isomorphism 
\begin{equation}
\psi_f^{\mer}( \F )_x \simeq 
\psi_f ( {\rm R} 
\Gamma_{X \setminus Q^{-1}(0)} ( \F ))_x
\end{equation}
for the classical (holomorphic) nearby cycle 
functor $\psi_f( \cdot )$. This implies that 
even if $f(x)$ is holomorphic on 
a neighborhood of $x \in X$ we do not 
have an isomorphism 
\begin{equation}
\psi_f^{\mer}( \F )_x \simeq 
\psi_f ( \F )_x
\end{equation}
in general. 
\end{remark}

The following result is an analogue 
for $\psi_f^{\mer}( \cdot )$ of the 
classical one for $\psi_f( \cdot )$ 
in Proposition \ref{prp.7.1.002}. 

\begin{proposition}\label{PDI} 
{\rm (see \cite[Proposition 2.4]{N-T})} 
Let $\pi : Y \longrightarrow X$ be a proper 
morphism of complex manifolds and 
$f \circ \pi$ a meromorphic function on $Y$ defined by 
\begin{equation}
f \circ \pi = \frac{P \circ \pi}{Q \circ \pi}. 
\end{equation}
Then for $\G \in \Db(Y)$ 
there exists an isomorphism 
\begin{equation}
\psi_{f}^{\mer} ( {\rm R} \pi_* \G ) \simeq 
{\rm R} \pi_* \psi_{f \circ \pi}^{\mer} ( \G ). 
\end{equation}
If moreover $\pi$ induces an isomorphism 
\begin{equation}
Y \setminus \pi^{-1}(P^{-1}(0) \cup Q^{-1}(0)) 
\simto X \setminus (P^{-1}(0) \cup Q^{-1}(0)), 
\end{equation}
then for $\F \in \Db(X)$ 
there exists an isomorphism 
\begin{equation}
\psi_{f}^{\mer} ( \F ) \simeq 
{\rm R} \pi_* \psi_{f \circ \pi}^{\mer} ( \pi^{-1} \F ). 
\end{equation}
\end{proposition}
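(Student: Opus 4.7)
The plan is to deduce both statements from Proposition \ref{prp.7.1.002} applied to the classical nearby cycle functor $\psi_t$. For (i), I set up the geometric picture as follows. Let $U := X \setminus Q^{-1}(0)$ and $V := Y \setminus (Q\circ\pi)^{-1}(0) = \pi^{-1}(U)$, and write $\pi_U: V \to U$ for the restriction of $\pi$. Let $i_f: U \hookrightarrow X \times \CC_t$ and $i_{f\circ\pi}: V \hookrightarrow Y \times \CC_t$ be the graph embeddings entering the definition of $\psi^{\mer}$, and set $\widetilde{\pi} := \pi \times \id_{\CC_t}: Y \times \CC_t \to X \times \CC_t$, which is proper. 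A check on points gives the key identity $i_f \circ \pi_U = \widetilde{\pi} \circ i_{f\circ\pi}$.

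Base change for the Cartesian square formed by the open immersions $U \hookrightarrow X$, $V \hookrightarrow Y$ and the vertical maps $\pi, \pi_U$ yields $(R\pi_*\G)|_U \simeq R(\pi_U)_*(\G|_V)$; combining this with the identity above gives
\begin{equation*}
Ri_{f*}\bigl((R\pi_* \G)|_U\bigr) \simeq Ri_{f*}R(\pi_U)_*(\G|_V) \simeq R\widetilde{\pi}_*Ri_{f\circ\pi,*}(\G|_V).
\end{equation*}
Applying $\psi_t$ and invoking Proposition \ref{prp.7.1.002} for the proper morphism $\widetilde{\pi}$ (noting $t \circ \widetilde{\pi} = t$ and that $\widetilde{\pi}$ restricted to the zero fibre $Y \times \{0\}$ is $\pi$), I get
\begin{equation*}
\psi_t\bigl(Ri_{f*}((R\pi_*\G)|_U)\bigr) \simeq R\pi_* \psi_t\bigl(Ri_{f\circ\pi,*}(\G|_V)\bigr),
\end{equation*}
which unwinds to the desired isomorphism $\psi_f^{\mer}(R\pi_*\G) \simeq R\pi_*\psi_{f\circ\pi}^{\mer}(\G)$.

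For (ii), I combine (i) with Lemma \ref{lem-1}(ii). Set $W := X \setminus (P^{-1}(0) \cup Q^{-1}(0))$ and let $j: W \hookrightarrow X$, $j': \pi^{-1}(W) \hookrightarrow Y$ be the open inclusions, so that $\pi^{-1}(W) = Y \setminus ((P\circ\pi)^{-1}(0) \cup (Q\circ\pi)^{-1}(0))$. By hypothesis the restriction $\pi_W: \pi^{-1}(W) \simto W$ is an isomorphism, which gives
\begin{equation*}
R\Gamma_W \F = Rj_* j^{-1}\F \simeq Rj_* R(\pi_W)_*(\pi_W)^{-1}j^{-1}\F \simeq R\pi_* Rj'_*(j')^{-1}\pi^{-1}\F = R\pi_* R\Gamma_{\pi^{-1}(W)}(\pi^{-1}\F).
\end{equation*}
Applying Lemma \ref{lem-1}(ii) on $X$, then part (i), and then Lemma \ref{lem-1}(ii) on $Y$ yields
\begin{equation*}
\psi_f^{\mer}(\F) \simeq \psi_f^{\mer}(R\Gamma_W \F) \simeq R\pi_*\psi_{f\circ\pi}^{\mer}\bigl(R\Gamma_{\pi^{-1}(W)}(\pi^{-1}\F)\bigr) \simeq R\pi_*\psi_{f\circ\pi}^{\mer}(\pi^{-1}\F),
\end{equation*}
as desired.

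Apart from bookkeeping with several functors, the only real subtlety is the role of the isomorphism hypothesis in (ii): it is precisely what allows us to rewrite the auxiliary object $R\Gamma_W\F$ in the form $R\pi_*(\cdot)$ on $X$ so that (i) can be brought to bear; without this hypothesis $\pi_W$ has positive-dimensional fibres and the reduction breaks down. Everything else --- the Cartesianness of the graph-embedding square and the compatibility $t \circ \widetilde{\pi} = t$ --- is straightforward to verify, so I do not anticipate a substantive obstacle.
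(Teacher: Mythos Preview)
Your proof is correct. The paper itself does not give a proof of this proposition --- it merely cites \cite[Proposition 2.4]{N-T} --- so there is nothing in the text to compare against directly; but your argument is the natural one and almost certainly coincides with the cited proof: unwind the definition of $\psi_f^{\mer}$ via the graph embedding, reduce to Proposition~\ref{prp.7.1.002} for the classical $\psi_t$ applied to the proper map $\pi\times\id_{\CC_t}$, and for the second part use Lemma~\ref{lem-1}(ii) together with the fact that an isomorphism over the complement lets you rewrite $R\Gamma_W\F$ as a pushforward from $Y$.
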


With Lemma \ref{lem-1} (iii) and 
Propositon \ref{PDI} at hands, as in 
the proofs of the theorems of Varchenko and Oka in 
Section \ref{section 3} one can now reprove Theorem 
\ref{the-2} using meromorphic nearby 
cycle sheaves. We leave it for an exercise of 
the readers. Moreover, thanks to the perversity in 
Theorem \ref{the-1} (ii), we obtain the following result. 
The problem being local, we may assume that $X$ is an open 
neighborhood of the origin $0 \in \CC^n$ in 
$\CC^n$ and $x=0 \in I(f)=P^{-1}(0) \cap Q^{-1}(0)$. 
For $j \in \ZZ$ and $\lambda \in \CC$ 
we denote by 
\begin{equation}
H^j(F_0; \CC )_{\lambda} \subset H^j(F_0; \CC )
\end{equation}
the generalized eigenspace of $\Phi_{j,0}$ 
for the eigenvalue $\lambda$. 

\begin{theorem}\label{the-33} 
{\rm (Nguyen-Takeuchi \cite[Theorem 1.2]{N-T})} 
Assume that the hypersurfaces $P^{-1}(0)$ and 
$Q^{-1}(0)$ of $X \subset \CC^n$ have an isolated 
singular point at the origin $0 \in X \subset \CC^n$ 
and intersect transversally on $X \setminus \{ 0 \}$. 
Then for any $\lambda \not=1$ we have 
the concentration 
\begin{equation}
H^j(F_0; \CC )_{\lambda} \simeq 0 
\qquad (j \not= n-1). 
\end{equation}
\end{theorem}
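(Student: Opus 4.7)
The plan is to show that for each $\lambda\neq 1$ the generalized monodromy eigenspace ${}^{p}\psi_{f,\lambda}^{\mer}(\CC_X[n]):=\psi_{f,\lambda}^{\mer}(\CC_X[n])[-1]$ is a perverse sheaf on $X$ supported at the single point $\{0\}$; the concentration of $H^j(F_0;\CC)_\lambda$ in degree $n-1$ then follows by a stalk calculation. Since $X$ is smooth of complex dimension $n$, $\CC_X[n]$ is perverse, and Theorem \ref{the-1}~(ii) makes ${}^{p}\psi_f^{\mer}(\CC_X[n])$ a perverse sheaf on $X$, supported in $P^{-1}(0)$ by Lemma \ref{lem-1}~(i). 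Because $\psi_f^{\mer}$ is by definition the classical nearby cycle functor $\psi_t$ applied to a suitable graph pushforward, the monodromy is quasi-unipotent and the generalized eigenspace decomposition analogous to Theorem \ref{thm.7.2.0078} takes place inside the abelian category of perverse sheaves, yielding the summands ${}^{p}\psi_{f,\lambda}^{\mer}(\CC_X[n])$.

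The crucial step is a stalkwise analysis at a point $x\in P^{-1}(0)\setminus\{0\}$, carried out in two cases. If $x\in P^{-1}(0)\setminus Q^{-1}(0)$, then $Q$ is invertible near $x$, so $f$ is holomorphic there and by Remark \ref{rem-1}, $\psi_f^{\mer}(\CC_X[n])_x$ reduces to the classical $\psi_f(\CC_X[n])_x$. The isolated singularity hypothesis makes $P^{-1}(0)$ smooth at $x$, so $f$ has nonvanishing differential at $x$, the Milnor fiber $F_x$ is contractible, and the monodromy is trivial; hence only the eigenvalue $1$ occurs. If instead $x\in P^{-1}(0)\cap Q^{-1}(0)$ with $x\neq 0$, transversality together with smoothness of $P^{-1}(0)$ and $Q^{-1}(0)$ at $x$ furnishes analytic coordinates $(\tl{z}_1,\dots,\tl{z}_n)$ centered at $x$ in which $P=\tl{z}_1$ and $Q=\tl{z}_2$. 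In the resulting normal form $f=\tl{z}_1/\tl{z}_2$, the Milnor fiber
\begin{equation*}
F_c=\bigl\{\tl{z}_1=c\tl{z}_2\bigr\}\cap B\setminus\bigl\{\tl{z}_2=0\bigr\}
\end{equation*}
is parametrized by $(\tl{z}_2,\tl{z}_3,\dots,\tl{z}_n)$ with $\tl{z}_2\neq 0$ and hence is homotopy equivalent to $S^1$; the family of parametrizations $(\tl{z}_2,\dots,\tl{z}_n)\mapsto (c\tl{z}_2,\tl{z}_2,\tl{z}_3,\dots,\tl{z}_n)$ trivializes the locally trivial fibration $B\setminus Q^{-1}(0)\to\Delta^*$, so the monodromy is the identity and again only $\lambda=1$ occurs. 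Combining these two cases with Lemma \ref{lem-1}~(iii), the stalks of ${}^{p}\psi_{f,\lambda}^{\mer}(\CC_X[n])$ vanish outside $\{0\}$ for every $\lambda\neq 1$.

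At this point a perverse sheaf on the smooth $n$-dimensional variety $X$ supported at the single point $\{0\}$ must be of the form $i_{0*}V$ for some finite-dimensional $\CC$-vector space $V$ placed in cohomological degree $0$. Using Lemma \ref{lem-1}~(iii) together with the identity $\CC_X=\CC_X[n][-n]$,
\begin{equation*}
H^j(F_0;\CC)_\lambda\simeq H^j\psi_{f,\lambda}^{\mer}(\CC_X)_0
=H^{j-n+1}\,{}^{p}\psi_{f,\lambda}^{\mer}(\CC_X[n])_0
\end{equation*}
equals $V$ when $j=n-1$ and vanishes otherwise, yielding the desired concentration.

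The main obstacle I anticipate is the transversal case $x\in P^{-1}(0)\cap Q^{-1}(0)\setminus\{0\}$: although the coordinate reduction to the normal form $f=\tl{z}_1/\tl{z}_2$ is straightforward, one must check carefully that the proposed trivialization is compatible with the ambient Milnor ball $B$ and that the meromorphic Milnor fibration coincides, up to a homotopy inside $B\setminus Q^{-1}(0)$, with the obvious projection onto the $c$-parameter. Once this local triviality is set up cleanly, the rest is a formal consequence of the perverse structure established in Theorem \ref{the-1}~(ii) and of the classification of perverse sheaves supported at a single point.
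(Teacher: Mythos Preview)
Your proposal is correct and follows exactly the strategy the paper indicates. The paper does not give an in-text proof of this theorem (it cites \cite[Theorem 1.2]{N-T}), but the sentence preceding the statement, ``thanks to the perversity in Theorem \ref{the-1} (ii), we obtain the following result,'' is precisely your plan: use that ${}^p\psi_f^{\mer}(\CC_X[n])$ is perverse, show that for $\lambda\neq 1$ the $\lambda$-eigenspace summand has support contained in $\{0\}$ by a local analysis at points of $P^{-1}(0)\setminus\{0\}$, and then invoke the fact that a perverse sheaf supported at a point is a vector space in degree $0$.

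Two small remarks. First, in the case $x\in P^{-1}(0)\setminus Q^{-1}(0)$ you infer ``$f$ has nonvanishing differential at $x$'' from smoothness of the hypersurface $P^{-1}(0)$; this uses that $P$ vanishes to order exactly $1$ along $P^{-1}(0)$ near $x$, which is the intended reading of the isolated-singularity hypothesis (i.e.\ the \emph{functions} $P,Q$ have isolated critical points on their zero loci), not merely smoothness of the underlying reduced set. Second, your worry about compatibility of the trivialization with the Milnor ball in the normal form $f=\tl{z}_1/\tl{z}_2$ is harmless: in the $(\tl{z}_2,\dots,\tl{z}_n)$-chart the fiber over $c$ is $\{(1+|c|^2)|\tl{z}_2|^2+|\tl{z}_3|^2+\cdots<\varepsilon^2,\ \tl{z}_2\neq 0\}$, which depends only on $|c|$; along a circle $|c|=\mathrm{const}$ the identity in these coordinates gives a trivialization, so the monodromy is literally the identity.
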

It seems that there is some geometric background on 
$F_0$ (like Milnor's bouquet 
decomposition theorem in 
Theorem \ref{thm.7.2.003}) 
for Theorem \ref{the-33} to hold. 
It would be an interesting problem to find it 
and reprove Theorem \ref{the-33} in a purely 
geometric manner. 
Combining the formula for 
$\zeta_{f,0}(t) \in \CC (t)$ in Theorem 
\ref{the-2}  
with Theorem \ref{the-33}, we obtain a 
formula for the multiplicities of the eigenvalues 
$\lambda \not= 1$ in $\Phi_{n-1,0}$ as follows. 

\begin{corollary}\label{cor-1} 
Assume that $f(x)= \frac{P(x)}{Q(x)}$ is 
non-degenerate at the origin $0 \in X= \CC^n$ 
and $P(x), Q(x)$ are convenient. 
Then in the notations of Theorem \ref{the-2}, 
for any $\lambda \not=1$ 
the multiplicity of the eigenvalue 
$\lambda$ in $\Phi_{n-1, 0}$ is 
equal to that of the factor $t- \lambda$ 
in the rational function 
\begin{equation}
\prod_{S \not= \emptyset} \ \Bigl\{ 
\prod_{i: \  d_i^S >0} 
(t^{d_i^S}-1)^{(-1)^{n- \sharp S} K_i^S} \Bigr\}
\qquad \in \CC (t)^*. 
\end{equation}
\end{corollary}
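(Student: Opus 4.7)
The plan is to derive Corollary \ref{cor-1} by combining the zeta function formula of Theorem \ref{the-2} with the concentration result of Theorem \ref{the-33}, after verifying that the hypotheses of the corollary imply those of the concentration theorem. My first step would be to show that when $f = P/Q$ is non-degenerate at the origin and $P,Q$ are convenient, the hypersurfaces $P^{-1}(0)$ and $Q^{-1}(0)$ are smooth or have an isolated singularity at $0$ and intersect transversally on $X \setminus \{0\}$. The argument mimics Lemma \ref{leme:3-5-ad}: one constructs a smooth subdivision $\Sigma$ of the dual fan of $\Gamma_+(f) = \Gamma_+(P) + \Gamma_+(Q)$ that does not subdivide the coordinate faces of $(\RR^n)^*_+$, and uses the simultaneous non-degeneracy condition of Definition \ref{def-3} on each compact face $\gamma = \gamma(P) + \gamma(Q)$ to see that on the smooth toric variety $X_\Sigma$ the proper transforms of $P^{-1}(0)$ and $Q^{-1}(0)$ meet each toric stratum lying over $0$ transversally and meet each other transversally. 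Pushing back to $\CC^n$ then yields the needed transversality away from the origin.

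Once this is done, Theorem \ref{the-33} applies and gives $H^j(F_0;\CC)_\lambda \simeq 0$ for every $j \neq n-1$ and every $\lambda \neq 1$. Writing $m_\lambda^{(j)}$ for the multiplicity of $\lambda$ as an eigenvalue of $\Phi_{j,0}$ and expanding
\begin{equation}
\det(\id - t\Phi_{j,0}) = \prod_\mu (1-\mu t)^{m_\mu^{(j)}} = \prod_\mu (-\mu)^{m_\mu^{(j)}}(t-\mu^{-1})^{m_\mu^{(j)}},
\end{equation}
I would read off that the multiplicity of the factor $(t-\lambda)$ in $\det(\id-t\Phi_{j,0})$ equals $m_{1/\lambda}^{(j)}$. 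Combining with the alternating product in Definition \ref{dfn:2-79} and the vanishing just established, the multiplicity of $(t-\lambda)$ in $\zeta_{f,0}(t)$ equals $(-1)^{n-1} m_{1/\lambda}^{(n-1)}$ for every $\lambda \neq 1$.

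Next I would match this with the combinatorial side. Using $t^d - 1 = -(1-t^d)$ and the identity $(-1)^{n-\sharp S} = (-1)^{n-1}(-1)^{\sharp S -1}$, Theorem \ref{the-2} gives
\begin{equation}
\prod_{S \neq \emptyset}\prod_{i:\, d_i^S>0}(t^{d_i^S}-1)^{(-1)^{n-\sharp S}K_i^S} \;=\; C \cdot \zeta_{f,0}(t)^{(-1)^{n-1}}
\end{equation}
for a nonzero constant $C \in \CC^*$. Therefore the multiplicity of $(t-\lambda)$ in the left hand side equals $(-1)^{n-1}$ times the multiplicity of $(t-\lambda)$ in $\zeta_{f,0}(t)$, which by the previous step is $m_{1/\lambda}^{(n-1)}$. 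Finally, I would invoke the fact that the characteristic polynomial of $\Phi_{n-1,0}$ has rational (indeed integer) coefficients because it comes from the $\ZZ$-structure on $H^{n-1}(F_0;\ZZ)$; hence complex conjugate roots have equal multiplicity, and for a root of unity $\lambda$ this gives $m_\lambda^{(n-1)} = m_{\overline{\lambda}}^{(n-1)} = m_{1/\lambda}^{(n-1)}$, completing the proof.

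The main obstacle will be the first step: carefully checking that the combinatorial non-degeneracy of $P/Q$ together with the convenience of $P$ and $Q$ really forces the transversality condition at the origin that underlies Theorem \ref{the-33}. Once that geometric input is secured, the remainder reduces to sign bookkeeping and the Galois symmetry $m_\lambda = m_{1/\lambda}$.
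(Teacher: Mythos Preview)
Your proposal is correct and follows exactly the route the paper indicates: the paper gives no detailed proof of this corollary beyond the sentence ``Combining the formula for $\zeta_{f,0}(t)$ in Theorem~\ref{the-2} with Theorem~\ref{the-33}, we obtain \ldots'', and your outline fills in precisely those details. Your verification that non-degeneracy of $f=P/Q$ together with convenience of $P$ and $Q$ forces the hypotheses of Theorem~\ref{the-33} (via a toric resolution subordinate to the dual fan of $\Gamma_+(P)+\Gamma_+(Q)$, as in Lemma~\ref{leme:3-5-ad}) is the right way to bridge the gap the paper leaves implicit, and your handling of the $\lambda \leftrightarrow \lambda^{-1}$ discrepancy via the integral structure on $H^{n-1}(F_0;\ZZ)$ is sound---one could equivalently note that the combinatorial product is itself invariant under $\lambda \mapsto \lambda^{-1}$ since each factor $(t^{d_i^S}-1)$ is.
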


For the meromorphic function $f(x)= \frac{P(x)}{Q(x)}$ 
and $\F \in \Db(X)$ we set also 
\begin{equation}
\psi_f^{\merc}( \F ):= \psi_t( {\rm R}i_{f!}
( \F |_{X \setminus Q^{-1}(0)}) ) 
\qquad \in \Db(X). 
\end{equation}
In fact, this object was originally defined by 
Raibaut in \cite{Raibaut-2}. 
Here we call it the meromorphic 
nearby cycle complex 
with compact support of $\F$ along $f$. 
Then we obtain a functor 
\begin{equation}
\psi_f^{\merc}( \cdot ) : 
\Db(X) \longrightarrow \Db(X)
\end{equation}
which satisfies the properties similar to 
the ones in  Lemma \ref{lem-1} (i) (ii),  
Theorem \ref{the-1} and Proposition 
\ref{PDI} (see e.g. \cite[Section 2]{Tak-3}). 
Moreover if $f$ 
is holomorphic on a neighborhood of a point 
$x \in X$, then we have an isomorphism 
\begin{equation}
\psi_f^{\merc}( \F )_x \simeq 
\psi_f ( \F_{X \setminus Q^{-1}(0)} )_x
\end{equation}
for the classical (holomorphic) nearby cycle 
functor $\psi_f( \cdot )$. 
However the isomorphism in Lemma \ref{lem-1} (iii) 
does not hold for $\psi_f^{\merc}( \F )$. 
This implies that the natural morphism 
\begin{equation}
\psi_f^{\merc}( \F ) \longrightarrow 
\psi_f^{\mer}( \F )
\end{equation}
is not an isomorphism in general. 

From now on, we assume that 
$f(x)= \frac{P(x)}{Q(x)}$ is 
a rational function on $X= \CC^n$ and 
$x=0 \in P^{-1}(0)$. 
To obtain also a formula for 
the Jordan normal form of its monodromy 
$\Phi_{n-1, 0}$ as in 
Theorems \ref{thm:8-4} and \ref{jordan}, 
we have to assume moreover that 
$f$ is polynomial-like in the following sense. 

\begin{definition}\label{def-2 7}
{\rm (Nguyen-Takeuchi \cite[Definition 1.3]{N-T})}
We say that the rational function 
$f(x)= \frac{P(x)}{Q(x)}$ is polynomial-like 
if there exists a 
resolution $\pi_0: \widetilde{X} \rightarrow X= \CC^n$
of singularities of $P^{-1}(0) \cup Q^{-1}(0)$ 
which induces an isomorphism 
$\widetilde{X} \setminus \pi_0^{-1}( \{ 0 \} ) 
\simto X \setminus \{ 0 \}$ such that for 
any irreducible component $D_i$ of the 
(exceptional) normal crossing divisor 
$D= \pi_0^{-1}( \{ 0 \} )$ we have the condition 
\begin{equation}
{\rm ord}_{D_i} (P \circ \pi_0) > 
{\rm ord}_{D_i} (Q \circ \pi_0). 
\end{equation}
\end{definition}
Since we assume here that $0 \in P^{-1}(0)$, 
if $Q=1$ i.e. $f= \frac{P}{1}$ is a polynomial 
on $X= \CC^n$ then it is polynomial-like in 
the sense of Definition \ref{def-2 7}. 
Let us give more examples of polynomial-like 
rational functions 
$f(x)= \frac{P(x)}{Q(x)}$. 

\begin{definition}\label{def-29}
{\rm (Nguyen-Takeuchi \cite[Definition 5.13]{N-T})}
We say that the Newton polyhedron
$\Gamma_+(P)$ is properly contained in the one
$\Gamma_+(Q)$ if for any 
$u \in \Int ( \RR^n)^*_+$ we have 
\begin{equation}
\min_{v \in \Gamma_+(P)} \langle u, v \rangle
>
\min_{v \in \Gamma_+(Q)} \langle u, v \rangle.
\end{equation}
In this case, we write $\Gamma_+(P) \subset
\subset \Gamma_+(Q)$.
\end{definition}
Since we assume here that $0 \in P^{-1}(0)$, obviously 
if $Q=1$ i.e. $f= \frac{P}{1}$ is a polynomial 
on $X= \CC^n$ then we have $\Gamma_+(P) \subset
\subset \Gamma_+(Q)$. 

\begin{lemma}\label{lem-pp-pl}
Assume that the rational function
$f(x)= \frac{P(x)}{Q(x)}$ is 
non-degenerate at the origin $0 \in X= \CC^n$, 
$P(x), Q(x)$ are convenient, and $\Gamma_+(P) \subset
\subset \Gamma_+(Q)$. Then $f(x)= \frac{P(x)}{Q(x)}$ 
is polynomial-like. 
\end{lemma}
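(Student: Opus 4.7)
The plan is to construct the desired resolution as a smooth toric modification associated to the Newton data of $P$ and $Q$, and then read off the polynomial-like condition from the definition of $\Gamma_+(P)\subset\subset\Gamma_+(Q)$ directly on toric charts. More precisely, let $\Sigma_f$ be the dual fan of $\Gamma_+(f)=\Gamma_+(P)+\Gamma_+(Q)$ in $(\RR^n)^*_+$. Since both $P$ and $Q$ are convenient, every face of the first quadrant $(\RR^n)^*_+$ is a cone of $\Sigma_P$ and of $\Sigma_Q$, hence also of their common refinement $\Sigma_f$. As in the construction used in Lemma \ref{leme:3-5-ad}, we can therefore choose a smooth subdivision $\Sigma$ of $\Sigma_f$ which does not subdivide any cone contained in the boundary $\partial(\RR^n)^*_+$. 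The associated smooth toric variety $X_\Sigma$ then comes with a proper birational morphism $\pi_0\colon X_\Sigma\longrightarrow\CC^n$ which restricts to an isomorphism $X_\Sigma\setminus\pi_0^{-1}(\{0\})\simto\CC^n\setminus\{0\}$, and $\pi_0^{-1}(\{0\})=D_1\cup\cdots\cup D_m$ is the normal crossing divisor whose components correspond to those primitive rays $u_i\in\mathrm{Int}(\RR^n)^*_+\cap\ZZ^n$ of $\Sigma$ that are not contained in $\partial(\RR^n)^*_+$.

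Next, I would verify that $\pi_0$ is an embedded resolution of $P^{-1}(0)\cup Q^{-1}(0)$. This is where the non-degeneracy hypothesis on $f=P/Q$ enters, by essentially the same argument as in Lemma \ref{leme:3-5-ad}. Concretely, for each non-empty subset $I\subset\{1,\dots,m\}$ the intersection $D_I:=\bigcap_{i\in I}D_i$ is smooth, and the condition of Definition \ref{def-3} (smoothness and transversality in $T=(\CC^*)^n$ of the complex hypersurfaces $\{P^{\gamma(P)}=0\}$ and $\{Q^{\gamma(Q)}=0\}$ for every compact face $\gamma\prec\Gamma_+(f)$) translates, on each affine toric chart of $X_\Sigma$, into the statement that the proper transforms $\widetilde{P^{-1}(0)}$ and $\widetilde{Q^{-1}(0)}$ are smooth and meet $D_I$ and each other transversally. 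Consequently the total transform $\pi_0^{-1}(P^{-1}(0)\cup Q^{-1}(0))=\widetilde{P^{-1}(0)}\cup\widetilde{Q^{-1}(0)}\cup D_1\cup\cdots\cup D_m$ is a normal crossing divisor, so $\pi_0$ indeed satisfies the resolution condition in Definition \ref{def-2 7}.

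Finally, for each exceptional component $D_i$, a standard computation on the toric affine chart attached to a maximal cone containing the ray $\rho_i=\RR_{\geq 0}u_i$ (precisely the computation recalled in the proof of Theorem \ref{thm:3-5-ad}) gives
\begin{equation}
\mathrm{ord}_{D_i}(P\circ\pi_0)=\min_{v\in\Gamma_+(P)}\langle u_i,v\rangle,\qquad \mathrm{ord}_{D_i}(Q\circ\pi_0)=\min_{v\in\Gamma_+(Q)}\langle u_i,v\rangle.
\end{equation}
Since $u_i\in\mathrm{Int}(\RR^n)^*_+$, the assumption $\Gamma_+(P)\subset\subset\Gamma_+(Q)$ of Definition \ref{def-29} yields the strict inequality $\mathrm{ord}_{D_i}(P\circ\pi_0)>\mathrm{ord}_{D_i}(Q\circ\pi_0)$ for every $i$, which is exactly the polynomial-like condition.

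The main technical point is the embedded-resolution step in the second paragraph, i.e.\ showing that the total transform of $P^{-1}(0)\cup Q^{-1}(0)$ is normal crossing on $X_\Sigma$; this requires combining smoothness of $\{P^{\gamma(P)}=0\}$ and $\{Q^{\gamma(Q)}=0\}$ on each torus orbit with their mutual transversality, and is essentially the two-hypersurface version of the argument used in Lemma \ref{leme:3-5-ad}. Everything else is a direct toric computation, and no further blow-ups beyond $X_\Sigma$ are needed.
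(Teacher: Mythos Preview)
Your proposal is correct and follows essentially the same approach as the paper: construct the smooth toric modification $X_\Sigma$ from a subdivision of the dual fan $\Sigma_f$ that leaves the boundary cones of $(\RR^n)^*_+$ untouched, observe that it resolves $P^{-1}(0)\cup Q^{-1}(0)$, and then read off the inequality ${\rm ord}_{D_i}(P\circ\pi_0)>{\rm ord}_{D_i}(Q\circ\pi_0)$ from the condition $\Gamma_+(P)\subset\subset\Gamma_+(Q)$ since each $u_i$ lies in $\mathrm{Int}(\RR^n)^*_+$. In fact your write-up is more detailed than the paper's, which simply asserts the resolution property and says the order condition ``can easily'' be checked; your explicit identification of the orders with $\min_{v\in\Gamma_+(P)}\langle u_i,v\rangle$ and $\min_{v\in\Gamma_+(Q)}\langle u_i,v\rangle$ is exactly the computation the paper leaves to the reader.
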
 

\begin{proof}
First of all, by our assumptions and 
Lemma \ref{leme:3-5-ad} the complex 
hypersurfaces $P^{-1}(0), Q^{-1}(0) \subset 
X= \CC^n$ have (at worst) isolated singular points at 
the origin $0 \in X= \CC^n$. 
Let us consider $X= \CC^n$ as the toric variety associated to 
the fan $\Sigma_0$ formed by the all faces of the first 
quadrant $(\RR^n)^*_+ \subset (\RR^n)^*$. Recall that 
we denote by $\Sigma_f$ the dual fan of 
$\Gamma_+(f)= \Gamma_+(P) + \Gamma_+(Q)$ in $(\RR^n)^*_+$. 
Then by our assumptions, we can easily see that 
any cone $\sigma \in \Sigma_0$ in $\Sigma_0$ such 
that $\sigma \subset \partial (\RR^n)^*_+ 
\simeq ( \RR^n_+ \setminus {\rm Int} (\RR^n_+) ) $ is 
an element of $\Sigma_f$. Hence we can construct a 
smooth subdivision $\Sigma$ of $\Sigma_f$ without 
subdividing the cones $\sigma \in \Sigma_f$ such 
that $\sigma \subset \partial (\RR^n)^*_+$ 
(see e.g. \cite[Chapter II, Lemma (2.6)]{Oka}). 
Let $X_{\Sigma}$ be the smooth toric variety associated 
to $\Sigma$. Then there exists a morphism 
$\pi_0: X_{\Sigma} \rightarrow X= \CC^n$
of toric varieties 
inducing an isomorphism 
$X_{\Sigma} \setminus \pi_0^{-1}( \{ 0 \} ) 
\simto X \setminus \{ 0 \}$. This is a 
resolution of singularities of $P^{-1}(0) \cup Q^{-1}(0)$ 
and by using our assumption $\Gamma_+(P) \subset
\subset \Gamma_+(Q)$ we can easily check the 
conditions in Definition \ref{def-2 7}.  
\qed 
\end{proof}
For a perverse sheaf $\F \in \Dbc (X)$ on $X$, as 
in the holomorphic case there exist direct sum 
decompositions 
\begin{equation}
\psi_f^{\mer}( \F ) \simeq \bigoplus_{\lambda \in \CC} \ 
\psi_{f, \lambda}^{\mer} ( \F ), 
\qquad 
\psi_f^{\merc}( \F ) \simeq \bigoplus_{\lambda \in \CC} \ 
\psi_{f, \lambda}^{\merc}( \F ).
\end{equation} 

\begin{proposition}\label{pro-109}
{\rm (Nguyen-Takeuchi \cite[Proposition 4.2]{N-T})} 
Assume that the rational function 
$f(x)= \frac{P(x)}{Q(x)}$ is polynomial-like 
and satisfies the 
conditions in Theorem \ref{the-33}. 
Then for any $\lambda \not=1$ the natural 
morphism 
\begin{equation}
\psi_{f, \lambda}^{\merc}
( \CC_X [n] )[-1] \longrightarrow 
\psi_{f, \lambda}^{\mer}
( \CC_X [n] )[-1] 
\end{equation}
is an isomorphism of perverse sheaves. Morevoer, the supports 
of the both sides of it 
are contained in 
the origin $\{ 0 \} \subset X= \CC^n$. 
\end{proposition}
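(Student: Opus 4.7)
The plan is to reduce the isomorphism claim to a stalk comparison at the origin by first establishing the support assertion. By Theorem \ref{the-1} and the corresponding perversity statement for $\psi_f^{\merc}$ (cf.\ \cite{Raibaut-2}), both $\psi_f^{\merc}(\CC_X[n])[-1]$ and $\psi_f^{\mer}(\CC_X[n])[-1]$ are perverse sheaves on $X$ with support in $P^{-1}(0)$, and their decompositions into generalized monodromy eigenspaces are decompositions in the category of perverse sheaves. I would first show that for any $\lambda \neq 1$ the $\lambda$-summands of both are supported in $\{0\}$; perversity together with zero-dimensional support will then force both to be skyscrapers at $0$ concentrated in cohomological degree $0$, so the natural morphism will be an isomorphism if and only if it is bijective on the $H^0$-stalks at $0$.

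To check the support assertion, I would take $p \in P^{-1}(0) \setminus \{0\}$ and split into cases. If $p \notin Q^{-1}(0)$, then $f$ is locally holomorphic at $p$ and Theorem \ref{the-33}'s isolated-singularity hypothesis makes $P^{-1}(0) = f^{-1}(0)$ smooth at $p$, so the Milnor fiber is contractible and the monodromy trivial; applying Lemma \ref{lem-1}(iii) (and the corresponding statement for $\psi_f^{\merc}$, together with the fact that the inclusion $j\colon X \setminus Q^{-1}(0) \hookrightarrow X$ is an isomorphism near $p$) shows that the $\lambda \neq 1$ stalks of both meromorphic nearby cycle sheaves vanish there. If instead $p \in P^{-1}(0) \cap Q^{-1}(0)$, the transversality hypothesis provides local coordinates in which $P = x_1$ and $Q = x_2$, so that $f = x_1/x_2$ near $p$; a direct computation then shows $F_p \simeq S^1$ with trivial monodromy (the $\eta$-family of ellipsoids in $\CC^{n-1}$ minus the hyperplane $\{x_2 = 0\}$ retracts onto a circle whose isotopy class is unchanged along the loop $\eta \mapsto \eta\cdot e^{2\pi \sqrt{-1} t}$). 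Equivalently, one blow-up at $p$ monomializes $P$ and $Q$ and converts the question into one about classical nearby cycles of a holomorphic function, whose monodromy has no eigenvalues different from $1$. Either way, the $\lambda \neq 1$ parts of both $\psi_f^{\mer}(\CC_X[n])_p$ and $\psi_f^{\merc}(\CC_X[n])_p$ vanish.

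It remains to show that on the stalk at $0$ the natural morphism is bijective. I would pass to the resolution $\pi_0\colon \widetilde{X} \to X$ and invoke Proposition \ref{PDI} together with its $\merc$-analogue, yielding, compatibly with the natural morphism,
\begin{equation}
\psi_f^{\mer}(\CC_X[n])_0 \simeq \mathrm{R}\Gamma\bigl(D,\ \psi_{f \circ \pi_0}^{\mer}(\CC_{\widetilde{X}}[n])|_D\bigr),
\end{equation}
and analogously for $\psi_f^{\merc}$. The polynomial-like condition ${\rm ord}_{D_i}(P\circ\pi_0) > {\rm ord}_{D_i}(Q\circ\pi_0)$ forces $f \circ \pi_0$ to extend holomorphically across every component of the exceptional divisor $D$, with zero divisor contained in $D \cup \widetilde{P}$, where $\widetilde{P}$ is the proper transform of $P^{-1}(0)$. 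On the locus $D \setminus \widetilde{Q}$ (with $\widetilde{Q}$ the proper transform of $Q^{-1}(0)$) both meromorphic nearby cycle sheaves reduce to the classical nearby cycle of this holomorphic extension, so the natural morphism is tautologically an isomorphism there. The hard part will be the local analysis on $D \cap \widetilde{Q}$, where $f \circ \pi_0$ is genuinely meromorphic: in local normal-crossing coordinates one writes $P \circ \pi_0$ and $Q \circ \pi_0$ as monomials times units, and an A'Campo-type eigenvalue computation in the spirit of Lemmas \ref{lem:2-ac-1} and \ref{lem:2-ac-21} will show that the $\lambda \neq 1$ parts of both meromorphic nearby cycles vanish on $D \cap \widetilde{Q}$. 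Combining these local vanishings with the agreement on $D \setminus \widetilde{Q}$ and pushing forward along the proper map $\pi_0|_D$ yields the desired stalk isomorphism at $0$, completing the proof.
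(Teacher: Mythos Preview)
The paper does not give a proof of this proposition; it merely cites \cite[Proposition~4.2]{N-T}.  So there is no in-paper argument to compare yours against, and I comment on your sketch directly.

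Your overall architecture is the right one: prove the support claim first, use perversity to reduce to a single stalk, then pass to the polynomial-like resolution via Proposition~\ref{PDI} and its compactly-supported analogue.  Two steps, however, are not yet arguments.

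\textbf{Support of $\psi^{\merc}$ away from $0$.}  For $p\notin Q^{-1}(0)$ your reasoning is fine.  For $p\in P^{-1}(0)\cap Q^{-1}(0)\setminus\{0\}$ you cannot invoke a ``corresponding statement for $\psi_f^{\merc}$'' of Lemma~\ref{lem-1}\,(iii): the paper explicitly says that isomorphism fails for $\psi^{\merc}$.  Your blow-up remark is a gesture, not a computation---after blowing up $\{x_1=x_2=0\}$ the pullback $x_1/x_2$ is still meromorphic on one chart, and you have not identified the sheaf to which the holomorphic nearby cycle functor is applied.  A cleaner route is Verdier duality: $\psi_f^{\merc}(\CC_X[n])[-1]$ and $\psi_f^{\mer}(\CC_X[n])[-1]$ are dual up to $\lambda\leftrightarrow\lambda^{-1}$, so the support statement for $\psi^{\merc}$ follows from the one you proved for $\psi^{\mer}$.

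\textbf{Agreement on the resolution.}  Your claim that on $D\setminus\widetilde Q$ ``both meromorphic nearby cycle sheaves reduce to the classical nearby cycle'' is the heart of the matter and is not automatic.  Even where $f\circ\pi_0$ extends holomorphically, Remark~\ref{rem-1} gives $\psi^{\mer}_{f\circ\pi_0}(\CC)\simeq\psi_{f\circ\pi_0}(\mathrm R\Gamma_{\setminus (Q\circ\pi_0)^{-1}(0)}\CC)$ while $\psi^{\merc}_{f\circ\pi_0}(\CC)\simeq\psi_{f\circ\pi_0}(\CC_{\setminus (Q\circ\pi_0)^{-1}(0)})$, and on $D$ the set $(Q\circ\pi_0)^{-1}(0)$ is nonempty.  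What makes both collapse to $\psi_{f\circ\pi_0}(\CC)$ is precisely the polynomial-like inequality: it forces $(Q\circ\pi_0)^{-1}(0)\subset(f\circ\pi_0)^{-1}(0)$ near $D\setminus\widetilde Q$, so Corollary~\ref{new-corol} applies to both extensions.  You should make this step explicit.  Near $D\cap\widetilde Q$ the inclusion $(Q\circ\pi_0)^{-1}(0)\subset(f\circ\pi_0)^{-1}(0)$ fails (the pole along $\widetilde Q$ is not in the zero locus), so Corollary~\ref{new-corol} is unavailable there and your asserted ``A'Campo-type vanishing'' requires a genuine local computation for a meromorphic monomial of the form $z_1^{c}/z_2\cdot(\text{unit})$ with $c>0$; this is where the work in \cite{N-T} lies, and your sketch does not supply it.

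In summary, the skeleton is correct, but the two load-bearing local comparisons for $\psi^{\merc}$ are asserted rather than proved; filling them in is exactly the content of the cited argument.
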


Now we shall introduce a filtration on 
$H^{j}(F_{0};\CC)_{\lambda}$ 
($j \in \ZZ$, $\lambda \in \CC$). 
For a variety $Z$ over $\CC$ we 
denote by ${\rm MHM}_{Z}$ the abelian category of 
mixed Hodge modules on $Z$ (see e.g. 
\cite[Section 8.3]{H-T-T}). 
First we regard $\psi_{f}^{\mathrm{mero}}
(\CC_{X}[n])[-1]$ 
as the underlying perverse sheaf of the mixed 
Hodge module $\psi_{t}^{H}({i_{f}}_{*}
(\CC^{H}_{X}[n]|_{X\setminus Q^{-1}(0)}))$ $\in {\rm MHM}_{X}$, 
where $\psi_{t}^{H}$ (resp. ${i_{f}}_{*}$) 
is a functor between the categories 
of mixed Hodge modules corresponding to the functor 
$\psi_{t}[-1]$ (resp. ${\rm R} {i_{f}}_{*}$) 
and $\CC_{X}^{H}[n] \in 
{\rm MHM}_{X}$ is the 
mixed Hodge module whose underlying perverse sheaf 
is $\CC_{X}[n]$.
For the inclusion map $j_{0}\colon \{0\}\hookrightarrow X$, 
we consider the pullback 
\begin{equation}
j_{0}^* 
\psi_{t}^{H}
({i_{f}}_{*}(\CC^{H}_{X}[n]|_{X\setminus Q^{-1}(0)})) \  
\in \Db ( {\rm MHM}_{\{ 0 \}} ) 
\end{equation}
by $j_{0}$, whose underlying constructible sheaf is 
$j_{0}^{-1}( \psi_{f}^{\mathrm{mero}}(\CC_{X}[n])[-1]$. 
Since the $(j-n+1)$-th cohomology group of $j_{0}^{-1}( 
\psi_{f}^{\mathrm{mero}}(\CC_{X}[n])[-1]$ 
is $H^{j}(F_{0};\CC)$, we thus obtain a 
mixed Hodge structure of $H^{j}(F_{0}; 
\CC)$. Restricting its weight filtration to 
$H^{j}(F_{0};\CC)_{\lambda} \subset H^{j}(F_{0};\CC)$ 
we obtain a filtration of $H^{j}(F_{0};\CC)_{\lambda}$. 
Then by Proposition \ref{pro-109} 
we can prove the following result (see 
\cite[Theorem 4.4]{N-T} for the precise proof).  

\begin{theorem}\label{weifilmonofil}
{\rm (Nguyen-Takeuchi \cite[Theorem 4.4]{N-T})} 
In the situation of Proposition \ref{pro-109}, for any 
$\lambda\neq 1$ 
the filtration of $H^{n-1}(F_{0};
\CC)_{\lambda}$ is the monodromy 
filtration of the Milnor monodromy 
$\Phi_{n-1, 0}\colon H^{n-1}(F_{0};
\CC)_{\lambda}\overset{\sim}{\to} H^{n-1}(
F_0 ;\CC)_{\lambda}$ centered at $n-1$.
\end{theorem}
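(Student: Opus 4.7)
The plan is to identify $H^{n-1}(F_0;\CC)_\lambda$ as the stalk at $0$ of a mixed Hodge module, then transport the computation to the resolution $\pi_0$ of Definition \ref{def-2 7}, and finally invoke Saito's theorem on nearby cycle mixed Hodge modules. First, by Proposition \ref{pro-109}, for $\lambda\neq 1$ the natural morphism $\psi^{\merc}_{f,\lambda}(\CC_X[n])[-1]\to \psi^{\mer}_{f,\lambda}(\CC_X[n])[-1]$ is an isomorphism of perverse sheaves supported at $\{0\}\subset X=\CC^n$. A perverse sheaf supported at a point is $j_{0*}$ of a finite-dimensional vector space placed in degree zero, and by Lemma \ref{lem-1}(iii) combined with the concentration result of Theorem \ref{the-33}, that vector space equals $H^{n-1}(F_0;\CC)_\lambda$. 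The mixed Hodge module refinement $\psi^H_{t,\lambda}(i_{f*}\CC^H_X[n]|_{X\setminus Q^{-1}(0)})[-1]$ then endows $H^{n-1}(F_0;\CC)_\lambda$ with the mixed Hodge structure whose weight filtration is the object of study.

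Next, I would transport the computation to the resolution $\pi_0\colon \widetilde X\to X$ using the mixed Hodge module analogue of Proposition \ref{PDI}. The decisive feature of the polynomial-like hypothesis is that the strict inequality $\mathrm{ord}_{D_i}(P\circ \pi_0)>\mathrm{ord}_{D_i}(Q\circ \pi_0)$ on every irreducible component $D_i$ of $D=\pi_0^{-1}(\{0\})$ forces, by a local coordinate check at every intersection configuration of the $D_i$ with the proper transforms of $P^{-1}(0)$ and $Q^{-1}(0)$, that $f\circ \pi_0$ is in fact \emph{holomorphic} on an open neighborhood of $D$ in $\widetilde X$. Consequently the meromorphic nearby cycle there reduces to a classical Saito nearby cycle mixed Hodge module of a holomorphic function, applied to the open pushforward from the complement of $(Q\circ \pi_0)^{-1}(0)$. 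Saito's fundamental theorem then applies: for $\lambda\neq 1$, the weight filtration on $\psi^H_{h,\lambda}(\CC^H_{\widetilde X}[n])[-1]$ is the monodromy filtration of $N=\log T_u$ centered at $\dim \widetilde X-1=n-1$, exactly as invoked in the proof of Theorem \ref{Th-DL}. Strictness of direct images of mixed Hodge modules under the proper map $\pi_0$ then transfers this identification down to the point $\{0\}\subset X$, giving the claimed description of the filtration on $H^{n-1}(F_0;\CC)_\lambda$.

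The principal obstacle is the reduction to the classical nearby cycle on $\widetilde X$ at the level of mixed Hodge modules. As Remark \ref{rem-1} warns, even when $f$ is holomorphic on a neighborhood of a point, $\psi^{\mer}_f$ and the classical $\psi_f$ need not have the same stalks; one must carefully account for the extra pushforward from $X\setminus Q^{-1}(0)$, which is the source of potential non-purity in the input. Saito's general theorem then only produces a \emph{relative} monodromy filtration with respect to the weight filtration of the input, rather than the absolute monodromy filtration centered at $n-1$ that the statement demands. The rescue is precisely Proposition \ref{pro-109}: the isomorphism $\psi^{\merc}_{f,\lambda}\simto \psi^{\mer}_{f,\lambda}$ for $\lambda\neq 1$ is an intermediate-extension-type statement which forces the eigenvalue-$\lambda$ part to be governed by a pure piece, so that the relative monodromy filtration collapses to the ordinary monodromy filtration centered at $n-1$. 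Making this collapse precise, and verifying that the polynomial-like hypothesis interacts correctly with Saito's weight formalism under $R\pi_{0*}$, will be the technical heart of the argument.
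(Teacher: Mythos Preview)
The paper does not give a detailed proof of this theorem; it only says that Proposition~\ref{pro-109} is the key and refers to \cite[Theorem 4.4]{N-T} for the precise argument. Your overall strategy---pass to the resolution $\pi_0$, reduce to Saito's theorem for a genuine holomorphic nearby cycle, and push back down---is consistent with that hint.

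There is, however, a concrete gap in your reduction step. The polynomial-like inequality ${\rm ord}_{D_i}(P\circ\pi_0)>{\rm ord}_{D_i}(Q\circ\pi_0)$ controls only the orders along the exceptional components $D_i$; it does \emph{not} make $f\circ\pi_0$ holomorphic on a full neighborhood of $D$. At a point of $D_i$ lying on the proper transform of $Q^{-1}(0)$, in local coordinates $(y_1,y_2,\dots)$ with $D_i=\{y_1=0\}$ and the proper transform $=\{y_2=0\}$, one has $f\circ\pi_0=y_1^{a-b}y_2^{-1}\cdot(\text{unit})$, which is genuinely meromorphic. So the passage to ``classical Saito nearby cycle of a holomorphic function'' cannot be made directly on $\widetilde X$. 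What the polynomial-like condition \emph{does} buy you is that after further blowing up to eliminate the indeterminacy of $f\circ\pi_0$ (obtaining a morphism $g:\widetilde{\widetilde X}\to\PP^1$), every exceptional divisor sitting over $0\in X$ lies in $g^{-1}(0)$ rather than $g^{-1}(\infty)$. That is the form in which the hypothesis should enter.

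Your identification of the second obstacle---that the input to $\psi^H$ is not the pure object $\CC^H_{\widetilde X}[n]$ but a pushforward from an open set, so Saito's theorem a priori yields only the \emph{relative} monodromy filtration---is exactly right, and you are correct that Proposition~\ref{pro-109} is what rescues this for $\lambda\neq 1$. The mechanism you sketch (``intermediate-extension-type statement forcing purity'') is the right intuition: the coincidence of the $!$- and $*$-versions on the $\lambda$-part kills the contribution of the boundary strata coming from $Q^{-1}(0)$, so that what survives is governed by the pure constant Hodge module and the absolute monodromy filtration centered at $n-1$ results. Making this precise is indeed the technical heart, and your proposal does not yet do so; but you have located the right ingredients.
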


Then in the situation of this theorem, forgetting the eigenvalue $1$ 
parts of the Milnor monodromies $\Phi_{j, 0}$ 
we can define the reduced Hodge spectrum 
$\tl{\rm sp}_{f,0}(t)$ of 
the Milnor fiber $F_0$ of $f$ at the origin 
$0 \in X= \CC^n$ 
(see \cite[Definition 5.11]{N-T} for 
the definition) 
which satisfies the symmetry 
\begin{equation}
\tl{\rm sp}_{f,0}(t) = t^n \cdot
\tl{\rm sp}_{f,0} \Bigl( \frac{1}{t} \Bigr)
\end{equation}
centered at $\frac{n}{2}$. On the other 
hand, in \cite{Raibaut-2} Raibaut defined 
the motivic zeta function 
of the rational function $f$ and the 
motivic Milnor fiber as its limit. Later 
in \cite[Section 5]{N-T}, 
the same objects were defined via different methods. 
Moreover in \cite[Section 5]{N-T}, 
assuming that $f$ is non-degenerate 
at the origin $0 \in X= \CC^n$ and $P(x), Q(x)$ 
are convenient, as in Theorem \ref{thm:8-3} Nguyen and 
the author described the motivic Milnor fiber 
of $f= \frac{P}{Q}$ in terms of 
the Newton polyhedra of $P$ and $Q$. Then 
in the case where $f$ is polynomial-like,  
we can obtain 
combinatorial descriptions of the reduced Hodge spectrum 
$\tl{\rm sp}_{f,0}(t)$ of $F_0$ and the Jordan 
normal forms of $\Phi_{n-1, 0}$ for 
the eigenvalues $\lambda \not= 1$ as in 
Theorems \ref{thm:7-19}, \ref{thm:8-4} and \ref{jordan}. 
See \cite[Section 6]{N-T} for the details. 
We can also globalize 
these results and obtain 
similar formulas for monodromies at infinity of 
the rational function 
$f(x)= \frac{P(x)}{Q(x)}$. They are natural 
generalizations of the results in Sections 
\ref{section 4} and \ref{section 7}.  
See \cite[Section 7]{N-T} for the details. 

By using the meromorphic nearby cycle functors, 
recently in the paper \cite{Tak-2} Bernstein-Sato 
polynomials i.e. b-functions 
for meromorphic functions 
were defined and their basic properties 
were studied. In particular, 
a Kashiwara-Malgrange type 
theorem for the Milnor monodromies was proved 
in \cite[Theorem 1.4]{Tak-2}. Also in 
the recent paper \cite{A-G-L-N}, similar but 
different b-functions for meromorphic functions were 
introduced and studied. Unlike the motivation in 
\cite{Tak-2}, the authors of \cite{A-G-L-N} 
applied their b-functions to the analytic continuations of 
local zeta functions associated to meromorphic functions. 
Moreover in \cite{A-G-L-N} and \cite{Tak-2},  
the authors of them introduced 
multiplier ideal sheaves for meromorphic functions 
and proved that their jumping numbers are 
related to their b-functions. This is an analogue for  
meromorphic functions of the celebrated theorem of 
\cite{E-L-S-V}.


\end{document}